\def\bfB{\mathbf{B}}
\def\bfC{\mathbf{C}}
\def\bfD{\mathbf{D}}
\newcommand{\Diag}{\operatorname{Diag}}
\newcommand{\Mat}{\operatorname{M}}
\newcommand{\Mata}{\operatorname{A}}
\newcommand{\id}{\operatorname{id}}
\newcommand{\GL}{\operatorname{GL}}
\newcommand{\Ker}{\operatorname{Ker}}
\newcommand{\Vect}{\operatorname{span}}
\newcommand{\im}{\operatorname{Im}}
\newcommand{\urk}{\operatorname{urk}}
\newcommand{\mrk}{\operatorname{mrk}}
\newcommand{\tr}{\operatorname{tr}}
\newcommand{\rk}{\operatorname{rk}}
\renewcommand{\setminus}{\smallsetminus}
\def\F{\mathbb{F}}
\def\K{\mathbb{K}}
\def\R{\mathbb{R}}
\def\C{\mathbb{C}}
\def\N{\mathbb{N}}
\renewcommand{\L}{\mathbb{L}}
\def\calA{\mathcal{A}}
\def\calB{\mathcal{B}}
\def\calC{\mathcal{C}}
\def\calD{\mathcal{D}}
\def\calH{\mathcal{H}}
\def\calL{\mathcal{L}}
\def\calM{\mathcal{M}}
\def\calN{\mathcal{N}}
\def\calP{\mathcal{P}}
\def\calR{\mathcal{R}}
\def\calS{\mathcal{S}}
\def\calT{\mathcal{T}}
\def\calU{\mathcal{U}}
\def\calV{\mathcal{V}}
\def\calW{\mathcal{W}}
\def\lcro{\mathopen{[\![}}
\def\rcro{\mathclose{]\!]}}
\theoremstyle{definition}
\newtheorem{Def}{Definition}[section]
\theoremstyle{plain}
\newtheorem{theo}{Theorem}[section]
\newtheorem{prop}[theo]{Proposition}
\newtheorem{cor}[theo]{Corollary}
\newtheorem{lemme}[theo]{Lemma}
\newtheorem{claim}{Claim}
\theoremstyle{plain}
\theoremstyle{remark}
\newtheorem{Rems}{Remarks}[section]
\newtheorem{Rem}[Rems]{Remark}
\newtheorem{ex}[Rems]{Example}
\title{Local linear dependence seen through duality I}
\author{Cl\'ement de Seguins Pazzis\footnote{Universit\'e de Versailles Saint-Quentin-en-Yvelines, Laboratoire de Math\'ematiques
de Versailles, 45 avenue des Etats-Unis, 78035 Versailles cedex, France}
\footnote{e-mail address: dsp.prof@gmail.com}}
\begin{document}

\thispagestyle{plain}

\maketitle


\begin{abstract}
A vector space $\calS$ of linear operators between vector spaces $U$ and $V$ is called locally linearly dependent
(in abbreviated form: LLD) when every vector $x \in U$ is annihilated by a non-zero operator in $\calS$.
A duality argument bridges the theory of LLD spaces to the one of vector spaces of non-injective operators.
This new insight yields a unified approach to rediscover basic LLD theorems and obtain many additional ones
thanks to the power of formal matrix computations.

In this article, we focus on the minimal rank for a non-zero operator in an LLD space.
Among other things, we reprove the Bre\v sar-\v Semrl theorem, which states that an $n$-dimensional LLD operator space always contains a
non-zero operator with rank less than $n$, and we improve the Meshulam-\v Semrl theorem that
examines the case when no non-zero operator has rank less than $n-1$.

We also tackle the minimal rank problem for a non-zero operator in an $n$-dimensional operator space that is not algebraically reflexive.
A theorem of Meshulam and \v Semrl states that, for all fields with cardinality large enough,
a non-reflexive operator space with dimension $n$ must contain a non-zero operator with rank at most $2n-2$.
We show that there are infinitely many integers $n$ for which this bound is optimal
for general infinite fields. Moreover, under mild cardinality assumptions,
we obtain a complete classification of the non-reflexive $n$-dimensional operator spaces in which no non-zero operator has rank less than $2n-2$.
This classification involves a new algebraic structure called left-division-bilinearizable (in abbreviated form: LDB)
division algebras, which generalize a situation that is encountered with quaternions and octonions and
whose systematic study occupies a large part of the present article.
\end{abstract}

\vskip 2mm
\noindent
\emph{AMS Classification:} Primary: 47L05; 15A03; 15A30. \\
Secondary: 17A35; 11E04.

\vskip 2mm
\noindent
\emph{Keywords:} Local linear dependence; Space of operators; Algebraic reflexivity; Rank; Division algebra

\section{Introduction}

\subsection{Local linear dependence}

Let $U$ and $V$ be vector spaces over a commutative field $\K$.
Linear operators $f_1,\dots,f_n$ from $U$ to $V$ are \textbf{locally linearly dependent (LLD)}
when, for every $x \in U$, the vectors $f_1(x),\dots,f_n(x)$ are linearly dependent in $V$.
There are two obvious situations in which the family $(f_1,\dots,f_n)$ is LLD, in which case we say that
$f_1,\dots,f_n$ are \textbf{trivially LLD}:
\begin{itemize}
\item $f_1,\dots,f_n$ are linearly dependent in the vector space $\calL(U,V)$ of all linear operators from $U$ to $V$.
\item There exists a finite-dimensional subspace $W$ of $V$ such that $\dim W<n$ and $f_i(x) \in W$ for every
$i \in \lcro 1,n\rcro$ and every $x \in U$.
\end{itemize}
Let $\calS$ be a finite-dimensional linear subspace of $\calL(U,V)$. We say that $\calS$ is locally
linearly dependent (LLD)
when every vector $x \in U$ is annihilated by some non-zero operator $f \in \calS$.
Given a positive integer $c$, we say that $\calS$ is $c$-locally linearly dependent ($c$-LLD)
when, for every vector $x \in U$, the linear subspace $\{f \in \calS : f(x)=0\}$ has dimension greater
than or equal to $c$.
Obviously, the operators $f_1,\dots,f_n$ are locally linearly dependent if and only if
they are linearly dependent or $\Vect(f_1,\dots,f_n)$ is locally linearly dependent.

\vskip 3mm
The systematic study of LLD systems of operators is a surprisingly fresh research area in operator theory and linear algebra.
Although some basic results of the theory, such as the fact that an LLD pair $(f,g)$ of operators is always trivial,
have widespread usage in operator theory, it is only quite recently that theorems for more general LLD systems
have started to blossom. We regard the topic as worth investigating for itself,
in particular because it is a natural generalization of Kaplansky's theorem stating that, if an
endomorphism $u$ of a vector space is such that
$(\id,u,u^2,\dots,u^n)$ is LLD, then $\id,u,\dots,u^n$ are linearly dependent.
However, LLD operators also have interesting applications to various problems in mathematics,
and it is those applications that have shaped the topic as it is today.
Amitsur \cite{Amitsur} showed that finding non-zero operators of small rank in LLD operator spaces
yields results on rings satisfying generalized polynomial identities (in that, the operators are actually homomorphisms of abelian groups,
with the target group enriched with a structure of left vector space over a skew field).
In particular, he proved that an $n$-dimensional LLD operator space
always contains a non-zero operator with rank less than $\dbinom{n+1}{2}-1$.
More recently, Bre\v sar and \v Semrl \cite{BresarSemrl} have classified the $3$-dimensional LLD operator spaces
over infinite fields in order to classify commuting pairs $(d,g)$ of continuous derivations of a Banach
algebra $\calA$ such that $dg(x)$ is quasi-nilpotent for all $x\in \calA$.
In their work, they also improved Amitsur's upper bound for the minimal rank of a non-zero operator in an LLD space
by showing that it is actually less than $n$ for an infinite field.

The theory of LLD spaces is also interesting for its connection with the currently fashionable topic of algebraic reflexivity.
Recall that a vector space $\calS$ of linear operators from $U$ to $V$ is called \textbf{algebraically reflexive}
when every linear operator $f  : U \rightarrow V$ which satisfies $\forall x \in U, \; f(x) \in \calS x$ belongs to $\calS$.
If, on the contrary, one can find an operator $f : U \rightarrow V$ which satisfies $\forall x \in U, \; f(x) \in \calS x$
but does not belong to $\calS$, then one sees that the operator space $\calS \oplus \K f$ is LLD, and $\calS$
is a (linear) hyperplane of it. If one is able to give an upper bound for the minimal rank of a non-zero element in
a hyperplane of an LLD space, then one recovers a sufficient condition for algebraic reflexivity.
Before the present article, the best known sufficient condition for algebraic reflexivity based upon the minimal rank of non-zero operators
had been obtained by Meshulam and \v Semrl for fields with cardinality large enough thanks to this method \cite{MeshulamSemrlLAA}.

\vskip 2mm
In the current state of the topic, the various problems that have been investigated are the following ones:
\begin{enumerate}[(1)]
\item \emph{The minimal rank problem for LLD spaces:}
give an upper bound for the minimal rank of a non-zero operator in an $n$-dimensional LLD space.

\item  \emph{The minimal rank problem for hyperplanes of LLD spaces.}
Studying this yields sufficient conditions for reflexivity based upon the minimal rank.

\item \emph{The classification problem:} classify minimal LLD operator spaces.
Prior to this paper, such classifications were known only for $2$-dimensional and $3$-dimensional LLD spaces,
and for $n$-dimensional minimal LLD spaces with an essential range (see Definition \ref{defreduced})
of dimension $\dbinom{n}{2}$ and, in each case, provided that the underlying field is large enough \cite{ChebotarSemrl}.

\item \emph{The maximal rank problem for minimal LLD spaces:}
give an upper bound for the maximal rank in a minimal LLD space of dimension $n$ (note that such an operator space
contains only operators of finite rank, see Proposition \ref{minimalisfinitedimensional}).
\end{enumerate}

\vskip 2mm
Until now, the theory of LLD operators has been developed as an independent subject, with no reference to prior existing
theories save for very basic tools of linear algebra. Our main point is that, with a simple duality argument, we can translate
all the above problems into questions on spaces of non-injective linear operators, and, in the finite-dimensional setting,
into questions on spaces of matrices with bounded rank, a theory for which
many powerful computational tools and classification theorems are now available. Although this duality idea has already surfaced
in earlier works \cite{BresarSemrl,MeshulamSemrlLAA}, we believe that its potential has been dramatically underestimated.
In some instances, this has led the above cited authors to obtain non-optimal results, and in some other ones,
to rediscover previously known results with far longer proofs without realizing it (compare \cite{ChebotarSemrl}
with \cite{AtkinsonPrim}).

Thus, this article and its sequel have two aims:
\begin{itemize}
\item Reboot the theory of LLD operator spaces by establishing the duality argument as the prime method.
We will show how this argument helps one to prove most if not all the known results of the theory in a way that is both efficient, intuitive and elegant.

\item Use the duality argument and the power of matrix computations to advance the theory of LLD operators in all directions.
With this method, we will obtain improved results for \emph{all} the above four problems.
\end{itemize}

In this first article, we shall focus on minimal rank problems, i.e. problems (1) and (2) above.
The next article will be devoted to advances in the classification problem and in the maximal rank problem.

\vskip 2mm
In the next section, we lay out the duality argument.

\subsection{The duality argument}\label{dualityargument}

Let $\calS$ be an $n$-dimensional linear subspace of $\calL(U,V)$.
The adjoint map of the natural embedding $\calS \hookrightarrow \calL(U,V)$
is the linear map
$$x \in U \longmapsto  \bigl[f \mapsto f(x)\bigr] \in \calL(\calS,V),$$
the image of which we denote by
$$\widehat{\calS}:=\bigl\{f \mapsto f(x) \mid x \in U\bigr\} \subset \calL(\calS,V).$$
Then, $\calS$ is LLD if and only if
$\widehat{\calS}$ is \textbf{defective}, i.e.\ no operator in $\widehat{\calS}$ is injective,
i.e.\
$$\forall \varphi \in \widehat{\calS}, \quad \rk \varphi <n.$$
Moreover, $\calS$ is $c$-LLD if and only if $\widehat{\calS}$ is $c$-\textbf{defective},
i.e.\ the kernel of every operator in $\widehat{\calS}$ has dimension greater than or equal to $c$, that is
$$\forall \varphi \in \widehat{\calS}, \quad \rk \varphi \leq n-c.$$

Assume furthermore that $V$ has finite dimension $m$. By choosing respective bases of $\calS$ and $V$,
we may represent $\widehat{\calS}$ by a linear subspace $\mathcal{M}$ of $\Mat_{m,n}(\K)$
(the space of $m \times n$ matrices with entries in $\K$).
For $V$ to be LLD (respectively, $c$-LLD), it is necessary and sufficient that $\rk M \leq n-1$
(respectively, $\rk M \leq n-c$) for all $M \in \calM$.
Thus, studying LLD operator spaces essentially amounts to studying linear subspaces of matrices with rank less than the number of columns.
The theory of such matrix spaces has made good progress in the last sixty years,
see e.g.\ \cite{AtkinsonPrim,AtkLloydPrim,Flanders}, and although some typical problems on LLD spaces
do not have a well-studied counterpart in the theory of matrix spaces with bounded rank, the use of computational tools
from the latter will help us advance them substantially.

\vskip 2mm
To give a simple though illuminating illustration of the power of the duality method,
let us solve the classification problem for $n=2$. Assume that $\calS$ is a $2$-dimensional LLD subspace of $\calL(U,V)$.
Then, $\widehat{\calS}$ is a linear subspace of $\calL(\calS,V)$ in which all the non-zero operators have rank $1$.
By a classical result that is generally attributed to Isaiah Schur,
it follows that either there is a linear hyperplane of $\calS$ on the whole of which all the elements of $\widehat{\calS}$ vanish,
or there is a $1$-dimensional subspace of $V$ which contains the range of every element of $\widehat{\calS}$.
However, the former case cannot hold as it would mean that some non-zero operator $f \in \calS$ satisfies $f(x)=0$ for all $x \in U$.
This yields a $1$-dimensional subspace $D$ of $V$ which contains $f(x)$ for every $f \in \calS$ and every $x \in U$.
Thus, every LLD pair of operators is trivially LLD.

\subsection{Additional definition and notation}

Following the French convention, we use $\N$ to denote the set of non-negative integers.

Given non-negative integers $m$ and $n$, we denote by $\Mat_{m,n}(\K)$ the vector space of matrices with $m$ rows, $n$ columns and entries in $\K$.
We denote by $\Mat_n(\K)$ the algebra of square matrices with $n$ rows and entries in $\K$, by $I_n$ its unit element, and by
$\GL_n(\K)$ its group of invertible elements. The transpose of a matrix $M$ is denoted by $M^T$.
One denotes by $\Mata_n(\K)$ the subspace of alternating matrices of $\Mat_n(\K)$
(i.e.\ skew-symmetric matrices with all diagonal entries zero).

\begin{Def}
Let $\calS$ be a linear subspace of $\calL(U,V)$.
We say that $\calS$ has \textbf{finite minimal rank} when it contains a non-zero operator of finite rank,
in which case we define the \textbf{minimal rank} of $\calS$, denoted by $\mrk(\calS)$,
as the smallest rank among the \emph{non-zero} finite rank operators in $\calS$.

Assume now that $U$ is finite-dimensional.
The maximal rank of an operator in $\calS$ is called the \textbf{upper-rank} of $\calS$
and denoted by $\urk(\calS)$.

For linear subspaces of matrices, we define the same notation by identifying $\Mat_{m,n}(\K)$
with $\calL(\K^n,\K^m)$ in the usual way.
\end{Def}

In later parts of the article, the following notions shall be useful:

\begin{Def}\label{defreduced}
Let $\calS$ be a linear subspace of $\calL(U,V)$. \\
The \textbf{kernel} of $\calS$ is defined as $\underset{f \in \calS}{\bigcap} \Ker f$, i.e.\ the set of vectors of $U$
at which all the operators in $\calS$ vanish. \\
The \textbf{essential range} of $\calS$ is defined as $\underset{f \in \calS}{\sum} \im f$, i.e.\ the linear subspace of $V$
spanned by the ranges of the operators in $\calS$ (note that the union of those ranges is generally not a linear subspace of $V$). \\
We say that $\calS$ is \textbf{reduced} when its kernel is $\{0\}$ and its essential range is $V$.
\end{Def}

Studying $c$-LLD operator spaces amounts to studying reduced ones.
Indeed, let $\calS$ be a linear subspace of $\calL(U,V)$ with kernel $U_0$
and essential range $V_0$. For every operator $f \in \calS$, the inclusions $U_0 \subset \Ker f$ and $\im f \subset V_0$ show that $f$ induces a
linear mapping
$$\overline{f} : [x] \in U/U_0 \longmapsto f(x) \in V_0.$$
Then,
$$\overline{\calS}:=\Bigl\{\overline{f} \;\mid\; f \in \calS\Bigr\}$$
is a reduced linear subspace of $\calL(U/U_0,V_0)$ and $f \mapsto \overline{f}$ is a rank-preserving isomorphism from $\calS$ to $\overline{\calS}$.
One notes that $\calS$ is $c$-LLD if and only if $\overline{\calS}$ is $c$-LLD.
We say that $\overline{\calS}$ is the \textbf{reduced space attached to $\calS$.}

\begin{Def}
Let $\calS$ and $\calS'$ be two vector spaces of linear operators, from $U$ to $V$ and from $U'$ to $V'$, respectively. \\
We say that $\calS$ is \textbf{equivalent to} $\calS'$, and we write $\calS \sim \calS'$, when there are isomorphisms
$F : U \overset{\simeq}{\rightarrow} U'$ and $G : V' \overset{\simeq}{\rightarrow} V$ such that
$\calS=\{G \circ g \circ F \mid g \in \calS'\}$. In that case, we note that $H : f \mapsto G^{-1} \circ f \circ F^{-1}$
is an isomorphism from $\calS$ to $\calS'$, so that
$$\forall f \in \calS, \quad f=G \circ H(f) \circ F.$$
In the special case when $U=V$ and $U'=V'$, we say that $\calS$ is \textbf{similar to} $\calS'$ when,
in the above condition, one can take $G=F^{-1}$.
\end{Def}

When $U$, $V$, $U'$ and $V'$ are all finite-dimensional, we note that $\calS$ is equivalent to $\calS'$ if and only if
both operator spaces are represented by the same set of matrices in different choices of bases of the source and target spaces.

\vskip 2mm
We adopt similar definitions for spaces of matrices.
In particular, we say that two vector spaces $\calM$ and $\calM'$ of $m \times n$ matrices are \textbf{equivalent}
when there are matrices $P \in \GL_m(\K)$ and $Q \in \GL_n(\K)$ such that $\calM=P\calM' Q$.
This means that $\calM$ and $\calM'$ represent the same operator space in different choices of bases.
A vector space $\calM$ of $m \times n$ matrices is called \textbf{reduced} when the space of linear operators
from $\K^n$ to $\K^m$ which it represents in the canonical bases is reduced, which means that
no non-zero vector $X \in \K^n$ satisfies $\calM X=\{0\}$, and no non-zero vector $Y \in \K^m$ satisfies $Y^T \calM=\{0\}$;
in other words, $\calM$ is inequivalent to a space of matrices with last column zero, and
$\calM$ is inequivalent to a space of matrices with last row zero.

\subsection{Structure of the article}

Section \ref{basiclemmassection} regroups some basic results on spaces of matrices with rank bounded above.
In particular, the Flanders-Atkinson lemma is restated there and given a short new proof.

Section \ref{smallranksection} is devoted to minimal rank theorems for LLD operator spaces. There, we reframe the proof
of the Bre\v sar-\v Semrl theorem (and its extension by Meshulam and \v Semrl) as a
straightforward corollary to the Flanders-Atkinson lemma (for fields with large cardinality).
The Meshulam-\v Semrl theorem on the critical rank is also given an improved treatment there,
and our new method helps us slightly relax the cardinality assumption on the field.
At the end of the section, we show that all the operators in a minimal $c$-LLD space
have finite rank: an important application is that reduced minimal LLD spaces have finite-dimensional source and target spaces.

The rest - and the largest part - of the article deals with the minimal rank problem in a hyperplane of an LLD space and its connection with
non-reflexive operator spaces. In \cite{MeshulamSemrlLAA}, Meshulam and \v Semrl showed that in such a non-reflexive $n$-dimensional space of operators,
there always exists a non-zero operator with rank at most $2n-2$ (provided that the underlying field has more than $n$ elements),
and even one with rank at most $n$ if the underlying field is algebraically closed.
Meshulam and \v Semrl conjectured that the latter result held for any large enough field. This however
fails, and we shall demonstrate this by giving several counterexamples.
Our most significant contribution to the problem consists of a classification of the $n$-dimensional non-reflexive spaces
in which all the non-zero operators have rank greater than or equal to $2n-2$.
The result can be summed up as follows:

\begin{theo}\label{classtheointro}
Let $\calT$ be an $n$-dimensional reduced non-reflexive operator space between finite-dimensional vector spaces.
Assume that $\# \K>n \geq 3$ and that all the non-zero operators in $\calT$ have rank at least $2n-2$. \\
Then, $\calT$ is equivalent to a hyperplane of the twisted operator space associated with an LDB division algebra.
\end{theo}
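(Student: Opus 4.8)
The plan is to run the duality argument of Section~\ref{dualityargument} in reverse: enlarge $\calT$ to an LLD space of dimension $n+1$, dualize it to a matrix space of bounded rank, squeeze that matrix space into a normal form by means of the Flanders--Atkinson lemma (Section~\ref{basiclemmassection}) and the rank hypothesis, and finally recognize the multiplication table of an LDB division algebra inside that normal form.

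Since $\calT$ is not algebraically reflexive, choose an operator $f : U \to V$ with $f(x) \in \calT x$ for all $x \in U$ but $f \notin \calT$, and set $\calS := \calT \oplus \K f$, an $(n+1)$-dimensional LLD operator space having $\calT$ as a hyperplane. One checks at once that $\calS$ is still reduced: its kernel is $\bigl(\bigcap_{g \in \calT} \Ker g\bigr) \cap \Ker f = \{0\}$, and its essential range contains the essential range of $\calT$, which is $V$. Dualizing and choosing bases of $\calS$ and of $V$, the defective space $\widehat{\calS}$ becomes a reduced linear subspace $\calM \subset \Mat_{m,n+1}(\K)$ with $\rk M \leq n$ for every $M \in \calM$, where $m := \dim V$. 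Dually, $\calS$ itself is represented by a reduced subspace $\calN \subset \Mat_{m,\dim U}(\K)$ in which $\calT$ corresponds to a hyperplane $\calN_0$, and where $\rk N$ equals the rank of the operator represented by $N$.

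The hypothesis $\mrk(\calT) \geq 2n-2$ now says that every non-zero element of the $n$-dimensional space $\calN_0$ has rank at least $2n-2$. Combined with the reducedness of $\calN$ and the bound $\rk M \leq n$ for $M \in \calM$, this is very restrictive; I expect it to force $\dim V = 2n-2$ and to force the maximal rank of the elements of $\calM$ to be exactly $n$, the alternative of a strictly smaller maximal rank corresponding to configurations in which $\calM$, hence $\calN_0$, admits a further reduction or produces operators of small rank, and being discarded thanks to the assumptions $\# \K > n$ and $n \geq 3$. Taking a rank-$n$ element of $\calM$ in the normal form $\begin{pmatrix} I_n & 0 \\ 0 & 0 \end{pmatrix}$, the Flanders--Atkinson lemma forces every $M \in \calM$ into a block shape whose lower-right corner is governed by the remaining blocks; the residual data assemble into a bilinear map $\mu$ together with a distinguished vector $e$, and the rank lower bound on $\calN_0$ turns out to be equivalent to the assertion that $\mu(x,\cdot)$ is injective for every $x \neq 0$ and $\mu(\cdot,y)$ is injective for every $y \neq 0$: a two-sided division property.

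Finally, $\mu$ equips the underlying space with a (non-associative) product without zero divisors, hence, after renormalizing so that $e$ becomes a two-sided unit, with the structure of a division algebra $A$; the one further linear relation among the blocks --- the one encoding the LLD-ness of the \emph{whole} space $\calS$ and not merely that of $\calT$ --- is precisely the left-division-bilinearizable identity isolated earlier in the article, so that $A$ is an LDB division algebra. Reading the normal form backwards then identifies $\calS$ with the twisted operator space of $A$ and $\calT$ with the hyperplane cut out by the vanishing of the last coordinate, which is the claim; the converse, that such a hyperplane satisfies the hypotheses whenever it fails to be reflexive, is comparatively routine and rests on the structure theory of LDB division algebras developed earlier. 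The main obstacle is the passage from the Flanders--Atkinson normal form to the algebra: proving that the minimal-rank condition on $\calT$ is \emph{equivalent} to the division property of $\mu$, and not merely implied by it, matching the leftover relation with the exact LDB axiom, and ruling out the spurious normal forms --- those in which $\calM$ reduces further, or in which $e$ fails to act invertibly --- that the theorem is designed to exclude.
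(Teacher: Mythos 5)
Your skeleton — enlarge $\calT$ to $\calS = \calT \oplus \K f$ with $f \in \calR(\calT)$, pass to the defective matrix space $\widehat{\calS}$, pin it into a Flanders--Atkinson normal form, and read off an algebra structure — matches the paper's proof of Theorem~\ref{theocaslimite}. But two of the intermediate landmarks you describe do not match the structure the theorem actually produces, and they would derail the endgame if carried out as written.

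First, you claim the residual blocks assemble into \emph{one} bilinear map $\mu$ with a distinguished vector $e$, and that one then renormalizes so that $e$ becomes a two-sided unit. That is not the LDB structure. An LDB division algebra is a triple $(A,\star,\bullet)$ with \emph{two} bilinear maps linked by the quadratic identity $x \star (x \bullet y) = q(x)\,y$; there is no unit axiom at all, and the examples given in Section~\ref{LDBdef} (e.g.\ a quaternion algebra with $\star = $ multiplication and $\bullet : (x,y) \mapsto x^\star y$) have no element that is a two-sided unit for both $\star$ and $\bullet$. Correspondingly, the paper's normal-form computation extracts two independent block families $E(X)$ and $F(Y)$ (Claims~\ref{EXinvertible} and~\ref{FYinvertible}), sets $X \star Y := F(Y)X$ and $X \bullet Y := E(Y)X$, and then derives the LDB axiom $X \star (X \bullet Y) \in \K Y$ from the polynomial identity \eqref{majoridentity5} obtained by polarizing the Flanders--Atkinson relations, using the two-homogeneous-degrees splitting made possible by $\# \K > 2$. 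The presence of a ``distinguished vector $e$'' playing the role of a unit is an artefact of your picture, not of the problem: the scalar directions in $\calT_A$ are absorbed into the extra $\K^2$ summand of the source $A \oplus \K^2$ of $\Gamma_A$, and in the proof they are normalized away by the column operations $C_i \leftarrow C_i - \lambda_i C_{n+1}$, not by passing to a unital algebra.

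Second, your final identification of $\calT$ with ``the hyperplane cut out by the vanishing of the last coordinate'' is wrong. In $\calT_A$ parametrized by $x + (\lambda,\mu) \in A \oplus \K^2$, the hyperplane $\{\mu = 0\}$ contains the vector $(0,1,0)$, which is $\widetilde q$-isotropic; by Proposition~\ref{rankinatwisted} the corresponding operator has rank $\dim A = n-1 < 2n-2$, contradicting $\mrk \calT \geq 2n-2$. The correct statement (Theorem~\ref{theocaslimite}(c)) is that $\calT$ corresponds to a \emph{non-isotropic} hyperplane of $\calT_A$; the coordinate hyperplanes are never non-isotropic. Finally, you gesture at but do not engage with the genuinely hard part of the argument — establishing $m = 2n-2$ (Claim~\ref{claimm=2n-2}), ruling out degenerations via Claims~\ref{claimHiszero} and~\ref{claimRdependsonC} (the latter requiring a nontrivial algebraic-density argument over the two-variable Laurent field $\K((s,t))$), and proving the invertibility Claims~\ref{EXinvertible} and~\ref{FYinvertible} — which is where the hypotheses $\# \K > n$ and $n \geq 3$ actually bite. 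You acknowledge these as obstacles, but the unit-element misconception would send the normal-form analysis in the wrong direction before those obstacles are even reached.
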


This result is Corollary \ref{cornonreflexive} of Section \ref{nonreflexivesectionIII}. LDB division algebras are defined in Section \ref{LDBdef},
and their associated twisted operator spaces are defined in Section \ref{twisteddef}.

In short, LDB division algebras are non-associative division algebras in which the left-division mapping
is bilinear up to multiplication by a scalar-valued function. As it seems that no theory of those division algebras
has ever appeared in the literature, a large part of the article is devoted to their systematic study.
For the ease of read, we have divided our considerations on the minimal rank problem in a hyperplane of an LLD space
into three sections: in Section \ref{nonreflexivesectionI}, we quickly reprove the known results on the topic and
give elementary counterexamples to the Meshulam-\v Semrl conjecture; in Section \ref{nonreflexivesectionII},
we introduce the theory of left-division-bilinearizable division algebras
and show that it yields examples in which the upper bound $2n-2$ of Meshulam and \v Semrl is attained; in
Section \ref{nonreflexivesectionIII}, it is proved that our construction yields all the possible situations
in which the upper bound $2n-2$ of Meshulam and \v Semrl is attained for reduced spaces, up to equivalence and with
the usual cardinality assumptions on the underlying field.

\section{Preliminary results from matrix theory}\label{basiclemmassection}

\subsection{Field extension lemmas}

\begin{Def}
Let $\calS$ be a linear subspace of $\Mat_{m,n}(\K)$.
Given a basis $(A_1,\dots,A_s)$ of $\calS$ and indeterminates $\mathbf{x}_1,\dots,\mathbf{x}_s$,
the matrix $\mathbf{x}_1 A_1+\cdots+\mathbf{x}_s A_s$ of $\Mat_{m,n}\bigl(\K(\mathbf{x}_1,\dots,\mathbf{x}_s)\bigr)$
is called a \textbf{generic matrix} of $\calS$.

If we only assume that $A_1,\dots,A_s$ span $\calS$, then $\mathbf{x}_1 A_1+\cdots+\mathbf{x}_s A_s$
is called a \textbf{semi-generic matrix} of $\calS$.
\end{Def}

\noindent Note that the entries of $\mathbf{x}_1 A_1+\cdots+\mathbf{x}_s A_s$ are $1$-homogeneous polynomials in $\K[\mathbf{x}_1,\dots,\mathbf{x}_s]$.

\begin{lemme}[Generic rank lemma]\label{genericrank}
Let $\calS$ be a linear subspace of $\Mat_{m,n}(\K)$ with $\# \K>\urk(\calS)$, and $\mathbf{A}$ be a semi-generic matrix of $\calS$.
Then, $\urk(\calS)=\rk \mathbf{A}$.
\end{lemme}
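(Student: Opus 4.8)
The plan is to compare the rank of $\mathbf{A}$ over the rational function field $\K(\mathbf{x}_1,\dots,\mathbf{x}_s)$ with the ranks of the specializations of $\mathbf{A}$ at points of $\K^s$. First I would observe that $\rk\mathbf{A}$ equals the largest size $r$ of a square submatrix of $\mathbf{A}$ whose determinant is a nonzero element of $\K[\mathbf{x}_1,\dots,\mathbf{x}_s]$ (the entries of $\mathbf{A}$ being polynomials, all minors lie in the polynomial ring). On the one hand, for any specialization $(\mathbf{x}_1,\dots,\mathbf{x}_s)=(t_1,\dots,t_s)\in\K^s$, the corresponding matrix $t_1A_1+\cdots+t_sA_s$ lies in $\calS$, so all its minors of size $>\urk(\calS)$ vanish; hence every minor of $\mathbf{A}$ of size $>\urk(\calS)$ is a polynomial vanishing at every point of $\K^s$. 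On the other hand, writing $r:=\rk\mathbf{A}$, there is an $r\times r$ minor of $\mathbf{A}$ equal to a nonzero polynomial $P\in\K[\mathbf{x}_1,\dots,\mathbf{x}_s]$, and a specialization at which $P$ does not vanish will produce a matrix in $\calS$ of rank at least $r$, giving $r\le\urk(\calS)$.

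To make both halves work I need: (i) a nonzero polynomial over $\K$ that vanishes identically on $\K^s$ must have, in each variable, degree at least $\#\K$; and (ii) the minors of $\mathbf{A}$ of size $>\urk(\calS)$ have degree in each variable strictly less than $\#\K$, so that by (i) they must be the zero polynomial, which forces $\rk\mathbf{A}\le\urk(\calS)$, while a nonzero $r\times r$ minor (with $r=\rk\mathbf{A}$) likewise has small enough degree in each variable to admit a nonvanishing specialization, giving $\urk(\calS)\le\rk\mathbf{A}$. Point (i) is the standard fact that a polynomial in $s$ variables over a field, each of whose degrees is $<\#\K$, and which vanishes at all points of $\K^s$, is zero (proved by induction on $s$, using that a one-variable polynomial of degree $<\#\K$ with $\#\K$ roots is zero; this is where the cardinality hypothesis $\#\K>\urk(\calS)$ enters). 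For point (ii) I note that the entries of $\mathbf{A}$ are $1$-homogeneous, hence of degree at most $1$ in each variable, so a $k\times k$ minor has degree at most $k$ in each $\mathbf{x}_i$ — in fact its total degree is exactly $k$; thus for $k\le\urk(\calS)$ we have degree $\le\urk(\calS)<\#\K$ in each variable, exactly what is needed.

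Putting these together: any minor of $\mathbf{A}$ of size $k>\urk(\calS)$ that is not already forced to be zero by degree bounds — here I restrict to $k=\urk(\calS)+1$, whose minors have degree $\urk(\calS)+1$ in each variable, which may equal $\#\K$; to avoid this borderline issue I instead argue directly that $\rk\mathbf{A}\ge\urk(\calS)$ by picking any $M_0\in\calS$ with $\rk M_0=\urk(\calS)$, writing $M_0=t_1^0A_1+\cdots+t_s^0A_s$, and noting that an $\urk(\calS)\times\urk(\calS)$ minor of $M_0$ is nonzero, hence the corresponding minor of $\mathbf{A}$ is a nonzero polynomial, so $\rk\mathbf{A}\ge\urk(\calS)$. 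For the reverse inequality, let $r=\rk\mathbf{A}$ and pick an $r\times r$ minor $P$ of $\mathbf{A}$ that is a nonzero polynomial; its degree in each variable is at most $r$. If $r>\urk(\calS)$, then since $\urk(\calS)+1\le\#\K$ is not quite enough, I would instead observe: all $r\times r$ minors with $r=\urk(\calS)+1$ of every specialization vanish, so the polynomial $P$ vanishes on all of $\K^s$; its degree in each variable is $\le\urk(\calS)+1$. The main obstacle is exactly this off-by-one in the degree bound versus $\#\K$, and the clean way around it is to use the hypothesis $\#\K>\urk(\calS)$ together with the $1$-homogeneity: a nonzero homogeneous polynomial of total degree $d$ in $s$ variables with each partial degree $\le d$ cannot vanish on all of $\K^s$ once $\#\K>d-1$, i.e. $\#\K\ge d$, which holds here with $d=\urk(\calS)+1$ precisely when $\#\K>\urk(\calS)$. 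This sharper vanishing lemma for homogeneous polynomials (vanishing on $\K^s$ forces degree $\ge\#\K$ in some variable, with equality allowed only in the inhomogeneous case) is the technical heart; with it, the two inequalities close and yield $\urk(\calS)=\rk\mathbf{A}$.
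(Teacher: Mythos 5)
Your proof is correct and takes essentially the same route as the paper: the easy inequality $\urk(\calS)\le\rk\mathbf{A}$ by specialization, and the reverse by observing that each $(\urk(\calS)+1)$-minor of $\mathbf{A}$ is a homogeneous polynomial of degree $\urk(\calS)+1\le\#\K$ that vanishes on all of $\K^s$ and hence must be the zero polynomial — precisely the computation in the paper. One small caution: your parenthetical re-statement of the vanishing lemma in per-variable-degree form (``vanishing forces degree $\ge\#\K$ in some variable, with equality allowed only in the inhomogeneous case'') is not right — over $\F_q$ the homogeneous polynomial $\mathbf{x}^q\mathbf{y}-\mathbf{x}\mathbf{y}^q$ vanishes identically on $\F_q^2$ yet has degree exactly $q$ in each variable — but the total-degree version you state just before it, namely that a nonzero homogeneous polynomial of total degree $d\le\#\K$ cannot vanish on all of $\K^s$, is the correct statement and is what your argument actually uses.
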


\begin{proof}
Set $r:=\urk(\calS)$.
As specializing the indeterminates which constitute $\mathbf{A}$ yields all the possible matrices of $\calS$,
it is obvious, by looking at all the sub-determinants of $\mathbf{A}$, that $r \leq \rk \mathbf{A}$. \\
Let $I$ and $J$ be respective subsets of $\lcro 1,m\rcro$ and $\lcro 1,n\rcro$ with $\# I=\# J=r+1$.
For an $m\times n$ matrix $M$, we denote by $M(I\mid J)$ the $(r+1) \times (r+1)$ submatrix of $M$
obtained by selecting the rows indices in $I$ and the column indices in $J$.
Then, $\det\bigl(\mathbf{A}(I \mid J)\bigr)$ is an $(r+1)$-homogeneous polynomial with coefficients in $\K$.
By specializing the indeterminates $\mathbf{x}_1,\dots,\mathbf{x}_s$, we find the polynomial function
$M \in \calS \mapsto \det\bigl(M(I \mid J)\bigr)$, which vanishes everywhere on $\calS$
as $r+1>\urk(\calS)$. Since $r+1 \leq \#\K$, we deduce that $\det\bigl(\mathbf{A}(I \mid J)\bigr)=0$.
Varying $I$ and $J$ yields $\rk \mathbf{A} \leq r$.
\end{proof}

\begin{Def}
Given a linear subspace $\calS$ of $\Mat_{m,n}(\K)$ and a field extension $\L$ of $\K$, we define $\calS_\L$ as the $\L$-linear subspace of
$\Mat_{m,n}(\L)$ spanned by $\calS$.
\end{Def}

Any basis of $\calS$ as a $\K$-vector space is a basis of $\calS_\L$ as an $\L$-vector space.
In particular, the spaces $\calS$ and $\calS_\L$ have a common generic matrix.
Therefore, Lemma \ref{genericrank} yields:

\begin{lemme}[Field extension lemma]\label{fieldextension}
Let $\calS$ be a linear subspace of $\Mat_{m,n}(\K)$, with
$\# \K>\urk(\calS)$. Then, $\urk(\calS)=\urk(\calS_\L)$
for any field extension $\L$ of $\K$.
\end{lemme}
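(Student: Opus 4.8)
The plan is to deduce the statement from the generic rank lemma (Lemma \ref{genericrank}) together with the elementary fact that the rank of a matrix is unaffected by a field extension.

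First I would fix a basis $(A_1,\dots,A_s)$ of $\calS$ over $\K$. As noted just above, this is also a basis of $\calS_\L$ over $\L$, so the matrix $\mathbf{A}:=\mathbf{x}_1 A_1+\cdots+\mathbf{x}_s A_s$ is simultaneously a generic matrix of $\calS$ (with entries in $\K(\mathbf{x}_1,\dots,\mathbf{x}_s)$) and a generic matrix of $\calS_\L$ (with entries in $\L(\mathbf{x}_1,\dots,\mathbf{x}_s)$). Since $\#\K>\urk(\calS)$, Lemma \ref{genericrank} yields $\urk(\calS)=\rk \mathbf{A}$, the rank being computed over $\K(\mathbf{x}_1,\dots,\mathbf{x}_s)$.

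Next, because the rank of a matrix equals the largest size of a non-vanishing minor, and a minor of $\mathbf{A}$ is zero in $\K(\mathbf{x}_1,\dots,\mathbf{x}_s)$ if and only if it is zero in the overfield $\L(\mathbf{x}_1,\dots,\mathbf{x}_s)$, the rank of $\mathbf{A}$ over $\L(\mathbf{x}_1,\dots,\mathbf{x}_s)$ is again $\urk(\calS)$. Now every matrix of $\calS_\L$ is obtained from $\mathbf{A}$ by specializing the indeterminates to scalars of $\L$, and such a specialization can only lower the rank (a non-vanishing minor of a specialized matrix comes from a non-zero minor polynomial of $\mathbf{A}$). Hence every element of $\calS_\L$ has rank at most $\urk(\calS)$, that is $\urk(\calS_\L)\leq \urk(\calS)$. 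The reverse inequality is immediate since $\calS\subseteq \calS_\L$ and the rank of a matrix with entries in $\K$ is the same whether it is viewed over $\K$ or over $\L$; combining the two inequalities concludes.

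The only point requiring a little care — and the reason I route the argument through the generic matrix instead of simply invoking Lemma \ref{genericrank} for $\calS_\L$ directly — is that no cardinality hypothesis is available on $\L$: if $\K$ is finite then $\L$ may be finite too, and the generic rank lemma need not apply to $\calS_\L$ a priori. The field-extension invariance of matrix rank, applied to $\mathbf{A}$ over the inclusion $\K(\mathbf{x}_1,\dots,\mathbf{x}_s)\subseteq\L(\mathbf{x}_1,\dots,\mathbf{x}_s)$, is exactly what lets us bypass this obstacle; everything else is routine.
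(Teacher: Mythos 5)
Your proof is correct and follows the same route as the paper: take a common generic matrix $\mathbf{A}$ of $\calS$ and $\calS_\L$ and invoke Lemma~\ref{genericrank}. You are right to flag and handle the subtlety that the cardinality hypothesis $\#\L>\urk(\calS_\L)$ is not available a priori — the paper's proof leaves this implicit — and your detour through the invariance of $\rk\mathbf{A}$ under the extension $\K(\mathbf{x}_1,\dots,\mathbf{x}_s)\subseteq\L(\mathbf{x}_1,\dots,\mathbf{x}_s)$, together with the specialization inequality (which needs no cardinality assumption), fills that gap cleanly.
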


\subsection{The Flanders-Atkinson lemma}

\begin{lemme}[Flanders-Atkinson lemma]\label{Flanders}
Let $(m,n)\in (\N \setminus \{0\})^2$ and let $r \in \lcro 1,\min(m,n)\rcro$ be such that $\# \K > r$. \\
Set $J_r:=\begin{bmatrix}
I_r & [0]_{r \times (n-r)} \\
[0]_{(m-r) \times r} & [0]_{(m-r) \times (n-r)}
\end{bmatrix}$
and consider an arbitrary matrix
$M=\begin{bmatrix}
A & C \\
B & D
\end{bmatrix}$ of $\Mat_{m,n}(\K)$ with the same decomposition pattern. \\
Assume that $\urk(\Vect(J_r,M)) \leq r$.
Then,
$$D=0 \quad \text{and} \quad \forall k \in \N, \; BA^kC=0.$$
\end{lemme}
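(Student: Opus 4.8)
The plan is to exploit the rank hypothesis by considering, for each scalar $t \in \K$, the matrix $M + t J_r = \begin{bmatrix} A + tI_r & C \\ B & D\end{bmatrix}$, which lies in $\Vect(J_r,M)$ and hence has rank at most $r$. The first step is to get rid of $D$: I would specialize to the case where $A + tI_r$ is invertible, which happens for all but finitely many $t$ (at most $r$ of them), and since $\# \K > r$ there is at least one such $t$; for such a $t$, the Schur complement relation forces $D - B(A+tI_r)^{-1}C = 0$ if the rank is to stay at $r$. Actually it is cleaner to argue with the indeterminate directly: treat $t = \mathbf{x}$ as an indeterminate over $\K$, so $A + \mathbf{x} I_r$ is invertible over $\K(\mathbf{x})$ (its determinant is a nonzero polynomial, monic of degree $r$), and the rank-at-most-$r$ condition, which by the generic rank lemma (Lemma \ref{genericrank}) persists over $\K(\mathbf{x})$ since $\# \K > r = \urk$, gives via the Schur complement that
$$D - B(A + \mathbf{x} I_r)^{-1} C = 0 \quad \text{in } \Mat_{(m-r)\times(n-r)}\bigl(\K(\mathbf{x})\bigr).$$

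The second step is to expand $B(A + \mathbf{x} I_r)^{-1} C$ as a power series in $\mathbf{x}^{-1}$. For $\mathbf{x}$ an indeterminate one has, as a formal identity in $\K[[\mathbf{x}^{-1}]]$ (or more carefully over the ring $\K(\mathbf{x})$ after expanding $(I_r + \mathbf{x}^{-1}A)^{-1}$ as a geometric series),
$$(A + \mathbf{x} I_r)^{-1} = \mathbf{x}^{-1}(I_r + \mathbf{x}^{-1} A)^{-1} = \sum_{k=0}^{\infty} (-1)^k \mathbf{x}^{-(k+1)} A^k.$$
Substituting into the relation from Step 1 yields
$$D = B(A + \mathbf{x} I_r)^{-1} C = \sum_{k=0}^{\infty} (-1)^k \mathbf{x}^{-(k+1)} B A^k C.$$
The left-hand side is a constant matrix (independent of $\mathbf{x}$), so comparing coefficients of the powers $\mathbf{x}^{-1}, \mathbf{x}^{-2}, \dots$ on both sides forces $D = 0$ (the $\mathbf{x}^0$ coefficient on the right is $0$) and $B A^k C = 0$ for every $k \in \N$ (these are the coefficients of $\mathbf{x}^{-(k+1)}$).

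The main subtlety — the step I would be most careful with — is the passage to the formal Laurent-series expansion: one must justify that the identity $D = B(A + \mathbf{x}I_r)^{-1}C$, valid in the field $\K(\mathbf{x})$, can legitimately be expanded into a power series in $\mathbf{x}^{-1}$ and that coefficient comparison is valid. This is fine because $\K(\mathbf{x})$ embeds in the field of formal Laurent series $\K((\mathbf{x}^{-1}))$, and in that field $(A + \mathbf{x}I_r)^{-1}$ genuinely has the claimed expansion (each entry is a rational function of $\mathbf{x}$ vanishing at infinity, hence expandable); a matrix over $\K$ viewed in this series field has all negative-degree coefficients zero, which is exactly what pins down $D = 0$ and the infinite family $BA^kC = 0$. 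Everything else is a routine application of the invertibility of $A + \mathbf{x}I_r$ over $\K(\mathbf{x})$, the Schur complement formula for rank, and the generic rank lemma to transfer the rank bound from $\K$ to $\K(\mathbf{x})$.
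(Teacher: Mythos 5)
Your proof is correct and is essentially the paper's proof: both extend scalars to a field of formal Laurent series so that the rank bound persists (you use the generic/field-extension lemma with $\K(\mathbf{x})\hookrightarrow\K((\mathbf{x}^{-1}))$, the paper uses $\K((t))$), both observe the $(1,1)$-block is then invertible, and both extract $D$ and all $BA^kC$ by expanding the Schur complement as a power series and comparing coefficients. The only difference is the cosmetic change of variable $t\leftrightarrow\mathbf{x}^{-1}$ (the paper reduces $J_r-tM$ near $t=0$, you reduce $M+\mathbf{x}J_r$ near $\mathbf{x}=\infty$).
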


In the original result of Flanders \cite{Flanders}, only the conclusions $D=0$ and $BC=0$ were stated.
The more complete result is due to Atkinson \cite{AtkinsonPrim} and was later rediscovered by Fillmore, Laurie and Radjavi
\cite{Fillmoreetal} with stronger assumptions on the cardinality of the underlying field. Here is a new, simplified proof of this lemma.

\begin{proof}
Denote by $\L=\K((t))$ the quotient field of the power series ring over $\K$ with one indeterminate $t$.
Using the field extension lemma, we find that $\rk(J_r-tM) \leq r$.
Note that
$$J_r-tM=\begin{bmatrix}
I_r-tA & -tC \\
-tB & -tD
\end{bmatrix}$$
and that $I_r-tA$ is invertible, with $(I_r-tA)^{-1}=\underset{k=0}{\overset{+\infty}{\sum}}t^kA^k$. \\
Using Gaussian elimination on the block rows, we find that $J_r-tM$ is equivalent to
$$\begin{bmatrix}
I_r-tA & -tC \\
[0]_{(m-r) \times r} & -tD-(tB)(I_r-tA)^{-1}(tC)
\end{bmatrix}.$$
Since $\rk(J_r-tM)\leq r$ and $\rk(I_r-tA)=r$, it follows that
$$-tD-(tB)(I_r-tA)^{-1}(tC)=0,$$
whence
$$tD+\underset{k=0}{\overset{+\infty}{\sum}}t^{k+2}BA^kC=0.$$
This readily yields the claimed results.
\end{proof}

\subsection{The decomposition lemma}

\begin{lemme}\label{decompositionlemma}
Let $\calS$ be a linear subspace of $\Mat_{m,n}(\K)$ and assume that there exists a pair
$(r,s)$, with $1 \leq r\leq m$ and $1 \leq s\leq n$, such that
every matrix $M$ of $\calS$ splits up as
$$M=\begin{bmatrix}
[?]_{r \times s} & C(M) \\
B(M) & [0]_{(m-r) \times (n-s)}
\end{bmatrix}.$$
Consider a semi-generic matrix of $\calS$ of the form
$$\mathbf{M}=\begin{bmatrix}
[?]_{r \times s} & \mathbf{C} \\
\mathbf{B} & [0]_{(m-r) \times (n-s)}
\end{bmatrix}.$$
Assume that $\# \K > \urk \calS$.
Then, $\rk \mathbf{B}=\urk B(\calS)$, $\rk \mathbf{C}=\urk C(\calS)$ and
$$\urk(\calS) \geq \urk(B(\calS))+\urk(C(\calS)).$$
\end{lemme}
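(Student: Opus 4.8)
The plan is to handle the two rank identities and the inequality separately: the identities fall straight out of the generic rank lemma, and the inequality reduces to a block-determinant computation performed over the rational function field $\L:=\K(\mathbf{x}_1,\dots,\mathbf{x}_s)$ attached to the chosen semi-generic matrix $\mathbf{M}=\sum_{i=1}^s \mathbf{x}_i A_i$, where $(A_1,\dots,A_s)$ spans $\calS$.

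First I would note that, since $B$ and $C$ are $\K$-linear operations on matrices, the families $\bigl(B(A_1),\dots,B(A_s)\bigr)$ and $\bigl(C(A_1),\dots,C(A_s)\bigr)$ span $B(\calS)$ and $C(\calS)$, so that $\mathbf{B}=\sum_i \mathbf{x}_i B(A_i)$ and $\mathbf{C}=\sum_i \mathbf{x}_i C(A_i)$ are semi-generic matrices of $B(\calS)$ and $C(\calS)$, respectively. As $B(M)$ and $C(M)$ are submatrices of $M$, we get $\urk B(\calS)\leq \urk\calS<\#\K$ and $\urk C(\calS)\leq \urk\calS<\#\K$, so Lemma~\ref{genericrank} applies to each of $B(\calS)$, $C(\calS)$ and $\calS$ and yields $\rk\mathbf{B}=\urk B(\calS)$, $\rk\mathbf{C}=\urk C(\calS)$ and $\rk\mathbf{M}=\urk\calS$. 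This settles the first two assertions and reduces the inequality to the claim $\rk\mathbf{M}\geq \rk\mathbf{B}+\rk\mathbf{C}$, a statement of plain linear algebra over $\L$.

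To prove that claim I would set $b:=\rk\mathbf{B}$ and $c:=\rk\mathbf{C}$ and exhibit an invertible $(b+c)\times(b+c)$ submatrix of $\mathbf{M}$. Choose $b$ rows in $\lcro r+1,m\rcro$ and $b$ columns in $\lcro 1,s\rcro$ witnessing $\rk\mathbf{B}=b$, giving an invertible submatrix $\mathbf{B}'$ of $\mathbf{B}$; similarly choose $c$ rows in $\lcro 1,r\rcro$ and $c$ columns in $\lcro s+1,n\rcro$ witnessing $\rk\mathbf{C}=c$, giving an invertible submatrix $\mathbf{C}'$ of $\mathbf{C}$. The two row-index sets are disjoint and so are the two column-index sets, so retaining exactly these rows and columns of $\mathbf{M}$ produces a $(b+c)\times(b+c)$ submatrix which, after reordering, reads $\begin{bmatrix}[?]_{c\times b} & \mathbf{C}' \\ \mathbf{B}' & [0]_{b\times c}\end{bmatrix}$: the lower-right block is forced to be zero since its rows lie in $\lcro r+1,m\rcro$ and its columns in $\lcro s+1,n\rcro$, so it sits inside the zero block of $\mathbf{M}$. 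A swap of the two block columns makes this matrix block-triangular with diagonal blocks $\mathbf{C}'$ and $\mathbf{B}'$, hence its determinant is $\pm\det\mathbf{C}'\,\det\mathbf{B}'\neq 0$. Therefore $\rk\mathbf{M}\geq b+c$, which gives $\urk\calS\geq \urk B(\calS)+\urk C(\calS)$.

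I do not expect a real obstacle here; the one point I would be careful to spell out is that the $b$ rows of $\mathbf{B}'$ must be drawn from the lower band of rows of $\mathbf{M}$ and the $c$ columns of $\mathbf{C}'$ from the right band of columns, so that the off-diagonal block of the extracted submatrix that overlaps the zero block of $\mathbf{M}$ is genuinely forced to vanish; without this, the block-triangular reduction would fail. Everything else is the generic rank lemma together with routine submatrix bookkeeping.
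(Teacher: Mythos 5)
Your proposal is correct and follows essentially the same route as the paper: observe that $\mathbf{B}$ and $\mathbf{C}$ are semi-generic matrices of $B(\calS)$ and $C(\calS)$, invoke the generic rank lemma for the two identities and for $\rk\mathbf{M}=\urk\calS$, and then use $\rk\mathbf{M}\geq\rk\mathbf{B}+\rk\mathbf{C}$. The only difference is that the paper treats the last inequality as immediate from the block shape, whereas you carry out the submatrix/block-triangular determinant argument explicitly — a welcome clarification, not a deviation.
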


\begin{proof}
It is obvious that $\mathbf{B}$ and $\mathbf{C}$ are semi-generic matrices, respectively, of $B(\calS)$ and $C(\calS)$;
this yields the first result.
The last result comes from writing
$$\urk \calS=\rk \mathbf{M} \geq \rk \mathbf{B}+\rk \mathbf{C}=\urk B(\calS)+\urk C(\calS).$$
\end{proof}

\section{The minimal rank problem in an LLD operator space}\label{smallranksection}

\subsection{Basic theorems}

\begin{Def}
Let $\calS$ be a finite-dimensional subspace of $\calL(U,V)$.
An operator $\varphi \in \widehat{\calS}$ is called \textbf{rank-optimal} when it has
the greatest rank among the elements of $\widehat{\calS}$.
\end{Def}

Here is the basic method for finding a non-zero operator of small rank in an LLD space:
we take a rank-optimal operator $\varphi \in \widehat{\calS}$ and then we choose a non-zero operator $f \in \Ker \varphi$.
As we are about to see, $f$ has always finite rank. For large fields, the following lemma, which
is an easy consequence of the Flanders-Atkinson lemma, yields a powerful (and sharp) result:

\begin{lemme}\label{basiclemma}
Let $\calS$ be a finite-dimensional subspace of $\calL(U,V)$, and $\varphi$ be a rank-optimal element in
$\widehat{\calS}$. Assume that $\# \K > \rk \varphi$. Then, $\im f \subset \im \varphi$ for all $f \in \Ker \varphi$.
\end{lemme}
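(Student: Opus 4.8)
The plan is to reduce the statement to the Flanders--Atkinson lemma (Lemma \ref{Flanders}) applied to a well-chosen pencil of two matrices. Write $\varphi = \widehat{x}$ for some $x \in U$, set $n := \dim \calS$ and $r := \rk \varphi$. We may assume $r \geq 1$, for if $r = 0$ then rank-optimality forces $\widehat{u} = 0$ for every $u \in U$, i.e.\ $\calS = \{0\}$, and there is nothing to prove. Fix an arbitrary vector $z \in U$ and put $\psi := \widehat{z} \in \widehat{\calS}$. The crucial observation is that $\widehat{\cdot}$ is linear, so $a \varphi + b\psi = \widehat{ax+bz}$ for all $(a,b) \in \K^2$; hence every element of $\Vect(\varphi,\psi)$ belongs to $\widehat{\calS}$ and, by rank-optimality of $\varphi$, has rank at most $r$.

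Next I would pass to a matrix picture over a finite-dimensional target. The images $\im \varphi = \calS x$ and $\im \psi = \calS z$ are finite-dimensional, being images of linear maps out of the finite-dimensional space $\calS$; set $V_0 := \im \varphi + \im \psi$, a finite-dimensional subspace of $V$. Choose a basis $(e_1,\dots,e_n)$ of $\calS$ whose last $n-r$ vectors form a basis of $\Ker \varphi$; then $(\varphi(e_1),\dots,\varphi(e_r))$ is a basis of $\im \varphi$, which I extend to a basis $(\varphi(e_1),\dots,\varphi(e_r),v_{r+1},\dots,v_d)$ of $V_0$. With respect to these bases, $\varphi$ is represented by the matrix $J_r$, while $\psi$ is represented by some $M = \begin{bmatrix} A & C \\ B & D \end{bmatrix}$ with the matching block sizes. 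Since $\Vect(J_r,M)$ represents $\Vect(\varphi,\psi)$, we get $\urk\bigl(\Vect(J_r,M)\bigr) \leq r$, and as $1 \leq r \leq \min(\dim V_0, n)$ and $\# \K > r = \rk \varphi$, Lemma \ref{Flanders} applies and yields $D = 0$ (the relations $BA^kC = 0$ are not needed here).

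It remains to unwind $D = 0$. This says precisely that, for each $j \in \lcro r+1, n\rcro$, the vector $e_j(z)$ has zero coordinates along $v_{r+1},\dots,v_d$, i.e.\ $e_j(z) \in \Vect(\varphi(e_1),\dots,\varphi(e_r)) = \im \varphi$. Since $(e_{r+1},\dots,e_n)$ spans $\Ker \varphi$ and $\im \varphi$ is a subspace, it follows that $f(z) \in \im \varphi$ for every $f \in \Ker \varphi$. As $z \in U$ was arbitrary, we conclude that $\im f \subset \im \varphi$ for every $f \in \Ker \varphi$.

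I do not anticipate a real obstacle here; the one point requiring care is the choice of bases, which must be arranged so that $\im \varphi$ is exactly the span of the first $r$ basis vectors of $V_0$ and $\Ker \varphi$ is exactly the span of the last $n-r$ basis vectors of $\calS$ — this is what makes the block $D$ of the matrix of $\psi$ record the components, transverse to $\im \varphi$, of the outputs of the operators in $\Ker \varphi$. A secondary subtlety worth flagging is that $V$ itself may be infinite-dimensional, which is why the matrix reduction is carried out inside $V_0$ separately for each $z$.
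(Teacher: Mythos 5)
Your proof is correct and rests on the same key ingredient as the paper's: the Flanders--Atkinson lemma applied with $\varphi$ put into the form $J_r$. The only real difference is bookkeeping: the paper first settles the case $\dim V < \infty$ by applying Flanders--Atkinson to the whole matrix space representing $\widehat{\calS}$, then handles infinite-dimensional $V$ by a separate contradiction argument using an idempotent $\pi$ with range $\im\varphi\oplus\K y$; you instead fix $z\in U$, restrict to the finite-dimensional $V_0=\im\varphi+\im\psi$ with $\psi=\widehat{z}$, and apply Flanders--Atkinson to the two-matrix pencil $\Vect(J_r,M)$. Since $z$ is arbitrary, this handles both cases at once, which is marginally cleaner than the paper's two-step reduction, but it is not a different route.

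Two small points worth double-checking, both of which you in fact cover. First, $V_0$ always has dimension at least $r$ (it contains $\im\varphi$), so $r\leq\min(\dim V_0,n)$ and the block decomposition makes sense; when $\im\psi\subset\im\varphi$ the blocks $B,D$ are empty and the conclusion is vacuous but true. Second, the containment $\Vect(\varphi,\psi)\subset\widehat{\calS}$ (via linearity of $x\mapsto\widehat{x}$) is exactly what guarantees $\urk\bigl(\Vect(J_r,M)\bigr)\leq r$ from rank-optimality of $\varphi$, so the hypothesis of Lemma~\ref{Flanders} is genuinely satisfied.
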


\noindent As a straightforward consequence, we have:

\begin{cor}[Meshulam and \v Semrl]\label{basiccor}
Let $\calS$ be an $n$-dimensional $c$-LLD subspace of $\calL(U,V)$. Assume that $\# \K>n-c$.
Then, there are respective subspaces $\calT$ and $V_0$ of $\calS$ and $U$
such that $\dim \calT \geq c$, $\dim V_0 \leq n-c$ and $\im f \subset V_0$ for every $f \in \calT$.
\end{cor}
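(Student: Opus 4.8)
The plan is to reduce immediately to the reduced situation and then to apply Lemma~\ref{basiclemma} to a rank-optimal operator $\varphi$ in $\widehat{\calS}$. First I would replace $\calS$ by its attached reduced space $\overline{\calS}$, which is $c$-LLD, has the same dimension $n$, and is represented (after choosing bases of the finite-dimensional source and target) by a reduced subspace of matrices; crucially, a subspace $V_0$ of the target of $\overline{\calS}$ with $\im\overline{f}\subset V_0$ for all $\overline{f}$ in a subspace $\overline{\calT}$ pulls back to the corresponding data for $\calS$ itself, so no generality is lost. Note here that if $U$ or $V$ is infinite-dimensional one first passes to a suitable finite-dimensional reduction; since $\calS$ is finite-dimensional and $c$-LLD, the essential range has finite dimension and the kernel has finite codimension, so $\overline{\calS}$ lives between finite-dimensional spaces and matrix language is available.

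Next, since $\calS$ is $n$-dimensional and $c$-LLD, the dual reformulation of Section~\ref{dualityargument} says that $\widehat{\calS}$ is $c$-defective, i.e.\ every $\varphi\in\widehat{\calS}$ satisfies $\rk\varphi\leq n-c$. Pick a rank-optimal $\varphi\in\widehat{\calS}$ and set $r:=\rk\varphi\leq n-c$. By the cardinality hypothesis $\#\K>n-c\geq r$, so Lemma~\ref{basiclemma} applies and gives $\im f\subset\im\varphi$ for every $f\in\Ker\varphi$. Now take $\calT:=\Ker\varphi$ as a subspace of $\calS$ (identifying $\calS$ with its image under the adjoint correspondence, which is an isomorphism) and $V_0:=\im\varphi\subset V$. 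Then $\dim\calT=\dim\widehat{\calS}-\rk\varphi=n-r\geq c$ since $r\leq n-c$, and $\dim V_0=r\leq n-c$. The containment $\im f\subset V_0$ for all $f\in\calT$ is exactly what Lemma~\ref{basiclemma} delivered, so all three requirements hold.

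The only genuine subtlety is bookkeeping: $\widehat{\calS}$ is a space of operators $\calS\to V$, whereas $\calT$ is supposed to be a subspace of $\calS$, so one must be careful that "$f\in\Ker\varphi$'' is interpreted correctly. Here $\varphi\in\widehat{\calS}$ is the evaluation-at-$x$ map $f\mapsto f(x)$ for some $x\in U$, its kernel $\Ker\varphi=\{f\in\calS:\ f(x)=0\}$ is literally a subspace of $\calS$, and Lemma~\ref{basiclemma} asserts $\im f\subset\im\varphi$ for each such $f$, where $\im\varphi=\calS x\subset V$. Thus $\calT=\{f\in\calS:\ f(x)=0\}$ and $V_0=\calS x$ are the desired subspaces; the dimension count $\dim\calT\geq c$ is then just the $c$-defectiveness of $\widehat{\calS}$ applied to $\varphi$, and $\dim V_0\leq n-c$ follows from rank-optimality together with the same inequality. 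I expect the main obstacle to be purely expository — namely stating the passage through $\overline{\calS}$ cleanly enough that the finite-dimensionality needed for matrix arguments is visibly in force — rather than any substantive difficulty, since the arithmetic is immediate once Lemma~\ref{basiclemma} is invoked.
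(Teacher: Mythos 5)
Your core argument is exactly what the paper intends as its ``straightforward consequence'': pick a rank-optimal $\varphi\in\widehat{\calS}$, observe that $c$-defectiveness forces $r:=\rk\varphi\leq n-c$, invoke Lemma~\ref{basiclemma} (permissible since $\#\K>n-c\geq r$), and read off $\calT:=\Ker\varphi$ of dimension $n-r\geq c$ and $V_0:=\im\varphi$ of dimension $r\leq n-c$ --- correctly noting in passing that the paper's ``subspace of $U$'' is a slip for ``subspace of $V$''. That is the whole proof.

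Your preliminary reduction paragraph, however, is both unnecessary and wrong where it asserts that for a finite-dimensional $c$-LLD space ``the essential range has finite dimension and the kernel has finite codimension.'' Neither claim holds in general. Take $U$ infinite-dimensional, $W=\K^{n-2}$, $V=U\oplus W$, a surjection $p:U\twoheadrightarrow W$, and endomorphisms $A_1,\dots,A_{n-1}$ of $W$ spanning an $(n-1)$-dimensional subspace of $\calL(W)$; set $f_0:x\mapsto (x,0)$ and $f_i:x\mapsto (0,A_i(p(x)))$. Then $\calS:=\Vect(f_0,\dots,f_{n-1})$ is an $n$-dimensional LLD space (any $n-1$ vectors in $\K^{n-2}$ are dependent), yet $f_0\in\calS$ is injective, so $\bigcap_{f\in\calS}\Ker f=\{0\}$ has infinite codimension in $U$ and $\sum_{f\in\calS}\im f\supset U\times\{0\}$ is infinite-dimensional. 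The reason this error does no damage is that Lemma~\ref{basiclemma} is already stated and proved for arbitrary $U$ and $V$ --- its own proof performs the cut-down to a finite-dimensional target via an idempotent --- so the detour through $\overline{\calS}$ and matrices can simply be deleted and your remaining two paragraphs stand on their own.
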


\noindent The case $c=1$ in this corollary is the ``basic theorem" on LLD operator spaces:

\begin{theo}[Aupetit, Bre\v sar and \v Semrl]\label{basictheo}
Let $\calS$ be an $n$-dimensional LLD operator space.
Assume that $\# \K \geq n$.
Then, $\calS$ contains a non-zero operator $f$ such that $\rk f<n$.
\end{theo}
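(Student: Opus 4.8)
The plan is to obtain the statement as the special case $c=1$ of Corollary \ref{basiccor}. First I would observe that $\calS$ being LLD means precisely that, for every $x \in U$, the subspace $\{f \in \calS : f(x)=0\}$ is non-zero, hence of dimension at least $1$; in other words $\calS$ is $1$-LLD. The hypothesis $\#\K \geq n$ is exactly the inequality $\#\K > n-1 = n-c$ demanded by Corollary \ref{basiccor}. Applying that corollary with $c=1$ then produces a subspace $\calT$ of $\calS$ with $\dim \calT \geq 1$ together with a subspace $V_0$ of $V$ with $\dim V_0 \leq n-1$ such that $\im f \subset V_0$ for every $f \in \calT$. Choosing any non-zero $f \in \calT$, we conclude that $f$ has finite rank and
$$\rk f = \dim \im f \leq \dim V_0 \leq n-1 < n,$$
which is what is wanted.

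For completeness I would recall where the actual content sits, namely in the chain Flanders--Atkinson $\Rightarrow$ Lemma \ref{basiclemma} $\Rightarrow$ Corollary \ref{basiccor}. Passing to the dual picture of Section \ref{dualityargument}, LLD-ness of $\calS$ says that $\widehat{\calS}\subset \calL(\calS,V)$ is defective, so $\rk \varphi \leq n-1$ for every $\varphi \in \widehat{\calS}$. One picks a rank-optimal $\varphi \in \widehat{\calS}$, say $\varphi : f \mapsto f(x_0)$, and sets $r := \rk \varphi \leq n-1$; then $\Ker \varphi = \{f \in \calS : f(x_0)=0\}$ has dimension $n-r \geq 1$ and $\im \varphi = \calS x_0$ has dimension $r$. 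Since $\#\K \geq n > r$, Lemma \ref{basiclemma} applies and gives $\im f \subset \im \varphi$ for all $f \in \Ker \varphi$; taking $\calT := \Ker \varphi$ and $V_0 := \im \varphi$ completes the reduction. Inside Lemma \ref{basiclemma} itself one writes $\varphi$ in bases adapted to decompositions $V = \im \varphi \oplus (\text{complement})$ and $\calS = \Ker \varphi \oplus (\text{complement})$, so that $\varphi$ becomes a matrix of the block shape $J_r$; rank-optimality of $\varphi$ forces the pencil spanned by $J_r$ and the matrix of any $f \in \calS$ to have upper-rank at most $r$, and the Flanders--Atkinson lemma (Lemma \ref{Flanders}) then annihilates the bottom-right block, which is exactly the inclusion $\im f \subset \im \varphi$ for $f \in \Ker \varphi$.

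Accordingly, there is essentially no obstacle once Corollary \ref{basiccor} is in hand; the only point requiring care is the cardinality bookkeeping. The assumption $\#\K \geq n$ must survive all the way down to the inequality $\#\K > r = \rk \varphi$ needed to invoke Lemma \ref{basiclemma} (and, beneath it, the generic-rank and field-extension lemmas used to prove Flanders--Atkinson), and this works only because defectiveness of $\widehat{\calS}$ supplies the a priori bound $r \leq n-1$. Without such a bound, or over a field that is too small, the block-elimination step collapses, which is precisely why the theorem carries the hypothesis $\#\K \geq n$ rather than being stated unconditionally.
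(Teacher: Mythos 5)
Your proof is correct and is exactly the route the paper takes: the paper explicitly presents Theorem \ref{basictheo} as the case $c=1$ of Corollary \ref{basiccor}, which it in turn derives from Lemma \ref{basiclemma} and the Flanders--Atkinson lemma, just as you describe. The only discrepancy is a harmless typo in the paper's statement of Corollary \ref{basiccor} (it says $V_0 \subset U$ where it must mean $V_0 \subset V$), which you have silently and correctly repaired.
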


\begin{proof}[Proof of Lemma \ref{basiclemma}]
We use the same basic ideas as Meshulam and \v Semrl, the only difference being that
we rely explicitly on the Flanders-Atkinson lemma. \\
Assume first that $V$ is finite-dimensional, and set $r:=\rk \varphi$ and $m:=\dim V$.
Bases $\bfB$ and $\bfC$ of $\calS$ and $V$ may then be chosen so that
$$\Mat_{\bfB,\bfC}(\varphi)=J_r:=\begin{bmatrix}
I_r & [0]_{r \times (n-r)} \\
[0]_{(m-r) \times r} & [0]_{(m-r) \times (n-r)}
\end{bmatrix}.$$
Denote by $\calM$ the linear subspace of $\Mat_{m,n}(\K)$ obtained by representing the elements of $\widehat{\calS}$
in the bases $\bfB$ and $\bfC$. Then, $\urk(\calM)=r$ and hence the Flanders-Atkinson lemma shows that
every matrix of $\calM$ has the form $$\begin{bmatrix}
[?]_{r \times r} & [?]_{r \times (n-r)} \\
[?]_{(m-r) \times r} & [0]_{(m-r) \times (n-r)}
\end{bmatrix}.$$
The first $r$ vectors of $\bfC$ span $\im \varphi$ and the last $n-r$ ones of $\bfB$ span $\Ker \varphi$.
Therefore $\im f \subset \im \varphi$ for every $f \in \Ker \varphi$.

Let us now consider the general case. Assume that some $f \in \Ker \varphi$ does not satisfy
$\im f \subset \im \varphi$. Let us choose $y \in \im f \setminus \im \varphi$, and set
$V_0:=\im \varphi \oplus \K y$. Choose an idempotent $\pi \in \calL(V)$ with image $V_0$.
As $\im \varphi \subset V_0$, we have $\rk(\pi \circ \varphi)=\rk \varphi$ and hence $\pi \circ \varphi$ is rank-optimal in
$\pi \widehat{\calS}$. Representing the operators $g \in \calS \mapsto \pi(g(x)) \in V_0$, for $x \in U$,
by matrices in the same way as above, we find that $\pi(f(x)) \in \im (\pi \circ \varphi)=\im \varphi$ for all
$x \in U$, contradicting the fact that $y \not\in \im \varphi$.
\end{proof}

Note that the same method yields a useful result on defective spaces of matrices:

\begin{prop}\label{rangminmatriciel}
Let $\calM$ be a linear subspace of $\Mat_{m,n}(\K)$ with $\urk \calM<n$.
Assume that $\# \K> \urk \calM$.
Then, there exists a non-zero vector $x \in \K^n$ such that
$\dim \calM x \leq \urk \calM$.
\end{prop}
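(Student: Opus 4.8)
The plan is to replay, in the matrix setting, the argument used for Lemma~\ref{basiclemma}: normalize a rank-optimal matrix of $\calM$ to the canonical form $J_r$, invoke the Flanders-Atkinson lemma to constrain the \emph{whole} space, and then read off the desired vector from the resulting block shape.

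Set $r:=\urk\calM$. Since $\urk\calM<n$ forces $n\geq 1$, there is a non-zero vector in $\K^n$; if $r=0$ then $\calM=\{0\}$ and any such vector works, so I may assume $r\geq 1$. First I would note that the conclusion is invariant under equivalence: if $P\in\GL_m(\K)$ and $Q\in\GL_n(\K)$, then $(P\calM Q)x=P\,\calM(Qx)$, hence $\dim(P\calM Q)x=\dim\calM(Qx)$, and $x\mapsto Qx$ permutes the non-zero vectors of $\K^n$. Thus, choosing $N\in\calM$ with $\rk N=r$ and invertible matrices $P,Q$ with $PNQ=J_r$, I may replace $\calM$ by $P\calM Q$ and thereby assume $J_r\in\calM$.

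Next, take an arbitrary $M\in\calM$ and write $M=\begin{bmatrix}A & C\\ B & D\end{bmatrix}$ with the block pattern of $J_r$. The plane $\Vect(J_r,M)$ is contained in $\calM$, so $\urk\bigl(\Vect(J_r,M)\bigr)\leq\urk\calM=r$; since $1\leq r\leq\min(m,n)$ (indeed $r\leq m$ because $r$ is a rank, and $r<n$) and $\#\K>r$, Lemma~\ref{Flanders} applies and yields in particular $D=0$. Finally, as $r<n$ the vector $x:=e_{r+1}$ of $\K^n$ is non-zero, and for every $M=\begin{bmatrix}A & C\\ B & 0\end{bmatrix}$ in $\calM$ the last $m-r$ coordinates of $Mx$ vanish. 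Hence $\calM x$ lies in the $r$-dimensional subspace of $\K^m$ formed by the vectors whose last $m-r$ coordinates are zero, so $\dim\calM x\leq r=\urk\calM$, as wanted.

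I do not expect a genuine obstacle here: the whole content is the recognition that this is the same ``rank-optimal element plus Flanders-Atkinson'' mechanism as in Lemma~\ref{basiclemma}. The only points that call for a little care are checking that the reduction by equivalence is harmless (so that assuming $J_r\in\calM$ costs nothing) and disposing of the trivial case $r=0$.
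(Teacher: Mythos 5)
Your proof is correct and matches what the paper intends: the remark immediately before Proposition \ref{rangminmatriciel} says "the same method yields", referring to the proof of Lemma \ref{basiclemma}, and your argument is precisely that method transported to the matrix setting — normalize a rank-optimal matrix to $J_r$, invoke the Flanders-Atkinson lemma to force $D=0$ in every $M\in\calM$, and read off $\dim\calM e_{r+1}\leq r$ from the resulting block pattern. The preliminary checks ($r=0$, invariance under equivalence, $r\leq\min(m,n)$, existence of $e_{r+1}$) are all handled correctly.
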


For small finite fields, Lemma \ref{basiclemma} fails: assume indeed that $\K$ has cardinality $q \leq r$,
let $n>r$, and consider the linear subspace of $\Mat_n(\K)$ spanned by the matrices
$$A=\begin{bmatrix}
I_r & [0] \\
[0] & [0]
\end{bmatrix} \quad \text{and} \quad B=\begin{bmatrix}
D & [0] \\
[0] & D'
\end{bmatrix},$$
where $D$ is an $r \times r$ diagonal matrix in which all the elements of $\K$ show up at least once on the diagonal,
and $D'$ is the $(n-r) \times (n-r)$ diagonal matrix $\Diag(1,0,\dots,0)$.
One checks that every linear combination of $A$ and $B$ has rank less than $r+1$.
However, $A$ is rank-optimal in $\Vect(A,B)$ and the $(r+1)$-th vector $e_{r+1}$ of the canonical basis of $\K^n$
belongs to $\Ker A$ although $Be_{r+1} \not\in \im A$.

\vskip 3mm
For finite fields, the known results are weaker, but nevertheless interesting.
Meshulam and \v Semrl \cite{MeshulamSemrlPJM} proved that
every $n$-dimensional $c$-LLD operator space contains a non-zero element $f$ with $\rk f \leq n-c$.
We shall use their counting method to improve the result a little bit:

\begin{prop}\label{finitefieldcase}
Let $\calS$ be an $n$-dimensional linear subspace of $\calL(U,V)$, and let $c \in \lcro 1,n-1\rcro$.
Assume that $\calS$ is $c$-LLD and that $\K$ is a finite field with $q$ elements.
Then, at least $q^c$ elements of $\calS$ have rank less than or equal to $n-c$.
\end{prop}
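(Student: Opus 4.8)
The plan is to adapt the counting argument of Meshulam and \v Semrl (from \cite{MeshulamSemrlPJM}) while keeping track of a slightly sharper bound. First I would reduce to the finite-dimensional case: since $\calS$ is $c$-LLD, for each $x\in U$ the subspace $\{f\in\calS:f(x)=0\}$ has dimension at least $c$; but this is a condition that only involves the finitely many values $f(x)$ as $f$ ranges over a basis of $\calS$, so after replacing $V$ by the essential range of $\calS$ and noting that everything of interest is captured on a finite-dimensional quotient of $U$, I may assume $U$ and $V$ are finite-dimensional and work with the dual picture $\widehat{\calS}\subset\calL(\calS,V)$, represented by a reduced (or at least convenient) space of $m\times n$ matrices where $n=\dim\calS$. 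In this picture the $c$-LLD hypothesis says precisely that every $\varphi\in\widehat{\calS}$ has $\rk\varphi\le n-c$, equivalently $\dim\Ker\varphi\ge c$.

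The core of the argument is a double count of the set
$$E:=\bigl\{(x,f)\in U\times\calS : x\neq 0,\ f\neq 0,\ f(x)=0\bigr\},$$
or rather its projectivized analogue, to avoid overcounting scalar multiples. Counting over $x$ first: for each nonzero $x$ (equivalently, for each $\varphi_x\in\widehat{\calS}$ with $\varphi_x\neq 0$, i.e.\ each point of $U$ outside the kernel of $\calS$, which is $\{0\}$ after reduction) the fiber $\{f\in\calS:f(x)=0\}=\Ker\varphi_x$ has dimension at least $c$, hence contains at least $q^{c}$ elements, $q^{c}-1$ of them nonzero. Counting over $f$ first: for a fixed nonzero $f\in\calS$ of rank $r=\rk f$, the set $\{x\in U:f(x)=0\}=\Ker f$ has codimension $r$ in $U$. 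The key numerical inequality to extract is that, after projectivizing in both coordinates and comparing the two counts, the number of $f$ with $\rk f$ \emph{large} (i.e.\ $\rk f>n-c$, which would force $\Ker f$ to be too small to contribute enough pairs) is too small to exhaust all the pairs guaranteed by the first count; hence the complementary set $\{f\in\calS:\rk f\le n-c\}$ must itself have at least $q^{c}$ elements. Concretely I would phrase this as: the number of pairs is at least $(q^{N}-1)(q^{c}-1)$ where $q^{N}$ counts the relevant nonzero $x$'s, while each nonzero $f$ with $\rk f=r$ contributes exactly $(q^{N-r}-1)$ values of nonzero $x$; bounding $q^{N-r}-1\le q^{N-1}-1$ for every $f$ with $r\ge 1$ is too crude, so one must instead split off the low-rank $f$'s and estimate the high-rank ones by $q^{N-1}-1$ or better, then solve the resulting inequality for the count of low-rank $f$'s.

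The main obstacle I anticipate is getting the bookkeeping in the double count tight enough to land exactly at $q^{c}$ rather than something weaker like $q^{c}-1$ or $(q^{c}-1)/(q-1)$; this is where the improvement over \cite{MeshulamSemrlPJM} presumably lies, and it will require being careful about whether one counts affine points, projective points, or elements of the vector spaces, and about the role of the zero operator and the zero vector. A secondary point to handle cleanly is the reduction to finite dimension: I would want to argue that if $\calS$ is $c$-LLD then so is the reduced space $\overline{\calS}$ attached to it (stated in the excerpt just after Definition \ref{defreduced}), that $\overline{\calS}$ lives between finite-dimensional spaces once one also passes to a finite-dimensional subspace of $U/U_0$ spanning $\widehat{\calS}$, and that the rank of each $f$ is preserved, so that counting low-rank elements of $\overline{\calS}$ suffices. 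With those two ingredients — the reduction and the sharpened double count — the proposition follows.
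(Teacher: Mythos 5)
Your overall strategy is the same as the paper's: double-count an incidence set $\{(f,x):f(x)=0\}$ with the two projections and compare the estimates. But the part you flag as "the main obstacle I anticipate" is exactly where the proof lives, and the proposal does not resolve it. The trick the paper uses is to \emph{include} the zero operator and the zero vector in the count, i.e.\ to work with $A=\{(f,x)\in\calS\times U:f(x)=0\}$ rather than your $E$ of nonzero pairs. This makes each fiber over $f$ have cardinality exactly $q^{p-\rk f}$ (where $p=\dim U$), with no $-1$ subtractions and no projectivization; the lower bound becomes $\#A\geq q^n+q^c(q^p-1)$ and the upper bound becomes $\#A\leq q^p+(N-1)q^{p-1}+(q^n-N)q^{p+c-n-1}$, where $N$ is the number of operators of rank $\leq n-c$. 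Comparing and factoring gives $(q^{p-1}-q^{p+c-n-1})(N+1-q^c)\geq\text{(nonnegative)}$, hence $N\geq q^c$. Your version, working with nonzero pairs (or projectivizing), introduces systematic $-1$ corrections and a division by $q-1$ that you would have to track through the inequality; it is not clear that the slack survives, and you explicitly say you have not checked. Until the algebra is carried out with a specific choice of count (and the paper's choice turns out to be the clean one), there is no proof.

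A second, smaller gap is the reduction to finite dimension. You propose replacing $V$ by the essential range and passing to a finite-dimensional quotient of $U$; but the essential range can perfectly well be infinite-dimensional, and nothing in the counting argument needs $V$ to be finite-dimensional at all. The only finiteness that is used is of $U$ (so that $\#\Ker f=q^{p-\rk f}$ makes sense). The paper assumes $U$ finite-dimensional first and then handles the general case by contradiction: if more than $q^n-q^c$ operators $f_1,\dots,f_s$ had rank $>n-c$, one builds a \emph{finite-dimensional subspace} $U_0\subset U$ containing, for each $i$, an $(n-c+1)$-dimensional complement to $\Ker f_i$ and, for each nonzero $f\in\calS$, a vector $x_f$ with $f(x_f)\neq 0$; restriction to $U_0$ then preserves the $c$-LLD property, keeps all the $f_i|_{U_0}$ of rank $>n-c$, and keeps $\calS$ faithfully represented, contradicting the finite-dimensional case. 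This is not a quotient of $U$, and it is not captured by the vague statement that the hypothesis "only involves finitely many values $f(x)$." You would need to spell out such a subspace construction (or an equivalent one) for the reduction to be correct.
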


\begin{proof}
As in \cite{MeshulamSemrlPJM}, we assume first that $U$ is finite-dimensional. Set $p:=\dim U$.
Denote by $N$ the cardinality of the set of all elements of $\calS$ which have rank less than or equal to $n-c$.
Consider the set
$$A:=\bigl\{(f,x)\in \calS \times U : \; f(x)=0\bigr\}$$
and its two canonical projections $\pi_1 : (f,x) \mapsto f$ and $\pi_2 : (f,x) \mapsto x$.
Then, $A$ is finite and we may estimate its cardinality in two different ways:
\begin{itemize}
\item For every non-zero vector $x \in U$, the assumptions show that $\dim \pi_2^{-1}\{x\} \geq c$ and therefore $\# \pi_2^{-1}\{x\} \geq q^c$.
Taking the zero vector into account yields:
$$\# A \geq q^n+q^c(q^p-1).$$
\item For every $f \in \calS$, the rank theorem shows that $\# \pi_1^{-1}\{f\}=q^{p-\rk f}$; thus:
either $f=0$ and then $\# \pi_1^{-1}\{f\}=q^p$;
or $1 \leq \rk f \leq n-c$ and then $\# \pi_1^{-1}\{f\} \leq q^{p-1}$;
or $\rk f>n-c$ and then $\# \pi_1^{-1}\{f\} \leq q^{p+c-n-1}$.
This yields:
$$\# A \leq q^p+(N-1)q^{p-1}+(q^n-N)q^{p+c-n-1}.$$
\end{itemize}
We deduce that
$$q^n+q^c(q^p-1) \leq q^p+(N-1)q^{p-1}+(q^n-N)q^{p+c-n-1}$$
and hence
$$(q^{p-1}-q^{p+c-n-1})(N+1-q^c) \geq q^n-q^c+q^{p+2c-n-1}-q^{p+c-n-1}+q^{c+p}-2q^{p+c-1}-q^p+2q^{p-1}.$$
However $q^n>q^c$, $q^{p+2c-n-1} \geq q^{p+c-n-1}$ and
$$q^{c+p}-2q^{p+c-1}-q^p+2q^{p-1}=(q-2)(q^{p+c-1}-q^{p-1}) \geq 0.$$
As $q^{p-1}>q^{p+c-n-1}$, it follows that $N>q^c-1$. This finishes the proof in the case $U$ is finite-dimensional.

Now, let us move on to the general case. Assume that more than $q^n-q^c$ operators $f_1,\dots,f_s$ in $\calS$
have rank greater than $n-c$ (possibly with infinite rank). For each $i \in \lcro 1,s\rcro$, choose an $(n-c+1)$-dimensional
linear subspace $U_i$ which intersects
$\Ker f_i$ trivially. For every non-zero operator $f$ of $\calS$, choose some $x_f \in U$ such that $f(x) \neq 0$.
Set $U_0:=\Vect\bigl\{x_f \mid f \in \calS \setminus \{0\}\bigr\}+\underset{i=1}{\overset{s}{\sum}} U_i$,
and note that $U_0$ is finite-dimensional and that $\Psi : f \in \calS \mapsto f_{|U_0} \in \calL(U_0,V)$ is a one-to-one linear map.
Noting that $\Psi(\calS)$ is a $c$-LLD subspace of $\calL(U_0,V)$
and that $\Psi(f_1),\dots,\Psi(f_s)$ are all distinct with rank greater than $n-c$,
one finds a contradiction with the first part of the proof.
\end{proof}

Note that this does not prove that enough small rank operators may be found in a common $(n-c)$-dimensional subspace of $\calS$
as in Corollary \ref{basiccor}.
On the other hand, for an arbitrary field, we can give an upper bound on the rank that is not as tight
but holds for all operators in a specific $(n-c)$-dimensional subspace:

\begin{prop}\label{AmitsurlikeProp}
Let $\calS$ be an $n$-dimensional LLD subspace of $\calL(U,V)$, and $\varphi$ be a rank-optimal element of $\widehat{\calS}$, with rank $r$. \\
Then, $\rk f \leq \dbinom{r+1}{2}+r(n-r)$ for all $f \in \Ker \varphi$.
\end{prop}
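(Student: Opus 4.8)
The plan is to mimic the proof of Lemma~\ref{basiclemma}, but instead of invoking the Flanders--Atkinson lemma (which requires a large field) to conclude $\im f\subset \im\varphi$, I would use the \emph{shape} of the matrices in $\calM$ forced by rank-optimality, without any cardinality hypothesis. First I would reduce to the case where $V$ is finite-dimensional by the same trick used in the general case of Lemma~\ref{basiclemma}: if some $f\in\Ker\varphi$ had rank exceeding $\binom{r+1}{2}+r(n-r)$, pick a finite-dimensional subspace $V_1$ of $V$ containing $\im\varphi$ and a large enough finite-dimensional part of $\im f$, compose with an idempotent $\pi$ of image $V_1$; then $\pi\circ\varphi$ stays rank-optimal in $\pi\widehat{\calS}$, and working with the finite-dimensional target $V_1$ would already bound $\rk(\pi\circ f)$, hence (by taking $V_1$ large) bound $\rk f$.

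So assume $\dim V=m<\infty$. Choose bases $\bfB$ of $\calS$ and $\bfC$ of $V$ so that $\Mat_{\bfB,\bfC}(\varphi)=J_r$, and let $\calM\subset\Mat_{m,n}(\K)$ represent $\widehat{\calS}$ in these bases. Since $\varphi$ is rank-optimal, $\urk(\calM)=r$. Now write every $M\in\calM$ in block form
\[
M=\begin{bmatrix} A(M) & C(M)\\ B(M) & D(M)\end{bmatrix}
\]
with $A(M)$ of size $r\times r$, etc. The key structural claim is that $D(M)=0$ for all $M\in\calM$: indeed, $J_r$ itself gives the block $\begin{bmatrix}I_r&0\end{bmatrix}$ on top, so if some $D(M)\neq 0$ then choosing a nonzero entry of $D(M)$, the matrix $J_r+M$ restricted to an appropriate $(r+1)\times(r+1)$ submatrix (the first $r$ rows and columns together with the row and column through that entry) is upper triangular with $r+1$ nonzero diagonal entries after a scaling — more precisely, $\urk(\Vect(J_r,M))\le r$ forces, by the very same Gaussian-elimination argument as in the proof of Lemma~\ref{Flanders} but carried out \emph{over $\K$ when $I_r-tA$ is replaced by a scalar perturbation}, that $D(M)=0$. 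Actually the cleanest route is: $D$ cannot be nonzero because $\begin{bmatrix}I_r&0\\0&D(M)\end{bmatrix}$ appears as $J_r$ plus the off-diagonal-free part is irrelevant; one simply notes $J_r$ and the bottom-right $(m-r)\times(n-r)$ block of $M$ together would produce rank $r+\rk D(M)$ by a direct pivoting on rows/columns outside the first $r$, so $\rk D(M)=0$. (This uses no field-size hypothesis, unlike the $BA^kC=0$ conclusions.)

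With $D(M)=0$ for all $M$, the last $n-r$ columns of $\bfB$ span $\Ker\varphi$, and for $f\in\Ker\varphi$ the matrix of the operator $x\mapsto(g\mapsto g(x))$ evaluated... more directly: $f\in\Ker\varphi$ corresponds to a linear form that, paired against $\calM$, extracts a column of $\calM$ lying in the last $n-r$ slots; every matrix of $\calM$ has its bottom $(m-r)\times(n-r)$ block zero, so the range of $f$ is contained in the span of $\im\varphi$ together with the image of the top-right $r\times(n-r)$ blocks $C(M)$. I would make this precise by the following counting: $\im f$ is spanned by the vectors $f(x)$, and modulo $\im\varphi$ (dimension $r$) each such vector lives in the quotient determined by the $C$-block; but an element $f$ of $\Ker\varphi$ kills the first $r$ "coordinates", so $\rk f\le \dim\bigl(\im\varphi + \textstyle\sum_{M}\im C(M)\bigr)$. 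Since the $C(M)$ are $r\times(n-r)$ matrices, $\sum_M\im C(M)$ has dimension at most $r$, and a finer accounting — noting that on the span of $\Ker\varphi$ the relevant block is an $r\times(n-r)$-shaped family while on a complement one still only sees the $\binom{r}{2}$-ish symmetric-type interaction — gives the bound $\binom{r+1}{2}+r(n-r)$; here $r(n-r)$ accounts for the $C$-blocks and $\binom{r+1}{2}$ for the "$A$-part" contribution along $\im\varphi$, exactly as in Amitsur's estimate.

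The main obstacle I anticipate is getting the constant right: proving $D(M)=0$ for every $M$ is robust and field-independent, but squeezing the dimension down from the crude $r+r(n-r)+(\text{something quadratic in }r)$ to precisely $\binom{r+1}{2}+r(n-r)$ requires the extra observation that the $A$-blocks, restricted to the situation at hand, only contribute an alternating/symmetric-sized $\binom{r+1}{2}$ rather than a full $r^2$ — this is the genuinely delicate bookkeeping step and is presumably where one reuses Amitsur's classical argument verbatim, now applied to the well-structured space $\calM$ rather than to the original LLD space.
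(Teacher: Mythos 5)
Your reduction to finite-dimensional $V$ via an idempotent $\pi$ is fine, but there are two genuine problems in the finite-dimensional core.

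First, the structural claim that $D(M)=0$ for \emph{every} $M\in\calM$ is false without a cardinality hypothesis. Your "direct pivoting on rows/columns outside the first $r$" implicitly requires the top-left $r\times r$ block $I_r+A(M)$ of $J_r+M$ to be invertible in order to take a Schur complement, and this can fail over a small field. Indeed, the paper's own counterexample (right after Proposition~\ref{rangminmatriciel}) exhibits a space $\Vect(J_r,M)$ with $\urk\leq r$ over a field with $q\leq r$ elements where $D(M)=\Diag(1,0,\dots,0)\neq 0$. So the "robust and field-independent" step you lean on is exactly where Flanders--Atkinson needs $\#\K>r$, and over small fields the conclusion $D(M)=0$ does not hold.

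Second, the last paragraph of your proposal --- the "finer accounting" that is supposed to collapse the $A$-block contribution from $r^2$ to $\binom{r+1}{2}$ --- is not an argument; you acknowledge as much. The mechanism the paper uses, and the one idea you are missing, is the following. Decompose the top-left block as $A(M)=L(M)+U(M)$ with $L(M)$ lower-triangular and $U(M)$ strictly upper-triangular, and set $E(M):=\begin{bmatrix}B(M)\\D(M)\end{bmatrix}$ (the whole right-hand $m\times(n-r)$ block). If $L(M)=0$ and $B(M)=0$, then the top-left block of $J_r+M$ is $I_r+U(M)$, which \emph{is} invertible over any field because $U(M)$ is strictly upper-triangular; now the Schur-complement argument applied to $J_r+M$ (rank $\leq r$) forces $D(M)=0$, hence $E(M)=0$. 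Thus $\Ker L\cap\Ker B\subset\Ker E$, and the rank theorem for $M\mapsto E(M)$ gives
\[
\dim E(\calM)\;\leq\;\dim L(\calM)+\dim B(\calM)\;\leq\;\binom{r+1}{2}+r(n-r).
\]
Since $f\in\Ker\varphi$ corresponds to a vector $X=\begin{bmatrix}0\\Y\end{bmatrix}\in\{0\}\times\K^{n-r}$ and $\calM X=E(\calM)Y$, this yields $\rk f\leq\dim E(\calM)$ and the stated bound. In short: you cannot claim $D\equiv 0$ on all of $\calM$; you can only force it on the subspace where the lower-triangular part of $A$ and the $B$-block vanish, and the triangular split is precisely what makes the argument independent of $\#\K$ and produces the $\binom{r+1}{2}$ (rather than $r^2$) term.
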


As $r \mapsto \dbinom{r+1}{2}+r(n-r)$ is increasing on $[1,n]$ (its derivative being $r \mapsto -r+n+\frac{1}{2}$),
we deduce the following corollary:

\begin{cor}\label{AmitsurlikeCor}
Let $\calS$ be an $n$-dimensional $c$-LLD subspace of $\calL(U,V)$. \\
Then, there are subspaces $\calT$ and $V_0$, respectively, of $\calS$ and $V$ such that
$\dim \calT \geq n-c$, $\dim V_0 \leq \dbinom{n-c+1}{2}+c(n-c)$ and $\im f \subset V_0$ for all $f \in \calT$.
\end{cor}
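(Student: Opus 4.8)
The plan is to obtain this corollary from Proposition \ref{AmitsurlikeProp} exactly as Corollary \ref{basiccor} was obtained from Lemma \ref{basiclemma}: apply the proposition to a rank-optimal operator of $\widehat{\calS}$, then push the resulting rank estimate through the monotonicity of $r \mapsto \dbinom{r+1}{2}+r(n-r)$ recorded just before the statement.

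First I would pass to the dual setting. Since $\calS$ is $c$-LLD, $\widehat{\calS}$ is $c$-defective, so every operator in $\widehat{\calS}$ has rank at most $n-c$; fixing a rank-optimal $\varphi \in \widehat{\calS}$ thus gives $\rk\varphi \le n-c$ (and $\rk\varphi \ge 1$, since $\dim\calS=n\ge 2$ forces $\calS\neq\{0\}$). Applying Proposition \ref{AmitsurlikeProp} to $\calS$ and $\varphi$ yields $\rk f \le \dbinom{\rk\varphi+1}{2}+\rk\varphi\,(n-\rk\varphi)$ for every $f \in \Ker\varphi$. What I really need at this point, however, is a \emph{single} subspace of $V$ receiving all the ranges $\im f$, $f \in \Ker\varphi$; the natural choice is to take $V_0$ to be the essential range $\sum_{f \in \Ker\varphi}\im f$ of the operator subspace $\Ker\varphi$, and the property required is that $\dim V_0 \le \dbinom{\rk\varphi+1}{2}+\rk\varphi\,(n-\rk\varphi)$. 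This is the form in which Proposition \ref{AmitsurlikeProp} should be proved — its proof should exhibit the ambient subspace that contains every range, not merely estimate the ranks individually — in complete analogy with the proof of Lemma \ref{basiclemma}, which produces the concrete subspace $\im\varphi$.

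Granting that, the corollary drops out at once. Set $\calT := \Ker\varphi$, so $\dim\calT = n-\rk\varphi \ge c$, and keep $V_0$ as above. Since $r \mapsto \dbinom{r+1}{2}+r(n-r)$ is increasing on $[1,n]$ (as observed before the statement) and $1 \le \rk\varphi \le n-c \le n$, we obtain
$$\dim V_0 \le \dbinom{\rk\varphi+1}{2}+\rk\varphi\,(n-\rk\varphi) \le \dbinom{n-c+1}{2}+c(n-c),$$
while $\im f \subseteq V_0$ for every $f \in \calT$ holds by the very definition of $V_0$. Everything here is mechanical once Proposition \ref{AmitsurlikeProp} is available; the only point that deserves attention — and the reason to set that proposition up with care — is the upgrade from a bound on each $\rk f$ to a common ambient range $V_0$ of the stated dimension.
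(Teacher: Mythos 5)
Your plan — apply Proposition \ref{AmitsurlikeProp} to a rank-optimal $\varphi$, take $\calT := \Ker\varphi$, and push the rank estimate through the monotonicity of $r \mapsto \binom{r+1}{2}+r(n-r)$ — is exactly the derivation the paper intends. You also correctly get $\dim\calT = n - \rk\varphi \ge c$ (the corollary's ``$\dim\calT \ge n-c$'' appears to be a slip; compare Corollary \ref{basiccor}).

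The problem is that you yourself identify the decisive step and then skip it. Proposition \ref{AmitsurlikeProp}, as stated and as proved in the paper, bounds $\rk f$ for each individual $f \in \Ker\varphi$; the corollary asserts the existence of a single subspace $V_0$ of the stated dimension containing every $\im f$ for $f \in \Ker\varphi$. You write that the proposition ``should be proved'' in the stronger form that exhibits such a common $V_0$, ``in complete analogy with the proof of Lemma \ref{basiclemma},'' and then say ``Granting that, the corollary drops out at once.'' But that grant is precisely the content that is missing, and it is not a formality. In the proof of Lemma \ref{basiclemma}, Flanders-Atkinson forces $D(M)=0$ for all $M \in \calM$, so $\im f \subset \im\varphi$ — the ambient subspace is handed to you. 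In the proof of Proposition \ref{AmitsurlikeProp}, no such vanishing is available (there is no cardinality hypothesis); the argument only shows $\dim E(\calM) \le \binom{r+1}{2}+r(n-r)$ and deduces $\rk f = \dim E(\calM)Y \le \dim E(\calM)$ for each $Y$. The essential range $\sum_{f\in\Ker\varphi}\im f$ corresponds not to $\dim E(\calM)$ but to the \emph{column span} of $E(\calM)$, i.e.\ $\sum_{Y} E(\calM)Y$, and a bound on $\dim E(\calM)$ does not bound that column span: a $d$-dimensional space of matrices with several columns can have column span of dimension well beyond $d$. So the passage from ``each $\rk f \le d$'' to ``a common $V_0$ with $\dim V_0 \le d$'' is a genuine additional claim that neither the paper's proof of Proposition \ref{AmitsurlikeProp} nor your proposal establishes. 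To close the gap you would need to actually produce the ambient subspace and verify its dimension — for instance by bounding $\dim\bigl(\operatorname{col\,span}\,E(\calM)\bigr)$ directly, or by reworking the inductive, Amitsur-style construction of $V_0$ — rather than asserting that such an argument ``should'' exist.
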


Note that the upper bound $\dbinom{n-c+1}{2}+c(n-c)$ is of the same order of
magnitude as the upper bound in Amitsur's lemma
\cite[Lemma 1]{Amitsur}.

\begin{proof}[Proof of Proposition \ref{AmitsurlikeProp}]
We start with the case when $V$ is finite-dimensional.
We may then find respective bases $\bfB$ and $\bfC$ of $\calS$ and $V$ such that
$$M_{\bfB,\bfC}(\varphi)=J_r:=\begin{bmatrix}
I_r & [0]_{r \times (n-r)} \\
[0]_{(m-r) \times r} & [0]_{(m-r) \times (n-r)}
\end{bmatrix}.$$
Denote by $\calM$ the vector space of the matrices associated with $\widehat{\calS}$ in the bases $\bfB$ and $\bfC$.
Split every $M \in \calM$ as
$$M=\begin{bmatrix}
L(M)+U(M) & B(M) \\
[?]_{(m-r) \times r} & D(M)
\end{bmatrix},$$
where $L(M)$, $U(M)$, $B(M)$ and $D(M)$ are, respectively, $r \times r$, $r \times r$, $r \times (n-r)$ and $(m-r) \times (n-r)$ matrices,
with $L(M)$ lower-triangular and $U(M)$ strictly upper-triangular; write also $E(M):=\begin{bmatrix}
B(M) \\
D(M)
\end{bmatrix}$.
For every $M \in \calM$ such that $L(M)=0$ and $B(M)=0$, combining inequality $\rk (J_r+M) \leq r$ with the fact that
$I_r+U(M)$ is invertible yields $D(M)=0$ and therefore $E(M)=0$.
Using the rank theorem for the map $M \mapsto E(M)$, we deduce that
$$\dim E(\calM) \leq \dim L(\calM)+\dim B(\calM) \leq \binom{r+1}{2}+r(n-r),$$
and our claim follows as $\dim \calM X=\dim E(\calM)Y \leq \dim E(\calM)$ for all $X =\begin{bmatrix}
[0]_{r \times 1} \\
Y
\end{bmatrix} \in \{0\} \times \K^{n-r}$.

For the general case, assume that some $f \in \Ker \varphi$ satisfies $\rk f>\dbinom{r+1}{2}+r(n-r)$,
choose a finite-dimensional linear subspace $V_1$ of $\im f$ such that $\dim V_1 >\dbinom{r+1}{2}+r(n-r)$,
and set $V_0:=\im \varphi+V_1$. Choose an idempotent $\pi$ of $\calL(V)$ with range $V_0$.
Then, with the same line of reasoning as in our proof of Lemma \ref{basiclemma}, one deduces from the
finite-dimensional case
that $\rk (\pi \circ f) \leq \dbinom{r+1}{2}+r(n-r)$, which contradicts the fact that $V_1 \subset \im (\pi \circ f)$.
\end{proof}

With the above results, we rediscover the following known theorem:

\begin{theo}[Bre\v sar,  Meshulam and \v Semrl]\label{basictheogeneral}
Every $n$-dimensional LLD operator space contains a non-zero operator with rank less that $n$.
\end{theo}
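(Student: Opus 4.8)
The plan is to dispatch the theorem by a two-case argument on the cardinality of the ground field, since the results already established cover the two regimes between them.

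If $\# \K \geq n$, then there is nothing new to prove: this is exactly Theorem \ref{basictheo} (the Aupetit--Bre\v sar--\v Semrl theorem), which was obtained above from Corollary \ref{basiccor}, and hence ultimately from Lemma \ref{basiclemma} and the Flanders--Atkinson lemma. So the only remaining situation is $\# \K < n$.

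In that case $\K$ is necessarily finite, since it has strictly fewer than $n$ elements; write $q := \# \K$, so that $2 \leq q \leq n-1$ and in particular $n \geq 3$. Because $\calS$ is LLD, it is $1$-LLD: for every $x \in U$, the subspace $\{f \in \calS : f(x)=0\}$ has dimension at least $1$. I would then apply Proposition \ref{finitefieldcase} with $c = 1$, which gives at least $q$ operators of $\calS$ with rank at most $n-1$; as $q \geq 2$, at least one of them is non-zero, and it is the desired operator of rank less than $n$.

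There is essentially no obstacle here: all the substance has already been packed into Theorem \ref{basictheo} and Proposition \ref{finitefieldcase}, and the only thing to check is that the hypotheses ``$\# \K \geq n$'' and ``$\K$ finite'' jointly exhaust every possibility, which is clear since any field with fewer than $n$ elements is finite. (The degenerate range $n \leq 1$ need not be addressed, as no $1$-dimensional subspace of $\calL(U,V)$ can be LLD and the statement is then vacuous.)
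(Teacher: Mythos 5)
Your proposal is correct and takes the same route as the paper: the theorem is stated in the paper immediately after Proposition \ref{finitefieldcase} and the Amitsur-type bounds with the remark that it follows ``with the above results,'' and the intended combination is exactly the one you spell out --- Theorem \ref{basictheo} handles $\# \K \geq n$, while Proposition \ref{finitefieldcase} with $c=1$ handles the complementary case $\#\K < n$ (which forces $\K$ finite and, since $q \geq 2$, yields a non-zero operator of rank at most $n-1$). Your dismissal of the degenerate range $n \leq 1$ and your observation that LLD implies $1$-LLD are both accurate, so the argument is complete.
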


The following application of Theorem \ref{basictheogeneral} was kindly suggested to us by Jean-Pierre Barani:

\begin{prop}\label{Baranitheorem}
Let $\L$ be a field extension of finite degree $d$ over $\K$, and $V$ be a finite-dimensional $\L$-vector space.
Then, for every $\K$-linear subspace $W$ of $V$ whose dimension is a multiple of $d$,
there is an $\L$-linear subspace $W'$ of $V$ such that $V=W\oplus W'$.
\end{prop}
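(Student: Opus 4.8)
The plan is to argue by contradiction and, starting from a maximal ``partial'' $\L$-linear complement of $W$, to manufacture a $d$-dimensional LLD operator space to which Theorem \ref{basictheogeneral} can be applied; the multiple-of-$d$ hypothesis will then collide with the conclusion of that theorem.

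Let $k\in\N$ be such that $\dim_\K W=kd$, and suppose for contradiction that $W$ has no $\L$-linear complement in $V$. The set of $\L$-linear subspaces $W'$ of $V$ with $W\cap W'=\{0\}$ is non-empty (it contains $\{0\}$) and its members have $\L$-dimension at most $\dim_\L V$; I would pick one, $W_0$, of maximal $\L$-dimension $m$. If $kd+md=\dim_\K V$ then $W\oplus W_0=V$, which is excluded, so $W\oplus W_0$ is a \emph{proper} $\K$-subspace of $V$. Next, pass to $\overline{V}:=V/W_0$, let $\pi\colon V\rightarrow\overline{V}$ be the canonical surjection, and put $\overline{W}:=\pi(W)$. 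Since $W\cap W_0=\{0\}$, the map $\pi$ induces a $\K$-linear isomorphism $W\overset{\simeq}{\rightarrow}\overline{W}$, so $\dim_\K\overline{W}=kd$ is still a multiple of $d$; and $\overline{W}$ is a proper subspace of $\overline{V}$ because $W\oplus W_0\ne V$. The crucial consequence of the maximality of $W_0$ is that \emph{every $\L$-line of $\overline{V}$ meets $\overline{W}$ non-trivially}: if some $\L$-line $\overline{L}\subseteq\overline{V}$ satisfied $\overline{L}\cap\overline{W}=\{0\}$, then $\pi^{-1}(\overline{L})$ would be an $\L$-subspace of $V$ of $\L$-dimension $m+1$, it would contain $W_0$, and $\pi^{-1}(\overline{L})\cap W$ would be contained in $\pi^{-1}(\overline{L}\cap\overline{W})\cap W=\pi^{-1}(\{0\})\cap W=W_0\cap W=\{0\}$, contradicting the maximality of $W_0$.

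With this in hand I would build the operator space. For $\lambda\in\L$ let $m_\lambda\colon\overline{V}\rightarrow\overline{V}$ denote multiplication by $\lambda$, let $q\colon\overline{V}\rightarrow\overline{V}/\overline{W}$ be the canonical surjection, and set
$$\calS:=\bigl\{\,q\circ m_\lambda \;\mid\; \lambda\in\L\,\bigr\}\subseteq\calL\bigl(\overline{V},\,\overline{V}/\overline{W}\bigr),$$
a $\K$-linear space of operators between the finite-dimensional $\K$-vector spaces $\overline{V}$ and $\overline{V}/\overline{W}$. Since $\overline{W}$ is proper, $m_\lambda(\overline{V})=\overline{V}\not\subseteq\overline{W}$ for $\lambda\ne0$, hence $q\circ m_\lambda\ne0$ for such $\lambda$; so $\lambda\mapsto q\circ m_\lambda$ is injective and $\dim_\K\calS=d$. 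Moreover $\calS$ is LLD: given $v\in\overline{V}$, either $v=0$ (and every non-zero operator of $\calS$ annihilates it) or $v\ne0$, in which case the $\L$-line $\L v$ meets $\overline{W}$ by the previous step, say $\mu v\in\overline{W}$ with $\mu\ne0$, and then $q\circ m_\mu$ is a non-zero operator of $\calS$ with $(q\circ m_\mu)(v)=0$. Finally, Theorem \ref{basictheogeneral} applied to the $d$-dimensional LLD space $\calS$ produces a non-zero operator $f=q\circ m_\lambda$ (with $\lambda\ne0$) such that $\rk f<d$. But $m_\lambda$ is a $\K$-linear automorphism of $\overline{V}$, so $\rk f=\rk q=\dim_\K\overline{V}-\dim_\K\overline{W}$, which is a positive multiple of $d$ (positive because $\overline{W}\subsetneq\overline{V}$, a multiple of $d$ because $\dim_\K\overline{V}$ and $\dim_\K\overline{W}$ both are); this cannot be $<d$, the desired contradiction. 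Hence $W$ admits an $\L$-linear complement, which must have the right $\K$-dimension by the dimension count, so $V=W\oplus W'$.

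The only genuinely delicate step is the choice of the auxiliary space $\calS$: a direct dimension count on, say, the space of restrictions to $W$ of $\L$-linear functionals on $V$ turns out to be too crude to force a contradiction. The trick is to bundle the scalar multiplications $m_\lambda$, composed with the projection modulo $\overline{W}$, into a space of the smallest possible dimension $d$, so that Theorem \ref{basictheogeneral} forces a rank strictly below $d$ while the invertibility of each $m_\lambda$ rigidly ties that rank to $\codim_\K\overline{W}$, a positive multiple of $d$. Everything else — existence of a maximal partial complement, the elementary dimension bookkeeping, and the verification that $\calS$ is LLD — is routine.
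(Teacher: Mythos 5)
Your proof is correct, and it is built on exactly the same engine as the paper's: you apply Theorem \ref{basictheogeneral} to the $d$-dimensional space of operators obtained by composing multiplication by $\lambda \in \L$ with the canonical projection onto the quotient by the image of $W$, and you exploit the fact that for $\lambda \neq 0$ this operator has rank equal to the $\K$-codimension of that image, which is at least $d$. The only difference is the packaging. The paper argues directly: if $W \subsetneq V$, the operators $\widehat{\lambda} : x \mapsto [\lambda x] \in V/W$ all have rank $\dim_\K(V/W) \geq d$, so $\calS$ is not LLD by (the contrapositive of) Theorem \ref{basictheogeneral}, which furnishes a vector $x$ with $\L x \cap W = \{0\}$; one then applies the inductive hypothesis to $W \oplus \L x$. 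You instead argue by contradiction from a maximal partial $\L$-complement $W_0$, pass to $\overline{V} = V/W_0$, observe that maximality forces every $\L$-line of $\overline{V}$ to meet $\overline{W}$ (so the analogous operator space \emph{is} LLD), and then contradict this with the rank computation. In effect, the paper proves the existence of a ``free $\L$-line'' and iterates, while you assume none exists past a maximal stage and run the same rank argument in the quotient by that stage — the contrapositive form of the same iteration. Both are fine; the paper's direct version is slightly more economical because it does not need the intermediate quotient by $W_0$ or the explicit maximality lemma, but your version makes the role of the multiple-of-$d$ hypothesis (forcing $\rk q$ to be a positive multiple of $d$, hence $\geq d$) perhaps a bit more transparent.
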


\begin{proof}
The result is obvious if $W=V$. Assume now that $W \subsetneq V$, so that $\dim_\K (V/W) \geq d$.
In that case, we show that there is a non-zero vector $x \in V$ such that $\L x \cap W=\{0\}$.
To prove the claimed result, we consider the space $\calS$ of all $\K$-linear operators
$$\widehat{\lambda} :\quad  x \in V \longmapsto [\lambda x] \in V/W, \quad \text{with $\lambda \in \L$.}$$
 Obviously, for all $\lambda \in \L \setminus \{0\}$, the operator $\widehat{\lambda}$ is surjective,
 whence its rank equals $\dim_\K(V/W)\geq d$. In particular, all these operators are non-zero, leading to $\dim_\K \calS=d$.
 We deduce from Theorem \ref{basictheogeneral} that $\calS$ is not LLD, which yields
 a vector $x \in V$ such that $\lambda x \not\in W$ for all $\lambda \in \L \setminus \{0\}$.
 Thus, $\L x \cap W=\{0\}$, as claimed.
 Noting that $\dim_\K(\L x\oplus W)=d+\dim_\K W$, the result is then easily obtained by downward induction on $\dim_\K W$.
 \end{proof}

An important consequence of Theorem \ref{basictheogeneral} is the known fact that any LLD operator space contains a non-zero operator
with finite rank. In \cite{Larson}, Larson uses this fact to prove that in an LLD operator space, the linear
subspace of finite rank operators is LLD itself, to the effect that every minimal LLD operator space contains only finite rank operators.
We generalize his results to $c$-LLD spaces:

\begin{prop}\label{finiterankprop}
Let $\calS$ be a $c$-LLD subspace of $\calL(U,V)$, and denote by $\calS_F$
its linear subspace of finite rank operators. Then, $\calS_F$ is $c$-LLD.
\end{prop}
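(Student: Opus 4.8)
The plan is to argue by induction on the codimension of $\calS_F$ in $\calS$ — recall that $\calS$, being $c$-LLD, is finite-dimensional, so this codimension is a non-negative integer. If $\calS_F=\calS$ there is nothing to prove. Otherwise I would pick a hyperplane $\calH$ of $\calS$ with $\calS_F\subseteq\calH$ (possible since $\calS_F\subsetneq\calS$), observe that $\calS_F$ is still exactly the space of finite-rank operators of $\calH$ and that it has codimension one less in $\calH$ than in $\calS$, and thereby reduce the whole statement to the following claim: \emph{every hyperplane $\calH$ of a $c$-LLD space $\calS$ that contains $\calS_F$ is again $c$-LLD}. Indeed, granting this, the induction hypothesis applied to $\calH$ gives that $(\calH)_F=\calS_F$ is $c$-LLD.

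To prove the claim, fix $x\in U$ and set $K_x:=\{f\in\calS:\ f(x)=0\}$, so that $\dim K_x\geq c$ by hypothesis and $\{f\in\calH:\ f(x)=0\}=\calH\cap K_x$. If $K_x\subseteq\calH$ then $\dim(\calH\cap K_x)=\dim K_x\geq c$; if not, then $K_x+\calH=\calS$ by maximality of $\calH$, whence $\dim(\calH\cap K_x)=\dim K_x-1$, which is $\geq c$ as soon as $\dim K_x\geq c+1$. So the one case to rule out is $\dim K_x=c$ together with $K_x\not\subseteq\calH$. In that case the evaluation operator $\varphi_x\in\widehat{\calS}$ has kernel $K_x$, hence rank $\dim\calS-c$; but the $c$-LLD-ness of $\calS$ says precisely that no element of $\widehat{\calS}$ has rank exceeding $\dim\calS-c$, so $\varphi_x$ is rank-optimal in $\widehat{\calS}$. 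Since $c\geq1$ the space $\calS$ is LLD, so Proposition \ref{AmitsurlikeProp} applies to $\varphi_x$ and shows that every operator in $\Ker\varphi_x=K_x$ has finite rank. However $K_x\not\subseteq\calH\supseteq\calS_F$ produces an operator $h\in K_x$ lying outside $\calS_F$, i.e.\ of infinite rank — a contradiction. Hence $\dim(\calH\cap K_x)\geq c$ for every $x$, which proves the claim, and with it the proposition.

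The hard part is recognizing that the ``lossy'' situation — where deleting one dimension of $\calS$ drops the dimension of $\calH\cap K_x$ below $c$ — can occur only at points $x$ where $\varphi_x$ is rank-optimal, for it is exactly at such points that one is entitled to invoke the finite-rank conclusion of Proposition \ref{AmitsurlikeProp} (or, over large fields, Lemma \ref{basiclemma}) and play it against the fact that any operator outside $\calS_F$ has infinite rank. A pleasant feature of this route is that it requires no cardinality assumption on $\K$ (Proposition \ref{AmitsurlikeProp} has none) and no finite-dimensionality assumption on $U$ or $V$: the whole argument takes place inside the finite-dimensional space $\calS$ and uses only the finite-versus-infinite rank dichotomy for individual operators.
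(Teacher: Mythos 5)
Your proof is correct, and it takes a genuinely different route from the paper's. The paper attacks the problem head-on: it extends a basis $(f_1,\dots,f_p)$ of $\calS_F$ to a basis $(f_1,\dots,f_n)$ of $\calS$, builds an auxiliary $(n-p)$-dimensional operator space $\calT$ in which every non-zero operator has infinite rank (hence $\calT$ is not LLD, by Theorem~\ref{basictheogeneral}), and exploits this to produce a vector $y \in U$ whose images under $f_{p+1},\dots,f_n$ are ``independent modulo the finite-rank data''; a block-matrix computation involving $M(x+y)$ then forces $\rk(f_1(x),\dots,f_p(x)) \leq p-c$. You instead induct on the codimension of $\calS_F$ in $\calS$, reduce everything to the single claim that any hyperplane $\calH$ of $\calS$ containing $\calS_F$ stays $c$-LLD, and prove that claim by dimension counting plus one invocation of Proposition~\ref{AmitsurlikeProp}: the only way passing to $\calH$ can drop $\dim(\calH\cap K_x)$ below $c$ is when $\dim K_x = c$, i.e.\ when $\varphi_x$ is rank-optimal, and then the whole kernel $K_x$ consists of finite-rank operators and so sits inside $\calS_F \subseteq \calH$ after all. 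Your approach is arguably more transparent: it isolates cleanly the one point of tension (the rank-optimal fibre) and plays the finite/infinite rank dichotomy against it, whereas the paper's argument requires constructing the vector $y$ and juggling an explicit block decomposition. Both proofs work over arbitrary fields and for arbitrary $U$, $V$; the paper ultimately relies on the Bre\v sar--Meshulam--\v Semrl theorem, while yours relies on Proposition~\ref{AmitsurlikeProp}, both of which are cardinality-free. One small note: it would be worth saying explicitly why $\calS_F$ is also the finite-rank subspace of $\calH$ — namely that $\calH_F = \calH \cap \calS_F = \calS_F$ since $\calS_F \subseteq \calH$ — though this is immediate.
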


\begin{proof}
Our method is essentially similar to Larson's.
We extend a basis $\calB=(f_1,\dots,f_p)$ of $\calS_F$ into a basis $(f_1,\dots,f_n)$ of $\calS$.
Let $x \in U$.
Set $V_1:=\Vect(f_1(x),\dots,f_n(x))$, choose an idempotent
$\pi$ in $\calL(V)$ with kernel $V_1$, and set $U_0:=\underset{k=1}{\overset{p}{\bigcap}} \Ker f_k$.
Note that $U_0$ has finite codimension in $U$ since $f_1,\dots,f_p$ have finite rank.
Consider the space of operators $\calT:=\bigl\{\pi \circ f_{|U_0} \mid f \in \Vect(f_{p+1},\dots,f_n)\bigr\}$.
For any non-zero operator $f \in \Vect(f_{p+1},\dots,f_n)$, the operator $\pi \circ f_{|U_0}$ does not have finite rank as $f \not\in \calS_F$;
therefore $\dim \calT=n-p$ and no non-zero operator in $\calT$ has finite rank. \\
We deduce that $\calT$ is not LLD, yielding some $y \in U_0$
such that $\dim \Vect\bigl(\pi\bigl(f_{p+1}(y)\bigr),\dots,\pi\bigl(f_n(y)\bigr)\bigr)=n-p$.
Now, set $V_0:=V_1+\Vect(f_{p+1}(y),\dots,f_n(y))$, $m:=\dim V_0$ and $q:=\dim V_1$.
Note that given a basis $(v_1,\dots,v_q)$ of $V_1$, the family $\mathcal{C}=(v_1,\dots,v_q,f_{p+1}(y),\dots,f_n(y))$ is a basis of
$V_0$. Choose finally an idempotent $\pi'$
of $\calL(V)$ with range $V_0$.
Then, for any $z \in U$, the matrix $M(z)$ of $f \mapsto \pi'(f(z))$ in the bases $\bfB$ and $\bfC$ has rank at most $n-c$,
whereas
$$M(x)=\begin{bmatrix}
A & [?]_{q \times (n-p)} \\
[0]_{(n-p) \times p} & [0]_{(n-p) \times (n-p)}
\end{bmatrix} \quad \text{and} \quad
M(y)=\begin{bmatrix}
[0]_{q \times p} & [0]_{q \times (n-p)} \\
[0]_{(n-p) \times p} & I_{n-p}
\end{bmatrix}$$
where $A$ is $q \times p$ matrix with $\rk A=\rk (f_1(x),\dots,f_p(x))$.
Then,
$$M(x+y)=\begin{bmatrix}
A & [?]_{q \times (n-p)} \\
[0]_{(n-p) \times p} & I_{n-p}
\end{bmatrix}.$$
As $\rk M(x+y) \leq n-c$, we deduce that
$$\rk (f_1(x),\dots,f_p(x))=\rk A \leq \rk M(x+y)-\rk I_{n-p} \leq p-c.$$
Therefore, $\calS_F$ is $c$-LLD.
\end{proof}

\begin{prop}\label{minimalisfinitedimensional}
Let $\calS$ be a finite-dimensional reduced minimal $c$-LLD subspace of $\calL(U,V)$.
Then, $U$ and $V$ are finite-dimensional.
\end{prop}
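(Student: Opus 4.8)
The plan is to exploit Proposition~\ref{finiterankprop} to reduce, via the reduced-space construction, to the situation where $\calS=\calS_F$, i.e.\ every operator in $\calS$ has finite rank, and then to show that when $\calS$ is reduced and all its members have finite rank, both $U$ and $V$ must be finite-dimensional. So first I would apply Proposition~\ref{finiterankprop}: the subspace $\calS_F$ of finite-rank operators in $\calS$ is again $c$-LLD. Since $\calS$ is a \emph{minimal} $c$-LLD space and $\calS_F \subset \calS$ is $c$-LLD, minimality forces $\calS=\calS_F$; hence every operator in $\calS$ has finite rank. (This is exactly Larson's observation in the $c=1$ case, now available in the stated generality.)

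Next, with $\calS$ spanned by finitely many finite-rank operators $f_1,\dots,f_n$, the essential range $\sum_{i=1}^n \im f_i$ is finite-dimensional; since $\calS$ is reduced, its essential range is all of $V$, so $\dim V < \infty$. That settles finiteness of $V$ immediately and for free.

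The remaining, and genuinely substantive, step is to prove that $U$ is finite-dimensional. Here I would argue by contradiction: suppose $\dim U=\infty$. The kernel $U_0:=\bigcap_{i=1}^n \Ker f_i$ of $\calS$ is $\{0\}$ because $\calS$ is reduced, but a priori $U_0$ trivial does not bound $\dim U$ when $V$ is finite-dimensional — wait, it does: the map $x\mapsto (f_1(x),\dots,f_n(x))\in V^n$ has kernel $U_0=\{0\}$, hence is injective, so $\dim U \le n\dim V<\infty$. That is the whole point. So in fact, once we know $\calS=\calS_F$ and $\calS$ is reduced, both conclusions drop out: reducedness on the target side bounds $\dim V$ by $\sum \dim \im f_i$, and injectivity of $x\mapsto(f_i(x))_i$ — which is precisely triviality of the kernel — then bounds $\dim U$ by $n\dim V$.

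I expect the only place where care is needed is the very first step: verifying that minimality of $\calS$ really does force $\calS=\calS_F$. One must check that $\calS_F$ is a \emph{linear} subspace of $\calS$ (clear: a sum of two finite-rank operators has finite rank) and that it is nonzero and $c$-LLD — the latter being exactly the content of Proposition~\ref{finiterankprop} — so that the definition of a minimal $c$-LLD space (a $c$-LLD space containing no proper $c$-LLD subspace) applies and yields $\calS_F=\calS$. After that, the dimension bounds are routine, so there is no real obstacle beyond invoking the preceding proposition correctly.
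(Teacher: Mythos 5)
Your proof is correct and follows essentially the same route as the paper: invoke Proposition~\ref{finiterankprop} together with minimality to get $\calS=\calS_F$, then use reducedness to bound $\dim V$ by the (finite-dimensional) essential range and $\dim U$ via triviality of the kernel $\bigcap_i \Ker f_i$. The only cosmetic difference is that the paper deduces finiteness of $\dim U$ directly from the fact that $\bigcap_i\Ker f_i$ has finite codimension, whereas you route it through the injection $x\mapsto(f_1(x),\dots,f_n(x))\in V^n$; both are the same observation.
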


\begin{proof}
As $\calS$ is minimal among $c$-LLD subspaces of $\calL(U,V)$,
Proposition \ref{finiterankprop} yields $\calS_F=\calS$, meaning that every operator in $\calS$ has finite rank. As $\calS$
is finite-dimensional, one finds a basis $(f_1,\dots,f_n)$ of $\calS$, and then
$\underset{k=1}{\overset{n}{\bigcap}} \Ker f_k$ has finite codimension in $U$, while
$\underset{k=1}{\overset{n}{\sum}} \im f_k$ has finite dimension. As $\calS$ is reduced, this shows that
$U$ and $V$ are finite-dimensional.
\end{proof}

\subsection{On a refinement of Meshulam and \v Semrl}

In \cite{MeshulamSemrlPJM}, Meshulam and \v Semrl proved the following beautiful refinement of Theorem \ref{basictheo}:

\begin{theo}[Meshulam and \v Semrl]\label{refinedMStheo}
Let $\calS$ be an $n$-dimensional LLD subspace of $\calL(U,V)$, with $\# \K \geq n+2$.
Then, either $\calS$ contains a non-zero element of rank less than $n-1$, or else
all the non-zero elements of $\calS$ have rank $n-1$.
\end{theo}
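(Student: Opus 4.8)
The plan is to push everything through the duality argument until we reach a rigid matrix configuration, and then to finish with a rank count based on the Flanders--Atkinson lemma and the decomposition lemma. Recall first that $\calS$ is LLD if and only if $\widehat{\calS}$ is defective, i.e.\ $\rk\varphi\le n-1$ for every $\varphi\in\widehat{\calS}$. So I would pick a rank-optimal $\varphi\in\widehat{\calS}$, of rank $r\le n-1$, note that $\dim\Ker\varphi=n-r\ge1$, and use Lemma~\ref{basiclemma} (applicable since $\#\K\ge n+2>r$) to get $\im f\subseteq\im\varphi$, hence $\rk f\le r$, for every $f\in\Ker\varphi$. If some nonzero operator of $\calS$ has rank $<n-1$ we are in the first alternative of the statement and done; so assume otherwise. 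Then every nonzero $f\in\Ker\varphi$ satisfies $n-1\le\rk f\le r\le n-1$, which forces $r=n-1$, $\dim\Ker\varphi=1$, and, writing $\Ker\varphi=\K f_0$, also $\rk f_0=n-1$ and $\im f_0=\im\varphi$. Everything then comes down to proving that every nonzero operator of $\calS$ has rank \emph{at most} $n-1$, for combined with the standing assumption this gives rank exactly $n-1$.

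Next I would pass to a matrix model. A routine truncation (restrict the operators to a finite-dimensional subspace $U_1\subseteq U$ chosen so that $f\mapsto f_{|U_1}$ stays injective on $\calS$, contains $x_0$, and witnesses any prescribed operator of large rank, then replace $V$ by the essential range) reduces us to $U$ and $V$ finite-dimensional with $\dim V=:m$; we may assume $m\ge n$. Pick bases of $\calS$ (with $f_0$ last) and of $V$ in which $\varphi$ is represented by $J_{n-1}$, and let $\calM\subseteq\Mat_{m,n}(\K)$ represent $\widehat{\calS}$, so $\urk\calM=n-1$ and $J_{n-1}\in\calM$. Applying the Flanders--Atkinson lemma (Lemma~\ref{Flanders}) to $\Vect(J_{n-1},M)$ for each $M\in\calM$, every matrix of $\calM$ has the block shape $\begin{bmatrix}A(M)&C(M)\\ B(M)&[0]\end{bmatrix}$, where $A(M)$ is $(n-1)\times(n-1)$, $C(M)$ is $(n-1)\times1$, $B(M)$ is $(m-n+1)\times(n-1)$, and moreover $B(M)A(M)^kC(M)=0$ for all $k\in\N$; polarizing these homogeneous identities (licit because $\#\K$ is large) yields the corresponding multilinear identities among distinct matrices of $\calM$, e.g.\ $B(M)C(M')+B(M')C(M)=0$. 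Since the last basis vector of $\calS$ is $f_0$, the space $C(\calM)$ represents $\im f_0$, so $C(\calM)=\K^{n-1}$, in particular $\dim C(\calM)=n-1\ge1$; the decomposition lemma (Lemma~\ref{decompositionlemma}) then gives $n-1=\urk\calM\ge\urk B(\calM)+\urk C(\calM)=\urk B(\calM)+1$, that is, $\urk B(\calM)\le n-2$.

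It then remains to show $\urk\calS\le n-1$. Writing a general operator of $\calS$ in coordinates as $M\mapsto M\lambda$ with $\lambda=(\mu,t)\in\K^{n-1}\times\K$, the block shape gives $M\lambda=\begin{bmatrix}A(M)\mu+tC(M)\\ B(M)\mu\end{bmatrix}$, so $\calM\lambda\subseteq\K^{n-1}\oplus B(\calM)\mu$; thus $\dim\calM\lambda$ equals $n-1$ when $\mu=0$ (for then $\calM\lambda=C(\calM)\oplus\{0\}$), and in general it is at most $n-1+\dim B(\calM)\mu$. So the only obstruction to $\urk\calS\le n-1$ is a possible nontrivial \emph{coupling} between the ``lower'' data $B(M)\mu$ and the ``upper'' data $A(M)\mu+tC(M)$. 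To rule it out, I would argue by contradiction: suppose some nonzero $g_0\in\calS$ has $\rk g_0\ge n$; use Proposition~\ref{rangminmatriciel} on $B(\calM)$ (valid since $\#\K>n-2\ge\urk B(\calM)$) to locate the structure of $B(\calM)$ and a distinguished nonzero $\mu_0$ with $\dim B(\calM)\mu_0\le n-2$; and then exploit the Flanders--Atkinson bilinear identities together with $C(\calM)=\K^{n-1}$ to control $A(M)\mu$ against $B(M)\mu$ and derive $\dim\calM\lambda\le n-1$ for all $\lambda$, contradicting $\rk g_0\ge n$.

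I expect this coupling analysis to be the main difficulty. A natural way to organize it is an induction on $n$ in which one splits off a well-chosen one-dimensional subspace of the source or target of $\calS$, so as to decrease $n$ by one while preserving the relevant hypotheses, or else a Flanders-type upper bound on $\dim\calM$ played against a lower bound coming from its $B$- and $C$-blocks. This is presumably also the step that consumes the full hypothesis $\#\K\ge n+2$, the weaker bound $\#\K\ge n$ having sufficed so far. One further tool worth keeping in reserve is the rank-preserving isomorphism $\calS\cong\widehat{\widehat{\calS}}$ combined with the fact that $\widehat{\calS}$ is itself a defective operator space: this allows one to re-run Lemma~\ref{basiclemma} with the roles of $\calS$ and $\widehat{\calS}$ interchanged, and thereby to bootstrap bounds on $\urk\calS$.
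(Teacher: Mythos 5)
Your proposal reproduces the first half of the argument faithfully: picking a rank-optimal $\varphi\in\widehat{\calS}$, using Lemma~\ref{basiclemma} to see that $\rk f\le\rk\varphi$ for all $f\in\Ker\varphi$, ruling out $\rk\varphi\le n-2$ under the ``no rank $<n-1$'' assumption, and hence pinning down $\rk\varphi=n-1$, $\dim\Ker\varphi=1$, $\im f_0=\im\varphi$. The passage to a matrix model and the application of the Flanders--Atkinson lemma to obtain the block shape $\begin{bmatrix}A(M)&C(M)\\B(M)&0\end{bmatrix}$ is also exactly what the paper does, and the extra observation that $C(\calM)=\K^{n-1}$ forces $\urk B(\calM)\le n-2$ via Lemma~\ref{decompositionlemma} is a correct, if inessential, refinement. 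All of this is sound.

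The gap is in the final step, which you yourself flag as ``the main difficulty.'' After setting up the problem as showing $\dim\calM\lambda\le n-1$ for all $\lambda\in\K^n$, you offer several candidate strategies (an induction on $n$, a rank count pitting $\dim\calM$ against its $B$- and $C$-blocks, an application of Proposition~\ref{rangminmatriciel} to $B(\calM)$, or iterating Lemma~\ref{basiclemma} through $\widehat{\widehat{\calS}}$), but none of them is carried through, and none of them is the argument that closes the proof. In particular, Proposition~\ref{rangminmatriciel} applied to $B(\calM)$ only produces a single vector $\mu_0$ with $\dim B(\calM)\mu_0$ small, which cannot by itself bound $\dim\calM\lambda$ for arbitrary $\lambda$; the proposed ``coupling analysis'' is left entirely heuristic. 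The paper's actual mechanism at this point is quite different: it defines $\calM$-good vectors (nonzero $X$ lying in the kernel of a rank-$(n-1)$ matrix of $\calM$), observes via Lemma~\ref{basiclemma} that every $n\times n$ minor of $\Mat_{\bfD,\bfC}(\check X)$ vanishes whenever $X$ is $\calM$-good, and then proves an algebraic density statement: passing to $\L=\K((t))$, for each $X\in\K^{n-1}$ the vector $\begin{bmatrix}t(I_{n-1}-tA)^{-1}X\\1\end{bmatrix}$ lies in $\Ker(J-tM)$ and hence is $\calM_\L$-good, and expanding the $n$-homogeneous minor polynomial $G$ in powers of $t$ forces its lowest nontrivial homogeneous component $G_d$ to vanish on $\K^{n-1}$ and therefore to be identically zero. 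That power-series deformation is the decisive idea, and it is absent from your proposal. Relatedly, your guess that the full hypothesis $\#\K\ge n+2$ is ``consumed'' by the missing step is not borne out: the paper's argument (Theorem~\ref{refinedMStheogeneralized} and Theorem~\ref{refinedMStheobetter}) in fact establishes the conclusion already for $\#\K\ge n$, so whatever closes the gap must not require the stronger cardinality bound.
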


This result has been widely used in subsequent works of Chebotar, Meshulam and \v Semrl \cite{ChebotarSemrl,
MeshulamSemrlPJM,MeshulamSemrlLAA}.
In \cite{MeshulamSemrlPJM}, it is proven first for infinite fields with an argument from algebraic geometry,
and then for finite fields with a counting argument. We shall improve the result as follows, with a unified proof:

\begin{theo}\label{refinedMStheogeneralized}
Let $\calS$ be an $n$-dimensional LLD subspace of $\calL(U,V)$.
Assume that $\# \K \geq n$ and that $\calS$ is not $2$-LLD.
Then:
\begin{enumerate}[(i)]
\item Either $\rk f \leq n-1$ for every $f \in \calS$.
\item Or, for every rank-optimal operator $\varphi \in \widehat{\calS}$
and every $f \in \Ker \varphi$, one has $\rk f<n-1$.
\end{enumerate}
\end{theo}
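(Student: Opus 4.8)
The plan is to run the duality translation of Section~\ref{dualityargument} and feed it into the Flanders--Atkinson lemma, much as in the proof of Lemma~\ref{basiclemma}.

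First, the preliminary reductions. As $\calS$ is not $2$-LLD, some evaluation map $\widehat{x_0}\in\widehat\calS$ has a kernel of dimension $\leq 1$, hence $\rk\widehat{x_0}\geq n-1$; since $\widehat\calS$ is defective, this forces $\urk(\widehat\calS)=n-1$. Consequently every rank-optimal $\varphi\in\widehat\calS$ has a one-dimensional kernel $\Ker\varphi=\K f_\varphi$, and Lemma~\ref{basiclemma} (applicable because $\#\K\geq n>n-1$) gives $\im f_\varphi\subseteq\im\varphi$, so $\rk f_\varphi\leq n-1$ in all cases. Thus (ii) simply says ``$\rk f_\varphi<n-1$ for every rank-optimal $\varphi$'', and if (ii) fails there is a rank-optimal $\varphi_0$ with $\rk f_0=n-1$, where $f_0:=f_{\varphi_0}$; then $V_0:=\im f_0=\im\varphi_0$ has dimension $n-1$. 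Assume moreover that (i) fails, pick $g\in\calS$ with $\rk g\geq n$, choose an $n$-dimensional subspace $V_1$ of $\im g$, and let $\pi\in\calL(V)$ be an idempotent with image the finite-dimensional space $W:=V_0+V_1$. Post-composing with $\pi$ keeps $\pi\circ\varphi_0$ of rank $n-1$ and rank-optimal in $\pi\widehat\calS:=\{h\mapsto\pi(h(x))\mid x\in U\}$, with $\Ker(\pi\circ\varphi_0)=\K f_0$, keeps $\rk(\pi\circ f_0)=n-1$, and keeps $\rk(\pi\circ g)\geq n$. So it is enough to reach a contradiction when $V$ is finite-dimensional, say of dimension $m$.

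Choose a basis $\bfB$ of $\calS$ ending with $f_0$ and a basis $\bfC$ of $V$ beginning with a basis of $V_0$, so that $\varphi_0$ is represented by $J_{n-1}$; let $\calM\subseteq\Mat_{m,n}(\K)$ be the resulting matrix model of $\widehat\calS$, so $\urk\calM=n-1$ and $J_{n-1}\in\calM$. By the Flanders--Atkinson lemma every $M\in\calM$ decomposes as $M=\begin{bmatrix}A(M)&C(M)\\ B(M)&[0]\end{bmatrix}$ (blocks of sizes $(n-1)\times(n-1)$, $(n-1)\times1$, $(m-n+1)\times(n-1)$ and $(m-n+1)\times1$), with $B(M)A(M)^kC(M)=0$ for all $k\in\N$. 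Since $f_0$ is the last vector of $\bfB$, the $n$-th columns of the matrices in $\calM$ span a copy of $\im f_0$; as these columns equal $\begin{bmatrix}C(M)\\ [0]\end{bmatrix}$ by the above, the hypothesis $\rk f_0=n-1$ says that $M\mapsto C(M)$ is onto $\K^{n-1}$. Passing to $\K(t)$ (field extension lemma), all these relations polarize without extra cardinality constraints: polarizing $B(M)C(M)=0$ gives $B(M)C(M')+B(M')C(M)=0$, and, using the surjectivity of $C$, this forces $B(M)=\beta(C(M))$ for a fixed linear map $\beta$ with $\beta(v)v=0$ and $\beta(v)w+\beta(w)v=0$ for all $v,w\in\K^{n-1}$. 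Fix $M_1,\dots,M_{n-1}\in\calM$ with $C(M_i)=e_i$, put $A_v:=\sum_i v_iA(M_i)$ and $A_0:=\{A(M)\mid C(M)=0\}$ (which contains $I_{n-1}$); polarizing $B(M)A(M)C(M)=0$ in $C(M)$ and along $A_0$ then yields $\beta(v)A'v=0$ for all $v$ and all $A'\in A_0$, together with trilinear identities among $\beta$, the $A_v$'s and the vectors, all consequences of the ``master'' relation $\beta(v)(A'+A_v)^kv=0$ ($A'\in A_0$, $v\in\K^{n-1}$, $k\in\N$).

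It now suffices to prove that $\dim(\calM X)\leq n-1$ for every $X\in\K^n$: indeed $\rk g=\dim(\calM X_g)$ for the coordinate vector $X_g$ of $g$, contradicting $\dim(\calM X_g)\geq n$. Write $X=\begin{bmatrix}X'\\x_n\end{bmatrix}$, so $MX=\begin{bmatrix}A(M)X'+x_nC(M)\\ \beta(C(M))X'\end{bmatrix}$, and set $\calM':=\{M\in\calM\mid\beta(C(M))X'=0\}$. The projection of $\calM X$ onto its last $m-n+1$ coordinates is onto $\{\beta(v)X'\mid v\in\K^{n-1}\}$ with kernel $\calM'X$, so $\dim(\calM X)=\dim(\calM'X)+s$ where $s:=\dim\{\beta(v)X'\mid v\in\K^{n-1}\}$; and since $\beta(X')X'=0$, the space $\{v\mid\beta(v)X'=0\}$ contains $X'$, whence $s\leq n-2$. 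Thus it is enough to show $\calM'X\subseteq\{v\mid\beta(v)X'=0\}$, i.e.\ $\beta\bigl(A(M)X'+x_nC(M)\bigr)X'=0$ for every $M\in\calM'$: then $\dim(\calM'X)\leq(n-1)-s$ and the two bounds add to $n-1$. For $M\in\calM'$ the term $x_n\beta(C(M))X'$ vanishes by definition of $\calM'$; writing $A(M)=A'+A_{C(M)}$ with $A'\in A_0$, the part $\beta(A'X')X'$ vanishes by $\beta(X')A'X'=0$ and $\beta(u)w=-\beta(w)u$; so everything reduces to the single clean statement
$$\beta(c)X'=0\ \Longrightarrow\ \beta(A_cX')X'=0\qquad(c\in\K^{n-1}).$$
I expect this to be the main obstacle. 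It does not follow from the quadratic identities alone — $A_{X'}X'$ and $A_cX'$ need not lie in $\K X'$ — and one must really use the higher Flanders--Atkinson relations (the master identity polarized in $v$ over $\K(t)$) to show that the bilinear map $(u,c)\mapsto\beta(A_cu)u$ is controlled by the alternating form $(u,w)\mapsto\beta(u)w$; the remaining manipulation, as well as the few extremal cases (e.g.\ $\#\K$ only just $\geq n$, or very small characteristic), should then be routine thanks to the $\K(t)$-reduction, which makes every polarized identity available under the sole hypothesis $\#\K>n-1$.
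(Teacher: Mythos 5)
Your preliminary reductions, the Flanders--Atkinson block structure for $\calM$, the surjectivity of $M \mapsto C(M)$, and the factorization $B(M)=\beta(C(M))$ with $\beta(v)w$ alternating in $(v,w)$ are all correct and parallel the beginning of the paper's proof. The proposal then diverges decisively, and the divergence is exactly where the gap lies.

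The paper does \emph{not} attempt to extract the dimension bound $\dim\calM X\leq n-1$ from polarized Flanders--Atkinson identities. Instead, it forms a polynomial $G$ from an $(n+1)\times(n+1)$ minor of the matrix of $\check X:M\mapsto MX$, observes that $G$ vanishes at every $\calM$-good vector, explicitly exhibits a new family of $\calM_{\K((t))}$-good vectors $\begin{bmatrix}t(I_{n-1}-tA)^{-1}X'\\1\end{bmatrix}$ (one for each $X'\in\K^{n-1}$, using the Flanders--Atkinson structure and the invertibility of $I_{n-1}-tA$), and then runs a degree argument: expanding $G=\sum_k G_k\cdot(\mathbf x_n)^{n-k}$ and isolating the lowest nonvanishing homogeneous part $G_d$, the vanishing of \eqref{powerseries} forces $G_d=0$ since $\#\K\geq n\geq d$, contradiction. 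The role of $\K((t))$ there is to manufacture \emph{new good vectors}, not to polarize identities. Your proposal uses $\K(t)$ only for the latter and misses the former, which is the engine of the argument.

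Your own plan reduces everything to the implication $\beta(c)X'=0\Rightarrow\beta(A_cX')X'=0$, and you explicitly do \emph{not} prove it: ``I expect this to be the main obstacle'' and ``should then be routine'' are not a proof. The implication is not a consequence of the quadratic identity $\omega(\mu,A'\mu)=0$ ($A'\in A_0$) nor of the cubic identity $\omega(\mu,A_\mu\mu)=0$ (writing $\omega(u,w):=\beta(u)w$); it is far from clear that the full hierarchy $\beta(\mu)(A'+A_\mu)^k\mu=0$ delivers it. The special case $n=3$, where $\beta(c)X'=0$ forces $c\parallel X'$, is misleading: for larger $n$ the condition $\beta(c)X'=0$ is merely a codimension-$s$ constraint with much less structure. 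Worse, the containment $\calM'X\subseteq\Ker(u\mapsto\beta(u)X')$ that you actually aim for is a priori strictly stronger than the inequality $\dim\calM'X\leq n-1-s$ you need, so it is not even clear that your target statement is true, let alone provable by polarizing. In short: the decisive step is missing, and the strategy of reading the bound off the polarized Flanders--Atkinson relations --- rather than via the density-plus-degree argument on $G$ --- remains unsubstantiated.
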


As Corollary \ref{basiccor} shows that every $2$-LLD space of operators contains a non-zero operator of rank less than $n-1$,
Theorem \ref{refinedMStheogeneralized} yields:

\begin{theo}\label{refinedMStheobetter}
Let $\calS$ be an $n$-dimensional LLD subspace of $\calL(U,V)$, with $\# \K \geq n$.
Then, either $\calS$ contains a non-zero element of rank less than $n-1$, or else
all the non-zero elements of $\calS$ have rank $n-1$.
\end{theo}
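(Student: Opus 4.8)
The plan is to obtain Theorem~\ref{refinedMStheobetter} as a two-line consequence of Theorem~\ref{refinedMStheogeneralized} and Corollary~\ref{basiccor}, by splitting on whether or not $\calS$ is $2$-LLD.

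\textbf{Case 1: $\calS$ is $2$-LLD.} Here I would apply Corollary~\ref{basiccor} with $c=2$; its cardinality hypothesis $\#\K>n-2$ is satisfied because $\#\K\geq n$. This produces a subspace $\calT\subset\calS$ with $\dim\calT\geq 2$ and a subspace $V_0\subset V$ with $\dim V_0\leq n-2$ such that $\im f\subset V_0$ for every $f\in\calT$. Any non-zero operator in $\calT$ then has rank at most $n-2<n-1$, so $\calS$ contains a non-zero operator of rank less than $n-1$, which is the first alternative of the conclusion.

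\textbf{Case 2: $\calS$ is not $2$-LLD.} Now Theorem~\ref{refinedMStheogeneralized} applies, its hypothesis $\#\K\geq n$ being granted. If alternative~(i) of that theorem holds, every $f\in\calS$ satisfies $\rk f\leq n-1$; then either all non-zero elements of $\calS$ have rank exactly $n-1$ (second alternative of the present theorem), or some non-zero element has rank at most $n-2<n-1$ (first alternative). If alternative~(ii) holds, I would fix a rank-optimal $\varphi\in\widehat{\calS}$; since $\calS$ is LLD, $\widehat{\calS}$ is defective, so $\rk\varphi<n=\dim\calS$ and hence $\Ker\varphi$ is a non-zero subspace of $\calS$. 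Picking a non-zero $f\in\Ker\varphi$, alternative~(ii) gives $\rk f<n-1$, so once more $\calS$ contains a non-zero operator of rank less than $n-1$.

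In every case the stated dichotomy holds. There is no real obstacle at this stage: the argument is pure bookkeeping, and the only points worth a moment's care are (a) verifying that $\#\K\geq n$ is indeed enough to invoke both Corollary~\ref{basiccor} (which needs $\#\K>n-2$) and Theorem~\ref{refinedMStheogeneralized} (which needs $\#\K\geq n$), and (b) the elementary remark that a rank-optimal operator in a defective operator space, having rank strictly below the dimension of its source, automatically has a non-zero kernel. The substance of the theorem is entirely carried by Theorem~\ref{refinedMStheogeneralized}, whose proof is the genuinely non-trivial ingredient.
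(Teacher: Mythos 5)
Your proof is correct and follows exactly the route the paper takes: the paper derives Theorem~\ref{refinedMStheobetter} from Theorem~\ref{refinedMStheogeneralized} by noting that Corollary~\ref{basiccor} (with $c=2$) handles the $2$-LLD case, while for the non-$2$-LLD case both alternatives of Theorem~\ref{refinedMStheogeneralized} yield the dichotomy. Your version merely spells out the bookkeeping (the cardinality checks and the observation that defectiveness forces $\Ker\varphi\neq\{0\}$) that the paper leaves implicit.
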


\begin{proof}[Proof of Theorem \ref{refinedMStheogeneralized}]
Set $r:=n-1$ for convenience.
Assume first that $V$ is finite-dimensional.
Let $\varphi$ be a rank-optimal element of $\widehat{\calS}$, so that $\rk \varphi=r$, and
assume that $\Ker \varphi$ contains a non-zero element $f$ with $\rk f \geq r$.
Then $\im f=\im \varphi$ as we already know from Lemma \ref{basiclemma} that $\im f \subset \im \varphi$.
We may then find respective bases of $\calS$ and $V$ such that
the matrix of $\varphi$ in those bases is
$$J:=\begin{bmatrix}
I_r & [0]_{r \times 1} \\
[0]_{(m-r) \times r} & [0]_{(m-r) \times 1}
\end{bmatrix}, \quad \text{where $m:=\dim V$.}$$
Denote by $\calM$ the vector space formed by the matrices associated with the operators of $\widehat{\calS}$ in the above bases.
By Lemma \ref{Flanders} and our assumptions on $\calS$, we see that $\calM$ has the following properties:
\begin{enumerate}[(a)]
\item $\urk(\calM)=r$.
\item For every $C \in \K^r$, the subspace $\calM$ contains a matrix of the form
$$\begin{bmatrix}
[?]_{r \times r} & C \\
[?]_{(m-r) \times r} & [0]_{(m-r) \times 1}
\end{bmatrix}.$$
\end{enumerate}
Let us say that a vector $X \in \K^n$ is \textbf{$\calM$-good} when it belongs to the kernel
of a rank $r$ matrix of $\calM$. By Lemma \ref{basiclemma}, one has $\dim \calM X \leq r$ for every such $X$.
For infinite fields, our basic idea is to show that the set of all $\calM$-good vectors is algebraically dense in $\K^n$.

Consider a basis $\bfD$ of $\calM$, denote by $\bfC$ the canonical basis of $\K^m$,
and, for $X \in \K^n$, consider the linear operator
$\check{X} : M \in \calM \mapsto MX \in \K^m$. \\
Set $p:=\dim \calM$.
Fix arbitrary finite subsets $I$ of $\lcro 1,m\rcro$ and $J$ of $\lcro 1,p\rcro$, both with cardinality $n$, and, for
a matrix $N \in \Mat_{m,p}(\K)$, denote by $N(I\mid J)$ the matrix of $\Mat_n(\K)$ obtained from $N$
by selecting the rows indexed over $I$ and the columns indexed over $J$.
Then $X \in \K^n \mapsto \det \Mat_{\bfD,\bfC}(\check{X})(I\mid J)$ is
a polynomial function associated with an $n$-homogeneous polynomial $G \in \K[\mathbf{x_1},\dots,\mathbf{x_n}]$, and
it vanishes at every $\calM$-good vector. We wish to show that $G$ vanishes everywhere on $\K^n$.

Obviously, $G$ is unchanged by replacing the ground field $\K$ with one of its extensions $\L$
(as $\bfD$ is also a basis of the $\L$-vector space $\calM_\L$).
Moreover, the space $\calM_\L$ still has upper-rank $r$ (see Lemma \ref{fieldextension}), and hence $G$ vanishes at
every $\calM_\L$-good vector.

Let $X \in \K^r$. Using point (b) above, we may find a matrix in $\calM$ of the form
$$M=\begin{bmatrix}
A & X \\
[?] & [?]
\end{bmatrix}, \quad \text{where $A \in \Mat_r(\K)$.}$$
We now work with $\L=\K((t))$, the quotient field of the power series ring over $\K$ with one indeterminate $t$.
The space $\calM_\L$ contains
$$J-tM=\begin{bmatrix}
I_r-tA & -tX \\
[?] & [?]
\end{bmatrix}.$$
However $I_r-tA$ is invertible, with inverse $\underset{k=0}{\overset{+\infty}{\sum}}t^k A^k$.
As $\rk(J-tM) \leq r$, we deduce that
$$\Ker (J-tM)=\Ker \begin{bmatrix}
I_r-tA & -tX
\end{bmatrix}.$$
Therefore, the vector $\begin{bmatrix}
t(I_r-tA)^{-1}X \\
1
\end{bmatrix}$ belongs to the kernel of $J-tM$, which shows that it is $\calM_\L$-good.
Thus,
\begin{equation}\label{powerseries}
G\bigl(t(I_r-tA)^{-1}X\, ,\,1\bigr)=0.
\end{equation}
Let us split
$$G=\underset{k=0}{\overset{n}{\sum}}G_k(\mathbf{x_1},\dots,\mathbf{x_{n-1}})\,(\mathbf{x_n})^{n-k},$$
where $G_k$ is a $k$-homogeneous polynomial for all $k \in \lcro 0,n\rcro$.
As $(0,\dots,0,1)$ is $\calM$-good, we already have $G_0=0$. \\
Assume now that $G \neq 0$, and set $d:=\min \bigl\{k \in \lcro 1,n\rcro : \; G_k\neq 0\bigr\}$.
Expanding identity \eqref{powerseries} then yields
$$t^d \,G_d(X)=0 \quad \text{mod. $t^{d+1}$,}$$
and hence $G_d(X)=0$. Varying $X$ then yields $G_d=0$ as $\# \K \geq n \geq d$.
This contradiction shows that $G=0$.

Varying $I$ and $J$, we deduce that $\rk \check{X} \leq r$ for all $X \in \K^n$,
and hence $\rk f < n$ for all $f \in \calS$.

We finish by reducing the general case to the finite-dimensional one.
Assume that $\calS$ contains an operator $g$ such that $\rk g>n-1$.
Let $\varphi$ be a rank-optimal operator in $\widehat{\calS}$.
Then, we may choose a finite-dimensional linear subspace $V_1$ of $V$ such that $V_1 \subset \im g$ and $\dim V_1>n-1$.
Therefore, $V_0:=\im \varphi+V_1$ is finite-dimensional, and we may choose an idempotent $\pi \in \calL(V)$
with range $V_0$. Using the above matrix method for the operator space
$\bigl\{\pi \circ \psi\mid \psi \in \widehat{\calS}\bigr\}$, we find that
$\rk (\pi \circ f)<n-1$ for all $f \in \Ker (\pi \circ \varphi)$.
Therefore, for every $f \in \Ker \varphi$, one concludes that $\rk (f)=\rk (\pi \circ f)<n-1$
as $\im f \subset \im \varphi$.
\end{proof}

For $c$-LLD operator spaces, we have the following easy corollary:

\begin{cor}\label{refinedcor}
Let $\calS$ be an $n$-dimensional $c$-LLD subspace of $\calL(U,V)$.
Assume that $\# \K > n-c$ and that $\calS$ is not $(c+1)$-LLD.
Then:
\begin{enumerate}[(i)]
\item Either $\rk f \leq n-c$ for all $f \in \calS$.
\item Or, for every rank-optimal $\varphi \in \widehat{\calS}$
and every $f \in \Ker \varphi$, one has $\rk f<n-c$.
\end{enumerate}
\end{cor}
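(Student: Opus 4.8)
The plan is to deduce this from Theorem~\ref{refinedMStheogeneralized} by restricting $\calS$ to well-chosen $(n-c+1)$-dimensional subspaces. Write $r:=n-c$. Since $\calS$ is $c$-LLD, every operator of $\widehat{\calS}$ has rank at most $r$; since $\calS$ is not $(c+1)$-LLD, some operator of $\widehat{\calS}$ has rank at least $r$. Hence a rank-optimal $\varphi\in\widehat{\calS}$ has rank exactly $r$; note that $r\geq1$ (otherwise $\widehat{\calS}=\{0\}$, whence $\calS=\{0\}$), so $c\leq n-1$, and $\dim\Ker\varphi=n-r=c$. The main tool will be the following elementary remark: given a nonzero $f_0\in\Ker\varphi$ and a complement $\calC$ of $\Ker\varphi$ in $\calS$, the subspace $\calS':=\K f_0\oplus\calC$ has dimension $r+1$, satisfies $\calS'\cap\Ker\varphi=\K f_0$, and $\widehat{\calS'}\subseteq\calL(\calS',V)$ consists of the restrictions to $\calS'$ of the operators of $\widehat{\calS}$; in particular $\urk\widehat{\calS'}\leq\urk\widehat{\calS}=r$, while $\varphi|_{\calS'}\in\widehat{\calS'}$ has rank $\dim\calS'-\dim(\calS'\cap\Ker\varphi)=r$. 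Thus $\calS'$ is LLD (as $\urk\widehat{\calS'}\leq r<\dim\calS'$), is not $2$-LLD (as $\widehat{\calS'}$ contains $\varphi|_{\calS'}$, whose rank equals $\dim\calS'-1$), the operator $\varphi|_{\calS'}$ is rank-optimal in $\widehat{\calS'}$ with $\Ker(\varphi|_{\calS'})=\K f_0$, and $\#\K\geq r+1=\dim\calS'$ by hypothesis. Theorem~\ref{refinedMStheogeneralized}, applied to $\calS'$, therefore yields: \emph{either} every operator of $\calS'$ has rank at most $r$, \emph{or} $\rk f_0<r$.

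The proof then follows the dichotomy of the statement. If every operator of $\calS$ has rank at most $r=n-c$, conclusion~(i) holds. Otherwise, fix $g\in\calS$ with $\rk g>r$; I claim conclusion~(ii) holds. So let $\varphi\in\widehat{\calS}$ be rank-optimal and let $f\in\Ker\varphi$; the case $f=0$ being trivial, assume $f\neq0$. First, $g\notin\Ker\varphi$: for if $g\in\Ker\varphi$ then $g\neq0$, and applying the remark with $f_0:=g$ produces a subspace $\calS'$ containing $g$, in which $g$ has rank $>r$, so $\calS'$ cannot satisfy the first alternative, and therefore $\rk g<r$, which is absurd. Since $g\notin\Ker\varphi$ and $g\neq0$, the line $\K g$ extends to a complement $\calC$ of $\Ker\varphi$ in $\calS$, and then $g\in\calC\subseteq\K f\oplus\calC$. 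Applying the remark with $f_0:=f$ and this $\calC$, the subspace $\calS':=\K f\oplus\calC$ again contains $g$, which has rank $>r$; so it fails the first alternative, and the second alternative gives $\rk f<r=n-c$. As $\varphi$ and $f$ were arbitrary, this is conclusion~(ii).

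No preliminary reduction to the case of finite-dimensional $V$ is needed, since Theorem~\ref{refinedMStheogeneralized} itself holds for arbitrary $U$ and $V$; everything above takes place inside $\calL(U,V)$ and uses nothing more than linear algebra in the $n$-dimensional space $\calS$. The only steps requiring any care are the bookkeeping of $\dim\calS'$, of $\calS'\cap\Ker\varphi$ and of $\rk(\varphi|_{\calS'})$ in the remark, together with the fact, used twice, that a nonzero operator lying outside $\Ker\varphi$ spans a line that can be completed to a complement of $\Ker\varphi$ in $\calS$. I do not expect any genuine obstacle: the whole content of the corollary is carried by Theorem~\ref{refinedMStheogeneralized}, the present argument being merely the observation that a $c$-LLD space which is not $(c+1)$-LLD restricts, along complements of the kernel of a rank-optimal element of $\widehat{\calS}$, to honest LLD spaces that are not $2$-LLD.
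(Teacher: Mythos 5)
Your proof is correct, and the strategy is essentially the one used in the paper: construct an $(n-c+1)$-dimensional LLD subspace $\calS'$ of $\calS$ containing both a fixed operator $g$ of rank greater than $n-c$ and the operator $f\in\Ker\varphi$, check that $\varphi$ restricts to a rank-optimal element of $\widehat{\calS'}$ with kernel $\K f$, and conclude by Theorem~\ref{refinedMStheogeneralized}. The one place where you diverge is in establishing $g\notin\Ker\varphi$: the paper does this in a single line by Lemma~\ref{basiclemma} (if $g\in\Ker\varphi$ then $\im g\subset\im\varphi$, so $\rk g\leq n-c$), whereas you run a second pass through your remark, applying Theorem~\ref{refinedMStheogeneralized} to $\K g\oplus\calC$. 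This is slightly longer but correct, and it has the mild virtue of making the corollary depend on Theorem~\ref{refinedMStheogeneralized} alone, without separately invoking Lemma~\ref{basiclemma}. Your bookkeeping (that $\calS'\cap\Ker\varphi=\K f_0$, that $\rk(\varphi|_{\calS'})=n-c$, that the cardinality hypothesis $\#\K>n-c$ gives exactly $\#\K\geq\dim\calS'$) is all accurate.
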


\begin{proof}
Assume that (i) is false, and choose some $h \in \calS$ with $\rk h>n-c$.
Let $\varphi$ be a rank-optimal operator in $\widehat{\calS}$. Then, $\rk \varphi=n-c$
and Lemma \ref{basiclemma} shows that $\varphi(h) \neq 0$. \\
Let $f \in \Ker \varphi \setminus \{0\}$.
We may choose a splitting $\calS=\calS'\oplus \Ker \varphi$ such that $h \in \calS'$,
and set $\calT:=\calS' \oplus \K f$. Then, $\calT$ is an $(n-c+1)$-dimensional LLD operator space
and $\varphi_{|\calT}$ is rank-optimal in $\widehat{\calT}$ as $\varphi(\calS')=\im \varphi$ is $(n-c)$-dimensional.
As $h \in \calT$ and $\rk h>n-c$, Theorem \ref{refinedMStheogeneralized} applied to $\calT$ yields $\rk f<n-c$.
Therefore, property (ii) holds.
\end{proof}

As the following example shows, it is hard to come up with a significant strengthening of
Corollary \ref{refinedcor}.

\begin{ex}
Let $p$ be an even integer with $p>2$. Set $n:=\dbinom{p}{2}$.
Consider the space $\calS=\Mata_p(\K)$, naturally seen as an $n$-dimensional linear subspace of $\calL(\K^p)$.
We endow $\K^p$ with its canonical non-degenerate symmetric bilinear form $(X,Y) \mapsto X^TY$, for which orthogonality is now considered.
For every non-zero $x \in \K^p$, the space $\calS x=\{x\}^\bot$ is $(p-1)$-dimensional.
It follows that $\calS$ is a $c$-LLD operator space for $c:=\dbinom{p}{2}-(p-1)=\dbinom{p-1}{2}$
and that every non-zero operator in $\widehat{\calS}$ is rank-optimal. \\
However:
\begin{itemize}
\item $\calS$ is obviously not $(c+1)$-LLD.
\item As $p$ is even, there is an invertible matrix in $\Mata_p(\K)$; therefore, some
operator in $\calS$ has rank greater than $n-c=p-1$.
\item For every $x \in \K^p \setminus \{0\}$, the kernel of $\varphi : f \mapsto f(x)$ consists of
all the alternating matrices $A \in \Mata_p(\K)$ for which $Ax=0$: one checks that
$\{Ay \mid A \in \Ker \varphi, \; y \in \K^p\}=\{x\}^\bot=\im \varphi$.
\end{itemize}
Therefore, even if conclusion (i) of Corollary \ref{refinedcor} does not hold for a given space $\calS$,
the set $\bigl\{f(y) \mid f \in \Ker \varphi, \;y \in U\bigr\}$ may equal $\im \varphi$ for some rank-optimal
operator $\varphi \in \widehat{\calS}$.
\end{ex}

\subsection{A note on Amitsur's lemma}\label{Amitsurnote}

Amitsur's work on rings with polynomial identities \cite{Amitsur} was one of the reasons why the minimal rank problem
was studied in the first place. In this work, Amitsur actually needed a generalization to
skew fields (Lemma 3 in \cite{Amitsur}), which boils down to the following statement:

\begin{lemme}[Amitsur's lemma]
Let $A$ be an abelian group, $V$ be a left vector space over a skew field $D$,
and $f_1,\dots,f_n$ be group homomorphisms from $A$ to $V$ such that
$(f_1(x),\dots,f_n(x))$ is linearly dependent over $D$ for all $x \in A$. Then, for some
$(\alpha_1,\dots,\alpha_n) \in D^n \setminus \{0\}$, the operator $\alpha_1 f_1+\cdots+\alpha_n f_n$ has finite rank less than or equal to
$\dbinom{n+1}{2}-2$.
\end{lemme}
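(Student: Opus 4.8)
The plan is to transplant the duality argument into the setting of the division ring $D$, following the proofs of Lemma~\ref{basiclemma} and Proposition~\ref{AmitsurlikeProp}, and then to fight for the sharp constant in a single borderline case. Let $\calS$ be the set of all homomorphisms $\alpha_1f_1+\cdots+\alpha_nf_n$ with $(\alpha_1,\dots,\alpha_n)\in D^n$; since the relations $\sum_i\alpha_if_i=0$ form a left $D$-subspace of $D^n$, $\calS$ is a left $D$-vector space of dimension at most $n$. If $\dim_D\calS<n$, a nontrivial relation produces a non-zero element of $\calS$ of rank $0\le\binom{n+1}{2}-2$ (assuming $n\ge2$; the case $n=1$ is degenerate), so we may assume $(f_1,\dots,f_n)$ is a $D$-basis of $\calS$. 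For $x\in A$ the evaluation $\hat x\colon g\mapsto g(x)$ is left-$D$-linear from $\calS$ to $V$, and the hypothesis says exactly that every element of $\widehat\calS:=\{\hat x\mid x\in A\}$, an additive subgroup of $\operatorname{Hom}_D(\calS,V)$, is non-injective. Writing $R(g)$ for the $D$-linear span of the range of an operator $g$, we have $R(g)=\operatorname{span}_D\{\hat x(g)\mid x\in A\}$, so the task is to find $g\in\calS\setminus\{0\}$ with $\dim_DR(g)\le\binom{n+1}{2}-2$.

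Since all elements of $\widehat\calS$ have rank below $n$, the supremum of their ranks is attained; fix a rank-optimal $\varphi\in\widehat\calS$ of rank $r\in\{1,\dots,n-1\}$. If some $g_0\in\Ker\varphi$ had too large a rank, I would compose all the maps $\hat x$ with an idempotent of $V$ onto a finite-dimensional subspace containing $\im\varphi$ and a large part of the range of $g_0$ --- precisely as in the proofs of Lemma~\ref{basiclemma} and Proposition~\ref{AmitsurlikeProp} --- reducing to $V=D^m$. Choosing $D$-bases of $\calS$ and of $V$ in which $\varphi$ becomes $J_r$, represent $\widehat\calS$ by an additive subgroup $\calM\subseteq\Mat_{m,n}(D)$: then $J_r\in\calM$, every matrix of $\calM$ has rank $\le r$, and --- since $\calM$ is closed under addition --- so does every $J_r+M$ with $M\in\calM$.

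Now the compression step of Proposition~\ref{AmitsurlikeProp} goes through over $D$. Split each $M\in\calM$ into the blocks $L(M)+U(M)$ (lower-triangular plus strictly upper-triangular $r\times r$ corner), $B(M)$ of size $r\times(n-r)$, an unconstrained $(m-r)\times r$ block, and $D(M)$ of size $(m-r)\times(n-r)$; for $M$ with $L(M)=0$ and $B(M)=0$, the unitriangular matrix $I_r+U(M)$ is invertible over $D$, and Gaussian elimination on the block rows of $J_r+M$ (which has rank $\le r$) forces $D(M)=0$. This uses only elementary operations over $D$, invertibility of unitriangular matrices, and the single integer combination $J_r+M\in\calM$ --- no determinants and no non-central extension of scalars --- hence is valid over a division ring. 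Running the rank bookkeeping of Proposition~\ref{AmitsurlikeProp} then gives $\dim_DR(g_0)\le\binom{r+1}{2}+r(n-r)$ for every $g_0\in\Ker\varphi$; some care is needed to check that this dimension count survives over a noncommutative base, i.e.\ that the relevant passages are restrictions of genuine $D$-linear maps between $D$-vector spaces of matrices.

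Since $r\mapsto\binom{r+1}{2}+r(n-r)$ increases on $[1,n]$, whenever $r\le n-2$ we already have $\dim_DR(g_0)\le\binom{n-1}{2}+2(n-2)=\binom{n+1}{2}-3$, so any non-zero $g_0\in\Ker\varphi$ works. The case $r=n-1$, where the bound reads $\binom{n+1}{2}-1$ --- one too large --- is the main obstacle and needs a finer analysis: here $\Ker\varphi=Dg_0$ is a line, and if $g_0$ does not annihilate all of $A$ I would pick $x_1$ with $g_0(x_1)\ne0$ and use the integer pencil $\widehat{x_0+kx_1}$, whose value at $g_0$ is $kg_0(x_1)\ne0$ for $k$ prime to the characteristic, so that its kernel is transverse to $Dg_0$; either its rank drops below $n-1$, reducing matters to the previous case, or an additional entry of the compressed matrix space is pinned to zero, recovering the missing unit. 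Making that last step precise, and making the dimension bookkeeping above fully rigorous over a noncommutative division ring, are where the genuine work lies.
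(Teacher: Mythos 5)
The paper does not actually prove this lemma: it cites Amitsur's original Lemma 3 of \cite{Amitsur} and only remarks that Amitsur's own argument can be tightened to give $\binom{n}{2}$, adding a separately proved variant when $D$ is finite-dimensional over its centre. So there is no in-paper proof to compare against, and your proposal has to be judged on whether it closes on its own. It does not, and the gap is more substantial than the ``some care'' and ``genuine work'' you flag.

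The compression step itself does transport to a division ring: Gaussian elimination on a block with an invertible unitriangular corner uses no scalars beyond $\Z$, and you only need that $J_r$ and $J_r+M$ both lie in $\widehat{\calS}$ and have rank at most $r$. So on the additive group $\calM$ you do get that $L(M)=0$ and $B(M)=0$ force $E(M)=0$. But the very next line of Proposition~\ref{AmitsurlikeProp}, namely $\dim E(\calM)\le\dim L(\calM)+\dim B(\calM)$, is the rank--nullity theorem for the $\K$-linear map $E$ restricted to the $\K$-subspace $\calM$. Over $D$ this has no analogue: $A$ carries no $D$-action, so $\widehat{\calS}$ is only an additive subgroup of $\operatorname{Hom}_D(\calS,V)$ and there is no $D$-linear structure on $\calM$ on which to apply rank--nullity. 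What you actually need is a bound on the $D$-dimension of the left $D$-span of $\{\hat x(g_0) : x\in A\}$, and knowing that the additive map $x\mapsto\hat x(g_0)$ factors through an additive group sitting inside a $D$-space of dimension $\binom{r+1}{2}+r(n-r)$ does \emph{not} bound that $D$-dimension: a tiny additive subgroup of a one-dimensional $D$-space can be mapped additively onto vectors spanning an arbitrarily large $D$-subspace. Nor can you repair this by replacing $\calM$ with its $D$-span, since the latter's elements need not have rank at most $r$, so the inclusion $\Ker L\cap\Ker B\subseteq\Ker E$ is no longer guaranteed there. This is precisely where Amitsur's proof does something structurally different: it builds the range of the sought operator by a greedy induction over well-chosen elements of $A$ and never requires $\widehat{\calS}$ to be a $D$-module. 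Your treatment of the borderline case $r=n-1$ is also too loosely sketched to assess, but the bookkeeping problem above already stops the cases $r\le n-2$.
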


Here, we wish to make a couple of remarks on that lemma. First of all, by following Amitsur's proof, one actually
comes up with the improved upper bound $\dbinom{n}{2}$. Indeed, in the notation of the proof of Lemma 3.1 of \cite{Amitsur},
inequality $\dim U_1 \leq \dim U_0+\dim \calT v_1 \leq \dim U_0+\tau$ can be replaced with
$\dim U_1 \leq \dim U_0+\dim \calT v_1-1\leq \dim U_0+\tau-1$ since it is assumed that the vectors $T_1 v_1,\dots,T_\tau v_1$ are linearly
dependent mod.\ $U_0$, and one can proceed by induction to obtain the claimed upper bound.

Secondly, Meshulam and \v Semrl claimed (see p. 452 of \cite{MeshulamSemrlPJM})
that Bre\v sar and \v Semrl's proof of Theorem \ref{basictheo} actually applies to infinite skew fields and, together with the
corresponding theorem for finite fields, yields the improved upper bound $n-1$ in Amitsur's lemma.
This is doubtful however as their method would require that, for every
$n \times n$ square matrix $M$ with entries in $D$, there should be an element of the prime subfield of $D$
which is not a left-eigenvalue of $M$, a statement which is obviously false for skew fields of small positive characteristic.

However, in order to prove Theorem 4.1 of \cite{MeshulamSemrlPJM}, it is not necessary to improve
the upper bound in the general context of Amitsur's lemma, but only in the one where
$D$ is finite-dimensional over its center $C$, the abelian group $A$ is endowed with a structure of $C$-vector space,
and the group homomorphisms $f_1,\dots,f_n$ are actually $C$-linear. In that situation, we can indeed prove:

\begin{lemme}
Let $D$ be a division algebra which is finite-dimensional over its center $C$.
Let $U$ be a $C$-vector space, $V$ be a $D$-left vector space, and $f_1,\dots,f_n$ be $C$-linear maps from $U$ to $V$.
Then, some non-trivial linear combination of $f_1,\dots,f_n$ over $D$ has rank at most $n-1$.
\end{lemme}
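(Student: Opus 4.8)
The plan is to descend from the noncommutative division algebra $D$ to its center $C$ and apply the machinery of Section \ref{smallranksection} there, the point being that the image of a suitably chosen evaluation operator is automatically a $D$-subspace of $V$, so that the descent costs no dimension. Throughout I use, as is implicit in Amitsur's lemma, that the $f_i$ are \emph{locally linearly dependent over $D$}, meaning that $(f_1(x),\dots,f_n(x))$ is $D$-linearly dependent for every $x\in U$; and the \emph{rank} of an operator $U\to V$ means the dimension over $D$ of the $D$-subspace of $V$ spanned by its image.

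First I would dispose of two degenerate cases. If $C$ is finite, then $D$ is a finite division ring, hence a field by Wedderburn's theorem, and $D=C$ since $C$ is its center; the statement then follows from Theorem \ref{basictheogeneral} applied to the LLD space $\Vect_C(f_1,\dots,f_n)$, after noting that if the $f_i$ are $C$-linearly dependent some non-trivial combination is already the zero operator. Likewise, if $f_1,\dots,f_n$ are linearly dependent over $D$ as $C$-linear maps $U\to V$ (with $D$ acting on $V$ on the left), a non-trivial $D$-combination of them is the zero operator, of rank $0\leq n-1$. So from now on $C$ is infinite and $f_1,\dots,f_n$ are $D$-linearly independent in $\calL_C(U,V)$.

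Set $d:=\dim_C D$ and let $W:=\Vect_D(f_1,\dots,f_n)\subset\calL_C(U,V)$, a left $D$-vector space with basis $(f_1,\dots,f_n)$, hence a $C$-vector space of dimension $nd$ that I regard as a $C$-linear operator space. For each $x\in U$ the evaluation $\widehat{x}:f\mapsto f(x)$ lies in $\widehat{W}\subset\calL_C(W,V)$, and a $D$-dependence relation $\sum_i\alpha_i f_i(x)=0$ with $(\alpha_1,\dots,\alpha_n)\neq 0$ produces a non-zero operator $\sum_i\alpha_i f_i\in\Ker\widehat{x}$ (non-zero because the $f_i$ are $D$-independent). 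Thus every operator in $\widehat{W}$ is non-injective; let $\varphi:=\widehat{x_0}$ be a rank-optimal one. As $C$ is infinite, $\#C>\rk\varphi$, so Lemma \ref{basiclemma} yields $\im g\subset\im\varphi$ for every $g\in\Ker\varphi$. The key observation is that $\im\varphi=\{f(x_0)\mid f\in W\}=\Vect_D\bigl(f_1(x_0),\dots,f_n(x_0)\bigr)$ is a left $D$-subspace of $V$, of $D$-dimension at most $n-1$ since $(f_1(x_0),\dots,f_n(x_0))$ is $D$-dependent. Picking any non-zero $g\in\Ker\varphi$ (possible since $\varphi$ is not injective), we get $\Vect_D(\im g)\subset\im\varphi$, so $g$ has rank at most $n-1$; and writing $g=\sum_i\alpha_i f_i$ we have $(\alpha_1,\dots,\alpha_n)\neq 0$ since $g\neq 0$ and the $f_i$ are $D$-independent. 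This gives the desired non-trivial $D$-combination.

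The only point requiring care is the descent to $C$: Lemma \ref{basiclemma} must be applied to the whole space $W$, not to a $C$-subspace of it, because the conclusion is useful here precisely because the range of the chosen evaluation operator is a genuine $D$-subspace of $V$ — for an arbitrary $C$-subspace one controls only its $C$-dimension, and the bound $n-1$ would degrade to $(n-1)d$. Beyond this everything is routine; in particular there is no need to reduce to finite-dimensional $U$ and $V$ first, since Lemma \ref{basiclemma} is already stated for arbitrary source and target spaces.
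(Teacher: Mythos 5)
Your proof is correct, but it takes a genuinely different route from the paper's. The paper handles the infinite-$C$ case by adapting Bre\v sar and \v Semrl's original argument (their Lemma 2.1 and Theorem 2.2): the key technical input is that a matrix over $D$ has only finitely many left-eigenvalues in $C$, so a generic central $\alpha$ keeps a certain family $w_1+\alpha z_1,\dots,w_r+\alpha z_r$ linearly independent, and the rest of their proof carries over verbatim with $\alpha$ restricted to $C$. You instead push the paper's own duality machinery through: form the $C$-operator space $W=\Vect_D(f_1,\dots,f_n)$ of $C$-dimension $nd$, note that the LLD-over-$D$ hypothesis makes $\widehat{W}$ defective, apply Lemma \ref{basiclemma} to a rank-optimal $\widehat{x_0}$, and observe that $\im\widehat{x_0}=\Vect_D(f_1(x_0),\dots,f_n(x_0))$ is automatically a $D$-subspace of $V$ of $D$-dimension at most $n-1$, so any nonzero $g\in\Ker\widehat{x_0}$ has $\Vect_D(\im g)\subset\im\widehat{x_0}$. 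The two degenerate cases (finite $C$ via Wedderburn, and $D$-dependent $f_i$) are handled correctly, and your remark that no finite-dimensionality reduction for $U,V$ is needed is right since Lemma \ref{basiclemma} already covers the general case. What your route buys is self-containment and consistency with the paper's stated program of treating everything through the Flanders--Atkinson/duality lens, at the cost of inflating the operator space's dimension from $n$ to $nd$ (harmless here, since only $\#C>\rk\varphi$ is needed); what the paper's route buys is a one-sentence reduction to an already-published argument, at the cost of leaning on an external reference and a non-obvious fact about left-eigenvalues over $D$.
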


\begin{proof}
If $C$ is finite, then $D$ is also finite, and hence $D=C$ as every finite skew field is commutative.
In that case, the result follows from Proposition \ref{finitefieldcase}, or, alternatively, from Corollary 2.3 of \cite{MeshulamSemrlPJM}.

Let us now assume that $C$ is infinite. Then, Bre\v sar and \v Semrl's Lemma 2.1 of \cite{BresarSemrl}
may be adapted by taking $W$ as a vector space over $D$ and by stating that at least one $\alpha$ in $C$
is such that $w_1+\alpha z_1,\dots,w_r+\alpha z_r$ are linearly independent:
indeed, its proof ultimately rests on the fact that a matrix of $\Mat_r(D)$ has finitely many left-eigenvalues in $C$,
which holds true because the set of left-eigenvalues of such a matrix is a finite union of conjugacy classes
in $D$, and those associated with central scalars are singletons.
From there, the proof of Theorem 2.2 of \cite{BresarSemrl} may be adapted effortlessly, the only difference being that we require that $\alpha$
belongs to $C$.
\end{proof}

With a similar line of reasoning, one sees that the upper bound $n-1$ holds in the conclusion of Amitsur's lemma
provided that the skew field be of characteristic zero: in the above proof, we replace $C$ with the ring of integers,
naturally seen as a subring of $D$. For skew fields with positive characteristic, we do not know whether a better upper bound
than $\dbinom{n}{2}$ can be obtained in general.

\section{The minimal rank problem in a non-reflexive operator space (I)}\label{nonreflexivesectionI}

In this section, all the vector spaces are assumed to be finite-dimensional.

\subsection{Definitions and aims}

\begin{Def}
Let $\calS$ be a linear subspace of $\calL(U,V)$. The reflexive closure $\calR(\calS)$
of $\calS$ is the set of all linear operators $g \in \calL(U,V)$ for which $g(x) \in \calS x$ for every $x \in U$. \\
We say that $\calS$ is (\textbf{algebraically}) \textbf{reflexive} when $\calS=\calR(\calS)$.
\end{Def}

Note that the reflexive closure of $\calS$ is the smallest reflexive linear subspace of $\calL(U,V)$ which contains
$\calS$.

There is an obvious link between LLD spaces and non-reflexive spaces:
let $\calS$ be a non-reflexive operator space, and $f$ be an operator in $\calR(\calS) \setminus \calS$.
Then, $\calS \oplus \K f$ is obviously LLD and has dimension $\dim \calS+1$.
Moreover, $\calR(\calS)$ is $c$-LLD for $c:=\dim \calR(\calS)-\dim \calS$.

The converse statements are not true in the sense that, for a non-LLD operator space
$\calS$, there may be an operator $f$ outside of $\calR(\calS)$ for which $\calS \oplus \K f$ is LLD.
For example, one takes $\calS=\K \pi_1$ and $f=\pi_2$, where $\pi_1$ and $\pi_2$ are the two canonical
projections of $\K^2$ onto $\K$. Then, $\calS \oplus \K \pi_2$ is LLD, but surely
$\pi_2$ does not belong to $\calR(\calS)$, and one can even prove that $\calS$ is reflexive.

\vskip 3mm
If we take a non-reflexive space $\calS$ and $f \in \calR(\calS) \setminus \calS$
as above, then $\calS$ appears as a linear hyperplane of the LLD space $\calS \oplus \K f$.
Thus, if we have general results on the minimal rank of a hyperplane of an LLD space, then
those results yield theorems on the minimal rank of a non-reflexive space of operators,
which leads to sufficient conditions for reflexivity.

\subsection{Two results of Meshulam and \v Semrl}

Here, we shall review and reprove results of Meshulam and \v Semrl \cite{MeshulamSemrlLAA} on the minimal rank in a non-reflexive space
of operators.

\begin{theo}[Meshulam and \v Semrl \cite{MeshulamSemrlLAA}]\label{minranktheo1}
Let $\calS$ be an $(n+1)$-dimensional LLD subspace of $\calL(U,V)$, with $\# \K>n \geq 2$, and $\calT$ be a hyperplane of $\calS$.
Then, $\mrk \calT \leq 2n-2$.
\end{theo}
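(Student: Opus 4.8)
The plan is to run the duality argument, translate the problem into one about a defective space of matrices, and then split into cases according to a rank-optimal operator of $\widehat{\calS}$.

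First I would dispose of the reduction to finite dimensions: if $\calT$ contained an operator of rank $>2n-2$, one would compose everything with an idempotent $\pi\in\calL(V)$ whose range contains $\im\varphi$ together with a finite-dimensional subspace of that operator's range of dimension $>2n-2$, exactly as in the proofs of Lemma \ref{basiclemma} and Proposition \ref{AmitsurlikeProp}; so I may assume $\dim V=m<\infty$. Fix a rank-optimal $\varphi\in\widehat{\calS}$ and put $r:=\rk\varphi$; since $\calS$ is LLD we have $r\le n$, and since $\#\K>n\ge r$ Lemma \ref{basiclemma} is available. Representing $\widehat{\calS}$ by a subspace $\calM\subset\Mat_{m,n+1}(\K)$ in bases of $\calS$ and $V$ chosen so that $\calT$ becomes the hyperplane $\K^n\times\{0\}$, one has $\urk\calM=r$, and the element $f\in\calT$ whose coordinate vector is $A=\begin{bmatrix}X\\ 0\end{bmatrix}$ satisfies $\rk f=\dim(\calM A)$ (because $f(x)$ has coordinate vector $MA$ when $M$ represents $\bigl[g\mapsto g(x)\bigr]$); so it will suffice to exhibit $X\in\K^n\setminus\{0\}$ with $\dim(\calM A)\le 2n-2$.

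The easy cases come from intersecting $\Ker\varphi$ with $\calT$. If $\Ker\varphi\cap\calT\ne\{0\}$, pick a non-zero $g_0$ in it; Lemma \ref{basiclemma} gives $\im g_0\subset\im\varphi$, so $\rk g_0\le r\le n\le 2n-2$ (using $n\ge2$), and we are done. Since $\dim(\Ker\varphi\cap\calT)\ge(n+1-r)+n-(n+1)=n-r$, this settles every case with $r\le n-1$, as well as the case $r=n$ with $\Ker\varphi\subset\calT$. There remains the genuinely hard case $r=n$ and $\Ker\varphi\cap\calT=\{0\}$: then $\Ker\varphi$ is a line $\K g_0$ with $g_0\notin\calT$ and $\calS=\calT\oplus\K g_0$, and I would take $g_0$ as the last basis vector and normalise so that the matrix of $\varphi$ has first $n$ columns forming $\begin{bmatrix}I_n\\ 0\end{bmatrix}$ and last column zero. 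The Flanders--Atkinson lemma (Lemma \ref{Flanders}), applied to $\Vect(J_n,M)$ for each $M\in\calM$, then shows that every $M\in\calM$ has the block form $\begin{bmatrix}P(M)&C(M)\\ R(M)&0\end{bmatrix}$ with $R(M)P(M)^kC(M)=0$ for all $k$, and the decomposition lemma (Lemma \ref{decompositionlemma}) applied to this shape yields $\urk\calM\ge\urk R(\calM)+\urk C(\calM)$; since $g_0\ne0$ forces $C(\calM)\ne\{0\}$, we get $\urk R(\calM)\le n-1$. For $A=\begin{bmatrix}X\\ 0\end{bmatrix}$ one computes $MA=\begin{bmatrix}P(M)X\\ R(M)X\end{bmatrix}$, whence $\dim(\calM A)\le\dim\bigl(P(\calM)X\bigr)+\dim\bigl(R(\calM)X\bigr)$, and the task becomes to choose $X$ making this at most $2n-2$.

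I would finish as follows. If $R(\calM)$ has a non-zero common kernel vector, take $X$ to be one: then $R(\calM)X=\{0\}$ and $\dim(\calM A)\le n\le 2n-2$. If $\urk R(\calM)\le n-2$, use Proposition \ref{rangminmatriciel} to find $X\ne0$ with $\dim\bigl(R(\calM)X\bigr)\le n-2$, whence $\dim(\calM A)\le 2n-2$. The remaining situation --- $R(\calM)$ column-reduced with $\urk R(\calM)=n-1$ --- is where the main obstacle lies: the crude bound above only gives $2n-1$, so one must recover the missing dimension by exploiting the correlation between the $P$ and $R$ parts forced by the Flanders--Atkinson identities, in particular $R(M)C(M)=0$ and its bilinearisation $R(M_1)C(M_2)+R(M_2)C(M_1)=0$. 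Concretely I would try $X=C(M_0)$ for a carefully chosen $M_0$, so that $R(\calM)X=R(M_0)\bigl(C(\calM)\bigr)\subset\im R(M_0)$, and then show, using these identities together with what is known about reduced matrix spaces of upper rank $n-1$, that the image of $M\mapsto\bigl(P(M)X,R(M)X\bigr)$ is strictly smaller than $P(\calM)X\oplus R(\calM)X$. Making that last step precise is the heart of the argument.
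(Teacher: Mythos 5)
You handle the reduction to finite dimensions, the choice of a rank-optimal $\varphi$, and the case $\Ker\varphi\cap\calT\ne\{0\}$ exactly as the paper does, and you correctly isolate the hard case $r=n$, $\Ker\varphi=\K g$ with $g\notin\calT$. But from that point your route diverges and, as you yourself say, leaves the heart of the argument unfinished: with the Flanders--Atkinson decomposition taken over $\im\varphi$, the crude bound $\dim P(\calM)X+\dim R(\calM)X\le n+(n-1)=2n-1$ is off by one, and the scheme you sketch for saving that dimension via $X=C(M_0)$ and the bilinearised identity $R(M_1)C(M_2)+R(M_2)C(M_1)=0$ is not carried out. That is a genuine gap, not a cosmetic one.

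The paper avoids it by making two moves you do not make. First, it disposes of the subcase $\rk g=n$ directly: since $g\in\Ker\varphi$ has rank $n=\rk\varphi$ and $\varphi$ is rank-optimal, alternative (ii) of Theorem \ref{refinedMStheogeneralized} fails, so alternative (i) gives $\rk f\le n\le 2n-2$ for every $f\in\calS$. Second, and this is the key idea, once $s:=\rk g\le n-1$ it decomposes $\calM$ not over $\im\varphi$ but over $\im g$: choosing the basis $\calC$ of $V$ so that its first $s$ vectors span $\im g$ forces every $M\in\calM$ into the block shape
$$M=\begin{bmatrix}[?]_{s\times n}&[?]_{s\times 1}\\ B(M)&[0]_{(m-s)\times 1}\end{bmatrix},$$
because the last column of $M$ lies in $\im g$ (it represents a value of $g$). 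The decomposition lemma then gives $\urk B(\calM)\le n-1$ since the last column is nonzero, and Proposition \ref{rangminmatriciel} produces $y\ne 0$ with $\dim B(\calM)y\le n-1$; the resulting operator of $\calT$ has rank at most $s+(n-1)\le 2(n-1)$. Your decomposition keeps $n$ rows in the top block, which is precisely why you lose a dimension; the paper's decomposition keeps only $s\le n-1$ rows there. If you want to rescue your approach, you should at minimum split on $\rk g$ as the paper does and then consider re-anchoring the row partition on $\im g$ rather than on $\im\varphi$.
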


This yields the following result for algebraic reflexivity:

\begin{cor}\label{minrankcor1}
Let $\calS$ be a non-reflexive $n$-dimensional subspace of $\calL(U,V)$.
Assume that $\# \K>n \geq 2$.
Then, $\mrk \calS \leq 2n-2$.
\end{cor}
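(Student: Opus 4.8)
The plan is to deduce Corollary \ref{minrankcor1} directly from Theorem \ref{minranktheo1} by the elementary device already described in the excerpt, namely by exhibiting the given non-reflexive space as a hyperplane of an LLD space of dimension one larger. Concretely, suppose $\calS \subset \calL(U,V)$ is non-reflexive with $\dim \calS = n$ and $\# \K > n \geq 2$. Since $\calS \neq \calR(\calS)$, I may pick an operator $f \in \calR(\calS) \setminus \calS$. Set $\calS' := \calS \oplus \K f$, so that $\dim \calS' = n+1$.

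Next I would verify that $\calS'$ is LLD. This is exactly the observation made just before the statement: for every $x \in U$ we have $f(x) \in \calS x$, so the vectors obtained by evaluating a basis of $\calS$ together with $f$ at $x$ are linearly dependent; equivalently there is a non-zero operator in $\calS'$ annihilating $x$ (either $f$ itself contributes a dependence relation, or if $f(x)=0$ then $f$ already annihilates $x$). Hence $\calS'$ is an $(n+1)$-dimensional LLD subspace of $\calL(U,V)$, and $\calS$ is a hyperplane of it.

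Finally I would apply Theorem \ref{minranktheo1} with $\calT := \calS$ and the integer $n$ playing the role of the theorem's $n$: the hypotheses $\# \K > n \geq 2$ are precisely those assumed, so the theorem gives $\mrk \calS \leq 2n-2$, which is the claim. In particular $\calS$ has finite minimal rank, so the quantity $\mrk \calS$ is well defined; one should remark that $2n-2 \geq 2$ when $n \geq 2$, so the bound is non-vacuous.

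There is essentially no obstacle here: the entire content of the corollary is packaged inside Theorem \ref{minranktheo1}, and the only thing to check is the routine reduction via the reflexive-closure trick. The one point deserving a word of care is that $f$ is genuinely non-zero and lies outside $\calS$, so that $\dim \calS' = n+1$ exactly and $\calS$ is a \emph{proper} hyperplane; this is automatic from $f \in \calR(\calS)\setminus\calS$. Everything else is a direct citation of the preceding theorem.
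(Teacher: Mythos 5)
Your proof is correct and is precisely the reduction the paper intends: take $f \in \calR(\calS)\setminus\calS$, observe that $\calS \oplus \K f$ is an $(n+1)$-dimensional LLD space containing $\calS$ as a hyperplane (since for each $x$ there is $g \in \calS$ with $(f-g)(x)=0$ and $f-g \neq 0$ because $f \notin \calS$), and apply Theorem \ref{minranktheo1}. The paper leaves this argument implicit, having already spelled out the reflexive-closure link earlier in the section.
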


It follows of course that an $n$-dimensional subspace of $\calL(U,V)$
is reflexive whenever all its non-zero operators have rank greater than $2n-2$.
This was a recent success of the strategy of using LLD operator techniques to analyze reflexivity.

We recall the proof of Theorem \ref{minranktheo1}, as its line of reasoning will
be crucial in the study of the critical case.

\begin{proof}[Proof of Theorem \ref{minranktheo1}]
We choose a rank-optimal operator $\varphi \in \widehat{\calS}$, whose rank we denote by $r$.
We know that $\rk f \leq r$ for all $f \in \Ker \varphi$. If
$\Ker \varphi \cap \calT \neq \{0\}$, then we deduce that
$\mrk \calT \leq r \leq n \leq 2n-2$.
Assume now that $\Ker \varphi \cap \calT=\{0\}$; since $\dim \Ker \varphi>0$, this yields
$r=n$, $\calS=\calT \oplus \Ker \varphi$ and $\Ker \varphi=\K g$ for some operator $g \in \calS \setminus \{0\}$.
If $\rk g=n$, then we deduce from Theorem \ref{refinedMStheogeneralized} that $\rk f \leq n$ for all non-zero operators $f \in \calS$,
and in particular $\mrk \calT \leq n \leq 2n-2$.
Assume now that $\rk g \leq n-1$. Set $s:=\rk g$.
Let us choose a basis $\calB$ of $\calS$ that is adapted to the decomposition
$\calS=\calT \oplus \Ker \varphi$, together with a basis $\calC$ of $V$ in which the first vectors form a basis of $\im g$.
Denote by $\calM$ the space of matrices representing the operators of the dual operator space
$\widehat{\calS}$ in the bases $\calB$ and $\calC$. Then, setting $m:=\dim V$, every $M \in \calM$ splits up as
$$M=\begin{bmatrix}
[?]_{s \times n} & [?]_{s \times 1} \\
B(M) & [0]_{(m-s) \times 1}
\end{bmatrix},$$
where, by Lemma \ref{decompositionlemma}, $B(\calM)$ is a linear subspace of $\Mat_{m-s,n}(\K)$ with $\urk B(\calM) \leq n-1$.
Proposition \ref{rangminmatriciel} yields a non-zero vector $y \in \K^n$ such that
$\dim B(\calM)y \leq n-1$. Denoting by $f$ the (non-zero) vector of $\calT$ whose coordinate matrix in the chosen basis of $\calT$ is $y$,
we deduce that $\rk f \leq s+\dim B(\calM)y \leq 2n-2$, which completes the proof.
\end{proof}

For algebraically closed fields, Meshulam and \v Semrl have proved an even
better result:

\begin{theo}[Meshulam, \v Semrl \cite{MeshulamSemrlPAMS}]\label{MeshulamSemrlhyperplane}
Assume that $\K$ is algebraically closed, and let $\calS$ be an $n$-dimensional non-reflexive subspace of $\calL(U,V)$.
Then, $\mrk \calS \leq n$.
\end{theo}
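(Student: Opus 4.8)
The plan is to reuse the strategy of Theorem~\ref{minranktheo1}, but to replace its final step -- which only yields the bound $2n-2$ -- by an argument from algebraic geometry that genuinely uses the algebraic closedness of $\K$. First I would reduce to a statement about LLD spaces: since $\calS$ is non-reflexive, pick $g\in\calR(\calS)\setminus\calS$; then $\calS':=\calS\oplus\K g$ is an $(n+1)$-dimensional LLD operator space in which $\calS$ is a hyperplane, so it suffices to prove that \emph{if $\calT$ is an $(n+1)$-dimensional LLD operator space between finite-dimensional spaces and $\calH$ is a hyperplane of $\calT$, then $\mrk(\calH)\le n$}. For $x$ in the source space $U$, write $\psi_x\in\widehat{\calT}$ for the evaluation operator $h\mapsto h(x)$, and set $r:=\urk(\widehat{\calT})$; since $\calT$ is LLD we have $r\le n$. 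I then distinguish two cases.

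If $r\le n-1$, then for every $x\in U$ one has $\dim\Ker(\psi_x)=\dim\calT-\rk(\psi_x)\ge 2$, so $\Ker(\psi_x)$ meets the hyperplane $\calH$ nontrivially; hence $\calH$ is an $n$-dimensional LLD operator space, and Theorem~\ref{basictheogeneral} provides a non-zero operator of $\calH$ of rank less than $n$, whence $\mrk(\calH)\le n-1$.

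Now assume $r=n$. A direct appeal to Proposition~\ref{rangminmatriciel} is not enough here: it would produce a non-zero operator of small rank inside $\calT$, but not necessarily one lying in the hyperplane $\calH$, and this is exactly where algebraic closedness must enter. Consider $\Omega:=\{x\in U:\rk(\psi_x)=n\}$, a non-empty (because $r=n$ is attained) Zariski-open, hence dense, subset of the irreducible variety $U$. For each $x\in\Omega$ the operator $\psi_x$ is rank-optimal in $\widehat{\calT}$ and $\Ker(\psi_x)$ is a line $\K h_x$ of $\calT$; moreover, since the kernel line of a corank-one matrix depends morphically on the matrix, $x\mapsto[\K h_x]$ is a morphism $\theta:\Omega\to\Pgros(\calT)$. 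By Lemma~\ref{basiclemma}, applicable since $\#\K>n=\rk(\psi_x)$, we get $\im(h_x)\subset\im(\psi_x)$ and hence $\rk(h_x)\le n$, so $\theta$ takes its values in $\Pgros(Z)$, where $Z:=\{f\in\calT:\rk f\le n\}$ is a Zariski-closed cone. The crucial observation is that $\theta$ is non-constant: were it identically equal to some $[\K g_0]$, then $g_0\in\Ker(\psi_x)$, i.e.\ $g_0(x)=0$, for every $x$ in the dense set $\Omega$, forcing $g_0=0$, which is absurd. Consequently the Zariski closure $Y$ of $\theta(\Omega)$ is an irreducible projective subvariety of $\Pgros(\calT)$ of dimension at least $1$, contained in $\Pgros(Z)$. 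Since a projective variety of positive dimension meets every hyperplane, $Y$ meets the hyperplane $\Pgros(\calH)\subset\Pgros(\calT)$, and this yields a non-zero operator $h_1\in\calH$ with $\rk(h_1)\le n$, so $\mrk(\calH)\le n$.

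I expect the main obstacle to lie in the case $r=n$: the key insight is that one should track the \emph{whole} algebraic family $(\Ker\psi_x)_{x\in\Omega}$ of kernel lines rather than a single rank-optimal operator, then verify that this family is positive-dimensional -- which rests on the innocuous-looking but essential remark that a constant family of kernel lines would produce a non-zero operator vanishing identically -- and finally invoke the classical fact that a positive-dimensional projective variety meets every hyperplane (the meeting point need not correspond to any individual $\psi_x$, only to a limit of kernel lines, but it still lies in the closed determinantal cone $\Pgros(Z)$). By contrast, the case $r\le n-1$ is immediate, and the reduction of non-reflexivity of $\calS$ to the hyperplane situation is the same device used for Theorem~\ref{minranktheo1}. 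If $U$ or $V$ were not assumed finite-dimensional, one would first compress $V$ to a finite-dimensional subspace containing the range of a rank-optimal operator, exactly as in the proof of Lemma~\ref{basiclemma}; in the present section everything is already finite-dimensional, so no such reduction is needed.
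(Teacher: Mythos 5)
Your proof is correct, and it follows the same global strategy as the paper: reduce to the LLD-hyperplane statement, dispose of the easy case $r\le n-1$, and in the critical case $r=n$ track the algebraic family of kernel lines over a dense Zariski-open set, observe it is not constant (else a non-zero operator would vanish identically), and then show that its projective closure must hit the hyperplane $\Pgros(\calH)$. Where the two diverge is in how this last step is carried out. The paper writes the kernel line as a vector $\mathbf{Z}$ of homogeneous polynomials with $\gcd 1$ in a generic-matrix parametrization, and invokes the affine Nullstellensatz together with the $\gcd$ condition to conclude that the last entry $\mathbf{p_{n+1}}$ would have to be a constant (giving the desired contradiction). You instead view the family as a morphism $\theta:\Omega\to\Pgros(\calT)$, check it is non-constant, take the irreducible closure $Y$ of its image (which lives inside the closed determinantal cone $\Pgros(Z)$ thanks to Lemma~\ref{basiclemma}), and invoke the classical fact that a positive-dimensional projective variety over an algebraically closed field meets every hyperplane. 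Both arguments are using algebraic closedness in essentially the same way -- your hyperplane-intersection fact is a projective-Nullstellensatz statement, and the paper's Nullstellensatz/$\gcd$ manipulation is just an explicit rendition of it in coordinates. Your formulation has the small advantage of making it transparent that the intersection point may arise only as a limit of kernel lines rather than an actual $\Ker\psi_x$, and that this costs nothing because $Z$ is closed; the paper handles the same point via the two Zariski-closedness observations (i) and (ii) in its proof. Both approaches require the source space to be finite-dimensional so that the Zariski topology is available, as you note.
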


It is noteworthy to point out that Meshulam and \v Semrl's method actually delivers a theorem that
is, at the core, a statement on LLD spaces:

\begin{theo}\label{LLDhyperplane}
Let $\calT$ be a hyperplane of an $(n+1)$-dimensional LLD space $\calS$.
Assume that $\K$ is algebraically closed. Then, $\mrk \calT \leq n$.
\end{theo}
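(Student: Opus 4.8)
The plan is to adapt the proof of Theorem \ref{minranktheo1}, keeping its case analysis but sharpening its last case with the extra rigidity granted by algebraic closedness. After the standard reduction to the case where $V$ is finite-dimensional (compose everything with an idempotent of finite-dimensional range enlarging $\im\varphi$, exactly as at the end of the proof of Lemma \ref{basiclemma}), I would pick a rank-optimal $\varphi\in\widehat{\calS}$, of rank $r$; since $\widehat{\calS}$ is defective and $\dim\calS=n+1$, we have $r\leq n$. If $\Ker\varphi\cap\calT\neq\{0\}$, then any non-zero $f$ in that intersection satisfies $\rk f\leq r\leq n$ by Lemma \ref{basiclemma} (whose cardinality hypothesis is vacuous over an infinite field), so $\mrk\calT\leq n$. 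Otherwise $\calS=\calT\oplus\Ker\varphi$ with $\dim\Ker\varphi=1$, whence $r=n$ and $\Ker\varphi=\K g$ for some non-zero $g$. If $\rk g=n$, I apply Theorem \ref{refinedMStheogeneralized} to the $(n+1)$-dimensional LLD space $\calS$: it is not $2$-LLD, since the space $\{f\in\calS:\varphi(f)=0\}=\Ker\varphi$ is a line, so its conclusion (ii) is incompatible with $g\in\Ker\varphi$ having rank $n$, forcing conclusion (i), i.e.\ $\rk f\leq n$ for all $f\in\calS$; again $\mrk\calT\leq n$.

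There remains the case $s:=\rk g\leq n-1$. Choosing bases adapted to $\calS=\calT\oplus\K g$ and to the flag $\im g\subset\im\varphi\subset V$, the space $\calM\subset\Mat_{m,n+1}(\K)$ representing $\widehat{\calS}$ has $\urk\calM=n$, and the last column of every $M\in\calM$ (which is $g$ evaluated) is supported in the first $s$ rows; writing $B(M)$ for the bottom $(m-s)\times n$ block, the decomposition lemma (Lemma \ref{decompositionlemma}) gives $\urk B(\calM)\leq n-1$, since the top-right $s\times 1$ block sweeps out all of $\K^s$. We may assume that no non-zero operator of $\calT$ has range inside $\im g$ (such an operator would have rank at most $s<n$, giving $\mrk\calT\leq s$); then $B(\calM)$ faithfully represents the $n$-dimensional LLD space $\overline{\calT}:=\{\pi\circ f:f\in\calT\}$, where $\pi:V\rightarrow V/\im g$ is the canonical projection. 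Since a non-zero $y\in\K^n$ with $\dim B(\calM)y\leq n-s$ yields $\rk f_y\leq s+\dim B(\calM)y\leq n$ for $f_y:=\sum_i y_if_i\in\calT\setminus\{0\}$, everything reduces to proving $\mrk\overline{\calT}\leq n-s$.

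Obtaining this bound — it beats the bound $\mrk\overline{\calT}<n$ of Theorem \ref{basictheogeneral} by exactly $s-1$ — is where I would invoke the algebraic-geometric argument of Meshulam and \v Semrl, and where algebraic closedness becomes indispensable. One passes to the generic matrix $\mathbf{M}$ of $\calM$, which has rank $n$ by the generic rank lemma (Lemma \ref{genericrank}), so that its kernel is a line over the rational function field, spanned by a vector $v=(v_1,\dots,v_n,v_{n+1})$ with coprime polynomial entries. Reading $\mathbf{M}v=0$ off the block decomposition identifies $(v_1,\dots,v_n)$ with a kernel vector of the generic matrix of $B(\calM)$ — non-zero, since $g\neq0$ — and exhibits $v_{n+1}$ as mediating the coupling of the top block with the last column; when $v_{n+1}\equiv 0$ the space $\calT$ is itself LLD, so Theorem \ref{basictheogeneral} finishes the argument, and otherwise one analyzes the projective variety swept out by the kernel line and, using that $\K$ is algebraically closed, extracts a \emph{constant} non-zero vector $y\in\K^n$ with $\dim B(\calM)y\leq n-s$.

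I expect this last extraction to be the genuine obstacle: the easy cases and the block reduction to ``$\mrk\overline{\calT}\leq n-s$'' are routine, but turning properties of the generic kernel of $\widehat{\calS}$ into \emph{exactly} the inequality $\mrk\overline{\calT}\leq n-s$ — rather than the crude $\mrk\overline{\calT}<n$ — requires careful algebraic-geometric bookkeeping, and is precisely the step that fails over general fields, where the best bound remains $2n-2$; that gap is what the later sections on LDB division algebras are devoted to.
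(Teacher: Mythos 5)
Your setup and easy reductions are fine and match the paper: the case $\Ker\varphi\cap\calT\neq\{0\}$ is dispatched by Lemma~\ref{basiclemma}, and the subcase $\rk g=n$ is dispatched cleanly by Theorem~\ref{refinedMStheogeneralized} (although the paper's proof does not actually need to treat that subcase separately). The genuine gap is in the remaining case, and it comes from your choice of reduction.

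You reorganize the matrix space around a flag based on $\im g$ and declare that everything reduces to proving $\mrk\overline{\calT}\leq n-s$ for the auxiliary $n$-dimensional LLD space $\overline{\calT}$. That reduction is only a sufficient condition: finding $y$ with $\dim B(\calM)y\leq n-s$ certainly yields $\rk f_y\leq n$, but the converse fails, and nothing in the paper (or elsewhere) establishes the stronger inequality $\mrk\overline{\calT}\leq n-s$. You noted yourself that it improves the general bound $\mrk\overline{\calT}\leq n-1$ by $s-1$, but that improvement is not a theorem of this paper, and it is not what Meshulam and \v Semrl's argument proves. The last paragraph of your sketch, in which you hope to "extract a constant non-zero vector $y$ with $\dim B(\calM)y\leq n-s$," is where the proof actually collapses: the Nullstellensatz argument does not deliver an upper bound on $\dim B(\calM)y$; it only gives a specialization of the full generic kernel line.

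The paper's proof avoids all of this by never separating out the block $B(\calM)$. It bases the row blocks on $\im\varphi$ (not on $\im g$), so that the generic matrix is $\mathbf{M}=\begin{bmatrix}\mathbf{A}&\mathbf{X}\\\mathbf{B}&[0]\end{bmatrix}$ with $\mathbf{A}$ invertible of size $n$, and computes the one-dimensional generic kernel $\K(\mathbf{x}_1,\dots,\mathbf{x}_s)\,\mathbf{Z}$ with $\mathbf{Z}$ having coprime homogeneous polynomial entries $\mathbf{p}_1,\dots,\mathbf{p}_{n+1}$. Lemma~\ref{basiclemma}, combined with the observation that $\{Y:\dim\calM Y\leq n\}$ is Zariski-closed and the locus where $\mathbf{A}$ specializes to an invertible matrix is Zariski-open, shows that $\dim\calM Z\leq n$ for \emph{every} specialization $Z$ of $\mathbf{Z}$ --- not just those coming from an invertible specialization of $\mathbf{A}$. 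At that point it only remains to produce a specialization with $\mathbf{p}_{n+1}=0$ but not all $\mathbf{p}_i=0$: by the Nullstellensatz and coprimality, this is possible unless $\mathbf{p}_{n+1}$ is constant, which would force $\mathbf{Z}$ constant and thus a non-trivial linear relation among the chosen basis of $\calS$, a contradiction. Your own sketch touches the generic kernel and the Nullstellensatz, but you never make the Zariski-density step explicit, and you aim at the wrong inequality ($\dim B(\calM)y\leq n-s$ rather than $\dim\calM\begin{bmatrix}y\\0\end{bmatrix}\leq n$), which is strictly harder and not what the argument yields. I would also flag the last sentence: the later sections on LDB division algebras concern optimality of the $2n-2$ bound over general fields; they are not an attempt to fill this gap.
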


In this last theorem, the upper bound $n$ is optimal. To see this, one considers
an arbitrary $(n+1)$-dimensional vector space $U$, and the space $\calS$ of all operators
from $U$ to $U \wedge U$ of the form $y \mapsto x \wedge y$ for some $x \in U$.
One checks that $\calS$ is $(n+1)$-dimensional LLD space and that every non-zero operator in $\calS$
has rank $n$. Thus, every hyperplane of $\calS$ has minimal rank $n$.
However, one can also prove that every hyperplane of $\calS$ is reflexive, so this
example does not tell us anything on the optimality of the upper bound in Theorem \ref{MeshulamSemrlhyperplane}.
In \cite{MeshulamSemrlPAMS}, Meshulam and \v Semrl prove that the optimal upper bound on the minimal rank of a
non-reflexive $n$-dimensional operator space must be greater than or equal to
$\lfloor \frac{n}{2}-\sqrt{n}\rfloor$.

The following proof of Theorem \ref{LLDhyperplane} is largely similar to the one of Meshulam and \v Semrl, with the notable exception of
the middle section, which does not use the ``differentiation trick" that is found in the original proof,
but only points to earlier known results on matrix spaces.

\begin{proof}[Proof of Theorem \ref{LLDhyperplane}]
Let $\varphi$ be a rank-optimal element of $\widehat{\calS}$, and set $r:=\rk \varphi$.
Using the line of reasoning from the proof of Theorem \ref{minranktheo1}, we find that
$\mrk \calT \leq n$ if $\Ker \varphi \cap \calT \neq \{0\}$, which holds true whenever $r<n$.
Now, we assume that $r=n$, we write $\Ker \varphi=\K g$, and we assume that $\calS=\calT \oplus \Ker \varphi$.
Denote by $\calM$ the matrix space associated with $\widehat{\calS}$ in respective bases of $\calS$ and $V$,
the first of which is adapted to the decomposition $\calS=\calT \oplus \K g$, and with the first $n$ vectors
of the second one forming a basis of $\im \varphi$.
Then, by Lemma \ref{Flanders}, some generic matrix of $\calM$ has the form
$$\mathbf{M}=\begin{bmatrix}
\mathbf{A} & \mathbf{X} \\
\mathbf{B} & [0]
\end{bmatrix},$$
where $\mathbf{A}$ and $\mathbf{X}$ are, respectively, $n \times n$ and $n \times 1$ matrices,
with $\rk \mathbf{A}=n$.
We write $\K[\mathbf{x_1},\dots,\mathbf{x_s}]$ for the polynomial ring used to construct $\mathbf{M}$.
As $\rk \mathbf{M}=n$, we find that
the kernel of $\mathbf{M}$ is spanned by the vector
$$\mathbf{Y}:=\begin{bmatrix}
\mathbf{A}^{-1}\mathbf{X} \\
-1
\end{bmatrix},$$
the entries of which are $0$-homogeneous fractions of polynomials.
Thus, $\Ker \mathbf{M}$ contains a non-zero vector of the form
$$\mathbf{Z}=\begin{bmatrix}\mathbf{p_1} & \cdots & \mathbf{p_n} & \mathbf{p_{n+1}} \end{bmatrix}^T,$$
where $\mathbf{p_1},\dots,\mathbf{p_{n+1}}$ are homogeneous polynomials with the same degree
$d$ and $\gcd(\mathbf{p_1},\dots,\mathbf{p_n},\mathbf{p_{n+1}})=1$.

According to Lemma \ref{basiclemma}, if by specializing $\mathbf{A}$ and $\mathbf{Z}$ at some point of $\K^s$, we find a
rank $n$ matrix $A$ and a vector $Z$, then $\dim \calM Z \leq n$.
Thus, using an algebraic density argument, the last result must hold for any specialization of $\mathbf{Z}$. Indeed:
\begin{enumerate}[(i)]
\item The set of all $Y \in \K^{n+1}$ such that $\dim \calM Y \leq n$ is linearly isomorphic, through duality,
to the one of all operators in $\calS$ with rank less than or equal to $n$, and that one is Zariski-closed.
\item The set of all $(x_1,\dots,x_s) \in \K^s$ at which the specialization of $\mathbf{A}$ is
invertible is Zariski-open in $\K^s$.
\end{enumerate}

In order to conclude, it suffices to show that some specialization of $\mathbf{Z}$ is a non-zero vector of
$\K^n \times \{0\}$, as such a vector would yield a non-zero operator $f \in \calT$ with $\rk f \leq n$.
Assume that such a vector does not exist. In other words, every zero of $\mathbf{p_{n+1}}$
is a zero of all the polynomials $\mathbf{p_1},\dots,\mathbf{p_n}$;
thus, by Hilbert's Nullstellensatz, $\mathbf{p_{n+1}}$ divides some power of each polynomial
$\mathbf{p_1},\dots,\mathbf{p_n}$, and the assumption that $\gcd(\mathbf{p_1},\dots,\mathbf{p_n},\mathbf{p_{n+1}})=1$
yields that $\mathbf{p_{n+1}}$ is constant. Therefore, $d=0$ and $\mathbf{Z}$ is a constant vector.
By specializing, this yields a non-zero vector $x \in \K^{n+1}$ on which all the matrices of
$\calM$ vanish, meaning that some non-trivial linear combination of the chosen basis of $\calS$ is zero. This contradiction concludes the proof.
\end{proof}

\subsection{Disproving a conjecture of Meshulam and \v Semrl}\label{disproveconjecturesection}

In \cite{MeshulamSemrlPAMS}, Meshulam and \v Semrl went on to conjecture that the upper
bound $n$ in Theorem \ref{LLDhyperplane} could hold for all large enough fields,
not only for algebraically closed ones.
Here, we disprove this through a very simple counterexample.

Let $n \geq 4$ be an even integer.
Let us assume for the moment that there exists a linear subspace $\calV$ of $\Mata_n(\K)$
with dimension $n-1$ in which all the non-zero matrices are non-singular (in which case we say that
$\calV$ is \textbf{non-singular}).
We consider orthogonality with respect to the symmetric bilinear form
$(X,Y) \mapsto X^TY$ on $\K^n$.
Note then that, for every non-zero vector $X \in \K^n$, one has
$\dim \calV X =n-1$ for, if not, we would find a non-zero element $A \in \calV$ with $AX=0$.
Thus $\calV X=\Mata_n(\K)X=\{X\}^\bot$ for all such $X$, as the inclusions $\calV X \subset \Mata_n(\K)X \subset \{X\}^\bot$
are obvious and $\dim \calV X=n-1=\dim \{X\}^\bot$.
Considering $\calV$ as a linear subspace of the endomorphism space $\calL(\K^n)$,
it follows that the reflexive closure of $\calV$ contains $\Mata_n(\K)$, and one can even check
that this reflexive closure equals $\Mata_n(\K)$. As $\dim \Mata_n(\K)=\dbinom{n}{2}>n-1$, this shows in particular that
$\calV$ is non-reflexive. However, we have assumed that every non-zero element of $\calV$ has rank $n$.
Thus, we have an $(n-1)$-dimensional non-reflexive space of operators in which all the non-zero operators have rank greater than $n-1$.

It remains to give examples of such spaces $\calV$.
Applying the Chevalley-Warning theorem to the pfaffian, one sees that no such subspace exists for a finite field
(see \cite[Satz 1]{Heineken} and \cite[Lemma 3]{GowQuinlan}).

For the case $n=4$, a $3$-dimensional linear subspace $\calV$ of $\Mata_4(\K)$
is non-singular if and only if the pfaffian does not vanish on $\calV \setminus \{0\}$.
Thus, we have the
example of the subspace of $\Mata_4(\R)$ defined by the generic matrix
$$\begin{bmatrix}
0 & -\mathbf{b} & \mathbf{c} & -\mathbf{d} \\
\mathbf{b} & 0 & \mathbf{d} & \mathbf{c} \\
-\mathbf{c} & -\mathbf{d} & 0 & \mathbf{b} \\
\mathbf{d} & -\mathbf{c} & -\mathbf{b} & 0
\end{bmatrix},$$
the pfaffian of which is $-\mathbf{b}^2-\mathbf{c}^2-\mathbf{d}^2$.
Note the connection with the standard representation of pure quaternions.

The division algebra of octonions yields an example of a non-singular subspace
$\calV$ of $\Mata_8(\R)$ with dimension $7$: the one defined by the generic matrix
$$\begin{bmatrix}
0 & -\mathbf{b} & -\mathbf{c} & -\mathbf{d} & -\mathbf{e} & -\mathbf{f} & -\mathbf{g} & -\mathbf{h} \\
\mathbf{b} & 0 & -\mathbf{d} & \mathbf{c} & \mathbf{f} & -\mathbf{e} & \mathbf{h} & -\mathbf{g} \\
\mathbf{c} & \mathbf{d} & 0 & -\mathbf{b} & \mathbf{g} & -\mathbf{h} & -\mathbf{e} & \mathbf{f} \\
\mathbf{d} & -\mathbf{c} & \mathbf{b} & 0 & \mathbf{h} & \mathbf{g} & -\mathbf{f} & -\mathbf{e} \\
\mathbf{e} & -\mathbf{f} & -\mathbf{g} & -\mathbf{h} & 0 & \mathbf{b} & \mathbf{c} & \mathbf{d} \\
\mathbf{f} & \mathbf{e} & \mathbf{h} & -\mathbf{g} & -\mathbf{b} & 0 & \mathbf{d} & -\mathbf{c} \\
\mathbf{g} & -\mathbf{h} & \mathbf{e} & \mathbf{f} & -\mathbf{c} & -\mathbf{d} & 0 & \mathbf{b} \\
\mathbf{h} & \mathbf{g} & -\mathbf{f} & \mathbf{e} & -\mathbf{d} & \mathbf{c} & -\mathbf{b} & 0
\end{bmatrix}.$$

In general, if we have a non-singular linear subspace $\calV$ of $\Mata_n(\K)$ with dimension $n-1$
together with a non-isotropic matrix $P \in \Mat_n(\K)$, i.e.\ a matrix such that the quadratic form
$X \mapsto X^TPX$ is non-isotropic, then $\K P \oplus \calV$ is a linear subspace of $\Mat_n(\K)$
in which every non-zero matrix is invertible. Indeed, we already know that every non-zero matrix of $\calV$
is invertible while, for every $\lambda \in \K \setminus \{0\}$ and every $M \in \calV$,
the matrix $\lambda P+M$ is invertible since the associated quadratic form $X \mapsto X^T(\lambda P+M)X=\lambda\, X^TPX$
is non-isotropic.
However, we know - see e.g \cite[Section 4]{dSPlinpresGL} -
that the $n$-dimensional linear subspaces of $\Mat_n(\K)$ in which all the non-zero elements are
non-singular correspond to the regular bilinear maps from $\K^n \times \K^n$ to $\K^n$,
i.e.\ to the structures of division algebra on the vector space $\K^n$.

For $\K=\R$, we can take $P=I_n$, and hence the Bott-Kervaire-Milnor theorem
\cite{BottMilnor,Kervaire} on real division algebras
shows that a non-singular linear subspace $\calV$ of $\Mata_n(\R)$ with dimension $n-1$ exists only if $n \in \{1,2,4,8\}$.

\subsection{The case of small finite fields}

In Section \ref{smallranksection}, we have recalled a theorem of Meshulam and \v Semrl that states that
every $n$-dimensional LLD operator space contains a non-zero operator of rank less than $n$.
We have seen that this result holds for all fields, although a different argument is needed for small finite fields.

What can be said of the minimal rank in a hyperplane of an $(n+1)$-dimensional LLD space when the underlying field has less than $n+1$ elements?
Although the upper bound $2n-2$ of Theorem \ref{minranktheo1} probably looks like a good candidate, it is currently out of our reach.
On the other hand, Ding \cite{Ding} has proved that every $n$-dimensional non-reflexive operator space has
minimal rank at most $n^2$, whatever the underlying field.
In this section, we shall improve Ding's result as follows:

\begin{theo}\label{minranktheo2}
Let $\calT$ be a linear subspace of an $(n+1)$-dimensional LLD operator space $\calS$, with $n>0$
and $\dim \calT \geq n$.
Then, $\mrk \calT \leq \dbinom{n+1}{2}$.
\end{theo}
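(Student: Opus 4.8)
The plan is to follow the proof of Theorem~\ref{minranktheo1}, replacing the two tools that require a large field — Lemma~\ref{basiclemma} and Proposition~\ref{rangminmatriciel} — by the field‑independent Theorem~\ref{basictheogeneral} and Proposition~\ref{AmitsurlikeProp}. I first dispose of the trivial reductions. We may assume $n\geq 2$, the case $n=1$ being immediate from the triviality of LLD pairs. If $\dim\calT=n+1$ then $\calT=\calS$ and Theorem~\ref{basictheogeneral} yields a non‑zero $f\in\calT$ with $\rk f\leq n\leq\dbinom{n+1}{2}$; so I assume $\calT$ is a hyperplane of $\calS$. If $\#\K>n$, Theorem~\ref{minranktheo1} already gives the stronger bound $\mrk\calT\leq 2n-2\leq\dbinom{n+1}{2}$; so I assume henceforth that $\#\K\leq n$, in which case $\K$ is finite and $\calT$ has finitely many elements. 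If $\calT$ is itself LLD, Theorem~\ref{basictheogeneral} applied to $\calT$ produces a non‑zero $f\in\calT$ with $\rk f<n$; so I also assume $\calT$ is not LLD. Then there is $x_0\in U$ at which $\varphi_0:f\mapsto f(x_0)$ is injective on $\calT$, and since $\calS$ is LLD this forces $\varphi_0$ to be rank‑optimal in $\widehat{\calS}$ with $\rk\varphi_0=n$; moreover $\Ker\varphi_0=\K g$ for some $g\in\calS\setminus\calT$ with $g(x_0)=0$, so that $\calS=\calT\oplus\K g$. A routine projection argument (as in the proofs of Lemma~\ref{basiclemma} and Proposition~\ref{AmitsurlikeProp}) reduces me to the case where $V$ is finite‑dimensional.

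The key reduction is as follows. Let $\pi\colon V\to V/\calT x_0$ be the canonical projection; note that $\dim\Ker\pi=\dim\calT x_0=n$, since $\varphi_0$ is injective on $\calT$. For every $f\in\calT$ one has $\rk f\leq\rk(\pi\circ f)+\dim\Ker\pi=\rk(\pi\circ f)+n$, so \emph{it suffices to exhibit a non‑zero $f\in\calT$ with $\rk(\pi\circ f)\leq\dbinom{n}{2}$}, for then $\rk f\leq n+\dbinom{n}{2}=\dbinom{n+1}{2}$. On the matrix side this amounts, after choosing a basis $(f_1,\dots,f_n,g)$ of $\calS$ with $\calT=\Vect(f_1,\dots,f_n)$ and a basis of $V$ whose first $n$ vectors are $f_1(x_0),\dots,f_n(x_0)$ (so that $\varphi_0\mapsto J_n$, the last column vanishing because $g(x_0)=0$), to decomposing each matrix $M$ of the representing space $\calM\subset\Mat_{m,n+1}(\K)$ of $\widehat{\calS}$ as a block matrix $\begin{bmatrix}A(M)&C(M)\\ B(M)&d(M)\end{bmatrix}$: for $f=\sum_j y_jf_j$ with coordinate column $Y$ one gets $\rk f=\dim\{MX:M\in\calM\}$ with $X=\begin{bmatrix}Y\\ 0\end{bmatrix}$, $MX=\begin{bmatrix}A(M)Y\\ B(M)Y\end{bmatrix}$, and $\{B(M)Y:M\in\calM\}=\im(\pi\circ f)$, whence again $\rk f\leq n+\rk(\pi\circ f)$. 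Here $B(\calM)$ is precisely the matrix space representing $\widehat{\,\overline{\calT}\,}$, where $\overline{\calT}:=\{\pi\circ f:f\in\calT\}\subset\calL(U,V/\calT x_0)$.

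Now set $k:=\dim\overline{\calT}\leq n$ and split into three cases. If $k<n$, then some non‑zero $f\in\calT$ has $\im f\subseteq\calT x_0$, so $\rk f\leq n\leq\dbinom{n+1}{2}$ and we are done. If $k=n$ and $\overline{\calT}$ is LLD, then Theorem~\ref{basictheogeneral} gives a non‑zero $\pi\circ f\in\overline{\calT}$ with $\rk(\pi\circ f)\leq n-1$, so $\rk f\leq n+(n-1)=2n-1\leq\dbinom{n+1}{2}$, and again we are done. The remaining case — $k=n$ and $\overline{\calT}$ not LLD, equivalently $\urk B(\calM)=n$ — is the heart of the matter and the main obstacle: it is exactly the configuration that Proposition~\ref{rangminmatriciel} excludes when $\#\K>n$ (and that is how one recovers Theorem~\ref{minranktheo1}), while over a small field it genuinely occurs. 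In this case $f\mapsto\pi\circ f$ is an isomorphism $\calT\to\overline{\calT}$, and the failure of LLD provides $y_0\in U$ with $\calT y_0$ of dimension $n$ and $\calT y_0\cap\calT x_0=\{0\}$. The plan for this case is: first show that $\overline{\calS}:=\{\pi\circ f:f\in\calS\}=\overline{\calT}\oplus\K(\pi\circ g)$ is again LLD — using that $\pi$ kills $\calS x_0$ and that $f\mapsto\pi\circ f$ is injective on $\calT$, and separately disposing of the degenerate subcase $\pi\circ g\in\overline{\calT}$, which unwinds to an element of $\calS\setminus\calT$ of rank $\leq n$ whose range lies in $\calT x_0$ — so that $\overline{\calT}$ is itself a hyperplane of an $(n+1)$‑dimensional LLD space, but over the \emph{strictly smaller} target $V/\calT x_0$; then iterate this descent, and, since $\K$ is finite, close the argument with a counting estimate in the spirit of Proposition~\ref{finitefieldcase} applied to the pair $(\overline{\calS},\overline{\calT})$ — tracking how many small‑rank operators of $\overline{\calS}$ actually lie in the hyperplane $\overline{\calT}$, the naive count over $\overline{\calS}$ alone being too wasteful. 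I expect this last step, together with establishing termination of the descent, to be the technically hardest point of the proof.
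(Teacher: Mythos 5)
Your preliminary reductions — to a hyperplane $\calT$, to $\#\K\leq n$ (hence $\K$ finite), to $V$ finite-dimensional, to $\calT$ non-LLD — are all sound, and the projection you set up is the same one that drives the paper's argument: with $\varphi_0$ rank-optimal and injective on $\calT$, one has $\im\varphi_0=\calT x_0$, and $\rk f\leq n+\rk(\pi\circ f)$ for the quotient map $\pi:V\to V/\im\varphi_0$. The two tractable branches of your case split ($\dim\overline{\calT}<n$, or $\dim\overline{\calT}=n$ with $\overline{\calT}$ LLD) are disposed of correctly. But the proposal stops exactly at the branch you identify as ``the heart of the matter,'' and the plan you sketch for it would not close, for a structural reason.

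The trouble with the proposed descent is that it never lowers the dimension of the operator space: each step only shrinks the target. The inequality $\rk f\leq n+\rk(\pi\circ f)$ therefore accumulates a fresh summand of $n$ at every stage, so after $t$ stages you only control $\rk f$ up to $tn$ plus whatever the terminal stage delivers, and nothing in the set-up bounds $t$ by $(n+1)/2$. The counting estimate of Proposition~\ref{finitefieldcase} cannot repair this either: with $c=1$ it produces about $\#\K-1$ non-zero small-rank operators in an $(n+1)$-dimensional LLD space, far too few to be forced into a prescribed hyperplane of codimension one over a finite field. So the hardest case is genuinely open in your write-up, as you in fact acknowledge.

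The paper's proof is organized quite differently: it is an induction on $n$, the dimension of $\calS$ itself. After reducing to $\calS$ minimal LLD and projecting along $V_0=\im\varphi$ for a rank-optimal $\varphi$ with $\Ker\varphi=\K g$, it sets $\calC:=\{f\in\calS:\pi\circ f=0\}$ and splits on $\calC$ rather than on whether $\pi\calT$ is LLD. When $\dim\calC\leq1$ and $g\notin\calC$, one chooses a hyperplane $\calU\ni g$ with $\calU\cap\calC=\{0\}$; the key lemma, proved ``following Amitsur,'' is that the $n$-dimensional space $\calS':=\pi\calU$ is LLD, and the inductive hypothesis applied to the pair $\bigl(\calS',(\pi\calT)\cap\calS'\bigr)$ yields a non-zero $f\in\calT$ with $\rk(\pi\circ f)\leq\binom{n}{2}$, hence $\rk f\leq n+\binom{n}{2}=\binom{n+1}{2}$. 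The configuration corresponding to your hard case is when $\calC=\K g=\Ker\varphi$, $g\notin\calT$, and $\pi\calT$ is not LLD; the paper does not iterate there at all. It picks a witness $x$ of the non-LLD-ness of $\pi\calT$, observes that $\psi:f\mapsto f(x)$ is another rank-optimal element of $\widehat{\calS}$ with $\Ker\psi=\Ker\varphi$ and $\im\psi\cap\im\varphi=\{0\}$, and reruns the earlier dichotomy with $\psi$ in place of $\varphi$: either $\mrk\calT\leq\binom{n+1}{2}$, or one is forced into $\im g\subset\im\psi$, which together with $\im g\subset\im\varphi$ gives $g=0$, a contradiction. In short, the paper resolves the hard case by a single ``swap the rank-optimal operator'' argument inside an induction on $\dim\calS$, not by a descent on $\dim V$; without this idea, or a genuine replacement for it, your proof is incomplete.
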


Note how this also improves Amitsur's lemma (see Section \ref{Amitsurnote}). Unsurprisingly,
our proof will borrow from Amitsur's method (see his proof of Lemma 1 in \cite{Amitsur}).

\begin{proof}
We prove the result by induction on $n$. For $n=1$, we know that the essential range of
$\calS$ has dimension $1$, and hence $\rk f =1$ for every non-zero operator $f \in \calS$.
Assume now that $n>1$. We may assume that $\calS$ is minimal among LLD spaces: indeed,
if the opposite is true, then we may find an LLD subspace $\calS'$ of $\calS$ with $2 \leq \dim \calS'<\dim \calS$,
and in any case $\dim (\calT \cap \calS') \geq \dim \calS'-1$, so the result follows by induction.

As $\calS$ is minimal, it is not $2$-LLD and hence $\widehat{\calS}$ has upper-rank $n$. Let $\varphi$ be a rank-optimal element of
$\widehat{\calS}$, set $V_0:=\im \varphi$ and choose a non-zero operator $g \in \Ker \varphi$.
Let us denote by $\pi : V \twoheadrightarrow V/V_0$ the canonical projection, and set
$$\calC:=\{f \in \calS : \; \pi \circ f=0\}.$$

If $\dim \calC>1$, then $\calC \cap \calT$ contains a non-zero element $f$, so that
$\im f \subset V_0$; in that case, we have found an element of $\calT$ with rank at most $n \leq \dbinom{n+1}{2}$.

Now, we assume that $\dim \calC \leq 1$ and that $\calC$ does not contain $g$.
Then, we can choose a hyperplane $\calU$ of $\calS$ such that $\calC \cap \calU=\{0\}$ and $g \in \calU$.
Thus, $\calS':=\{\pi \circ f \mid f \in \calU\}$ has dimension $n$.
Following Amitsur, we prove that $\calS'$ is LLD. Indeed, assume on the contrary that some $x \in U$ is such that
$\pi(f(x)) \neq 0$ for all non-zero operators $f \in \calU$. Then, $\dim \{f \in \calS : \; \pi(f(x))=0\} \leq 1$, and, as
$\calS$ is LLD, we deduce that $\{f \in \calS : \; \pi(f(x))=0\}=\{f \in \calS : \; f(x)=0\}$.
However, writing $\varphi : f \mapsto f(y)$ for some $y \in U$, we note that
$\pi(f(x+y))=\pi(f(x))$ for all $f \in \calS$. Applying the above results to both vectors $x$ and $x+y$, we deduce that
$\{f \in \calS : \; f(x)=0\}=\{f \in \calS : \; f(x+y)=0\}$, whence
$\forall f \in \calS, \; f(x)=0 \Rightarrow f(y)=0$. Since $\calS$ is LLD, we may choose a non-zero operator $f \in \calS$ with
$f(x)=0$. Then, $f \in \Ker \varphi$, and hence $f \in \calU$. But $\pi(f(x))=0$ contradicts the above
results on $\calU$. Therefore, $\calS'$ is LLD.

From there, we note that $\dim \bigl((\pi \calT) \cap \calS'\bigr) \geq \dim \calS'-1$, which, by induction,
yields a non-zero operator $f \in \calT$
such that $\rk (\pi \circ f) \leq \dbinom{n}{2}$. Therefore,
$$\rk f \leq \dim V_0+\rk (\pi \circ f) \leq n+\binom{n}{2}=\binom{n+1}{2}.$$
It remains to consider the case when $\calC=\K g=\Ker \varphi$ (note that the Flanders-Atkinson lemma shows that
this is always the case if $\# \K > n$). In that situation, we note that $\im g \subset V_0$.

If $g \in \calT$, we deduce that $\mrk \calT \leq \dim V_0=n$.

Let us assume further that $g \not\in \calT$, so that $\calT':=\{\pi \circ f \mid f \in \calT\}=\{\pi \circ f \mid f \in \calS\}$ has dimension $n$.
If $\calT'$ is LLD, the above line of reasoning yields $\mrk \calT \leq \dbinom{n+1}{2}$, once more.
Assume further that $\calT'$ is not LLD, and choose $x \in U$ such that $\pi(f(x)) \neq 0$ for all non-zero operators $f \in \calT$.
Then, for $\psi : f \in \calS \mapsto f(x)$, we have $\dim \Ker \psi \leq 1$, and hence $\psi$ is rank-optimal in $\widehat{\calS}$.
Note that $\{f \in \calS : \; \pi(f(x))=0\}$ contains $g$ and has dimension at most $1$.
Hence, $\{f \in \calS : \; \pi(f(x))=0\}=\K g$. It follows that $\Ker \psi \subset \K g$, and,
as $\Ker \psi \neq \{0\}$, one deduces that
$\Ker \psi=\Ker \varphi$. Setting $V_1:=\im \psi$, we see that $V_0 \cap V_1 =\{0\}$ for the opposite would yield
$\dim (V_0+V_1)/V_0 <\dim V_1=n$, contradicting the fact that $f \in \calT \mapsto \pi(f(x))\in (V_0+V_1)/V_0$ is linear and one-to-one.
Finally, we can conclude. Replacing $\varphi$ with $\psi$ in the first part of our proof, we see that either
$\mrk \calT \leq \dbinom{n+1}{2}$ or $\im g \subset V_1$. In the second case, we would find $\im g \subset V_0 \cap V_1=\{0\}$,
contradicting the fact that $g \neq 0$.

Therefore, in any case we have shown that $\mrk \calT \leq \dbinom{n+1}{2}$, which completes the proof by induction.
\end{proof}

\section{The minimal rank problem in a non-reflexive operator space (II)}\label{nonreflexivesectionII}

Now that the Meshulam-\v Semrl conjecture has been disproved,
the question remains whether the upper bound $2n-2$ in Corollary \ref{minrankcor1}
is the best one available in general or if one can come up with a lower one.
In this section, we give a general construction of examples in which the upper bound $2n-2$ is attained.

Quadratic forms play a large part in the following considerations, and the reader should be aware that a
solid knowledge of the theory of quadratic forms over fields is necessary beyond this point. In particular,
Witt's theory will be needed in the last part of the section.
Nevertheless, we shall recall some basic facts here as they are sufficient to grasp the main results.

Over an arbitrary field (whatever its characteristic), a quadratic form on a finite-dimensional vector space
$V$ is a map of the form $q : x \mapsto b(x,x)$, where $b : V \times V \rightarrow \K$ is a bilinear form
(but $b$ can be non-symmetric). The polar form of $q$ is then defined as the symmetric
bilinear form $b_q : (x,y) \mapsto q(x+y)-q(x)-q(y)=b(x,y)+b(y,x)$, and orthogonality refers to this bilinear form
when one speaks of orthogonality with respect to $q$.
Note that if $b$ is symmetric, then the polar form of $q$ is $2\,b$ (and thus it is zero if in addition $\K$ has characteristic $2$).
Note that, for fields of characteristic not $2$, the polar form of $q$ is usually defined as $(x,y) \mapsto \frac{1}{2}\,(q(x+y)-q(x)-q(y))$
but the need for a unified treatment motivates the above definition. Over a field of characteristic $2$, the polar
form of $q$ is always alternating, in the sense that $b_q(x,x)=0$ for all $x \in V$.

The radical of the quadratic form $q$ is defined as $\{x \in V : \; b_q(x,-)=0\}$, and $q$ is called non-degenerate (or regular) when
its radical equals $\{0\}$. The form $q$ is called isotropic when there is a non-zero vector $x \in V \setminus \{0\}$ such that $q(x)=0$.
Over a field of characteristic not $2$, a non-isotropic quadratic form is always non-degenerate
(this is not true however over fields of characteristic $2$; for example the quadratic form $x \mapsto x^2$
on such a field is non-isotropic although its polar form is zero).

Two quadratic forms $q$ and $q'$ (respectively on $V$ and $V'$) are called equivalent when
there is a linear isomorphism $u : V \overset{\simeq}{\rightarrow} V'$ such that $\forall x \in V, \; q(x)=q'(u(x))$;
in this case, we write $q \simeq q'$ and we say that $u$ is an isometry from $q$ to $q'$.
The orthogonal group of the quadratic space $(V,q)$ is the group of all isometries from $q$ to itself.

Given quadratic forms $q_1$ and $q_2$ on, respectively, spaces $V_1$ and $V_2$, we denote by
$q_1 \bot q_2$ their orthogonal direct sum as defined by $(q_1 \bot q_2) : (x,y)\in V_1 \times V_2 \mapsto q_1(x)+q_2(y)$.
Given scalars $a_1,\dots,a_n$, one denotes by $\langle a_1,\dots,a_n\rangle$ the
quadratic form $(x_1,\dots,x_n) \mapsto \underset{k=1}{\overset{n}{\sum}} a_k x_k^2$ on $\K^n$.
A quadratic form $q$ is called hyperbolic when it is equivalent to the orthogonal direct sum of finitely many copies of
the standard hyperbolic form $(x,y) \mapsto xy$ on $\K^2$.

We shall not recall the general definition of the tensor product of quadratic forms, but we simply remind the reader that
$\langle a_1,\dots,a_n \rangle \otimes q \simeq (a_1\,q)\bot (a_2\,q)\bot \cdots \bot (a_n\,q)$ whenever $q$ is a quadratic form and
$a_1,\dots,a_n$ are scalars.

Assume for now that $\K$ has characteristic $2$: given $(a,b) \in \K^2$, one denotes by $[a,b]$
the quadratic form $(x,y) \mapsto ax^2+xy+by^2$ on $\K^2$. In particular, every $2$-dimensional hyperbolic form is
equivalent to $[0,0]$. The map $\calP : x \mapsto x^2+x$ is an endomorphism of the group $(\K,+)$.
If $q$ is a regular quadratic form with dimension $2n$, then, in some basis, one of the bilinear forms $b$ that satisfies
$\forall x \in V, \; q(x)=b(x,x)$ is represented by
$\begin{bmatrix}
A & [?]_{n \times n} \\
[0]_{n \times n} & B
\end{bmatrix}$, where $A$ and $B$ belong to $\Mat_n(\K)$, and the class of $\tr(AB)$ in the quotient group
$\K/\calP(\K)$ depends only on $q$ and is called the \emph{Arf invariant} of $q$. In particular, the Arf invariant of
$[a,b]$ is the class of $ab$ mod. $\calP(\K)$ (see \cite[Chapter XXXII Section 4]{invitquad}).

\subsection{LDB division algebras}\label{LDBdef}

\begin{Def}
Let $A$ be a vector space. A bilinear map $\star : A \times A \rightarrow A$
is called \textbf{regular} when $a \star b \neq 0$ for all $(a,b)\in (A \setminus \{0\})^2$.
\end{Def}

\begin{Def}
Let $(A,\star)$ be a finite-dimensional division algebra, i.e.\ $A$ is a finite-dimensional vector space over $\K$ and $\star$ is
a regular bilinear mapping from $A \times A$ to $A$.
A \textbf{quasi-left-inversion of $\star$} is a binary operation $\bullet : A \times A \rightarrow A$
which vanishes exactly on $(A \times \{0\}) \cup (\{0\} \times A)$ and for which
the vectors $x \star (x \bullet y)$ and $y$ are colinear for all $(x,y) \in A^2$.

A \textbf{left-division-bilinearizable} (in abbreviated form: \textbf{LDB}) division algebra is a triple
$(A,\star,\bullet)$, where $(A,\star)$ is a finite-dimensional division algebra, \emph{with positive dimension},
and $\bullet$ is a bilinear quasi-left-inversion of $\star$.
\end{Def}

Here is the first important result on LDB division algebras:

\begin{prop}
Let $(A,\star,\bullet)$ be an LDB division algebra.
Then, there exists a unique quadratic form $q$ on $A$ such that
$$\forall (x,y) \in A^2, \; x \star (x \bullet y)=q(x)\,y.$$
Moreover, $q$ is non-isotropic. We shall say that $q$ is the \textbf{quadratic form attached to $(A,\star,\bullet)$}.
\end{prop}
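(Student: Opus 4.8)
The plan is to fix $x \in A \setminus \{0\}$ and first establish that there is a scalar $q(x)$ such that $x \star (x \bullet y) = q(x)\, y$ for \emph{all} $y \in A$, then show that the function $x \mapsto q(x)$ so obtained is a quadratic form, and finally prove non-isotropy. For the first step, observe that for each fixed $x \neq 0$ the map $y \mapsto x \star (x \bullet y)$ is linear in $y$ (since $\bullet$ is bilinear and $\star$ is bilinear), and the colinearity hypothesis says it sends every $y$ into $\K y$. A linear endomorphism of $A$ that stabilizes every line is a scalar multiple of the identity (this is the standard fact already invoked in the $n=2$ classification via Schur's lemma; one checks it directly by applying the endomorphism to $y_1$, $y_2$ and $y_1 + y_2$ for linearly independent $y_1, y_2$). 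Hence there is a unique scalar, call it $q(x)$, with $x \star (x \bullet y) = q(x)\, y$ for all $y$; for $x = 0$ we set $q(0) = 0$, which is consistent since $0 \bullet y = 0$ and $0 \star z = 0$. Uniqueness of $q$ as a function is immediate from this argument (evaluate at any $y \neq 0$), and once we know $q$ is a quadratic form, uniqueness as a quadratic form follows a fortiori.

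Next I would show $q$ is a quadratic form, i.e.\ $q(x) = b(x,x)$ for some bilinear $b : A \times A \to \K$. The idea is to \emph{polarize} the defining identity in the first slot. Fix $y_0 \in A \setminus \{0\}$. The map $x \mapsto x \star (x \bullet y_0)$ is a quadratic map $A \to A$ (each entry is a quadratic form in the coordinates of $x$, being a composite of the bilinear $\bullet(-, y_0)$ with one slot of $\star$ followed by the other — more precisely it is $x \mapsto B_2(x, B_1(x))$ for fixed linear $B_1$ and bilinear $B_2$, hence coordinatewise quadratic). By the defining identity this quadratic map equals $x \mapsto q(x)\, y_0$. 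Writing $y_0$ in a basis, each coordinate of $q(x)\, y_0$ is $q(x)$ times a constant; choosing a coordinate where that constant is nonzero exhibits $q$ as a quadratic form (a scalar multiple of a coordinate of a vector-valued quadratic map is a quadratic form). One should check that the resulting $b$ does not depend on the auxiliary choices, but this is automatic because $q$ itself is already well-defined by the first step.

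For non-isotropy, suppose $q(x) = 0$ for some $x \neq 0$. Then $x \star (x \bullet y) = 0$ for all $y$. Pick any $y \neq 0$; since $\bullet$ vanishes exactly on $(A \times \{0\}) \cup (\{0\} \times A)$ and $x \neq 0$, $y \neq 0$, we get $x \bullet y \neq 0$. But then $x \star (x \bullet y) = 0$ with both $x \neq 0$ and $x \bullet y \neq 0$ contradicts the regularity of $\star$. Hence $q(x) \neq 0$ for all $x \neq 0$, i.e.\ $q$ is non-isotropic.

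The only genuinely delicate point is the first step — extracting a \emph{single} scalar $q(x)$ valid for all $y$ from the pointwise colinearity condition — and the subtlety there is purely the ``endomorphism stabilizing all lines is scalar'' lemma, which needs $\dim A \geq 1$ (true by the definition of LDB division algebra, which requires positive dimension) and is trivial when $\dim A = 1$. Everything after that is bookkeeping: the quadratic-form property is a formal consequence of $\star$ and $\bullet$ being bilinear, and non-isotropy is an immediate consequence of the regularity of $\star$ together with the prescribed vanishing locus of $\bullet$. I would present the three steps in exactly the order above.
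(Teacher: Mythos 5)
Your proposal is correct and follows essentially the same route as the paper: extract $q(x)$ from the fact that $y \mapsto x \star (x \bullet y)$ is a line-preserving linear endomorphism, exhibit $q$ as $x \mapsto \varphi(x \star (x \bullet y_0))$ for a suitable linear form $\varphi$ (your ``choose a coordinate where $y_0$ is nonzero'' is exactly the paper's choice of $\varphi \in A^\star$ with $\varphi(y_0)=1$), and deduce non-isotropy from regularity of $\star$ and $\bullet$. The only cosmetic difference is that the paper writes the bilinear form $(x_1,x_2)\mapsto \varphi(x_1\star(x_2\bullet y_0))$ explicitly instead of appealing to coordinatewise reasoning.
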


\begin{proof}
Let $x \in A$. Then, $y \mapsto x \star (x \bullet y)$ is linear and maps every vector of $A$ to a collinear vector.
Therefore, there is a unique scalar $q(x)\in \K$ such that
$$\forall y \in A, \; x \star (x \bullet y)=q(x)\,y.$$
Let us choose a non-zero vector $y \in A$ and a linear form $\varphi \in A^\star$ such that $\varphi(y)=1$.
Then, $\forall x \in A, \; q(x)=\varphi(x \star (x \bullet y))$, and as
$$(x_1,x_2) \mapsto \varphi(x_1 \star (x_2 \bullet y))$$
is a bilinear form on $A$ we deduce that $q$ is a quadratic form on $A$.
Finally, if $x \neq 0$ then $q(x)\, y \neq 0$ as $\star$ and $\bullet$ are regular, whence $q(x)\neq 0$.
\end{proof}

It follows that for a finite field $\K$, an LDB division algebra over $\K$ has dimension at most $2$.

If $\K$ has characteristic $2$, then one should be aware that the quadratic form $q$ may be degenerate, in the
sense that its (alternating) polar form $(x,y) \mapsto q(x+y)-q(x)-q(y)$ may be degenerate, even when
the dimension of $A$ is even. An example of this will soon be given.

\begin{Rem}\label{inversionremark}
Let $(A,\star,\bullet)$ be an LDB division algebra with attached quadratic form $q$.
Then, $(A,\bullet,\star)$ is an LDB division algebra with attached quadratic form $q$!
Indeed, let $(x,y) \in A^2$ be with $x \neq 0$.
Then,
$$x \star (x \bullet (x \star y))=q(x)\,(x \star y)=x \star (q(x)\, y),$$
and since $x$ is non-zero and $\star$ is regular, this yields
$$x \bullet (x \star y)=q(x)\,y.$$
\end{Rem}

Now, say that $A=\K^n$ with its standard vector space structure.
Using canonical matrix representations, a regular
bilinear map $\star : A^2 \rightarrow A$ can be seen as a one-to-one linear map $M : a \in \K^n \mapsto (a \star -) \in \Mat_n(\K)$
which maps every non-vector vector $a$ to an invertible matrix.
A bilinear quasi-left-inversion of $\star$ can then be viewed as a one-to-one linear map
$N : \K^n \hookrightarrow \Mat_n(\K)$ for which there exists a non-isotropic quadratic form $q$ in
$\K^n$ such that
$$\forall a \in \K^n, \; N(a)M(a)=q(a)\,I_n$$
i.e.\
$$\forall a \in \K^n \setminus \{0\}, \; N(a)=q(a)\,M(a)^{-1}.$$
For $n=2$, a basic example is obtained by taking a $2$-dimensional linear subspace $\calV$ of $\Mat_2(\K)$
in which every non-zero matrix is invertible; then one takes an arbitrary isomorphism $i : \K^2 \overset{\simeq}{\rightarrow} \calV$.
An obvious bilinear quasi-left-inversion of $i$ is $x \mapsto \widetilde{i(x)}$, where $\widetilde{M}$ denotes the
transpose of the comatrix of $M$, the associated quadratic form being $x \mapsto \det i(x)$.
This shows in particular that every $2$-dimensional division algebra has a bilinear quasi-left-inversion.
Recall that every $2$-dimensional division algebra is equivalent to a quadratic extension of $\K$, and
two quadratic extensions of $\K$ are equivalent as division algebras if and only if they are isomorphic
as field extensions of $\K$. Note also that if $\L$ is a quadratic extension of $\K$, then:
\begin{itemize}
\item Either $\L$ is separable over $\K$, and then $(x,y) \mapsto \sigma(x)y$ is
a bilinear quasi-left-inversion of the product on $\L$, where $\sigma$ is the non-identity automorphism of the $\K$-algebra $\L$;
the attached quadratic form is the norm of $\L$ over $\K$.
\item Or $\L$ is inseparable over $\K$, and $\K$ has characteristic $2$;
then, the product on $\L$ is a bilinear quasi-left-inversion of itself and the attached quadratic form is $x \in \L \mapsto x^2 \in \K$.
In that case, the attached quadratic form is totally degenerate, i.e.\ its polar form is zero!
\end{itemize}

\vskip 3mm
Another set of examples is yielded by quaternion algebras.
Let $q$ be a $2$-dimensional quadratic form over $\K$ such that $\langle 1\rangle \bot (-q)$ is non-isotropic.
Then, we consider the Clifford algebra $C(q)$ associated with $q$. Recall that $C(q)$ is then a skew-field extension of $\K$
of degree $4$, on which there is an anti-automorphism of $\K$-algebra $x \mapsto x^\star$
called the \textbf{conjugation}, together with a quadratic form $N$ on $C(q)$ called the \textbf{norm} of the quaternion algebra $C(q)$ and which satisfies
$\forall x \in C(q), \; x^\star x=xx^\star=N(x)$.
Thus, $(x,y) \mapsto x^\star y$ is a bilinear quasi-left-inversion of the product of $C(q)$ and the attached quadratic form is $N$.
Note that if $\K$ has characteristic not $2$, then $N \simeq \langle 1,\delta\rangle \bot (-q)$,
where $\delta$ is a determinant of $q$; if $\K$ has characteristic $2$, then
either $q \simeq \langle a,b\rangle$ for some $(a,b)\in (\K^*)^2$ and then
$N \simeq \langle 1,a\rangle \otimes \langle 1,b\rangle$, or
$q$ is non-degenerate and then $N \simeq [1,\delta] \bot q$ where $\delta$ represents the Arf invariant of $q$.

In particular, for the field of real numbers and $q=\langle -1,-1\rangle$, this construction yields
the standard skew field of quaternions, and a relevant
$4$-dimensional subspace of $\Mat_4(\K)$ is given by the generic matrix
$$\begin{bmatrix}
\mathbf{a} & -\mathbf{b} & -\mathbf{c} & -\mathbf{d} \\
\mathbf{b} & \mathbf{a} & \mathbf{d} & -\mathbf{c} \\
\mathbf{c} & -\mathbf{d} & \mathbf{a} & \mathbf{b} \\
\mathbf{d} & \mathbf{c} & -\mathbf{b} & \mathbf{a}
\end{bmatrix}.$$

Finally, we can find larger LDB division algebras by using Cayley's generalized octonions:
starting from the above quaternion algebra $C(q)$, one chooses, if possible, a scalar $\varepsilon$ such that
$\langle 1,-\varepsilon\rangle \otimes N$ is non-isotropic (note that this is equivalent to having
$\varepsilon$ outside the range of $N$, since $N$ is multiplicative), and one uses the Cayley-Dickson construction to define
an inner composition law $*$
on $C(q)^2$ by
$$(a,b) * (c,d):=(ac-db^\star\, ,\, a^\star d-\varepsilon cb).$$
One checks that this endows $C(q)^2$ with a structure of $8$-dimensional division algebra over $\K$,
and the pairing defined by
$$(a,b) \bullet (c,d):=(a^\star c+db^\star \, , \, ad+\varepsilon cb)$$
provides a bilinear quasi-left-inversion of $*$ with attached quadratic form
$N \bot (-\varepsilon\,N) \simeq \langle 1,-\varepsilon\rangle \otimes N$.

\vskip 3mm
Let us resume the general theory of LDB division algebras.
Note that if $\bullet$ is a bilinear quasi-left-inversion of $\star$, then $\lambda\,\bullet$ is obviously another one for each $\lambda \in \K \setminus \{0\}$.
We prove that this yields all the possible bilinear quasi-left-inversion maps:

\begin{prop}\label{unicitedelinversion}
Let $\circ$ and $\bullet$ be two bilinear quasi-left-inversion maps for the division algebra $(A,\star)$.
Then, $\bullet$ is a scalar multiple of $\circ$.
\end{prop}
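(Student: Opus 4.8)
The plan is to reduce the statement to the elementary fact that a linear endomorphism of $A$ for which every non-zero vector is an eigenvector must be a scalar multiple of the identity. First I would apply the preceding proposition to the two LDB division algebras $(A,\star,\circ)$ and $(A,\star,\bullet)$, obtaining non-isotropic quadratic forms $q_\circ$ and $q_\bullet$ on $A$ characterized by $x \star (x \circ y)=q_\circ(x)\,y$ and $x \star (x \bullet y)=q_\bullet(x)\,y$ for all $(x,y)\in A^2$.

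Next, for a fixed non-zero $x \in A$, I would use that $\star$ is regular to see that the left multiplication $L_x : z \mapsto x \star z$ is an injective, hence bijective, endomorphism of $A$. The two displayed identities then give $x \circ y = q_\circ(x)\,L_x^{-1}(y)$ and $x \bullet y = q_\bullet(x)\,L_x^{-1}(y)$ for every $y \in A$, and since $q_\circ(x) \neq 0$ by non-isotropy, eliminating $L_x^{-1}(y)$ yields
$$x \bullet y = \lambda(x)\,(x \circ y) \quad \text{for all } y \in A, \qquad \text{with } \lambda(x):=q_\bullet(x)/q_\circ(x).$$
The key point to record here is that this scalar $\lambda(x)$ does not depend on $y$ (and is non-zero, since $q_\bullet$ is non-isotropic).

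It then remains to prove that $\lambda$ is constant on $A \setminus \{0\}$. The case $\dim A = 1$ being trivial, I would assume $\dim A \geq 2$, fix a non-zero $y_0 \in A$, and consider the linear endomorphisms $u : x \mapsto x \bullet y_0$ and $v : x \mapsto x \circ y_0$ of $A$, the latter being injective --- hence bijective --- because $\circ$ vanishes only on $(A\times\{0\})\cup(\{0\}\times A)$ and $y_0 \neq 0$. Setting $h := v^{-1}\circ u$, the identity above gives $h(x) = \lambda(x)\,x$ for every non-zero $x$, so every non-zero vector of $A$ is an eigenvector of $h$; by the standard argument (comparing the eigenvalues at $x$, $x'$ and $x+x'$ for linearly independent $x,x'$), $h$ equals $\mu\,\id$ for some scalar $\mu$, whence $\lambda(x)=\mu$ for all $x \neq 0$. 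Then $x \bullet y = \mu\,(x \circ y)$ holds for all $x \in A \setminus\{0\}$ and all $y \in A$, and trivially also for $x=0$, so $\bullet = \mu \circ$.

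I do not expect a genuine obstacle in this argument; the only subtlety is the independence of $\lambda(x)$ from $y$, which must be established before the eigenvector argument can be applied, and which is precisely what inverting $L_x$ delivers.
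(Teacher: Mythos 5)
Your proof is correct, and it takes a genuinely different route from the one in the paper. The paper observes that the maps $g : x \mapsto (x \circ -)$ and $f : x \mapsto (x \bullet -)$ from $A$ to $\calL(A)$ are locally linearly dependent (since for each non-zero $x$ both $x \circ -$ and $x \bullet -$ are scalar multiples of $(x \star -)^{-1}$), and then appeals to the classification of LLD pairs established in Section 1.2 via duality and Schur's theorem on rank-one spaces: since $\rk g = \dim A \geq 2$, the ``common line of values'' alternative is excluded, so $f$ and $g$ are linearly dependent, and as $g \neq 0$ this forces $f = \mu\,g$. Your argument instead makes the pointwise ratio $\lambda(x)=q_\bullet(x)/q_\circ(x)$ explicit, then proves its constancy directly: fixing $y_0 \neq 0$ and forming the linear endomorphism $h = v^{-1}\circ u$ (with $u : x \mapsto x \bullet y_0$, $v : x \mapsto x \circ y_0$, $v$ bijective by regularity of $\circ$), you get $h(x)=\lambda(x)\,x$ for all non-zero $x$, so $h$ is scalar by the standard eigenvector argument. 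What your approach buys is self-containedness: it avoids the LLD machinery and gives the scalar explicitly as the ratio of the two attached quadratic forms. What the paper's approach buys is brevity and thematic coherence, since it recycles the already-established classification of $2$-dimensional LLD spaces, which is one of the paper's recurring tools. Both proofs hinge on the same observation that for each fixed $x$ the two partial maps are collinear; the difference lies only in how that pointwise collinearity is upgraded to global proportionality.
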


\begin{proof}
The result is obvious if $A$ has dimension at most $1$, for in that case all the bilinear maps from $A^2$ to $A$
are collinear. Assume now that $\dim A>1$.
Then, for all
$x \in A \setminus \{0\}$, the endomorphisms $x \bullet -$ and $x \circ -$ are collinear vectors of $\calL(A)$
as they are both scalar multiples of $(x \star -)^{-1}$.

It follows that the maps $f : x \mapsto x \bullet -$ and $g : x \mapsto x \circ -$
are locally linearly dependent. However, $\rk g =\dim A \geq 2$ as $\circ$ is regular, and hence
$f$ is a scalar multiple of $g$, which yields the claimed result.
\end{proof}

It follows that the quadratic form attached to an LDB division algebra $(A,\star,\bullet)$
is uniquely determined, up to multiplication by a non-zero scalar, by $\star$.

Interestingly, LDB division algebras are connected with the examples we have discussed in Section \ref{disproveconjecturesection}:

\begin{prop}
Let $(A,\star,\bullet)$ be an $n$-dimensional LDB division algebra, with $n \geq 2$. Then,
there exists an $(n-1)$-dimensional linear subspace of $\Mata_n(\K)$ in which all the non-zero matrices are invertible,
and $n$ is even.
\end{prop}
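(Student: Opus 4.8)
The plan is to imitate, in full generality, the construction from Section~\ref{disproveconjecturesection} that produced non-singular $(n-1)$-dimensional subspaces of $\Mata_n(\K)$ out of the quaternions and the octonions, namely to extract the ``pure part'' of the left-multiplication representation. First I would fix a basis of $A$, thereby identifying $A$ with $\K^n$, and write $M : A \hookrightarrow \Mat_n(\K)$ for the linear map sending $a$ to the matrix of $x \mapsto a \star x$ and $N : A \hookrightarrow \Mat_n(\K)$ for the analogous map attached to $\bullet$; both send non-zero vectors to invertible matrices because $\star$ and $\bullet$ are regular. The defining property of $q$ reads $M(a)N(a)=q(a)\,I_n$, and by Remark~\ref{inversionremark} also $N(a)M(a)=q(a)\,I_n$; polarizing both gives, for all $a,b \in A$,
\[
M(a)N(b)+M(b)N(a)=b_q(a,b)\,I_n=N(a)M(b)+N(b)M(a),
\]
where $b_q$ is the polar form of $q$, which in characteristic $\neq 2$ is non-degenerate since $q$ is anisotropic.

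The technical heart — and the step I expect to be the main obstacle — is to produce a non-degenerate symmetric bilinear form $\beta$ on $A$ for which left-$\star$-multiplication and left-$\bullet$-multiplication by a common element are mutually $\beta$-adjoint; equivalently, an invertible symmetric matrix $G$ with $M(x)^T G = G\,N(x)$ for all $x \in A$, which then also forces $N(x)^T G = G\,M(x)$ and hence $(M(a)N(b))^T G = G\,M(b)N(a)$. In characteristic $\neq 2$ I would try to show that $\beta$ can be taken to be (a scalar multiple of) the polar form $b_q$ itself — this is exactly what happens for quaternions and octonions, where conjugation is the $b_q$-adjoint of left multiplication — deriving it from the polarized identity above and pinning down the scaling via the uniqueness of $\bullet$ up to a scalar (Proposition~\ref{unicitedelinversion}). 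The characteristic~$2$ case must be handled separately and is genuinely different: the polarization degenerates (one even gets $M(x)N(y)-M(y)N(x)=b_q(x,y)\,I_n$), and, worse, $b_q$ may itself be degenerate, so one needs a different companion form and a ``trace-zero hyperplane'' argument, showing that on that hyperplane $M(x)^2$ is a scalar matrix and $G\,M(x)$ is alternating (symmetric with vanishing diagonal).

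Granting the matrix $G$ (char.\ $\neq 2$), I would fix any $v_0 \in A \setminus \{0\}$ and set
\[
\calV := \bigl\{\, G\bigl(M(x)N(v_0)-M(v_0)N(x)\bigr) \;\big|\; x \in A \,\bigr\},
\]
i.e.\ the image, under left multiplication by $G$, of the space of operators $x\star(v_0\bullet -)-v_0\star(x\bullet -)$. From $(M(a)N(b))^T G = G\,M(b)N(a)$ and $G=G^T$ each matrix of $\calV$ is skew-symmetric, hence alternating. Using the polarized identity one rewrites
\[
M(x)N(v_0)-M(v_0)N(x) = -2\,M(v_0)\,N\!\Bigl(x-\frac{b_q(v_0,x)}{2q(v_0)}\,v_0\Bigr),
\]
so the kernel of $x \mapsto M(x)N(v_0)-M(v_0)N(x)$ is exactly the line $\K v_0$ (because $N$ sends non-zero vectors to invertible matrices), whence $\dim \calV = n-1$, and every non-zero element of $\calV$ is a product of invertible matrices, hence invertible. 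This produces the desired non-singular $(n-1)$-dimensional subspace of $\Mata_n(\K)$. The evenness of $n$ is then immediate: an alternating matrix has even rank over any field, so an alternating matrix of odd size is singular; since $n-1 \geq 1$, the space $\calV$ contains an invertible alternating matrix, which is impossible unless $n$ is even.
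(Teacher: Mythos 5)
You take a genuinely different route from the paper, but your proof has an essential gap that you yourself flag: the existence of a non-degenerate symmetric bilinear form $\beta$ (equivalently, an invertible symmetric $G$ with $M(x)^T G = G N(x)$ for all $x$) is never established. You say you would ``try to show'' that $\beta$ can be taken to be a scalar multiple of $b_q$, but this is not derived from the LDB axioms and is not obviously true: the data $(\star,\bullet,q)$ of an LDB division algebra can be twisted (e.g., replacing $x\star y$ by $g(x\star y)$ and $x\bullet y$ by $x\bullet g^{-1}(y)$ with $g \in \GL(A)$ arbitrary) in ways that preserve the LDB structure but move $\star$ and $\bullet$ outside of any fixed adjoint pairing, so pinning $\beta$ to $b_q$ or anything canonical requires real work. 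Once the $G$ is granted, the rest of your argument in characteristic $\neq 2$ is correct: the reformulation $M(x)N(v_0)-M(v_0)N(x)=-2M(v_0)N\bigl(x-\tfrac{b_q(v_0,x)}{2q(v_0)}v_0\bigr)$ is a valid consequence of the polarized identity, the kernel of this map is exactly $\K v_0$, and the images under left multiplication by $G$ are skew-symmetric of rank $n$ (so non-singular); the evenness of $n$ then follows as you say. But the characteristic $2$ branch is also only sketched, and in that setting the alternating/skew-symmetric distinction and the possible degeneracy of $b_q$ require a separate, unwritten argument.

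The paper sidesteps all of this with a much more elementary device: it writes the $i$-th row of $A(X):=M(X)$ as $X^T A_i$ and the $j$-th column of $B(X):=N(X)$ as $B_j X$, reads off from the $(1,j)$-entry of $A(X)B(X)=q(X)I_n$ (for $j \ge 2$) that each $A_1 B_j$ is alternating, and takes $\calV = A_1 \Vect(B_2,\dots,B_n)$. No invariant bilinear form is needed; the invertibility of non-zero elements of $\calV$ and of $A_1$ follows immediately from the fact that non-trivial linear combinations of the $A_i$ (resp.\ $B_j$) are non-singular because the $A(X)$ (resp.\ $B(X)$) are, and the argument is characteristic-free. So if you want to salvage your approach, the missing lemma on the existence of $G$ must actually be proved; otherwise, the paper's row/column extraction is the shorter and more robust path.
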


\begin{proof}
Without loss of generality, we may assume that $A=\K^n$.
Denote by $q$ the quadratic form attached to $(\K^n,\star,\bullet)$.
For $X \in \K^n$, let us denote, respectively, by
$A(X)$ and $B(X)$ the matrices of $X \star -$ and $X \bullet -$ in the canonical basis.
Then, $X \mapsto A(X)$ and $X \mapsto B(X)$ are one-to-one linear maps from $\K^n$ to $\Mat_n(\K)$, and we have
\begin{equation}\label{matrixidentity}
\forall X \in \K^n, \; A(X)B(X)=q(X)I_n.
\end{equation}
We may find matrices $A_1,\dots,A_n$ and $B_1,\dots,B_n$, all in $\Mat_n(\K)$, such that
$$\forall X \in \K^n, \; A(X)=\begin{bmatrix}
X^T A_1 \\
\vdots \\
X^T A_n
\end{bmatrix} \quad \text{and} \quad
B(X)=\begin{bmatrix}
B_1X & \cdots & B_n X
\end{bmatrix}.$$
As $A(X)$ and $B(X)$ are non-singular for all non-zero vectors $X \in \K^n$, each non-trivial linear combination of $A_1,\dots,A_n$
is non-singular, and so is each non-trivial linear combination of $B_1,\dots,B_n$.
From \eqref{matrixidentity}, we find
$$\forall X \in \K^n, \;\forall i \in \lcro 2,n\rcro, \; X^T A_1B_iX=0,$$
and hence every matrix of $\calV:=A_1\Vect(B_2,\dots,B_n)$ is alternating.
On the other hand, every non-zero matrix of $\calV$ is non-singular, and $\dim \calV=n$.
One concludes by noting that $\Mata_n(\K)$ contains a non-singular matrix only if $n$ is even.
\end{proof}

By way of consequence, if there is an $n$-dimensional LDB division algebra over $\K$, with $n \geq 4$, then
there is also an $(n-1)$-dimensional non-reflexive operator space in which all the non-zero operators have rank $n$
(see Section \ref{disproveconjecturesection}).

\begin{Rem}
Using the arguments of the proof, one sees that defining an $n$-dimensional LDB division algebra structure on the $\K$-vector space
$\K^n$ amounts to finding a list $(A_1,\dots,A_n) \in \Mat_n(\K)^n$ for which there is
a non-isotropic matrix $P \in \GL_n(\K)$ together with a list $(B_1,\dots,B_n) \in \Mat_n(\K)^n$ satisfying
$$\forall (i,j)\in \lcro 1,n\rcro^2, \; A_iB_j-\delta_{i,j}P \in \Mata_n(\K),$$
where $\delta_{i,j}=1$ if $i=j$, and $\delta_{i,j}=0$ otherwise.
\end{Rem}

Here is an alternative proof that the dimension of an LDB division algebra is either $1$ or an even number.
Let $(A,\star,\bullet)$ be an $n$-dimensional LDB division algebra with attached quadratic form denoted by $q$.
Assume that $n \geq 3$. Then, as $q$ is non-isotropic the field $\K$ must be infinite.
For all $x \in A$, we have $(x \star -) \circ (x \bullet -)=q(x)\,\id_A$ and hence
$$\det(x \star -)\det(x \bullet -)=q(x)^n.$$
Note that $x \mapsto \det(x \star -)$ and $x \mapsto \det(x \bullet -)$ are both homogeneous polynomial functions of degree $n$
on the vector space $A$.
However, as $q$ is a non-isotropic quadratic form and $n \geq 2$, the polynomial map $q$
is irreducible. Thus, $x \mapsto \det(x \star -)$ is the product of a power of $q$
with a non-zero scalar; as its degree is $n$ and the one of $q$ is $2$, we deduce that $n$ is even.

\subsection{The twisted operator space attached to an LDB division algebra}\label{twisteddef}

Let $(A,\star,\bullet)$ be an LDB division algebra.
The bilinear mapping
$$\Gamma_{A,\star,\bullet} : \begin{cases}
(A \oplus \K^2) \times A^2 & \longrightarrow A^2 \\
\bigl(x +(\lambda,\mu),(y,z)\bigr) & \longmapsto (x \star z+\lambda\, y\, ,\, x \bullet y+\mu\, z)
\end{cases}$$
is left-regular.
Indeed, given $(x,\lambda,\mu) \in A \times \K^2$ such that $\Gamma_{A,\star,\bullet}(x+(\lambda,\mu),-)=0$,
we find $\lambda\,y=0$ and $x \bullet y=0$ for all $y \in A$ by taking the vector $(y,0)$,
and similarly we find $\mu z=0$ for all $z \in A$; this yields $\lambda=0$, $x=0$ and $\mu=0$.

\begin{Def}
The \textbf{twisted operator space} $\calT_{A,\star,\bullet}$ attached to the LDB division algebra $(A,\star,\bullet)$
is the vector space of all endomorphisms
$\Gamma_{A,\star,\bullet}(x+(\lambda,\mu),-)$ of $A^2$, for $(x,\lambda,\mu) \in A \times \K^2$.
\end{Def}

As is customary, we shall simply write $\calT_A$ (respectively, $\Gamma_A$) instead of $\calT_{A,\star,\bullet}$ (respectively, of
$\Gamma_{A,\star,\bullet}$) when no confusion can reasonably arise on the pair of laws $(\star,\bullet)$.
Note that $\calT_A$ has dimension $\dim A+2$. Moreover, $\calT_A$ is reduced because it contains $\id_{A^2}=\Gamma_A\bigl((1,1),-\bigr)$.

Obviously, $\calT_{A,\star,\bullet}$ depends on both laws $\star$ and $\bullet$.
However, given $\alpha \in \K \setminus \{0\}$, the operator spaces $\calT_{A,\star,\bullet}$
and $\calT_{A,\star,\alpha \bullet}$ are equivalent. Indeed, for the isomorphisms
$F : x+(\lambda,\mu) \in A \oplus \K^2 \longmapsto x+(\lambda,\alpha^{-1}\mu) \in A \oplus \K^2$ and
$G : (y,z)\in A^2 \mapsto (y,\alpha z) \in A^2$ one checks that
$$\forall (X,Y) \in (A \oplus \K^2) \times A^2, \quad
\Gamma_{A,\star,\alpha \bullet}(X,Y)=G\bigl(\Gamma_{A,\star,\bullet}(F(X),Y)\bigr).$$

\begin{prop}\label{LDBdonneLLD}
Let $(A,\star,\bullet)$ be an LDB division algebra.
Then, $\calT_A$ is LLD and more precisely $\{f \in \calT_A : \; f(y,z)=0\}$ has dimension $1$ for all
$(y,z) \in A^2 \setminus \{(0,0)\}$.
\end{prop}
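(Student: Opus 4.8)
The plan is to fix a non-zero pair $(y,z)\in A^2$ and directly compute the condition under which an operator $f=\Gamma_A\bigl(x+(\lambda,\mu),-\bigr)$ annihilates it, then show the space of solutions $(x,\lambda,\mu)\in A\oplus\K^2$ is exactly one-dimensional. Writing out the definition, $f(y,z)=0$ means
$$x\star z+\lambda\,y=0 \quad\text{and}\quad x\bullet y+\mu\,z=0.$$
I would split the argument according to whether $y$ and $z$ are both non-zero, or exactly one of them vanishes (they cannot both vanish by hypothesis).

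First suppose $y\neq 0$ and $z\neq 0$. From the second equation, $x\bullet y=-\mu z$; applying $x\star(-)$ and using the defining identity $x\star(x\bullet y)=q(x)\,y$ of the attached quadratic form $q$ (Proposition on the quadratic form attached to an LDB division algebra), we get $q(x)\,y=-\mu\,(x\star z)$. But the first equation gives $x\star z=-\lambda\,y$, so $q(x)\,y=\mu\lambda\,y$, hence $q(x)=\lambda\mu$ since $y\neq 0$. Now I claim $x$ determines $\lambda$ and $\mu$ uniquely when $x\neq 0$: indeed $x\star(-)$ is invertible (as $\star$ is regular), so $x\star z=-\lambda y$ forces $\lambda=-z'\!\cdot\! y$-type dependence — more carefully, $z$ and $\lambda$ are linked by $z=-\lambda\,(x\star-)^{-1}(y)$, and since $z\neq 0$ this pins down $\lambda\neq 0$ and then $z$ is determined by $x$ and $\lambda$; similarly $x\bullet(-)$ is invertible (Remark \ref{inversionremark}, or because $x\bullet(-)=q(x)(x\star-)^{-1}$ for $x\neq0$), giving $\mu\neq 0$ and $y$ determined by $x$ and $\mu$. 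So for fixed $(y,z)$, the map $x\mapsto(\lambda,\mu)$ on the solution set (restricted to $x\neq0$) is single-valued, and scaling $x$ by $t\in\K^\ast$ scales $\lambda$ by $t$ (from $x\star z=-\lambda y$ being linear in $x$) and $\mu$ by $t$ as well (from $q(x)=\lambda\mu$ being $2$-homogeneous, consistent only if... wait: $q(tx)=t^2\lambda\mu$ while $\lambda\mapsto t\lambda$, so $\mu\mapsto t\mu$). Thus the solution set is closed under scaling and, once we show it contains a non-zero element and nothing linearly independent from it, it is exactly a line. For the case $x=0$: then $\lambda y=0$ and $\mu z=0$, forcing $\lambda=\mu=0$, i.e.\ only the trivial solution, which lies on any line through a non-zero solution.

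The remaining ingredient is to produce a non-zero solution and rule out a two-dimensional solution space. For existence: given $y\neq 0,z\neq0$, the equation $x\star z=-y$ has a unique solution $x_0\neq 0$ (by regularity of $\star$); then set $\lambda=1$, and one checks $x_0\bullet y$ is collinear with $z$ — indeed $z\star(x_0\bullet y)$... hmm, rather use $x_0\bullet(x_0\star z)=q(x_0)z$ from Remark \ref{inversionremark}, i.e.\ $x_0\bullet(-y)=q(x_0)z$, so $x_0\bullet y=-q(x_0)z$, giving $\mu=q(x_0)$; this exhibits $(x_0,1,q(x_0))$ as a solution, manifestly non-zero. To see the space is at most one-dimensional, suppose $(x,\lambda,\mu)$ and $(x',\lambda',\mu')$ are two solutions; if both have $x,x'\neq 0$ then from $x\star z=-\lambda y$ and $x'\star z=-\lambda' y$ we get $\lambda'\,(x\star z)=\lambda\,(x'\star z)$, so $(\lambda' x-\lambda x')\star z=0$, whence $\lambda' x=\lambda x'$ by regularity; combined with the scaling relations on $\lambda,\mu$ this forces the two triples to be proportional. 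The degenerate subcases $y=0,z\neq0$ (then the equations read $x\star z=0$, so $x=0$, and $\mu z=0$, so $\mu=0$, leaving only $\lambda$ free — a line) and $y\neq0,z=0$ (symmetrically, $\mu$ free, a line) are handled the same way.

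The main obstacle I expect is organizing the case analysis cleanly and making the homogeneity/scaling bookkeeping rigorous rather than hand-wavy — in particular verifying that the ``$x$ determines $\lambda,\mu$'' step genuinely forces one-dimensionality and does not secretly allow a second independent branch when $q$ takes special values. Using Remark \ref{inversionremark} to pass freely between $\star$ and $\bullet$ is what makes the computation symmetric and keeps it short; the key identities $x\star(x\bullet y)=q(x)y$ and $x\bullet(x\star y)=q(x)y$ are the whole engine of the proof.
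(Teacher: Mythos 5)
Your proof is correct and follows essentially the same route as the paper: split into the degenerate cases ($y=0$ or $z=0$) and the generic case, then in the generic case apply $x\star(-)$ to the second equation and invoke the identity $x\star(x\bullet y)=q(x)\,y$ to reduce everything to the first equation plus the constraint $q(x)=\lambda\mu$. The paper's write-up is tighter — it directly parametrizes the solution set as $\{(-\lambda x_0,\lambda,q(x_0)\lambda):\lambda\in\K\}$ where $x_0\star z=y$, rather than exhibiting one solution and separately arguing any two solutions are proportional — but the mathematical content is the same, and your "exhibit a solution and rule out a second independent branch" bookkeeping does close correctly once one notes that $\lambda,\lambda'\neq 0$ whenever $x,x'\neq 0$.
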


\begin{proof}
Denote by $q$ the quadratic form attached to $(A,\star,\bullet)$.
Let $(y,z) \in A^2 \setminus \{(0,0)\}$ and $(x,\lambda,\mu) \in A \times \K^2$.
Then,
$$\Gamma_A\bigl(x+(\lambda,\mu),(y,z)\bigr)=0 \; \Leftrightarrow \; \begin{cases}
x \star z & =-\lambda\,y \\
x \bullet y & =-\mu\, z.
\end{cases}$$
If $z=0$, then $y \neq 0$ and the above condition is equivalent to $\lambda=0$ and $x=0$; in that case,
$\{f \in \calT_A : \; f(y,z)=0\}$ has dimension $1$. \\
If $y=0$, then a similar line of reasoning yields the same conclusion. \\
Assume finally that $y \neq 0$ and $z \neq 0$. In the above condition, the additional condition $x=0$
would lead to $\lambda=\mu=0$. Assume now that $x \neq 0$, and denote by
$x_0$ the sole vector of $A$ for which $x_0 \star z=y$. Then,
$$x \bullet y =-\mu\,z \, \Leftrightarrow\, x \star (x \bullet y)=x \star (-\mu z) \, \Leftrightarrow \, q(x)\,y=-\mu \, x\star z,$$
and hence the above set of conditions is equivalent to
$$x=-\lambda\,x_0 \quad \text{and} \quad \mu=q(x_0)\, \lambda,$$
which defines a $1$-dimensional subspace of $A \oplus \K^2$, as claimed.
\end{proof}

Using $\Gamma_A$ to identify $\calT_A$ with $A \oplus \K^2$, we may consider the quadratic form
$$\widetilde{q} : x+(\lambda,\mu) \mapsto q(x)-\lambda \mu$$
on $\calT_A$. Note that $\widetilde{q}$ is equivalent to $q \bot \langle 1,-1\rangle$ if $\K$ has characteristic not $2$,
and to $q \bot [0,0]$ otherwise.

Now, we show that the singular operators in $\calT_A$ are the zeros of the quadratic form
$\widetilde{q}$:

\begin{prop}\label{rankinatwisted}
Let $(A,\star,\bullet)$ be an $n$-dimensional LDB division algebra
with attached quadratic form $q$. Let $f \in \calT_A \setminus \{0\}$.
Then $\rk f=n$ or $\rk f=2n$, whether $f$ is a zero of the quadratic form $\widetilde{q}$ or not.
\end{prop}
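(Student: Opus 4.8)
Write $f=\Gamma_A\bigl(x+(\lambda,\mu),-\bigr)$ for some $(x,\lambda,\mu)\in A\times\K^2$ with $(x,\lambda,\mu)\neq(0,0,0)$, so that $f(y,z)=(x\star z+\lambda y,\ x\bullet y+\mu z)$. The plan is to compute $\ker f$ explicitly and show $\dim\ker f\in\{0,n\}$, with $\dim\ker f=n$ precisely when $\widetilde q\bigl(x+(\lambda,\mu)\bigr)=q(x)-\lambda\mu=0$; the rank–nullity theorem on $A^2$ (of dimension $2n$) then gives $\rk f\in\{2n,n\}$ in the two cases. The system $f(y,z)=0$ reads $x\star z=-\lambda y$ and $x\bullet y=-\mu z$.

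First I would dispose of the degenerate sub-cases. If $x=0$ then $f=\lambda\,\mathrm{pr}_1\oplus\mu\,\mathrm{pr}_2$ in a suitable sense: the conditions become $\lambda y=0$ and $\mu z=0$; since $(x,\lambda,\mu)\neq 0$ forces $(\lambda,\mu)\neq(0,0)$, one sees $\dim\ker f=n$ when exactly one of $\lambda,\mu$ vanishes and $\dim\ker f=0$ when both are nonzero, which matches $\widetilde q(x+(\lambda,\mu))=-\lambda\mu$. Next suppose $x\neq 0$. If $\lambda=0$, then the first equation gives $x\star z=0$, hence $z=0$ by regularity of $\star$, and the second gives $x\bullet y=0$, hence $y=0$ by regularity of $\bullet$; so $\ker f=\{0\}$, consistent with $\widetilde q=q(x)\neq 0$ (here I use that $q$ is non-isotropic). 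The case $\mu=0$ with $x\neq0$ is symmetric via Remark~\ref{inversionremark}, or handled directly the same way.

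The main case is $x\neq 0$, $\lambda\neq 0$, $\mu\neq 0$. Here I mimic the computation in the proof of Proposition~\ref{LDBdonneLLD}. Given $(y,z)\in\ker f$, apply $x\star-$ to the second equation: $x\star(x\bullet y)=q(x)\,y$, while $x\star(-\mu z)=-\mu\,(x\star z)=\lambda\mu\,y$ using the first equation. Hence $\bigl(q(x)-\lambda\mu\bigr)\,y=0$. Likewise, applying $x\bullet-$ to the first equation and using Remark~\ref{inversionremark} (so that $x\bullet(x\star z)=q(x)\,z$) gives $\bigl(q(x)-\lambda\mu\bigr)\,z=0$. Therefore, if $\widetilde q(x+(\lambda,\mu))=q(x)-\lambda\mu\neq 0$, then $y=z=0$ and $\ker f=\{0\}$, so $\rk f=2n$. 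If instead $q(x)-\lambda\mu=0$, I show $\ker f$ is exactly $n$-dimensional: the map $z\mapsto(-\lambda^{-1}(x\star z),\,z)$ from $A$ to $A^2$ is linear and injective, and I must check its image lies in $\ker f$, i.e.\ that $y:=-\lambda^{-1}(x\star z)$ satisfies $x\bullet y=-\mu z$; indeed $x\bullet y=-\lambda^{-1}\,x\bullet(x\star z)=-\lambda^{-1}q(x)\,z=-\lambda^{-1}(\lambda\mu)\,z=-\mu z$, using $q(x)=\lambda\mu$ and Remark~\ref{inversionremark} once more. So $\dim\ker f\geq n$. For the reverse inequality, any $(y,z)\in\ker f$ with $z=0$ forces $x\star 0=0=-\lambda y$, hence $y=0$; thus the projection $\ker f\to A$, $(y,z)\mapsto z$, is injective, giving $\dim\ker f\leq n$. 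Hence $\dim\ker f=n$ and $\rk f=n$.

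I expect the only mild subtlety is bookkeeping the two symmetric roles of $\star$ and $\bullet$ — the clean way around it is to invoke Remark~\ref{inversionremark} (that $(A,\bullet,\star)$ is again an LDB division algebra with the same quadratic form $q$), which supplies the identity $x\bullet(x\star z)=q(x)\,z$ for free and makes the $\lambda=0$/$\mu=0$ cases genuinely symmetric. Everything else is the short linear-algebra computation above together with non-isotropy of $q$ and regularity of $\star$ and $\bullet$.
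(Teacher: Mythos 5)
Your proof is correct and follows essentially the same route as the paper: after the easy $x=0$ sub-cases, apply $x\star-$ to the equation $x\bullet y=-\mu z$ to get $(q(x)-\lambda\mu)\,y=0$, then read off that the kernel is trivial or $n$-dimensional according to whether $\widetilde q$ vanishes. The only stylistic difference is that the paper does not split off the sub-cases $x\neq 0$, $\lambda=0$ and $x\neq 0$, $\mu=0$: the substitution $x\star z=-\lambda y$ into $q(x)\,y=-\mu\,x\star z$ already yields $(q(x)-\lambda\mu)\,y=0$ without assuming $\lambda,\mu\neq 0$, so those cases are absorbed automatically (and your separate invocation of Remark~\ref{inversionremark} to get $(q(x)-\lambda\mu)z=0$ is likewise dispensable, since $y=0$ together with $x\star z=-\lambda y=0$ and regularity of $\star$ already forces $z=0$). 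A minor point in your favor: you explicitly dispose of the case $x=0$ with both $\lambda\neq 0$ and $\mu\neq 0$, which the paper tacitly omits.
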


\begin{proof}
Let $(x,\lambda,\mu) \in A \times \K^2$ be such that $f=\Gamma_A(x+(\lambda,\mu),-)$. \\
If $x=0$, $\lambda=0$ and $\mu \neq 0$, one obviously has $\rk f=n$.
The same holds if $x=0$, $\lambda \neq 0$ and $\mu=0$. \\
Assume now that $x \neq 0$.
Let $(y,z) \in A^2$. Then,
$$f(y,z)=0 \; \Leftrightarrow \; \begin{cases}
x\star z=-\lambda y \\
x \bullet y=-\mu z.
\end{cases}$$
As $x$ is non-zero, the second condition is equivalent to
$x \star (x \bullet y)=-\mu\, x \star z$, i.e. to $q(x)\, y=-\mu\, x \star z$. Therefore,
$$f(y,z)=0 \; \Leftrightarrow \; \begin{cases}
x\star z=-\lambda y \\
\bigl(q(x)-\lambda\mu\bigr)\,y=0.
\end{cases}$$
If $q(x) \neq \lambda \mu$, one deduces that $f$ is one-to-one.
Otherwise, the kernel of $f$ is the $n$-dimensional subspace
$\bigl\{(x \star t,-\lambda t)\mid t \in A\}$, and hence $\rk f=2n-n=n$.
\end{proof}

Thus, any non-isotropic hyperplane of $\calT_A$ yields a case when the upper bound
in Corollary \ref{minrankcor1} is attained for the integer $n+1$. Moreover, the following results
show that such a hyperplane is always non-reflexive, and, better still, that $\calT_A$ is its reflexive closure
whenever $\dim A \geq 3$.

\begin{prop}
Let $\calH$ be a non-isotropic hyperplane of $\calT_A$.
Then $\calT_A \subset \calR(\calH)$.
\end{prop}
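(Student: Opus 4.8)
The plan is to combine the rank dichotomy of Proposition \ref{rankinatwisted} with the fact, from Proposition \ref{LDBdonneLLD}, that at each non-zero vector of $A^2$ exactly a line of operators of $\calT_A$ vanishes. Throughout, set $n:=\dim A$, so that $A^2$ has dimension $2n$ and $\calT_A$ has dimension $n+2$.

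First I would unwind the hypothesis. Saying that the hyperplane $\calH$ of $\calT_A$ is non-isotropic means that the quadratic form $\widetilde{q}$ (attached to $\calT_A$ after the identification $\calT_A \simeq A \oplus \K^2$ through $\Gamma_A$) has no non-trivial zero on $\calH$. By Proposition \ref{rankinatwisted}, the non-zero operators of $\calT_A$ that are \emph{not} zeros of $\widetilde{q}$ are exactly those of rank $2n$, i.e.\ the invertible endomorphisms of $A^2$ that belong to $\calT_A$. Hence \emph{every non-zero operator of $\calH$ is invertible}.

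Next, fix a vector $(y,z)\in A^2 \setminus \{(0,0)\}$ and consider the evaluation map $\varepsilon : f \in \calT_A \mapsto f(y,z)\in A^2$. By Proposition \ref{LDBdonneLLD}, its kernel is a line $\K f_0$; since $f_0 \neq 0$ annihilates the non-zero vector $(y,z)$, it is not invertible, so $f_0 \notin \calH$ by the previous paragraph, and therefore $\K f_0 \cap \calH = \{0\}$. Thus $\varepsilon$ is injective on $\calH$, so $\varepsilon(\calH)$ has dimension $\dim \calH = n+1$; on the other hand $\varepsilon(\calT_A)$ also has dimension $(n+2)-1 = n+1$, since its kernel is the line $\K f_0$. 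As $\varepsilon(\calH) \subseteq \varepsilon(\calT_A)$ and these subspaces have equal finite dimension, they coincide, i.e.\ $\{h(y,z)\mid h \in \calH\} = \{f(y,z)\mid f \in \calT_A\}$.

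Finally I would conclude. For any $g \in \calT_A$ and any $(y,z)\in A^2$, the step above (the case $(y,z)=(0,0)$ being trivial) gives $g(y,z) \in \{h(y,z)\mid h \in \calH\} = \calH\,(y,z)$, so $g$ belongs to the reflexive closure $\calR(\calH)$; hence $\calT_A \subset \calR(\calH)$. There is essentially no obstacle here: the whole argument rests on the single observation that the operator of $\calT_A$ vanishing at a prescribed non-zero vector is non-invertible and therefore lies outside the anisotropic hyperplane $\calH$, and the only thing requiring care is the dimension bookkeeping and applying Propositions \ref{LDBdonneLLD} and \ref{rankinatwisted} with the correct value of $n$.
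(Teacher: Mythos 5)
Your proof is correct and rests on the same key observation as the paper's: for each $(y,z)\neq 0$, any non-zero operator of $\calT_A$ vanishing at $(y,z)$ is singular and hence lies outside the anisotropic hyperplane $\calH$. The paper reaches the conclusion slightly more directly---fixing $f\in\calT_A\setminus\calH$, writing the annihilating operator as $\alpha f+h$ with $h\in\calH$ and $\alpha\neq 0$, and solving for $f(y,z)$---whereas you carry out an equivalent dimension count showing $\varepsilon(\calH)=\varepsilon(\calT_A)$; both routes are essentially the same argument.
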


\begin{proof}
Let $f \in \calT_A \setminus \calH$.
Let $(y,z) \in A^2$. As $\calT_A$ is LLD, some non-zero operator $g \in \calT_A$ vanishes at $(y,z)$.
However $g \not\in \calH$ as all the non-zero operators of $\calH$ are non-singular.
Therefore $g=\alpha\,f+h$ for some $\alpha \in \K \setminus \{0\}$ and some $h \in \calH$,
which yields $f(y,z)=-\frac{1}{\alpha}\,h(y,z)$. Therefore, $f \in \calR(\calH)$.
\end{proof}

\begin{prop}
Let $(A,\star,\bullet)$ be an LDB division algebra with dimension $n \geq 3$. Then, $\calT_A$ is reflexive.
\end{prop}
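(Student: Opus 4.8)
The plan is to prove directly that $\calR(\calT_A)=\calT_A$. Take $g\in\calR(\calT_A)$, write every vector of $A^{2}$ as a pair, and split $g=(g_{1},g_{2})$ accordingly. The first step is a normalization. Evaluating the relation $g(y,z)\in\calT_A\cdot(y,z)$ at pairs of the form $(y,0)$ gives $g(y,0)=(\lambda y,\,x\bullet y)$ for some $(x,\lambda)$ depending on $y$, so $g_{1}(y,0)\in\K y$ for all $y$; since $\dim A\geq 2$ this forces $g_{1}(\cdot,0)=\lambda_{0}\,\id_{A}$ for a fixed scalar $\lambda_{0}$, and symmetrically $g_{2}(0,\cdot)=\mu_{0}\,\id_{A}$. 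As $\Gamma_A\bigl((1,0),-\bigr)$ and $\Gamma_A\bigl((0,1),-\bigr)$ belong to $\calT_A$, subtracting $\lambda_{0}\,\Gamma_A\bigl((1,0),-\bigr)+\mu_{0}\,\Gamma_A\bigl((0,1),-\bigr)$ from $g$ reduces the problem to the case where $g_{1}(y,z)=\psi(z)$ and $g_{2}(y,z)=\chi(y)$ for some linear maps $\psi,\chi\colon A\to A$. Since $\Gamma_A(x_{0},-)$ has components $(x_{0}\star z,\,x_{0}\bullet y)$, it then suffices to produce a single $x_{0}\in A$ with $\psi=x_{0}\star(-)$ and $\chi=x_{0}\bullet(-)$.

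Next I would unpack the condition at a fixed pair of nonzero vectors $y,z$: it says there are $(x,\lambda,\mu)$ with $\psi(z)=x\star z+\lambda y$ and $\chi(y)=x\bullet y+\mu z$. Because $\star$ and $\bullet$ are regular, the linear maps $a\mapsto a\star z$ and $a\mapsto a\bullet y$ are bijective; let $x_{0}(z),y_{0}(y),u^{*},w^{*}\in A$ be defined by $x_{0}(z)\star z=\psi(z)$, $y_{0}(y)\bullet y=\chi(y)$, $u^{*}\star z=y$ and $w^{*}\bullet y=z$. Writing the first equation as $\bigl(x_{0}(z)-x\bigr)\star z=\lambda y$ shows $x=x_{0}(z)-t\,u^{*}$ for some scalar $t$, and feeding this into the second equation turns the whole condition into $y_{0}(y)-x_{0}(z)\in\Vect(u^{*},w^{*})$. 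Now the identity $a\bullet(a\star b)=q(a)\,b$ of Remark \ref{inversionremark}, applied to $u^{*}$, gives $u^{*}\bullet y=q(u^{*})\,z=\bigl(q(u^{*})\,w^{*}\bigr)\bullet y$, hence $u^{*}=q(u^{*})\,w^{*}$ by injectivity of $a\mapsto a\bullet y$; as $q$ is non-isotropic and $u^{*}\neq 0$, the scalar $q(u^{*})$ is nonzero, so $u^{*}$ and $w^{*}$ are collinear and the condition collapses to
$$y_{0}(y)-x_{0}(z)\in\K\,w^{*}(y,z)\qquad\text{for all nonzero }y,z,$$
where $w^{*}(y,z)$ is the unique vector with $w^{*}(y,z)\bullet y=z$, depending linearly on $z$ for each fixed $y$.

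To finish, I would exploit the freedom in $y$. Fixing a nonzero $y_{*}$ and writing $x_{0}(z)=y_{0}(y_{*})-\rho\,w^{*}(y_{*},z)$, substitution into $\psi(z)=x_{0}(z)\star z$ together with the identity $a\star(a\bullet b)=q(a)\,b$ in the form $w^{*}(y_{*},z)\star z=q\bigl(w^{*}(y_{*},z)\bigr)\,y_{*}$ yields $\psi(z)-y_{0}(y_{*})\star z\in\K y_{*}$ for all $z$. Applying this to two linearly independent vectors $y_{1},y_{2}$ and subtracting, $\bigl(y_{0}(y_{1})-y_{0}(y_{2})\bigr)\star z$ stays in the $2$-dimensional space $\Vect(y_{1},y_{2})$ for every $z$; since $z\mapsto d\star z$ is injective whenever $d\neq 0$ and $n\geq 3$, this forces $y_{0}(y_{1})=y_{0}(y_{2})$, and together with $y_{0}(cy)=y_{0}(y)$ it shows $y_{0}$ is a constant $x_{0}$, i.e.\ $\chi=x_{0}\bullet(-)$. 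Plugging $y_{0}\equiv x_{0}$ back, $x_{0}-x_{0}(z)\in\K\,w^{*}(y,z)$ for every nonzero $y$; since $w^{*}(y_{1},z)$ and $w^{*}(y_{2},z)$ are linearly independent for independent $y_{1},y_{2}$ (again by regularity of $\bullet$), those two lines meet only at $0$, so $x_{0}(z)=x_{0}$ and $\psi=x_{0}\star(-)$. Hence $g=\Gamma_A(x_{0},-)\in\calT_A$. The step I expect to be delicate is the middle one: a priori $y_{0}(y)-x_{0}(z)$ only lies in the two-dimensional space $\Vect(u^{*},w^{*})$, and one must recognize, using the quasi-inversion identities of Remark \ref{inversionremark} and the non-isotropy of $q$, that this space is in fact a line; once that is done, the two final rigidity arguments are straightforward dimension counts relying on $n\geq 3$ and the regularity of $\star$ and $\bullet$ (and, incidentally, no hypothesis on $\K$ is needed).
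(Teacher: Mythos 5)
Your argument is correct, but it takes a genuinely different route from the paper's. Both proofs open the same way: evaluate on $(y,0)$ and $(0,z)$ to see the diagonal blocks of $g$ are scalar multiples of the identity, subtract an element of $\calT_A$, and reduce to $g(y,z)=(\psi(z),\chi(y))$. From there the paper performs a further normalization (picking $z_0\ne 0$ and $x_0$ with $x_0\star z_0=\psi(z_0)$, then subtracting $\Gamma_A(x_0,-)$ so that $\psi(z_0)=0$), proves $\im\chi\subset\K z$ for every $z\in\Ker\psi\setminus\{0\}$ and the symmetric statement, and finishes with a short dimension count: $\rk\chi\le 1$, then $\dim\Ker\chi\ge 2$ (using $n\ge 3$), hence $\psi=0$ and then $\chi=0$. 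You instead parametrize $\psi(z)=x_0(z)\star z$ and $\chi(y)=y_0(y)\bullet y$, show via the collinearity of $u^*$ and $w^*$ (which really does rely on the quadratic identity $a\bullet(a\star b)=q(a)b$ and non-isotropy of $q$) that the constraint collapses to $y_0(y)-x_0(z)\in\K\,w^*(y,z)$, and then prove rigidity of $y_0$ and $x_0(\cdot)$ by a subtraction trick using $n\ge 3$; this produces the element $x_0$ of $A$ with $g=\Gamma_A(x_0,-)$ directly rather than normalizing $g$ down to $0$. One caveat in your final step: the claim that $w^*(y_1,z)$ and $w^*(y_2,z)$ are linearly independent for independent $y_1,y_2$ is true but does not follow from regularity of $\bullet$ alone; it again needs the identity $a\star(a\bullet b)=q(a)b$ (apply it with $a=w^*(y_2,z)$). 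Alternatively, and more simply, note that $y\mapsto w^*(y,z)$ is a bijection of $A\setminus\{0\}$ onto itself, so $x_0-x_0(z)$ would have to lie on $\K w$ for every nonzero $w$, which forces $x_0(z)=x_0$ once $\dim A\ge 2$.
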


\begin{proof}
Denote by $q$ the quadratic form attached to $(A,\star,\bullet)$.
Let $f \in \calR(\calT_A)$. Since our goal is to prove that $f \in \calT_A$,
we will subtract well-chosen elements of $\calT_A$ to $f$ so as to obtain $0$.
First of all, we find four endomorphisms $g$, $h$, $i$ and $j$ of $A$ such that
$$\forall (y,z) \in A^2, \; f(y,z)=\bigl(g(y)+i(z)\,,\,j(y)+h(z)\bigr).$$
Let $y \in A$. Then, $f(y,0)=(g(y),j(y))$ belongs to
$\bigl\{(\lambda y,x \bullet y) \mid (x,\lambda)\in A \times \K\bigr\}$, and hence $g(y)$ is a scalar multiple of $y$.
Thus, $g=\alpha\id_A$ for some $\alpha \in \K$, and the same line of reasoning applied to the pairs $(0,z)$
shows that $h=\beta\id_A$ for some $\beta \in \K$. Subtracting from $f$ the operator $(y,z) \mapsto (\alpha y,\beta z)$,
which belongs to $\calT_A$, we see that no generality is lost in assuming that
$$\forall (y,z) \in A^2, \; f(y,z)=\bigl(i(z)\,,\,j(y)\bigr).$$

Let us choose an arbitrary non-zero vector $z_0 \in A$, and let $x_0 \in A$ be such that
$x_0 \star z_0=i(z_0)$. Then, subtracting the operator $(y,z) \mapsto (x_0 \star z, x_0 \bullet y)$ from $f$, we see that
no further generality is lost in assuming that $i(z_0)=0$.
Let us examine the image of $j$. Let $y \in A$. Then, there must be a triple $(x,\lambda,\mu) \in A \times \K^2$
such that
$$(0,j(y))=f(y,z_0)=\bigl(x \star z_0+\lambda y\, , \, x \bullet y+\mu z_0\bigr).$$
If $\lambda =0$, then one finds $x \star z_0=0$ with $z_0 \neq 0$, and hence $x=0$ and $j(y)=\mu z_0$. \\
Assume now that $j(y) \not\in \K z_0$. Then, $\lambda \neq 0$ and hence
$$x \bullet y=-\frac{1}{\lambda}\, x \bullet (x \star z_0)=-\frac{q(x)}{\lambda}\,z_0,$$
and hence $j(y) \in \K z_0$. In any case, one finds that $\im j \subset \K z_0$.
As $\star$ is a quasi-left-inversion of $\bullet$, this method proves in general that
\begin{equation}\label{inclusionnoyauimage}
\forall z \in \Ker i \setminus \{0\}, \; \im j \subset \K z \quad \text{and} \quad
\forall y \in \Ker j \setminus \{0\}, \; \im i \subset \K y.
\end{equation}
One successively deduces that $\rk j \leq 1$ (using the first statement in \eqref{inclusionnoyauimage}
together with the assumption that $\Ker i \neq \{0\}$), and that $\dim \Ker j \geq 2$ (as $n \geq 3$);
using the second statement in \eqref{inclusionnoyauimage}, this leads to $i=0$, and then $j=0$
by using once more the first statement in \eqref{inclusionnoyauimage} (as $n \geq 2$).
Finally, $f=0$, which finishes the proof.
\end{proof}

\begin{cor}\label{reflexiveclosureofahyperplane}
Let $\calH$ be a non-isotropic hyperplane of the twisted operator space $\calT_A$ attached to an LDB division algebra
$(A,\star,\bullet)$ whose dimension is greater than $2$.
Then, $\calR(\calH)=\calT_A$.
\end{cor}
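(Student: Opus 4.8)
The plan is to deduce this corollary immediately by combining the two propositions that precede it. First I would recall the remark made just after the definition of the reflexive closure: $\calR(\calM)$ is the \emph{smallest} reflexive linear subspace of operators containing $\calM$. In particular the operation $\calM \mapsto \calR(\calM)$ is inclusion-preserving and fixes every reflexive space. Since $\calH$ is a hyperplane of $\calT_A$, we have $\calH \subset \calT_A$, and hence $\calR(\calH) \subset \calR(\calT_A)$. The hypothesis $\dim A > 2$ means $\dim A \geq 3$, so the proposition asserting that $\calT_A$ is reflexive applies and gives $\calR(\calT_A) = \calT_A$; therefore $\calR(\calH) \subset \calT_A$.

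For the reverse inclusion I would invoke the other proposition, namely the one stating that every non-isotropic hyperplane $\calH$ of $\calT_A$ satisfies $\calT_A \subset \calR(\calH)$; this is exactly where the hypothesis that $\calH$ is \emph{non-isotropic} enters, through Proposition \ref{rankinatwisted}, which guarantees that the non-zero operators of such an $\calH$ are all non-singular, so that the (unique up to scalar) operator of $\calT_A$ annihilating a prescribed vector cannot belong to $\calH$. Combining $\calR(\calH) \subset \calT_A$ with $\calT_A \subset \calR(\calH)$ yields $\calR(\calH) = \calT_A$, as claimed. There is no genuine obstacle here: all the substance has already been carried out in the two propositions, and the only point worth flagging is the bookkeeping of hypotheses — the dimension condition $\dim A > 2$ is precisely what makes the reflexivity proposition ($n \geq 3$) available, while the non-isotropy of $\calH$ is what feeds the inclusion $\calT_A \subset \calR(\calH)$.
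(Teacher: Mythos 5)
Your proof is correct and is precisely the argument the paper intends: the corollary is stated without a separate proof because it follows immediately by combining the two preceding propositions (the inclusion $\calT_A \subset \calR(\calH)$ from non-isotropy, and $\calR(\calT_A)=\calT_A$ from reflexivity of $\calT_A$ when $\dim A \geq 3$) with the monotonicity of $\calR(\cdot)$.
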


In the case $n=2$, the operator space attached to an LDB division algebra is never reflexive:

\begin{prop}
Let $(A,\star,\bullet)$ be a $2$-dimensional LDB division algebra. Then, in well-chosen bases of $A^2$,
the space $\calT_A$ is represented by a $4$-dimensional subspace of $\Mata_4(\K)$.
In such bases, $\calR(\calT_A)$ is represented by $\Mata_4(\K)$, which has dimension $6$.
\end{prop}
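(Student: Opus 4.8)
The plan is to make the block structure of $\calT_A$ explicit and then to find one simple change of bases of $A^2$ that forces every operator of $\calT_A$ to be represented by an alternating matrix. Fix a basis of $A$, so that $A\cong\K^2$ and $A^2\cong\K^4$ in the obvious way, and for $x\in A$ write $M(x)\in\Mat_2(\K)$ for the matrix of $y\mapsto x\star y$ and $N(x)\in\Mat_2(\K)$ for that of $y\mapsto x\bullet y$. In the block decomposition $\K^4=\K^2\oplus\K^2$ the operator $\Gamma_A\bigl(x+(\lambda,\mu),-\bigr)$ is then represented by
$$R(x,\lambda,\mu):=\begin{bmatrix} \lambda I_2 & M(x) \\ N(x) & \mu I_2 \end{bmatrix}.$$
Since the $2\times 2$ transpose-of-comatrix map $N\mapsto\widetilde N$ is linear and satisfies $N\widetilde N=(\det N)\,I_2$, the map $x\mapsto\widetilde{M(x)}$ is a bilinear quasi-left-inversion of $\star$; by Proposition \ref{unicitedelinversion} it differs from $\bullet$ by a non-zero scalar factor, and, as $\calT_{A,\star,\bullet}$ and $\calT_{A,\star,\alpha\bullet}$ are equivalent for every $\alpha\in\K\setminus\{0\}$, I may and do assume from now on that $N(x)=\widetilde{M(x)}$.

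Next I would carry out the change of bases. Set $J:=\begin{bmatrix} 0 & 1 \\ -1 & 0 \end{bmatrix}$, so that $J^T=J^{-1}=-J$ and $\widetilde N=JN^TJ^T$ for every $N\in\Mat_2(\K)$, and put $P:=Q:=\begin{bmatrix} 0 & I_2 \\ J^T & 0 \end{bmatrix}\in\GL_4(\K)$. A direct block computation gives
$$P\,R(x,\lambda,\mu)\,Q=\begin{bmatrix} \mu J^T & \widetilde{M(x)} \\ -\widetilde{M(x)}^{T} & \lambda J^T \end{bmatrix},$$
whose diagonal blocks are alternating and whose off-diagonal blocks are the negatives of each other's transpose; hence $P\,R(x,\lambda,\mu)\,Q\in\Mata_4(\K)$ for all $(x,\lambda,\mu)$. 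Thus $\calT_A$ is equivalent to $\calN:=P\,R(A\times\K^2)\,Q\subseteq\Mata_4(\K)$, and $\dim\calN=\dim\calT_A=\dim A+2=4$. This settles the first assertion, the chosen bases of $A^2$ being the ones produced by $P$ and $Q$ from the block bases above.

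For the reflexive closure I would work inside $\Mata_4(\K)$ and exploit that $\calT_A$ is LLD. Since $\calR$ transforms accordingly under equivalence, $\calR(\calT_A)$ is represented, in these bases, by $\calR(\calN)$. For every non-zero $X\in\K^4$, Proposition \ref{LDBdonneLLD} gives $\dim\{f\in\calT_A:f(X)=0\}=1$, whence $\dim\calN X=4-1=3$; on the other hand $U^TSU=0$ for every $S\in\Mata_4(\K)$ and every $U$, so $\calN X\subseteq\{X\}^\bot$ (orthogonality relative to $(U,V)\mapsto U^TV$), and as $\dim\{X\}^\bot=3$ we conclude $\calN X=\{X\}^\bot$ for all $X\neq 0$. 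Therefore a linear endomorphism $g$ of $\K^4$ lies in $\calR(\calN)$ if and only if $X^TgX=0$ for all $X\in\K^4$; evaluating this at the vectors $e_i$ and $e_i+e_j$ shows that the matrix of $g$ is alternating, so $\calR(\calN)=\Mata_4(\K)$, which has dimension $\binom{4}{2}=6$.

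The only delicate point is locating the right change of bases in the second paragraph: acting diagonally on $A^2$ cannot work, because the symmetric diagonal blocks $\lambda I_2$ and $\mu I_2$ of $R(x,\lambda,\mu)$ have to become alternating, and this forces one to swap (and twist) the two copies of $A$, which is exactly the role of the block‑anti‑diagonal matrices $P$ and $Q$. Once this is found and the normalization $N(x)=\widetilde{M(x)}$ is in place, the remaining work — checking that $P\,R(x,\lambda,\mu)\,Q$ is alternating and that the reflexive closure is all of $\Mata_4(\K)$ — is short, the latter being an immediate consequence of the LLD property already recorded in Proposition \ref{LDBdonneLLD}.
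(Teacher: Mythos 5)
Your proof is correct. The second half (computing $\calR(\calN)=\Mata_4(\K)$) is exactly the paper's argument: a $4$-dimensional subspace of $\Mata_4(\K)$ with $\dim\calN X=3$ for all non-zero $X$ forces $\calN X=\{X\}^\bot$, whence the reflexive closure is the full alternating space. For the first half, though, you take a genuinely different and arguably cleaner route. The paper uses Proposition \ref{unicitedelinversion} together with the classification of $2$-dimensional division algebras to reduce to two canonical cases (separable or inseparable quadratic extension of $\K$), and in each case exhibits by hand a non-degenerate alternating bilinear form $B$ on $A^2$ with $B\bigl((y,z),f(y,z)\bigr)=0$ for all $f\in\calT_A$, concluding by choosing bases in which $B$ is represented by $I_4$. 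You instead normalize $\bullet$ to the $2\times 2$ adjugate map $x\mapsto\widetilde{M(x)}$ (again via Proposition \ref{unicitedelinversion}, but without needing the classification of $2$-dimensional division algebras), and then carry out a single explicit block conjugation by $P=Q=\begin{bmatrix}0 & I_2\\ J^T & 0\end{bmatrix}$; the identity $J^TMJ^T=-\widetilde{M}^{\,T}$ makes $P\,R(x,\lambda,\mu)\,Q$ visibly alternating (including the zero-diagonal requirement, so characteristic $2$ is handled with no extra work). The two approaches encode the same underlying alternating structure — your $P$ and $Q$ are precisely a pair of bases making the paper's invariant form equal to $I_4$ — but your version is uniform across cases and entirely computational, which is a modest simplification over the paper's case split.
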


\begin{proof}
As we can replace $\calT_A$ with an equivalent subspace, we may use Proposition \ref{unicitedelinversion}
and the classification of $2$-dimensional division algebras to reduce the situation to two canonical cases:
\begin{itemize}
\item Case 1: $A$ is a separable quadratic extension of $\K$.
Denote by $\sigma$ the non-identity automorphism of the $\K$-algebra $A$.
The pairing $(x,y) \mapsto \sigma(x)y$ is obviously a bilinear quasi-left-inversion of the product, so that the attached operator space
$\calT_A$ is the set of all linear maps
$$(y,z)\in A^2 \mapsto \bigl(xz+\lambda y\, , \, \sigma(x)y+\mu z\bigr), \quad \text{with $(x,\lambda,\mu) \in A \times \K^2$.}$$
The (alternating) bilinear form on the $\K$-vector space $A^2$ defined by
$$B\bigl( (a,b)\,,\, (c,d)\bigr):=\sigma(a)c-a\sigma(c)+\sigma(b)d-b\sigma(d)$$
is obviously non-degenerate, and one checks that
$$\forall (y,z) \in A^2, \;\forall f \in \calT_A, \; B\bigl((y,z),f(y,z)\bigr)=0.$$
Thus, in bases $\calB$ and $\calC$ of $A^2$ in which the bilinear form $B$ is represented by $I_4$,
the space $\calT_A$ is represented by a $4$-dimensional subspace of $\Mata_4(\K)$.

\item Case 2: $A$ is an inseparable quadratic extension of $\K$.
Then, the product on $A$ is a bilinear quasi-left-inversion of itself, and the attached operator space $\calT_A$
is the set of all linear maps
$$(y,z)\in A^2 \mapsto \bigl(xz+\lambda y\, , \, xy+\mu z\bigr), \quad \text{with $(x,\lambda,\mu) \in A \times \K^2$.}$$
Let us choose a linear form $\alpha : A \rightarrow \K$ with kernel $\K$.
One checks that
$$B : ((a,b),(c,d)) \mapsto \alpha(ac+bd)$$
is a non-degenerate bilinear form on the $\K$-vector space $A^2$.
Using the fact that $y^2 \in \K$ for all $y \in A$, one sees that
$$\forall (y,z)\in A^2, \; \forall f \in \calT_A, \; B\bigl((y,z),f(y,z)\bigr)=0.$$
As in Case 1, this shows that $\calT_A$ is represented by a $4$-dimensional subspace of $\Mata_4(\K)$ in well-chosen bases of $A^2$.
\end{itemize}
Remember from Proposition \ref{LDBdonneLLD} that $\dim \calT_A Y=3$ for all $Y \in A^2 \setminus \{0\}$.
In order to conclude, we let $\calV$ be an arbitrary $4$-dimensional linear subspace of $\Mata_4(\K)$ -
seen as a linear subspace of $\calL(\K^4)$ - satisfying $\dim \calV X=3$ for all non-zero vectors $X \in \K^4$.
Then, for the standard non-degenerate symmetric bilinear form on $\K^4$, we have $\calV X=\{X\}^\bot$ for all non-zero vectors $X \in \K^4$, and hence
the reflexive closure of $\calV$ is the set of all matrices $A \in \Mat_4(\K)$ satisfying $X^TAX=0$ for all non-zero
vectors $X \in \K^4$,
i.e. $\calR(\calV)=\Mata_4(\K)$.
\end{proof}

In order to find a non-isotropic hyperplane of $\calT_A$, it \emph{suffices}
to find a non-zero value $a \in \K$ outside the range of $q$.
Then, the quadratic form $q \bot \langle -a\rangle$ is non-isotropic, and hence
the hyperplane of all operators $\Gamma_A(x+\lambda(a,1),-)$ with $(x,\lambda)\in A \times \K$
is non-isotropic. For fields of characteristic not $2$, we shall prove that a
non-isotropic hyperplane of $\calT_A$ is always equivalent to a hyperplane of that type.

Now, we may at last give explicit examples of fields for which the upper bound $2n-2$ in
Theorem \ref{minranktheo1} is optimal, for some values of $n$ that are greater than $2$.

Assume that there are two non-zero scalars $a$ and $b$ in $\K$ such that $\langle 1,-a\rangle \otimes \langle 1,-b\rangle$
is non-universal (i.e.\ its range does not contain every non-zero scalar)
and non-isotropic (note that if $\K$ has characteristic not $2$, a non-degenerate isotropic form is always universal).
Then, $\langle 1,-a\rangle \otimes \langle 1,-b\rangle$
is equivalent to the norm $N$ of the quaternion algebra $A=C(\langle a,b\rangle)$ over $\K$.
Choosing a value $c$ outside the range of $\langle 1,-a\rangle \otimes \langle 1,-b\rangle$,
one sees that the quadratic form $\widetilde{N}$ is non-isotropic on the subspace of $\calT_A$ corresponding to
$C(\langle a,b\rangle) \times \K (1,c)$; this yields a $5$-dimensional non-reflexive subspace of $\calT_A$ in which
all the non-zero operators have rank $8$.
Using $c$, we may also construct an $8$-dimensional LDB division algebra $B$
whose attached quadratic form is equivalent to $\langle 1,-a\rangle \otimes \langle 1,-b\rangle \otimes \langle 1,-c\rangle$
(see the construction of octonions recalled in Section \ref{LDBdef}), and, if we can find a value $d$ outside
the range of this form, then we can find a $9$-dimensional non-reflexive operator space in which
all the non-zero operators have rank $16$.

For the field of reals, the above constructions with $a=b=c=d=-1$ give rise to examples for $n=5$ and $n=9$,
respectively associated with standard quaternions and standard octonions.
Note that this kind of construction may also be done for non-real fields: for example, one may take
the quotient field $\C((a,b,c,d))$ of the ring of formal power series in four independent variables $a,b,c,d$ and complex coefficients; over this field,
the quadratic form $\langle 1,-a\rangle \otimes \langle 1,-b\rangle \otimes \langle 1,-c\rangle \otimes \langle 1,-d\rangle$
is non-isotropic.

For some fields of characteristic $2$, it is possible to construct LDB division algebras of arbitrary large dimension.
Consider a list $(t_1,\dots,t_n,t_{n+1})$ of independent indeterminates, and denote
by $\L=\F_2(t_1,\dots,t_{n+1})$
the field of fractions of the corresponding polynomial ring (with coefficients in $\F_2$). Then,
$\K:=\F_2(t_1^2,\dots,t_n^2,t_{n+1})$ is a subfield of $\L$, and $\L$ has dimension $2^n$ over $\K$.
Considering $\L$ as a $\K$-algebra, we note that the two pairings $\star : (x,y) \mapsto xy$ and $\bullet:=\star$
are regular and $\forall (x,y)\in \L^2, \; x \star (x\bullet y)=x^2 y$, and $x \mapsto x^2$
is a non-isotropic quadratic form on the $\K$-vector space $\L$. Thus, $(\L,\star,\bullet)$
is an LDB division algebra over $\K$ with attached quadratic form $q: x \mapsto x^2$.
Note that this quadratic form is totally degenerate, i.e.\ it is additive!
Note also that $q$ is equivalent to the quadratic form $\langle 1,t_1^2\rangle \otimes \cdots \otimes \langle 1,t_n^2\rangle$.
Obviously, the scalar $t_{n+1}$ of $\K$ does not belong to the range of $q$, and hence
$\calT_\L$ contains a non-isotropic hyperplane.

For fields of characteristic not $2$, the existence of LDB division algebras with dimension larger than $8$ remains an open issue.

\subsection{On the equivalence between twisted operator spaces associated with LDB division algebras}\label{caracequivalencesection}

Here, we delve deeper into the structure of the twisted operator space attached to
an LDB division algebra.

First of all, we introduce two notions of similarity between LDB division algebras.
Let $(A,\star,\bullet)$ be an LDB division algebra with attached quadratic form $q$, and $B$ be a vector space such that $\dim B=\dim A$.
Let $f : B \overset{\simeq}{\rightarrow} A$, $g : B \overset{\simeq}{\rightarrow} A$
and $h : A \overset{\simeq}{\rightarrow} B$ be isomorphisms. The
bilinear mapping $\star'$ defined on $B^2$ by
$$x \star' y:=h(f(x) \star g(y))$$
is obviously equivalent to $\star$, and hence regular, and one checks that the bilinear mapping $\bullet'$ defined on $B^2$ by
$$x \bullet' y:=g^{-1}(f(x) \bullet h^{-1}(y))$$
is a quasi-left-inversion of $\star'$ and that $x \mapsto q(f(x))$ is the quadratic form attached to the LDB division algebra
$(B,\star',\bullet')$. This motivates the following definitions:

\begin{Def}
Let $(A,\star,\bullet)$ and $(B,\star',\bullet')$ be LDB division algebras.
\begin{itemize}
\item We say that $(A,\star,\bullet)$ and $(B,\star',\bullet')$ are \textbf{weakly equivalent}
when the division algebras $(A,\star)$ and $(B,\star')$ are equivalent, i.e.\ when
there are isomorphisms
$f : B \overset{\simeq}{\rightarrow} A$, $g : B \overset{\simeq}{\rightarrow} A$ and $h : A \overset{\simeq}{\rightarrow} B$ such that
$$\forall (x,y)\in B^2, \; x \star' y=h(f(x) \star g(y)).$$
\item We say that $(A,\star,\bullet)$ and $(B,\star',\bullet')$ are \textbf{equivalent}
when there are isomorphisms
$f : B \overset{\simeq}{\rightarrow} A$, $g : B \overset{\simeq}{\rightarrow} A$ and $h : A \overset{\simeq}{\rightarrow} B$ such that
$$\forall (x,y)\in B^2, \; x \star' y=h(f(x) \star g(y)).$$
and
$$\forall (x,y)\in B^2, \; x \bullet' y=g^{-1}(f(x) \bullet h^{-1}(y)).$$
\end{itemize}
\end{Def}

Obviously, the equivalence between two LDB division algebras implies their weak equivalence.
Using Proposition \ref{unicitedelinversion}, one sees that if $(A,\star,\bullet)$ is weakly equivalent
to $(B,\star',\bullet')$, then, for some $\alpha \in \K \setminus \{0\}$,
the LDB division algebras $(A,\star,\bullet)$ and $(B,\star',\alpha\,\bullet')$ are equivalent.

Our aim in the rest of the section is to study the relationship between those properties and the
structure of the associated twisted operator spaces. Here are two interesting issues:
\begin{itemize}
\item On what conditions on $A$ and $B$ are the operator spaces $\calT_A$ and $\calT_B$ equivalent?
\item On what conditions on $A$ and $B$ are the endomorphism spaces $\calT_A$ and $\calT_B$ similar?
\end{itemize}

The following theorem answers both questions:

\begin{theo}\label{classtwistedtheo}
Let $A$ and $B$ be LDB division algebras.
Then:
\begin{enumerate}[(a)]
\item The LDB division algebras $A$ and $B$ are weakly equivalent if and only if
the operator spaces $\calT_A$ and $\calT_B$ are equivalent.
\item The LDB division algebras $A$ and $B$ are equivalent if and only if
the endomorphism spaces $\calT_A$ and $\calT_B$ are similar.
\end{enumerate}
\end{theo}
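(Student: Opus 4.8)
The plan is to prove the two "only if" directions by a single explicit construction and the two "if" directions by a rigidity analysis of the rank-$n$ operators.

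For the forward directions, suppose first that $A$ and $B$ are equivalent LDB division algebras, via isomorphisms $f,g\colon B\to A$ and $h\colon A\to B$ with $x\star'y=h(f(x)\star g(y))$ and $x\bullet'y=g^{-1}(f(x)\bullet h^{-1}(y))$. I would set $F\colon A^2\to B^2,\ (y,z)\mapsto(h(y),g^{-1}(z))$ and $G\colon B^2\to A^2,\ (y',z')\mapsto(h^{-1}(y'),g(z'))$, so that $G=F^{-1}$, and check by direct substitution that $G\circ\Gamma_B(x+(\lambda,\mu),-)\circ F=\Gamma_A(f(x)+(\lambda,\mu),-)$ for all $(x,\lambda,\mu)\in B\times\K^2$. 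Since $f$ is bijective this gives $\calT_A=\{G\circ T\circ F\mid T\in\calT_B\}$ with $G=F^{-1}$, i.e. $\calT_A$ and $\calT_B$ are similar, hence equivalent; this proves "only if" in (b) and the "equivalent" half of "only if" in (a). To finish "only if" in (a), the remarks following Proposition \ref{unicitedelinversion} let one pass, after rescaling $\bullet'$, from weak equivalence of $A,B$ to equivalence, and $\calT_{B,\star',\bullet'}$ and $\calT_{B,\star',\alpha\bullet'}$ are equivalent operator spaces, so weak equivalence of $A,B$ still yields equivalence of $\calT_A,\calT_B$.

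For the converse I would first read intrinsic data off $\calT_A$. By Proposition \ref{rankinatwisted} the non-zero non-injective members of $\calT_A$ are exactly those of rank $n$, each with $n$-dimensional kernel; one checks that the kernels arising this way are precisely $A\times\{0\}$, $\{0\}\times A$ and $W_x:=\{(x\star u,u)\mid u\in A\}$ (for $x\in A\setminus\{0\}$), and likewise the images (indeed the rank-$n$ operator with kernel $W_x$ has image $W_{x/q(x)}$, where $q$ is the attached quadratic form). Thus the family $\calL_A$ of kernels (equivalently images) of the rank-$n$ operators of $\calT_A$ is encoded, through any two of its members chosen as axes, by an $n$-dimensional subspace of $\Mat_n(\K)$ with no non-zero singular element; taking one axis to be $A\times\{0\}$ yields $\calV_A:=\{z\mapsto x\star z\mid x\in A\}$, whose equivalence class is exactly the weak-equivalence class of $(A,\star)$. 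Given an equivalence $\calT_A=\{G\circ T\circ F\mid T\in\calT_B\}$, rank-$n$ operators map to rank-$n$ operators, so $F$ carries $\calL_A$ onto $\calL_B$ (on kernels) and $G$ carries $\calL_B$ onto $\calL_A$ (on images); comparing the matrix spaces attached to corresponding pairs of axes yields an equivalence of matrix spaces. The task then becomes to choose axes so that this matrix space is $\calV_A$ on one side and $\calV_B$ on the other — rather than the "inverted" space $\calV'_A:=\{z\mapsto x\bullet z\mid x\in A\}$ attached to the companion LDB structure $(A,\bullet,\star)$ of Remark \ref{inversionremark}. Once this is done, writing $F$ and $G$ block-diagonally against the chosen axes and matching the four blocks of $G\circ\Gamma_B(x+(\lambda,\mu),-)\circ F$ with a general element of $\calT_A$ forces relations $L^{\star}_{x'}=\gamma\circ L^{\star'}_{x}\circ\psi$ and $L^{\bullet}_{x'}=\delta\circ L^{\bullet'}_{x}\circ\phi$ for a linear bijection $x\mapsto x'$ and the block maps $\phi,\psi,\gamma,\delta$; reading these backwards exactly as in the forward computation produces $f,g,h$, giving weak equivalence of $A,B$ in case (a), and — when $G=F^{-1}$, which forces the auxiliary scalars to be $1$ — full equivalence in case (b).

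The main obstacle is this normalization: showing that an equivalence respects, up to the evident symmetries, the pair of coordinate subspaces and the distinction between $\star$ and $\bullet$. Here I would use the intrinsic involution $\sigma_A$ of $\calL_A$ that sends the kernel of a rank-$n$ operator to its image (well defined because that operator is unique up to scalar), which computes as $W_x\mapsto W_{x/q(x)}$, interchanges $A\times\{0\}$ with $\{0\}\times A$, and is intertwined by an equivalence (through $F$ and $G$; outright conjugated in the similarity case, where $G=F^{-1}$). The pair $\{A\times\{0\},\{0\}\times A\}$ can then be singled out among the size-two $\sigma_A$-orbits — e.g. in the similarity case as the unique one whose two associated annihilator lines span a plane containing $\id_{A^2}$ — while the companion swap $\calT_{A,\star,\bullet}\sim\calT_{A,\bullet,\star}$ handles the $\star$/$\bullet$ bookkeeping. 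A secondary difficulty is characteristic $2$, where $q$, and hence $\widetilde q$, may be degenerate so that quadric-rigidity arguments weaken; there one must argue directly with the linear structure of $\calL_A$ and $\calV_A$, and invoke Witt's cancellation and extension theorems only for the non-degenerate summand when matching $q_A$ with $q_B$. Once the axes are normalized the remaining computation is routine and merely reverses the forward construction.
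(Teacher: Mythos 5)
Your forward (easy) direction is essentially the paper's: you build $F$ and $G$ block-diagonally from $f,g,h$, check the conjugation identity, and reduce weak equivalence to equivalence by rescaling $\bullet'$ via Proposition \ref{unicitedelinversion}. That part is fine.

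The backward direction is where the genuine gap lies, at precisely the step you flag as ``the main obstacle.'' Your idea is to recover the coordinate subspaces $A\times\{0\}$ and $\{0\}\times A$ from intrinsic data attached to the rank-$n$ operators, using the involution $\sigma_A$ sending kernel to image. The identification of kernels as $A\times\{0\}$, $\{0\}\times A$, and $W_x=\{(x\star u,u)\mid u\in A\}$ is correct (up to a sign: the image of the rank-$n$ operator with kernel $W_x$ is $W_{-x/q(x)}$, not $W_{x/q(x)}$; immaterial in characteristic $2$, but worth noting). However, the proposed distinguishing criterion fails: for \emph{every} size-two $\sigma_A$-orbit $\{W_x,W_{-x/q(x)}\}$ with $q(x)\neq -1$, the annihilator of $W_x$ is $\K\cdot\Gamma_A\bigl(-x+(1,q(x)),-\bigr)$, the annihilator of $W_{-x/q(x)}$ is $\K\cdot\Gamma_A\bigl(x/q(x)+(1,1/q(x)),-\bigr)$, and taking $\beta=q(x)/(1+q(x))$ and $\alpha=\beta/q(x)$ one checks that $\alpha\Gamma_A(-x+(1,q(x)),-)+\beta\Gamma_A(x/q(x)+(1,1/q(x)),-)=\Gamma_A((1,1),-)=\id_{A^2}$. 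So the plane spanned by the two annihilator lines contains $\id_{A^2}$ for all size-two orbits, not just for $\{A\times\{0\},\{0\}\times A\}$, and your criterion selects nothing.

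This is not a cosmetic defect: the deeper reason is that no such intrinsic criterion can exist. The paper's Rectification Lemma (Lemma \ref{rectificationlemma}) shows that for any orthogonal automorphism $s$ of $(A\oplus\K^2,\widetilde q)$ there are automorphisms $F,G$ of $A^2$ and an orthogonal $H$ fixing $\K^2$ pointwise with $\Gamma_A(s(x),-)=G\circ\Gamma_A(H(x),-)\circ F$; combined with the transitivity of the orthogonal group on hyperbolic pairs (Witt), this means there are self-equivalences of $\calT_A$ whose induced action on kernels moves the coordinate pair $\{A\times\{0\},\{0\}\times A\}$ onto essentially any other size-two orbit. The paper's route is therefore not to normalize the axes by an invariant, but to use the quadratic Nullstellensatz to pin down a scaling $\alpha$ with $\widetilde{q_B}\circ K=\alpha\,\widetilde{q_A}$, then Witt's extension theorem to produce an orthogonal $u$ carrying the image of a hyperbolic pair under $K$ to the standard one, and then the Rectification Lemma to \emph{absorb} $u$ into modified $F,G$, after which the block-diagonal comparison you describe goes through. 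Your sketch cannot be completed along the stated lines without supplying an argument doing the work of the Rectification Lemma, and the characteristic-$2$ aside (``argue directly with the linear structure'') is also unaddressed, whereas the paper handles it carefully in Lemma \ref{productofreflexions} via Dieudonn\'e's results and symplectic transvections.
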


Proving the converse implications in Theorem \ref{classtwistedtheo} is rather straightforward:
assume indeed that $(A,\star,\bullet)$ and $(B,\star',\bullet')$ are equivalent.
Then, we have three isomorphisms
$f : B \overset{\simeq}{\rightarrow} A$, $g : B \overset{\simeq}{\rightarrow} A$ and $h : A \overset{\simeq}{\rightarrow} B$
such that
$$\forall (x,y)\in B^2, \; x \star' y=h\bigl(f(x) \star g(y)\bigr).$$
and
$$\forall (x,y)\in B^2, \; x \bullet' y=g^{-1}\bigl(f(x) \bullet h^{-1}(y)\bigr).$$
Thus, for every $(x,y,z,\lambda,\mu)\in B^3 \times \K^2$, one finds that
$$\Gamma_B(x+(\lambda,\mu),(y,z))
=\Bigl(h\bigl(f(x)\star g(z)+\lambda\, h^{-1}(y)\bigr), g^{-1}\bigl(f(x) \bullet h^{-1}(y) +\mu\, g(z)\bigr)\Bigr).$$
Therefore, the two isomorphisms
$$F : x+(\lambda,\mu) \in B \oplus \K^2 \longmapsto f(x)+(\lambda,\mu) \in A \oplus \K^2$$
and
$$G : (y,z)\in B^2 \longmapsto (h^{-1}(y),g(z)) \in A^2$$
satisfy
$$\forall X \in B \oplus \K^2, \quad
\Gamma_B(X,-)=G^{-1} \circ \Gamma_A(F(X),-) \circ G,$$
and hence $\calT_A$ and $\calT_B$ are similar.

With the weaker assumption that $A$ is weakly equivalent to $B$,
we would find some $\alpha \in \K \setminus \{0\}$ such that $(B,\star',\bullet')$ is equivalent
to $(A,\star,\alpha\bullet)$; as $\calT_{(A,\star,\alpha\,\bullet)} \sim \calT_{A,\star,\bullet}$,
one would deduce that $\calT_{B,\star',\bullet'} \sim \calT_{A,\star,\bullet}$.

\vskip 4mm
In contrast, the proof of the direct implications in Theorem \ref{classtwistedtheo} is highly
non-trivial. It is based upon the following technical result, of which we will later derive other consequences.

\begin{lemme}[Rectification lemma]\label{rectificationlemma}
Let $(A,\star,\bullet)$ be an LDB division algebra with attached quadratic form $q$.
One endows $A \oplus \K^2$ with the quadratic form $\widetilde{q} : x+(\lambda,\mu) \mapsto q(x)-\lambda \mu$.
Let $s$ be an orthogonal automorphism of the quadratic space $(A \oplus \K^2,\widetilde{q})$.
Then, there are automorphisms $F : A^2 \overset{\simeq}{\rightarrow} A^2$,
$G : A^2 \overset{\simeq}{\rightarrow} A^2$ and an orthogonal automorphism $H$ of $A \oplus \K^2$
such that $H$ fixes every vector of $\K^2$ and
$$\forall x \in A \oplus \K^2, \quad \Gamma_A(s(x),-)=G \circ \Gamma_A(H(x),-) \circ F.$$
\end{lemme}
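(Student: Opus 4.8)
The plan is to use the fact that, through $\Gamma_A$, the space $\calT_A$ is a faithful copy of the quadratic space $(A\oplus\K^2,\widetilde{q})$ carrying one extra bit of rigidity. Write $v=x+(\lambda,\mu)$ and recall from the computations preceding Proposition~\ref{rankinatwisted} that $\Gamma_A(v,-)$ is the block operator $\begin{bmatrix} \lambda\,\id_A & A(x)\\ B(x) & \mu\,\id_A\end{bmatrix}$ of $\calL(A^2)$, where $A(x)\colon z\mapsto x\star z$ and $B(x)\colon y\mapsto x\bullet y$ satisfy $A(x)B(x)=B(x)A(x)=q(x)\,\id_A$, the second identity by Remark~\ref{inversionremark}. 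Multiplying blocks, one gets the key relation
\[
\Gamma_A(v,-)^2+b_{\widetilde{q}}(v,\mathbf 1)\,\Gamma_A(v,-)=\widetilde{q}(v)\,\id_{A^2},
\]
where $\mathbf 1:=(1,1)\in\K^2$ is the unique vector with $\Gamma_A(\mathbf 1,-)=\id_{A^2}$ and $\widetilde{q}(\mathbf 1)=-1$. Since an orthogonal automorphism $H$ of $\widetilde{q}$ fixing $\K^2$ pointwise is exactly an $h\oplus\id_{\K^2}$ with $h\in O(q)$, the statement amounts to: every isometry of $\widetilde{q}$ differs from such a map by an operation $T\mapsto G\circ T\circ F$ on $\calT_A$. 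Call the isometries with that property \emph{good}, and let $\calG_0$ be the set of good isometries for which one may take $H=\id$; since $\Gamma_A(s(w),-)=G\,\Gamma_A(w,-)\,F$ for all $w$ implies $\Gamma_A\bigl(s(s'(v)),-\bigr)=G\,\Gamma_A(s'(v),-)\,F$, the set $\calG_0$ is a subgroup of $O(\widetilde{q})$ (stable under inverses as well).

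The engine of the proof is a single computation: polarising the key relation and using $\Gamma_A(b,-)^2=\widetilde{q}(b)\,\id-b_{\widetilde{q}}(b,\mathbf 1)\,\Gamma_A(b,-)$ yields, for every $b$ with $\widetilde{q}(b)\neq0$,
\[
\Gamma_A\bigl(\tau_b(v),-\bigr)=-\,\widetilde{q}(b)^{-1}\,\Gamma_A(b,-)\circ\bigl[\Gamma_A(v,-)+b_{\widetilde{q}}(v,\mathbf 1)\,\id_{A^2}\bigr]\circ\Gamma_A(b,-).
\]
As $\widetilde{q}(\mathbf 1)=-1$ forces $\tau_{\mathbf 1}(v)=v+b_{\widetilde{q}}(v,\mathbf 1)\mathbf 1$, we have $\Gamma_A(v,-)+b_{\widetilde{q}}(v,\mathbf 1)\,\id_{A^2}=\Gamma_A\bigl(\tau_{\mathbf 1}(v),-\bigr)$; since $\Gamma_A(b,-)$ is invertible, substituting $v\leftarrow\tau_{\mathbf 1}(v)$ shows $\tau_b\circ\tau_{\mathbf 1}\in\calG_0$ for every anisotropic $b$. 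Using $\tau_{\mathbf 1}\,\tau_c\,\tau_{\mathbf 1}=\tau_{\tau_{\mathbf 1}(c)}$, every product $\tau_{b_1}\tau_{b_2}$ of two reflections equals $(\tau_{b_1}\tau_{\mathbf 1})\circ(\tau_{\tau_{\mathbf 1}(b_2)}\tau_{\mathbf 1})$, hence lies in $\calG_0$; so $\calG_0$ contains the subgroup $O'$ of $O(\widetilde{q})$ generated by such products. By Cartan--Dieudonn\'e, when $\widetilde{q}$ is non-degenerate every isometry is a product of reflections, hence lies in $O'$ or is one reflection times an element of $O'$; thus it suffices to prove that the two special isometries $n\colon x+(\lambda,\mu)\mapsto x+(-\lambda,-\mu)$ and $\sigma\colon x+(\lambda,\mu)\mapsto x+(\mu,\lambda)$ are good: then $\tau_{\mathbf 1}=\sigma\circ n$ is good, every reflection $\tau_b=(\tau_b\tau_{\mathbf 1})\circ\tau_{\mathbf 1}$ is good (an element of $\calG_0$ composed with a good map), and hence so is every isometry.

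The isometry $n$ is plainly good (take $H=(-\id_A)\oplus\id_{\K^2}$, $G=-\id_{A^2}$, $F=\id_{A^2}$). Unwinding the block equations, $\sigma$ is good if and only if the LDB division algebra $(A,\bullet)$ of Remark~\ref{inversionremark} is \emph{isotopic} to $(A,\star)$, i.e.\ there are invertible $P,R,S$ of $A$ with $P(x\bullet y)=(Rx)\star(Sy)$ for all $x,y$, and moreover $R\in O(q)$. I would produce such an isotopy explicitly: fixing $e\in A\setminus\{0\}$ and polarising $A(x)B(x)=q(x)\,\id_A$ gives $A(x)B(e)+A(e)B(x)=b_q(x,e)\,\id_A$, whence, after using $A(e)^{-1}=q(e)^{-1}B(e)$,
\[
B(x)=A(e)^{-1}\,A\bigl(R(x)\bigr)\,B(e),\qquad R(x):=q(e)^{-1}\,b_q(x,e)\,e-x.
\]
A short verification shows that $R$ is invertible, that $R^2=\id_A$ and that $R\in O(q)$ (in characteristic $2$ one rewrites $R$ as the Eichler transvection $x\mapsto x+q(e)^{-1}b_q(x,e)e$). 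This is the required isotopy $A(e)(x\bullet y)=R(x)\star B(e)(y)$, and feeding $P=A(e)$, $R$, $S=B(e)$ back into the block equations produces explicit $F$, $G$ and $H=R\oplus\id_{\K^2}$ witnessing that $\sigma$ is good.

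The step I expect to be the main obstacle is the characteristic $2$ case in which $q$ --- and hence $\widetilde{q}$ --- is degenerate: then $O(\widetilde{q})$ need not be generated by reflections, so the reduction above misses the ``unipotent'' isometries acting non-trivially on the radical. Handling those requires enlarging the generating set (by suitable orthogonal transvections and by isometries coming from the radical) and checking, by the same polarisation technique that produced the reflection formula, that each extra generator is good; this is where the bulk of the remaining technical work lies. Everything else is bookkeeping --- tracking the signs and the non-commuting factors $\tau_{\mathbf 1}$, $\sigma$, $n$ --- which the subgroup $\calG_0$ makes painless.
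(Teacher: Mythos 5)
Your key algebraic identity
\[
\Gamma_A(v,-)^2+b_{\widetilde{q}}(v,\mathbf 1)\,\Gamma_A(v,-)=\widetilde{q}(v)\,\id_{A^2}
\]
is correct, and polarising it really does give the clean formula
\[
\Gamma_A\bigl(\tau_b(v),-\bigr)=-\widetilde{q}(b)^{-1}\,\Gamma_A(b,-)\circ \Gamma_A\bigl(\tau_{\mathbf 1}(v),-\bigr)\circ \Gamma_A(b,-),
\]
from which $\tau_b\circ\tau_{\mathbf 1}\in\calG_0$ for every anisotropic $b$ follows at once (with $\Gamma_A(b,-)$ invertible by Proposition~\ref{rankinatwisted}). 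The treatment of $n$ and of $\sigma$ (via the operator identity $B(x)=A(e)^{-1}A(R(x))B(e)$ obtained by polarising $A(x)B(x)=q(x)\id_A$, with $R$ orthogonal and involutive in every characteristic) also checks out, and the composition bookkeeping works because you only ever precompose an element of $\calG_0$ with a single good isometry. This is a genuinely different route from the paper's proof: there, the base case computes explicitly the kernels and images of $\Gamma_A(s((1,0)),-)$ and $\Gamma_A(s((0,1)),-)$ for a reflection $s$ along a line outside $\K^2$, builds $F$ and $G$ from a direct-sum decomposition of $A^2$, and then extends by induction on reflection factors, absorbing the accumulated $H$ by re-twisting the algebra structure. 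Your version is more uniform --- your reflection formula handles reflections along lines inside $\K^2$ as well, so you do not need the ``not included in $\K^2$'' refinement that the paper has to establish.

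The gap you flag at the end is, however, a real one, and it is exactly the point the paper addresses with a separate lemma (Lemma~\ref{productofreflexions}). What saves the day is not an enlargement of the generating set: it is that $O(\widetilde q)$ is \emph{still} generated by reflections even when $\widetilde q$ is degenerate, because the radical of $\widetilde{q}$ is contained in $A$ (where $q$ is non-isotropic), so $\widetilde q$ is non-isotropic on its radical. Under that hypothesis, generation by reflections holds by Dieudonn\'e's theorems (with Witt-type reductions in characteristic $2$), the only exception being a $4$-dimensional hyperbolic form over $\F_2$, which cannot occur here since $\widetilde q$ restricted to $A$ is non-isotropic. Without citing or proving this generation statement, your argument is incomplete precisely where you say it is; with it, the argument closes cleanly and is arguably shorter than the paper's.
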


Proving this lemma requires the following result on the orthogonal group of~$\widetilde{q}$.
Recall that, given a quadratic space $(E,\varphi)$ with polar form $b : (x,y) \mapsto \varphi(x+y)-\varphi(x)-\varphi(y)$,
together with a non-isotropic vector $a \in E$, the reflection along $\K a$ is defined as
the linear map $x \mapsto x-\frac{b(x,a)}{\varphi(a)}\,a$.

\begin{lemme}\label{productofreflexions}
The orthogonal group of the quadratic space $(A \oplus \K^2, \widetilde{q})$
is generated by the reflections along the non-isotropic lines of $A \oplus \K^2$
that are not included in $\K^2$.
\end{lemme}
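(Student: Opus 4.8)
The plan is to combine a Cartan--Dieudonné generation theorem with a short local computation inside the hyperbolic plane $\K^2$. Throughout, write $b_{\widetilde q}$ for the polar form of $\widetilde q$, and recall that $A$ and $\K^2$ are mutually $b_{\widetilde q}$-orthogonal in $A\oplus\K^2$ and that, since $q$ is anisotropic and $\dim A\geq 1$, there is a vector $b\in A$ with $q(b)\neq 0$. The first move is to reduce the statement to the following claim: \emph{every reflection $r_a$ along a non-isotropic line $\K a\subseteq\K^2$ is a product of reflections along non-isotropic lines not contained in $\K^2$.} This reduction rests on the fact that $O(\widetilde q)$ is generated by the reflections along \emph{all} its non-isotropic lines. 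When $\widetilde q$ is non-degenerate --- which happens automatically unless $\operatorname{char}\K=2$ and the polar form of $q$ is degenerate --- this is the Cartan--Dieudonné theorem; the lone exceptional case, an orthogonal sum of two hyperbolic planes over $\F_2$, does not arise here because $q$ anisotropic forces $\widetilde q$ to be non-hyperbolic. When $\operatorname{char}\K=2$ and $q$ is degenerate, one sets $R:=\operatorname{rad}(b_q)\subseteq A$; since $\widetilde q|_R$ is anisotropic with identically zero polar form, its linear isometry group is trivial, so every isometry of $\widetilde q$ --- and likewise every reflection --- fixes $R$ pointwise, after which a splitting $\widetilde q\simeq(q|_R)\perp\widetilde q'$ with $\widetilde q'$ non-degenerate reduces the generation statement to the non-degenerate summand by a Witt-extension argument.

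Granting that $O(\widetilde q)$ is generated by all its reflections, here is how I would rewrite $r_a$ for a fixed non-isotropic $a\in\K^2$. Pick $b\in A$ with $q(b)\neq 0$; after rescaling $b$ (which is possible as soon as $\#\K$ is large enough, the obstruction being at most two bad values of the scalar), arrange that $\widetilde q(a)+q(b)\neq 0$, so that the plane $P:=\K a\oplus\K b$ is non-degenerate. Since $r_a$ fixes $A$ (hence $\K b$) pointwise and $r_b$ fixes $\K^2$ (hence $\K a$) pointwise, the composite $r_b r_a$ acts as $-\operatorname{id}$ on $P$ and as the identity on $P^{\perp}$. Choosing the orthogonal basis $u_1:=a+b$, $u_2:=q(b)\,a-\widetilde q(a)\,b$ of $P$ --- both vectors non-isotropic and both with nonzero $A$-component, hence spanning lines not contained in $\K^2$ --- one gets $r_b r_a=r_{u_1}r_{u_2}$, and therefore $r_a=r_b\,r_{u_1}\,r_{u_2}$ is a product of three admissible reflections. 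The finitely many small-field cases left uncovered (necessarily $\dim A\leq 2$, so $\widetilde q$ has dimension $\leq 4$ over $\F_2$ or $\F_3$) are then settled by direct inspection of $O(\widetilde q)$.

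In characteristic $2$ the whole argument must be reread with ``reflection'' meaning the orthogonal transvection $v\mapsto v-\dfrac{b_{\widetilde q}(v,a)}{\widetilde q(a)}\,a$: the skeleton of the proof is unchanged but the auxiliary identities have to be recomputed (for instance $r_a$ now fixes $a$ itself, so ``$-\operatorname{id}$ on $P$'' must be replaced by the appropriate unipotent element of $O(P)$, and the orthogonal splitting $P\oplus P^{\perp}$ has to be handled with the polar form in mind). I expect the main obstacle to lie precisely in the degenerate characteristic-$2$ situation of the first step: peeling off the anisotropic radical $R$ must be made compatible with the restriction ``lines not contained in $\K^2$'', and since $R$ sits inside $A$ whereas $\K^2$ meets $R$ trivially, one has to verify that the reflections produced on the non-degenerate part $\widetilde q'$ lift to reflections of the full space along lines avoiding $\K^2$. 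Once this point and the characteristic-$2$ bookkeeping are dispatched, the three ingredients --- generation by reflections, the rewriting of $\K^2$-reflections, and the characteristic-$2$ adjustments --- combine to give the lemma.
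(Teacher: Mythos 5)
Your two-step skeleton (generate $O(\widetilde q)$ by all reflections, then rewrite the reflections along $\K a\subset\K^2$) matches the paper's, but the execution of the second step is where the real gaps lie, and they are not small bookkeeping issues.

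For the rewriting step you decompose $r_b r_a$ as $-\operatorname{id}_P$ on a plane $P=\K a\oplus\K b$ and then split $-\operatorname{id}_P$ into two reflections along $u_1=a+b$ and $u_2=q(b)a-\widetilde q(a)b$. This route has three problems. First, the identity $r_br_a=-\operatorname{id}_P$ is vacuous in characteristic $2$: a reflection (orthogonal transvection) $r_a$ there \emph{fixes} $a$ rather than negating it, $P$ has identically zero polar form (since $b_{\widetilde q}(a,a)=b_{\widetilde q}(b,b)=0$ and $a\perp b$), so $P$ is degenerate and there is no orthogonal splitting $P\oplus P^\perp$, and $-\operatorname{id}=\operatorname{id}$ anyway. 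You flag this as needing a rewrite but do not supply it; this is exactly the situation the lemma has to cover, since the whole LDB machinery is interesting precisely in characteristic $2$. Second, even in characteristic not $2$ you need $u_1$ non-isotropic, i.e.\ $\widetilde q(a)+q(b)\neq 0$; your rescaling trick fails over $\F_2$ (where $\lambda^2=\lambda$) and is not obviously innocuous over $\F_3$, and "direct inspection of the small cases" is not carried out. Third, your condition "$\widetilde q(a)+q(b)\neq 0$, so that $P$ is non-degenerate" is not the non-degeneracy condition (over $\K$ of characteristic not $2$, $P$ is automatically non-degenerate since $a\perp b$ are both anisotropic), which suggests the computation was not checked.

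The paper sidesteps all of this with a characteristic-free device: given any non-isotropic $b\notin\K^2$ with $b\not\perp a$, the conjugation identity $s_a=s_b\circ s_{s_b(a)}\circ s_b$ expresses $s_a$ as a product of three reflections whose axes avoid $\K^2$, because $s_b(a)=a-\lambda b$ with $\lambda\neq 0$ forces $s_b(a)\notin\K^2$. Such a $b$ is produced explicitly as $b=x_0+(1,0)$ with $x_0\in A\smallsetminus\{0\}$, for which $\widetilde q(b)=q(x_0)\neq 0$ and $b_{\widetilde q}(b,a)=b_{\widetilde q}((1,0),a)\neq 0$ because $a$ is non-isotropic in the hyperbolic plane. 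No genericity, no rescaling, no case split on the characteristic, no residual small-field check. Your plan's first step is essentially the paper's (Cartan--Dieudonn\'e plus the degenerate-radical reduction in characteristic $2$), though you should be aware the paper handles the $\F_2$ radical-dimension-one case via the isomorphism of $O(\widetilde q)$ with a symplectic group, which your "peel off the radical" sketch does not automatically produce. As written, the proposal is an outline with acknowledged holes rather than a proof: the characteristic-$2$ rewriting and the small-field cases both need to be actually done, and switching to the conjugation trick would dissolve both at once.
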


\begin{proof}[Proof of Lemma \ref{productofreflexions}]
First of all, we show that the orthogonal group of $(A \oplus \K^2, \widetilde{q})$
is generated by reflections.

As $\widetilde{q}$ is regular on $\K^2$, and as $\K^2$ is $\widetilde{q}$-orthogonal
to $A$, the radical of $\widetilde{q}$ is included in $A$, and hence $\widetilde{q}$ is non-isotropic on its radical.
It is a general fact that whenever a finite-dimensional quadratic form $\varphi$ is non-isotropic on its radical,
its orthogonal group is generated by reflections, unless $\varphi$ is a $4$-dimensional hyperbolic form over
a field with two elements. If the underlying field has characteristic not $2$, then
$\varphi$ is regular as it vanishes everywhere on its radical, and hence the result is simply Witt's theorem,
see \cite[Chapter XXI, Theorem 2.1.1]{invitquad}.
When $\K$ has characteristic $2$, there are two subcases:
\begin{itemize}
\item \textbf{Case 1:} $\K$ has more than $2$ elements. Then, one can use
an equivalent of Witt's extension lemma (see \cite[Lemma 5.2.3]{invitquad}) to reduce the situation to the one where
$\varphi$ is totally isotropic, i.e.\ its radical is the source space $E$ of $\varphi$.
In that special case however, $\varphi$ is a linear injection from $E$ to $\K$ (where the scalar multiplication on $\K$ is
defined by $\lambda.x:=\lambda^2 x$)
and hence the only orthogonal automorphism for $\varphi$ is the identity.
\item \textbf{Case 2:} $\K \simeq \F_2$. Then, the radical of $\varphi$ has dimension $0$ or $1$
because $\varphi$ is linear on it, and hence one-to-one as it is non-isotropic on it.
In the first case, the result is a theorem of Dieudonn\'e \cite[Proposition 14, p.42]{Dieudonnegroupesclassiques}.
In the second one, it is known that the orthogonal group of $\varphi$ is isomorphic to a symplectic group,
through an isomorphism that turns reflections into symplectic transvections \cite[Section V.23]{Dieudonnegroupesclassiques},
and one concludes by using the classical theorem stating that every symplectic group is generated by symplectic transvections
\cite[Proposition 4, p.10]{Dieudonnegroupesclassiques}.
\end{itemize}

Now, for a non-isotropic vector $x$ of $(A \oplus \K^2, \widetilde{q})$,
denote by $s_x$ the reflection along $\K x$.
In order to conclude, we fix an arbitrary non-isotropic vector $a \in \K^2$
and we prove that the reflection $s_a$ is a product of reflections along vectors of $(A \oplus \K^2) \setminus \K^2$.
However, given a non-isotropic vector $b$ of $A \oplus \K^2$, one computes that
$s_a=s_b^{-1} \circ s_{s_b(a)} \circ s_b$. If $b$ is not orthogonal to $a$, then $b \in \Vect(a,s_b(a))$.
In order to conclude, it suffices to show that $b$ may be chosen outside of $\K^2$ and non-orthogonal to $a$,
so that $s_b(a)$ cannot belong to $\K^2$. Note that $b_0=(1,0)$ is not orthogonal to $a$ since $a$ is non-isotropic;
choosing an arbitrary vector $x_0 \in A \setminus \{0\}$, we see that
$b:=x_0+b_0$ is not orthogonal to $a$ and $\widetilde{q}(b)=q(x_0) \neq 0$,
which completes the proof since $b\not\in \K^2$.
\end{proof}

\begin{proof}[Proof of Lemma \ref{rectificationlemma}]
We start with the case when $s$ is the reflection along a non-isotropic vector
$X_0=x_0+(\lambda_0,\mu_0)$ of $(A \oplus \K^2) \setminus \K^2$.
Then, we have $x_0 \neq 0$ and hence $q(x_0) \neq 0$.
Set
$$\overrightarrow{a}:=s\bigl((1,0)\bigr) \quad \text{and} \quad
\overrightarrow{b}:=s\bigl((0,1)\bigr),$$
$$f:=\Gamma_A(\overrightarrow{a},-) \quad \text{and} \quad g:=\Gamma_A(\overrightarrow{b},-).$$
Setting $b_q : (y,z) \in A^2 \mapsto q(y+z)-q(y)-q(z)$, we have
$$\forall (x,\lambda,\mu) \in A \times \K^2, \; s(x+(\lambda,\mu))
=x+(\lambda,\mu)-\frac{b_q(x,x_0)-\lambda\mu_0-\mu \lambda_0}{q(x_0)-\lambda_0\mu_0}\bigl(x_0+(\lambda_0,\mu_0)\bigr).$$
In particular,
$$\overrightarrow{a}=\frac{\mu_0}{q(x_0)-\lambda_0\mu_0}\,x_0+\Bigl(\frac{q(x_0)}{q(x_0)-\lambda_0\mu_0},\frac{\mu_0^2}{q(x_0)-\lambda_0\mu_0}\Bigr)$$
and
$$\overrightarrow{b}=\frac{\lambda_0}{q(x_0)-\lambda_0\mu_0}\,x_0+\Bigl(\frac{\lambda_0^2}{q(x_0)-\lambda_0\mu_0},\frac{q(x_0)}{q(x_0)-\lambda_0\mu_0}\Bigr).$$
One computes that
\begin{align*}
\Ker f& = & \bigl\{(y,z) \in A^2: \; q(x_0)\,y=-\mu_0\,x_0\star z\bigr\} = & \bigl\{(\mu_0\, x_0 \star t,-q(x_0)\,t) \mid t \in A\bigr\} \\
\Ker g& = & \bigl\{(y,z) \in A^2: \; q(x_0)\,z=-\lambda_0\,x_0\bullet y\bigr\} = & \bigl\{(-q(x_0)\,t,\lambda_0\, x_0 \bullet t) \mid t \in A\bigr\}.
\end{align*}
Indeed, if we consider $f$ for example, the identity
$\bigl\{(y,z) \in A^2: \; q(x_0)\,y=-\mu_0\,x_0\star z\bigr\} =  \bigl\{(\mu_0\, x_0 \star t,-q(x_0)\,t) \mid t \in A\bigr\}$
and the inclusion $\Ker f \subset \bigl\{(y,z) \in A^2: \; q(x_0)\,y=-\mu_0\,x_0\star z\bigr\}$
are obvious; as $s$ is a reflection and $(1,0)$ is $\widetilde{q}$-isotropic, $\overrightarrow{a}$ is also $\widetilde{q}$-isotropic;
then, Proposition \ref{rankinatwisted} yields $\rk f=\dim A$; one concludes by noting that $\bigl\{(\mu_0\, x_0 \star t,-q(x_0)\,t) \mid t \in A\bigr\}$
has dimension $\dim A$, visibly. The case of $g$ is handled similarly.

Noting that $\overrightarrow{a}+\overrightarrow{b}=s\bigl((1,1)\bigr)$ is non-isotropic,
we find that $f+g$ is non-singular, and hence $\im f+\im g=A^2$
and $\Ker f \cap \Ker g=\{0\}$. As $\rk f=\rk g=\frac{\dim A^2}{2}$, one deduces that $\Ker f\oplus \Ker g=A^2=\im f\oplus \im g$.

Now, let us consider the isomorphism $g_1 : A \overset{\simeq}{\rightarrow} \im f$ (respectively, $g_2 : A \overset{\simeq}{\rightarrow} \im g$)
obtained by composing the isomorphism
$$f_1 : y \in A \mapsto (q(x_0)\,y\, ,\,-\lambda_0\,x_0\bullet y) \in \Ker g$$
with the isomorphism $\Ker g \overset{\simeq}{\rightarrow} \im f$ induced by $f$
(respectively, by composing the isomorphism
$$f_2 : z \in A \mapsto (-\mu_0\,x_0\star z\, ,\, q(x_0)\,z) \in \Ker f$$
with the isomorphism $\Ker f \overset{\simeq}{\rightarrow} \im g$ induced by $g$).
One computes that
$$\forall y \in A, \; g_1(y)=\Bigl(q(x_0)\,y \,, \,  \mu_0\,x_0 \bullet y    \Bigr) \quad \text{and}
\quad
\forall z \in A, \; g_2(z)=\Bigl(\lambda_0\,x_0 \star z \, ,\,  q(x_0)\,z\Bigr).$$
Set
$$G : (y,z) \in A^2 \longmapsto g_1(y)+g_2(z) \in A^2 \quad \text{and} \quad
F : (y,z) \in A^2 \longmapsto f_1(y)+f_2(z) \in A^2.$$
As $A^2=\Ker f\oplus \Ker g=\im f\oplus \im g$, both maps $F$ and $G$ are automorphisms of $A^2$.

Now, let $x \in A$ and set $\alpha:=b_q(x_0,x)$.
Then,
$$s(x)=\Bigl(x-\frac{\alpha}{q(x_0)-\lambda_0\mu_0}\,x_0\Bigr)-\frac{\alpha}{q(x_0)-\lambda_0\mu_0}\,(\lambda_0,\mu_0).$$
Fixing $y \in A$, one
computes that
$$\Gamma_A(s(x),f_1(y))=\Bigl(-\lambda_0\,x\star (x_0 \bullet y)\, ,\,(q(x_0)\,x-b_q(x_0,x)\,x_0)\bullet y\Bigr).$$
Writing $(y_1,y_2)=\Gamma_A(s(x),f_1(y))$, one finds that
\begin{align*}
\frac{1}{q(x_0)}\,x_0 \star y_2 & =
\frac{1}{q(x_0)}\,\bigl(q(x_0)\,x_0 \star (x \bullet y)-b_q(x_0,x)\,x_0 \star (x_0 \bullet y)\bigr) \\
& = \bigl(x_0 \star (x \bullet y)-b_q(x_0,x)\, y\bigr) \\
& = -x \star (x_0 \bullet y) \\
& = \frac{1}{\lambda_0}\,y_1,
\end{align*}
where the identity $b_q(x_0,x)\, y =x \star (x_0 \bullet y)+x_0 \star (x \bullet y)$ comes from polarizing the
quadratic identity $\forall t \in A, \; t \star (t \bullet y)=q(t)\,y$.
With this, one sees that $\Gamma_A(s(x),f_1(y)) \in \im g_2$, and more precisely that
$$G^{-1}(\Gamma_A(s(x),f_1(y)))=\bigl(0,r(x) \bullet y\bigr),$$
where $r : t \mapsto t-\frac{b_q(t,x_0)}{q(x_0)}x_0$ denotes the reflection of $(A,q)$ along $\K x_0$.
Similarly, one computes that
$$\forall z \in A, \; G^{-1}(\Gamma_A(s(x),f_2(z)))=\bigl(r(x) \star z,0\bigr).$$
One concludes that
$$\Gamma_A(s(x),-)=G \circ \Gamma_A(r(x),-) \circ F^{-1}.$$
Finally, it is obvious from the definitions of $F$ and $G$ that
$$\Gamma_A\bigl(s\bigl((1,0)\bigr),-\bigr)=f=G \circ \Gamma_A\bigl((1,0),-\bigr) \circ F^{-1}$$
and
$$\Gamma_A\bigl(s\bigl((0,1)\bigr),-\bigr)=g=G \circ \Gamma_A\bigl((0,1),-\bigr) \circ F^{-1}.$$
Therefore, the isomorphisms $F^{-1}$, $G$ and the orthogonal automorphism
$H : x+(\lambda,\mu) \in A \oplus \K^2  \mapsto r(x)+(\lambda,\mu) \in A \oplus \K^2$ satisfy the claimed properties.

\vskip 3mm
Let us return to the general case. Let $u$ be an orthogonal automorphism of $A \oplus \K^2$
and $s$ be a reflection of $A\oplus \K^2$ along a line which is not included in $\K^2$.
Assume that there are two automorphisms $F$ and $G$ of $A^2$ together with
an orthogonal automorphism $H$ of $A \oplus \K^2$ which fixes every vector of $\K^2$ and satisfies
$$\forall x \in A \oplus \K^2, \quad \Gamma_A(u(x),-)=G \circ \Gamma_A(H(x),-) \circ F.$$
Then, we prove that the claimed result also holds for $u \circ s$.

Let us endow $B:=A$ with a new structure of LDB division algebra:
for all $(x,y) \in B^2$, one sets:
$$x \star' y:=H(x) \star y \quad \text{and} \quad x \bullet' y:=H(x) \bullet y.$$
Thus, we have
$$\forall x \in A \oplus \K^2, \; \Gamma_B(x,-)=\Gamma_A(H(x),-),$$
whence
$$\forall x \in A \oplus \K^2, \; \Gamma_A(u(x),-)=G \circ \Gamma_B(x,-) \circ F$$
and $q$ is the quadratic form attached to $B$!
Thus, $s$ is a reflection of the quadratic space $(B \oplus \K^2,\widetilde{q})$ along a line which is not included in $\K^2$.
Applying the first step in the LDB division algebra $(B,\star',\bullet')$
yields two automorphisms $F'$ and $G'$ of $A^2$ together with an orthogonal automorphism $H'$ of $(A \oplus \K^2,\widetilde{q})$
such that $H'$ fixes every vector of $\K^2$ and
$$\forall x \in B \oplus \K^2, \; \Gamma_B(s(x),-)=G' \circ \Gamma_B (H'(x),-) \circ F'.$$
For all $x \in A \oplus \K^2$, this yields
\begin{align*}
\Gamma_A(u(s(x)),-) & = G \circ \Gamma_B(s(x),-) \circ F \\
& = G \circ G' \circ \Gamma_B(H'(x),-) \circ F' \circ F \\
& = (G \circ G') \circ \Gamma_A \bigl((H\circ H')(x),-\bigr) \circ (F' \circ F).
\end{align*}
Since $H \circ H'$ is an orthogonal automorphism of $(A \oplus \K^2,\widetilde{q})$ that fixes every vector of $\K^2$,
we find that $u \circ s$ has the claimed property.

Finally, Lemma \ref{productofreflexions} shows that every orthogonal automorphism of $(A \oplus \K^2,\widetilde{q})$
is the product of reflections along lines that are not included in $\K^2$, and hence the result follows by induction
on the number of such reflections.
\end{proof}

\begin{proof}[Proof of the direct implications in Theorem \ref{classtwistedtheo}]
We have already proved the converse statements.
Let us write the considered LDB division algebras formally as $(A,\star,\bullet)$ and $(B,\star',\bullet')$, and
denote the attached quadratic forms by $q_A$ and $q_B$, respectively.

Assume that $\calT_A$ is equivalent to $\calT_B$.
Then, we have isomorphisms $F : A^2 \overset{\simeq}{\rightarrow} B^2$ and $G : B^2 \overset{\simeq}{\rightarrow} A^2$,
and an isomorphism $K : A \oplus \K^2 \overset{\simeq}{\rightarrow} B \oplus \K^2$ such that
$$\forall x \in A \oplus \K^2, \; \Gamma_A(x,-)=G \circ \Gamma_B(K(x),-)\circ F.$$
Let us define $\varphi : \calT_A \rightarrow \calT_B$ as the sole isomorphism which satisfies
$$\forall x \in A \oplus \K^2, \; \varphi(\Gamma_A(x,-))=\Gamma_B(K(x),-).$$

Then, $\varphi : \calT_A \rightarrow \calT_B$ is a rank-preserving isomorphism of vector spaces, and hence
the linear map $K : A \oplus \K^2 \rightarrow B \oplus \K^2$ maps the set of isotropic vectors of
$\widetilde{q_A}$ bijectively onto the set of isotropic vectors of $\widetilde{q_B}$.
As each form $\widetilde{q_A}$ and $\widetilde{q_B}$ has a hyperbolic sub-form,
the quadratic Nullstellensatz yields a non-zero scalar
$\alpha$ such that $\widetilde{q_B}(K(x))=\alpha\,\widetilde{q_A}(x)$ for all $x \in A \oplus \K^2$.
Set $x:=K\bigl((1,0)\bigr)$ and $y:=K\bigl((0,-\alpha^{-1})\bigr)$.
Then, $x$ and $y$ are $\widetilde{q_B}$-isotropic and
$\widetilde{q_B}(x+y)=\alpha\, \widetilde{q_A}((1,-\alpha^{-1}))=1$, so that
$(x,y)$ is a hyperbolic pair of the quadratic space $(B \oplus \K^2,\widetilde{q_B})$.
By Witt's extension theorem, there is an orthogonal automorphism $u$ of $(B \oplus \K^2,\widetilde{q_B})$
such that $K\bigl((1,0)\bigr)=u\bigl((1,0)\bigr)$ and $K\bigl((0,-\alpha^{-1})\bigr)=u\bigl((0,-1)\bigr)$.
Then, by Lemma \ref{rectificationlemma}, we may find automorphisms
$F' : B^2 \rightarrow B^2$ and $G' : B^2 \rightarrow B^2$ together with an orthogonal automorphism $H$ of $B \oplus \K^2$ such that
$H$ fixes every vector of $\K^2$ and
$$\forall x \in B \oplus \K^2, \; \Gamma_B(u(x),-)=G' \circ \Gamma_B(H(x),-) \circ F'.$$
This leads to
$$\forall x \in A \oplus \K^2, \; \Gamma_A(x,-)=(G \circ G') \circ \Gamma_B(H(u^{-1}(K(x))),-) \circ (F' \circ F),$$
and we note that $G_1:=G \circ G'$ and $F_1:=F' \circ F$ are isomorphisms from $B^2$ to $A^2$ and from $A^2$ to $B^2$, respectively.
The definition of $u$ shows that $(u^{-1}\circ K)\bigl((1,0)\bigr)=(1,0)$ and $(u^{-1}\circ K)\bigl((0,1)\bigr)=(0,\alpha)$.
As $u$ and $K$ preserve orthogonality, this yields an isomorphism $k : A \overset{\simeq}{\rightarrow} B$
such that $u^{-1}(K(x))=k(x)$ for all $x \in A$. Setting finally $h :=H \circ k$, one finds that $h : A \rightarrow B$
is an isomorphism and
$$\forall (x,\lambda,\mu) \in A \times \K^2, \quad \Gamma_A\bigl(x+(\lambda,\mu),-\bigr)=
G_1 \circ \Gamma_B\bigl(h(x)+(\lambda,\alpha\mu),-\bigr) \circ F_1.$$
As $\im \Gamma_A\bigl((1,0),-\bigr)=A \times \{0\}$ and $\im \Gamma_B\bigl((1,0),-\bigr)=B \times \{0\}$, one deduces that
$G_1$ maps $B \times \{0\}$ into $A \times \{0\}$. Similarly, one proves that $G_1$ also maps $\{0\} \times B$
into $\{0\} \times A$.
Looking at the respective kernels of $\Gamma_A\bigl((0,1),-\bigr)$, $\Gamma_B\bigl((0,\alpha),-\bigr)$, $\Gamma_A\bigl((1,0),-\bigr)$ and
$\Gamma_B\bigl((1,0),-\bigr)$,
one shows in a similar fashion that $F_1$ maps $A \times \{0\}$ into $B \times \{0\}$ and $\{0\} \times A$ into $\{0\} \times B$.
Thus, we have four isomorphisms $f_1 : A \overset{\simeq}{\rightarrow} B$, $f_2 : A \overset{\simeq}{\rightarrow} B$,
$g_1 : B \overset{\simeq}{\rightarrow} A$ and $g_2 : B \overset{\simeq}{\rightarrow} A$ such that
$$\forall (y,z) \in A^2, \; F_1(y,z)=(f_1(y),f_2(z)) \quad \text{and} \quad
\forall (y,z) \in B^2, \; G_1(y,z)=(g_1(y),g_2(z)).$$
Then, $G_1 \circ \Gamma_B\bigl((1,0),-\bigr) \circ F_1=\Gamma_A\bigl((1,0),-\bigr)$ and
$G_1 \circ \Gamma_B\bigl((0,\alpha),-\bigr) \circ F_1=\Gamma_A\bigl((0,1),-\bigr)$ yield
$g_1 \circ f_1=\id_A$ and $g_2 \circ f_2=\alpha^{-1} \id_A$, respectively.

Thus, for all $x \in A$, identity $\Gamma_A(x,-)=G_1 \circ \Gamma_B\bigl(h(x),-\bigr) \circ F_1$ means that
$$\forall (y,z) \in A^2, \; x \star z=f_1^{-1}\bigl(h(x) \star' f_2(z)\bigr) \quad \text{and}
\quad x \bullet y=\alpha^{-1}\,f_2^{-1}\bigl(h(x) \bullet' f_1(y)\bigr),$$
which shows that $A$ and $B$ are weakly equivalent LDB division algebras.

Moreover, if $\calT_A$ and $\calT_B$ are similar, the isomorphisms $F$ and $G$ from the beginning of the proof may be chosen so as to have
$G=F^{-1}$. In that case, as $\Gamma_A\bigl((1,1),-\bigr)=\id_{A^2}$, we find that $\Gamma_B\bigl(K((1,1)),-\bigr)=\id_{B^2}$, and
hence $K\bigl((1,1)\bigr)=(1,1)$.
Thus, we actually have $\alpha=1$, and the rest of the proof yields that $A$ and $B$ are equivalent LDB division algebras.
\end{proof}

Here is a stunning corollary:

\begin{cor}
Let $(A,\star,\bullet)$ be an LDB division algebra. Then, $(A,\bullet,\star)$ is equivalent to $(A,\star,\bullet)$.
\end{cor}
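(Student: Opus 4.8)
The plan is to reduce the statement to Theorem \ref{classtwistedtheo}(b) by exhibiting a similarity between the endomorphism spaces $\calT_{A,\star,\bullet}$ and $\calT_{A,\bullet,\star}$. First I would invoke Remark \ref{inversionremark}, which tells us that $(A,\bullet,\star)$ is genuinely an LDB division algebra (with the same attached quadratic form $q$), so that Theorem \ref{classtwistedtheo}(b) is applicable to the pair $(A,\star,\bullet)$, $(A,\bullet,\star)$.

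The key computation is with the swap automorphism $\sigma\colon (y,z)\in A^2 \mapsto (z,y)\in A^2$, which satisfies $\sigma^{-1}=\sigma$. Directly from the definition of $\Gamma_{A,\star,\bullet}$ one checks that, for every $(x,\lambda,\mu)\in A\times\K^2$,
$$\sigma\circ\Gamma_{A,\star,\bullet}\bigl(x+(\lambda,\mu),-\bigr)\circ\sigma=\Gamma_{A,\bullet,\star}\bigl(x+(\mu,\lambda),-\bigr).$$
Since $x+(\lambda,\mu)\mapsto x+(\mu,\lambda)$ is a bijection of $A\oplus\K^2$ onto itself, letting $(x,\lambda,\mu)$ range over $A\times\K^2$ shows that $\calT_{A,\bullet,\star}=\{\sigma\circ f\circ\sigma^{-1}\mid f\in\calT_{A,\star,\bullet}\}$; taking $F=\sigma$ (so $G=F^{-1}=\sigma$) in the definition of similarity, this means precisely that the endomorphism spaces $\calT_{A,\star,\bullet}$ and $\calT_{A,\bullet,\star}$ are similar.

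I would then conclude by applying Theorem \ref{classtwistedtheo}(b): similarity of $\calT_{A,\star,\bullet}$ and $\calT_{A,\bullet,\star}$ forces the LDB division algebras $(A,\star,\bullet)$ and $(A,\bullet,\star)$ to be equivalent, which is the desired statement.

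There is essentially no obstacle once Theorem \ref{classtwistedtheo} is available; the only point requiring care is the bookkeeping of the two scalar coordinates $\lambda,\mu$, which are interchanged by conjugation by $\sigma$ — this is why one lands on $\Gamma_{A,\bullet,\star}(x+(\mu,\lambda),-)$ rather than $\Gamma_{A,\bullet,\star}(x+(\lambda,\mu),-)$, but since we only need the equality of the two operator \emph{spaces} (not a pointwise correspondence of their natural parametrizations), this swap is harmless. One could instead attempt to write down a direct equivalence $(A,\bullet,\star)\to(A,\star,\bullet)$ by producing explicit linear isomorphisms $f,g,h$ of $A$ satisfying the compatibility identities in the definition of equivalence, but no natural choice suggests itself, so routing through the twisted operator spaces is the cleaner path.
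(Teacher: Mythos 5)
Your proposal is correct and coincides with the paper's own proof: you exhibit similarity of the twisted operator spaces via conjugation by the swap automorphism (the paper calls it the ``twist'' automorphism $T$), observe that the $(\lambda,\mu)$-swap is immaterial since one only needs equality of operator spaces, and then invoke Theorem \ref{classtwistedtheo}(b). Your preliminary appeal to Remark \ref{inversionremark} to confirm that $(A,\bullet,\star)$ is itself an LDB division algebra is a small extra bit of care that the paper leaves implicit.
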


\begin{proof}
By Theorem \ref{classtwistedtheo}, it suffices to show that $\calT_{A,\bullet,\star}$
and $\calT_{A,\star,\bullet}$ are similar, which follows from the simple observation that
the ``twist" automorphism $T : (x,y) \in A^2 \mapsto (y,x) \in A^2$ satisfies
$$\forall (x,\lambda,\mu) \in A \times \K^2, \; \Gamma_{(A,\bullet,\star)}\bigl(x+(\lambda,\mu),-\bigr)
=T^{-1} \circ \Gamma_{(A,\star,\bullet)} \bigl(x+(\mu,\lambda),-\bigr) \circ T.$$
\end{proof}

We finish with another application of the Rectification lemma:

\begin{prop}\label{hyperplanereduced}
Let $(A,\star,\bullet)$ be an LDB division algebra, with attached quadratic form $q$, and assume that $\K$ has characteristic not $2$.
Let $\calH$ be a non-isotropic hyperplane of $\calT_A$. Then, there is a non-zero scalar $\alpha$
that is not in the range of $q$ and such that
$\calH$ is equivalent to the space of all operators of the form
$$(y,z) \in A^2 \mapsto \bigl(x \star z+\alpha \lambda y\, , \, x \bullet y+\lambda z\bigr) \in A^2, \quad
\text{for $(x,\lambda)\in A \times \K$.}$$
\end{prop}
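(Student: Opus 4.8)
The plan is to work in the quadratic space $(A \oplus \K^2, \widetilde{q})$, identified with $\calT_A$ through $x+(\lambda,\mu) \mapsto \Gamma_A(x+(\lambda,\mu),-)$. Since $\K$ has characteristic not $2$ and $q$ is non-isotropic, $q$ is non-degenerate, hence so is $\widetilde{q}$, and $\widetilde{q} \simeq q \bot \langle 1,-1\rangle$. Let $\calH_0$ be the hyperplane of $A \oplus \K^2$ corresponding to $\calH$; by Proposition \ref{rankinatwisted}, the assumption that $\calH$ is non-isotropic is precisely the statement that $\widetilde{q}$ is anisotropic on $\calH_0$. Observe also that, for $\alpha \in \K$, the operators $(y,z) \mapsto (x \star z + \alpha \lambda y,\, x \bullet y + \lambda z)$ are exactly the $\Gamma_A(v,-)$ with $v$ in the hyperplane $\calD_\alpha := A \oplus \K(\alpha,1)$, and a direct computation of the polar form of $\widetilde{q}$ gives $\calD_\alpha^{\bot} = \K w_\alpha$ with $w_\alpha := (0,-\alpha,1)$ and $\widetilde{q}(w_\alpha) = \alpha$.

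The key step is to pin down $\alpha$. Since $\widetilde{q}$ is non-degenerate, $\calH_0^{\bot} = \K w$ for some non-zero $w$, and $c := \widetilde{q}(w) \neq 0$ (otherwise $w$ would be $\widetilde{q}$-orthogonal to $\calH_0 + \K w = A \oplus \K^2$). I would then show that $c$ is not represented by $q$: were $q \simeq \langle c \rangle \bot q'$, we would have both $\widetilde{q} \simeq \langle c \rangle \bot q' \bot \langle 1,-1\rangle$ and $\widetilde{q} \simeq \langle c \rangle \bot (\widetilde{q}|_{\calH_0})$ (splitting off $\K w$), so Witt cancellation would force $\widetilde{q}|_{\calH_0} \simeq q' \bot \langle 1,-1\rangle$, which is isotropic, contradicting the anisotropy of $\widetilde{q}|_{\calH_0}$. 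Hence $\alpha := c$ is a non-zero scalar outside the range of $q$, and $\widetilde{q}(w_\alpha) = \alpha = \widetilde{q}(w) \neq 0$.

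To conclude, Witt's extension theorem supplies an orthogonal automorphism $s$ of $(A \oplus \K^2, \widetilde{q})$ with $s(w_\alpha) = w$, whence $s(\calD_\alpha) = s\bigl((\K w_\alpha)^{\bot}\bigr) = (\K w)^{\bot} = \calH_0$. Applying the Rectification lemma (Lemma \ref{rectificationlemma}) to $s$ yields automorphisms $F$ and $G$ of $A^2$ and an orthogonal automorphism $H$ of $A \oplus \K^2$, fixing every vector of $\K^2$, such that $\Gamma_A(s(x),-) = G \circ \Gamma_A(H(x),-) \circ F$ for all $x$. Because $\K^2$ is a non-degenerate subspace, $A = (\K^2)^{\bot}$, so $H$ maps $A$ onto $A$ and therefore preserves $\calD_\alpha$; consequently
$$\calH = \bigl\{\Gamma_A(v,-) : v \in \calH_0\bigr\} = \bigl\{G \circ \Gamma_A(H(x),-) \circ F : x \in \calD_\alpha\bigr\} = \bigl\{G \circ \Gamma_A(v,-) \circ F : v \in \calD_\alpha\bigr\},$$
which is exactly the assertion that $\calH$ is equivalent to the space of operators $(y,z) \mapsto (x \star z + \alpha \lambda y,\, x \bullet y + \lambda z)$. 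The one genuinely delicate point is the middle paragraph: the value of $\alpha$ is forced to be $\widetilde{q}(w)$, and the content of the argument is to check that this value is simultaneously nonzero and not a value of $q$, which is what makes Witt's extension theorem applicable; the rest is routine bookkeeping once the Rectification lemma is in hand.
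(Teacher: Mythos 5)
Your proof is correct and follows essentially the same route as the paper: identify the orthogonal line $\calD := \calH_0^{\bot}$, take $\alpha$ as the value of $\widetilde{q}$ there, carry the reference line $\K(0,-\alpha,1)$ onto $\calD$ by Witt's extension theorem, and convert the resulting orthogonal automorphism into an equivalence of operator spaces via the Rectification lemma. The only cosmetic difference is that you establish up front, by Witt cancellation, that $\alpha$ is outside the range of $q$, whereas the paper deduces this at the very end from the rank-preservation of the equivalence together with Proposition \ref{rankinatwisted}; both arguments are valid.
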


\begin{proof}
Denote by $\calH_0$ the hyperplane of $A \oplus \K^2$ corresponding to $\calH$ through
$X \mapsto \Gamma_A(X,-)$.
Then, the orthogonal subspace $\calD$ of $\calH_0$ in $(A \oplus \K^2,\widetilde{q})$ is $1$-dimensional and non-isotropic.
Choosing a non-zero value $\alpha$ of $\widetilde{q}$ on $\calD$, we set $\calD':=\K (-\alpha,1) \subset A \oplus \K^2$
and we note that $\widetilde{q}$ takes the value $\alpha$ on $\calD'$.

Thus, the quadratic spaces $(\calD',q_{\calD'})$ and $(\calD,q_{\calD})$ are isomorphic, and hence
Witt's extension theorem yields an orthogonal automorphism $u$ of $(A \oplus \K^2,\widetilde{q})$ such that $u(\calD')=\calD$.
Using Lemma \ref{rectificationlemma}, one finds automorphisms $G$ and $F$ of $A^2$, together with
an orthogonal automorphism $v$ of $(A \oplus \K^2,\widetilde{q})$ that fixes every vector of $\K^2$ and for which
$$\forall x \in A \oplus \K^2, \; \Gamma_A(u(x),-)=G \circ \Gamma_A(v(x),-) \circ F.$$
Thus, $\calH$ is equivalent to the space of all operators $\Gamma_A(y,-)$ with
$y$ in the hyperplane $\calH_1:=(v\circ u^{-1})(\calH_0)$ of $A \oplus \K^2$.
As $v \circ u^{-1}$ is an orthogonal automorphism of $(A \oplus \K^2,\widetilde{q})$,
we see that $\calH_1$ is the orthogonal complement of $(v \circ u^{-1})(\calD)=v(\calD')=\calD'$, that is
$\calH_1=A\oplus \K (\alpha,1)$. Thus, $\calH$ is equivalent to the space $\calH'$ of all operators
$\Gamma_A\bigl(x+(\alpha \lambda,\lambda),-\bigr)$ with $(x,\lambda)\in A \times \K$.

Since all the non-zero operators in $\calH'$ must be non-singular, the quadratic form $\widetilde{q}$
is non-isotropic on $\calH_1$; as $\widetilde{q}_{|\calH_1} \simeq q \bot \langle -\alpha\rangle$, it ensues
that $\alpha$ does not belong to the range of $q$.
\end{proof}

\subsection{The operator space $\calT_A$ as a Jordan algebra}

We close this section with a short remark on the structure of the endomorphism space $\calT_A$.

Let $(A,\star,\bullet)$ be an LDB division algebra with attached quadratic form $q$. Viewing $\calT_A$ as a linear subspace of $\calL(A^2)$,
we note that it has the peculiar property of being stable by squares. Indeed, given $(x,\lambda,\mu) \in A \times \K^2$,
one checks that, for all $(y,z)\in A^2$,
$$\Gamma_A(x+(\lambda,\mu),\Gamma_A(x+(\lambda,\mu),(y,z)))^2=
\bigl((\lambda+\mu)x\star z+(\lambda^2+q(x))y\, , \, (\lambda+\mu)x\bullet y+(\mu^2+q(x))z\bigr),$$
and hence
$$\Gamma_A(x+(\lambda,\mu),-)^2=\Gamma_A\bigl((\lambda+\mu)x+(\lambda^2+q(x),\mu^2+q(x)),-\bigr).$$
In particular, $\calT_A$ is a Jordan subalgebra of $\calL(A^2)$ (for the Jordan product $(f,g) \mapsto fg+gf$),
and it is isomorphic to the Jordan algebra $(A\oplus \K^2,b)$, where
$$b\bigl(x+(\lambda,\mu),x'+(\lambda',\mu')\bigr)=
(\lambda'+\mu')x+(\lambda+\mu)x'+\bigl(2\lambda\lambda'+b_q(x,x'),2\mu\mu'+b_q(x,x')\bigr),$$
and $b_q :(x,x') \mapsto q(x+x')-q(x)-q(x')$ denotes the polar form of $q$.

\section{The minimal rank problem in a non-reflexive operator space (III)}\label{nonreflexivesectionIII}

In the preceding section, we have shown examples of $(n+1)$-dimensional LLD operator spaces containing
a hyperplane with minimal rank $2n-2$. In this final section, we show that those examples are essentially the only
possible spaces of this type, with the usual restrictions on the cardinality of the underlying field.

\subsection{The classification theorem}\label{classtheostatement}

\begin{theo}\label{theocaslimite}
Let $n \geq 3$ be an integer, and assume that $\# \K>n$.
Let $\calS$ be an $(n+1)$-dimensional reduced LLD subspace of $\calL(U,V)$, and
$\calH$ be a hyperplane of $\calS$ such that $\mrk \calH=2n-2$.
Then:
\begin{enumerate}[(a)]
\item $\calS$ is equivalent to the twisted operator space $\calT_A$ attached to some $(n-1)$-dimensional LDB division algebra $A$;
\item The LDB division algebra $A$ is uniquely defined by $\calS$ up to weak equivalence;
\item The operator space $\calH$ is equivalent to a non-isotropic hyperplane of $\calT_A$;
\item If $n \geq 4$, then $\calS$ is the reflexive closure of $\calH$ and
the weak equivalence class of the LDB division algebra $A$ is solely dependent on $\calH$.
\end{enumerate}
\end{theo}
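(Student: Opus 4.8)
The plan is to pass to the dual picture, establish a rigid matrix normal form for $\widehat{\calS}$, recognize it as the normal form of a twisted operator space attached to an LDB division algebra, and then read off (b), (c), (d) from the rectification machinery and the reflexivity results of Section \ref{nonreflexivesectionII}.

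\emph{Reduction to a normal form.} First I would fix a rank-optimal $\varphi\in\widehat{\calS}$. Since $\calS$ is LLD one has $\rk\varphi\le n$; were $\Ker\varphi\cap\calH\neq\{0\}$, Lemma \ref{basiclemma} would produce a non-zero operator of $\calH$ of rank $\le\rk\varphi\le n<2n-2$, against $\mrk\calH=2n-2$. Hence $\Ker\varphi\cap\calH=\{0\}$, so $\dim\Ker\varphi=1$, $\rk\varphi=n$, and $\calS=\calH\oplus\K g$ with $\Ker\varphi=\K g$. Then $\urk\widehat{\calS}=n$, so $\calS$ is not $2$-LLD; as $\mrk\calH>n$, Theorem \ref{refinedMStheogeneralized} rules out its first alternative, so $\rk f<n$ for all $f\in\Ker\varphi$, in particular $\rk g\le n-1$. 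Running the computation from the proof of Theorem \ref{minranktheo1} — with $s:=\rk g$ and $B(\calM)$ the lower block from Lemma \ref{decompositionlemma}, one obtains a non-zero $f\in\calH$ with $\rk f\le s+\urk B(\calM)\le(n-1)+(n-1)=2n-2$ — the equality $\mrk\calH=2n-2$ forces $\rk g=n-1$ and $\urk B(\calM)=n-1$. Finally, applying the Flanders--Atkinson lemma in a basis where the matrix of $\varphi$ equals $J_n$, and adjusting bases once more, every $M\in\calM:=\widehat{\calS}$ acquires the block shape $\begin{bmatrix}A(M)&C(M)\\ B(M)&[0]\end{bmatrix}$ with $B(M)A(M)^kC(M)=0$ for every $k\ge0$.

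\emph{Upgrading to a twisted operator space --- the main obstacle.} The heart of the proof, which I expect to occupy most of Section \ref{nonreflexivesectionIII}, is to show that this normal form is, up to row and column operations, the matrix form of $\widehat{\calT_A}$ for a suitable $(n-1)$-dimensional LDB division algebra $A$. Concretely: (1) combine the relations $B(M)A(M)^kC(M)=0$ with the extremality $\mrk\calH=2n-2$ and the reducedness of $\calS$ to pin down $\dim V=2n-2$, hence $\dim U=\dim\widehat{\calS}=2n-2$; (2) recoordinatize $V\cong A\oplus A$ with $A:=\K^{n-1}$, carrying $\im g$ onto $A\times\{0\}$, and recoordinatize $\calS$ so that $\calM$ takes the block shape $\begin{bmatrix}\Sigma(z)&y&[0]\\ \Theta(y)&[0]&z\end{bmatrix}$ indexed by $(y,z)\in A^2$; (3) read off from $\Sigma,\Theta$ two bilinear maps $\star,\bullet$ on $A$, prove they are regular --- so $(A,\star)$ is a division algebra and $\bullet$ a quasi-left-inversion --- and derive the identity $x\star(x\bullet y)=q(x)\,y$ for a (necessarily non-isotropic) quadratic form $q$. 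It is precisely in step (3) that the Flanders--Atkinson relations and the rank-$(2n-2)$ hypothesis get converted into the defining identity of an LDB division algebra; in characteristic $2$ one works with the general, possibly degenerate, notion of non-isotropic quadratic form. Once this is done $\calS\sim\calT_A$, which is (a); and (b) follows from Theorem \ref{classtwistedtheo}(a), since $\calS\sim\calT_A\sim\calT_{A'}$ forces $A$ and $A'$ to be weakly equivalent.

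\emph{Parts (c) and (d).} Transporting $\calH$ through the equivalence $\calS\sim\calT_A$ via a rank-preserving isomorphism gives a hyperplane $\calK$ of $\calT_A$ with $\mrk\calK=2n-2$; by Proposition \ref{rankinatwisted}, a hyperplane of $\calT_A$ has all its non-zero operators of rank $2n-2$ exactly when the corresponding hyperplane of $A\oplus\K^2$ contains no non-zero $\widetilde{q}$-isotropic vector, i.e.\ is non-isotropic. Thus $\calK$ is a non-isotropic hyperplane of $\calT_A$ and $\calH\sim\calK$, which is (c). For (d), assume $n\ge4$, so $\dim A=n-1\ge3$: Corollary \ref{reflexiveclosureofahyperplane} yields $\calR(\calK)=\calT_A$, and since taking the reflexive closure commutes with pre- and post-composition by fixed isomorphisms, $\calR(\calH)=\calS$. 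Hence $\calS$ is recovered from $\calH$, so if $(\calS',\calH')$ is another pair as in the statement with $\calH\sim\calH'$, then $\calS=\calR(\calH)\sim\calR(\calH')=\calS'$, whence $\calT_A\sim\calT_{A'}$ and, by Theorem \ref{classtwistedtheo}(a) together with (b), the weak equivalence class of $A$ depends only on $\calH$.
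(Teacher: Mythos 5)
Your reduction-to-normal-form paragraph and your derivations of (b), (c), (d) from (a) are essentially the paper's own: same rank-optimal $\varphi$, same deduction of $\rk\varphi=n$, $\Ker\varphi=\K g$, $\rk g=n-1$, same transport of $\calH$ through the equivalence combined with Proposition \ref{rankinatwisted} for (c), and the same use of Theorem \ref{classtwistedtheo}(a) and Corollary \ref{reflexiveclosureofahyperplane} for (b) and (d).

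However, for part (a) — which you rightly identify as "the heart of the proof" and "the main obstacle" — you only supply a program, not a proof, and the gap is substantial. Your steps (1)–(3) are a fair forecast of what the final normal form looks like (the paper indeed ends up with matrices $\begin{bmatrix} F(Y) & X & 0 \\ E(X) & 0 & Y\end{bmatrix}$), but getting there is not a matter of "applying Flanders–Atkinson and adjusting bases." Two concrete obstructions you do not address: first, the Flanders–Atkinson lemma applied to a single matrix only yields $B(M)A(M)^kC(M)=0$ for that $M$, whereas what is really needed are cross-term relations obtained by applying the lemma to $M_X+\lambda N$ and separating the $\lambda$-homogeneous components (yielding the identities \eqref{bigidentity1}, \eqref{bigidentity2} in the paper). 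Second, and more seriously, the paper has to kill the spurious row block $R(M)$: the step "$C(M)=0 \Rightarrow R(M)=0$" (Claim \ref{claimRdependsonC}) is proved by a delicate algebraic-density argument over $\K((s,t))$, completely parallel in spirit to the proof of Theorem \ref{refinedMStheogeneralized} but with an extra indeterminate, and this is the genuinely new technical input of the section — your proposal contains no trace of it. Similarly, the facts that $\dim V=2n-2$, that $E(X)$ and $F(Y)$ are invertible for non-zero arguments, and that $C_5$ is an isomorphism (so that $\dim\calM=2n-2$) each require separate arguments appealing to $\mrk\calH=2n-2$ and Theorem \ref{refinedMStheogeneralized} again; none of them is automatic. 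In short, you have the right skeleton for (a) but almost none of the flesh, and the hardest claim is not even signposted.
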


As we have seen that LDB division algebras can only exist in even dimension or in dimension $1$,
it follows that Meshulam and \v Semrl's upper bound $2n-2$ from Theorem \ref{minranktheo1} can be replaced with $2n-3$ provided that
$n$ is even and larger than $3$.

A striking feature of the above classification theorem is that one only assumes that $\calS$ is LLD,
not that it lies within the reflexive closure of $\calH$!
The following corollary is straightforward by combining Theorem \ref{theocaslimite} with Proposition \ref{hyperplanereduced}.

\begin{cor}
Let $n \geq 3$ be an integer, and assume that $\# \K>n$ and that $\K$ has characteristic not $2$.
Let $\calS$ be an $(n+1)$-dimensional reduced LLD operator space, and
$\calH$ be a hyperplane of $\calS$ such that $\mrk \calH=2n-2$.
Then, there exists an LDB division algebra $A$ together with a non-zero scalar $\alpha$ such that
$\calH$ is equivalent to the space of all operators of the form
$$(y,z) \in A^2 \mapsto \bigl(x \star z+\alpha \,\lambda\, y\, , \, x \bullet y+\lambda\, z\bigr) \in A^2, \quad
\text{with $(x,\lambda)\in A \times \K$.}$$
Moreover, $\alpha$ is outside the range of the quadratic form attached to $A$.
\end{cor}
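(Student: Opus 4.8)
The plan is to obtain this corollary as an immediate consequence of the two main results of the preceding sections, namely the classification Theorem~\ref{theocaslimite} and the normal-form Proposition~\ref{hyperplanereduced}, linked together by the transitivity of the equivalence relation on operator spaces.

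First I would invoke Theorem~\ref{theocaslimite}: since $n \geq 3$, $\#\K > n$, the space $\calS$ is $(n+1)$-dimensional, reduced and LLD, and $\calH$ is a hyperplane of $\calS$ with $\mrk \calH = 2n-2$, parts (a) and (c) of that theorem produce an $(n-1)$-dimensional LDB division algebra $(A,\star,\bullet)$ together with an equivalence $\calS \sim \calT_A$ under which $\calH$ corresponds to a non-isotropic hyperplane $\calH_0$ of $\calT_A$. In particular $\calH \sim \calH_0$.

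Next I would feed $\calH_0$ into Proposition~\ref{hyperplanereduced}, which applies precisely because $\K$ has characteristic not $2$: it yields a non-zero scalar $\alpha$, lying outside the range of the quadratic form $q$ attached to $(A,\star,\bullet)$, such that $\calH_0$ is equivalent to the space of all operators
$$(y,z) \in A^2 \longmapsto \bigl(x \star z + \alpha\,\lambda\, y\, ,\, x \bullet y + \lambda\, z\bigr), \qquad (x,\lambda) \in A \times \K.$$
Composing the two equivalences $\calH \sim \calH_0$ and $\calH_0 \sim (\text{that space})$ — equivalence of operator spaces being manifestly transitive, directly from its definition through pairs of isomorphisms of the source and target spaces — gives the claimed description of $\calH$, and the final assertion that $\alpha$ is outside the range of the quadratic form attached to $A$ is exactly what Proposition~\ref{hyperplanereduced} records.

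There is no genuine obstacle here: the only points needing a moment's attention are the verification that the hypotheses of Theorem~\ref{theocaslimite} (reducedness of $\calS$, $n \geq 3$, $\#\K > n$) and of Proposition~\ref{hyperplanereduced} (characteristic not $2$) are all in force, and that the relation $\sim$ composes. All the substantive work has already been carried out in the proofs of those two results, so the corollary is purely a matter of stringing them together.
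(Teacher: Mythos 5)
Your proposal is correct and matches the paper's own (sketched) proof exactly: the paper explicitly remarks that this corollary ``is straightforward by combining Theorem~\ref{theocaslimite} with Proposition~\ref{hyperplanereduced},'' which is precisely the chain of reductions you carry out, via Theorem~\ref{theocaslimite}(a) and (c), then Proposition~\ref{hyperplanereduced}, then transitivity of equivalence.
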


Finally, with the non-reflexivity assumption, we have found:

\begin{cor}\label{cornonreflexive}
Let $\calT$ be an $n$-dimensional reduced non-reflexive operator space, with $\# \K>n \geq 3$.
If all the non-zero operators in $\calT$ have rank at least $2n-2$, then
$\calT$ is equivalent to a hyperplane of the twisted operator space associated with an $(n-1)$-dimensional LDB division algebra.
\end{cor}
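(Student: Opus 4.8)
The plan is to deduce the statement directly from the classification theorem \ref{theocaslimite}. Since $\calT$ is non-reflexive, I would first pick an operator $f \in \calR(\calT) \setminus \calT$ and set $\calS := \calT \oplus \K f$. As recalled at the start of Section \ref{nonreflexivesectionI}, $\calS$ is then an $(n+1)$-dimensional LLD operator space, and $\calT$ is a linear hyperplane of it. I would then note that $\calS$ is reduced: its kernel is contained in that of $\calT$, namely $\{0\}$, and its essential range contains that of $\calT$, namely the whole target space.

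Next I would identify the minimal rank of $\calT$. Corollary \ref{minrankcor1}, applied to the non-reflexive $n$-dimensional space $\calT$ with $\# \K > n \geq 2$, yields $\mrk \calT \leq 2n-2$; in particular $\calT$ contains a non-zero operator of finite rank, so $\mrk \calT$ is defined. Together with the hypothesis that every non-zero operator in $\calT$ has rank at least $2n-2$, this forces $\mrk \calT = 2n-2$.

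It then remains to apply Theorem \ref{theocaslimite} to $\calS$ with the hyperplane $\calH := \calT$: the hypotheses $n \geq 3$, $\# \K > n$, $\calS$ reduced $(n+1)$-dimensional LLD, and $\mrk \calH = 2n-2$ are all satisfied, so part (a) gives that $\calS$ is equivalent to the twisted operator space $\calT_A$ attached to some $(n-1)$-dimensional LDB division algebra $A$, and part (c) gives that $\calT$ is equivalent to a (non-isotropic) hyperplane of $\calT_A$. This is exactly the claim.

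There is no genuine obstacle here once Theorem \ref{theocaslimite} is available; the point worth stressing is that the classification theorem only requires $\calS$ to be LLD, not to sit inside $\calR(\calT)$, and the former holds automatically for $\calS = \calT \oplus \K f$. All the substance lies in Theorem \ref{theocaslimite}, established in the remainder of Section \ref{nonreflexivesectionIII}.
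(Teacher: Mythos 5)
Your proof is correct and follows exactly the route intended by the paper: pass from $\calT$ to $\calS=\calT\oplus\K f$ with $f\in\calR(\calT)\setminus\calT$, check that $\calS$ is reduced and LLD, pin down $\mrk\calT=2n-2$ via Corollary \ref{minrankcor1}, and invoke parts (a) and (c) of Theorem \ref{theocaslimite}. Nothing is missing.
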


\subsection{Proof of the classification theorem}

Here, we prove Theorem \ref{theocaslimite}.
First of all, let us obtain points (b), (c) and (d) as easy consequences of point (a) together
with results from Section \ref{nonreflexivesectionII}.
To begin with, point (b) is a consequence of point (a) of Theorem \ref{classtwistedtheo}.

Now, assume that there is an LDB division algebra $A$, necessarily of dimension $n-1$, together with isomorphisms
$F : U \overset{\simeq}{\rightarrow} A^2$,
and $G : A^2 \overset{\simeq}{\rightarrow} V$ such that
$\calS=\bigl\{G \circ f \circ F \mid f \in \calT_A\bigr\}$.
As $f \mapsto G^{-1} \circ f \circ F^{-1}$ is linear, one-to-one and preserves the rank of operators, we see that
$\calH':=\bigl\{G^{-1} \circ f \circ F^{-1} \mid f \in \calH\bigr\}$ is a hyperplane of $\calT_A$ in which all the non-zero operators
have rank greater than or equal to $2n-2$. Thus, $\calH'$ is a non-isotropic hyperplane of $\calT_A$,
and it is obviously equivalent to $\calH$, proving point (c).

For any linear subspace $\calV$ of $\calL(U,V)$,
if we set $\calW:=\{G^{-1} \circ f \circ F^{-1} \mid f \in \calV\}$, then one sees that
$$\calR(\calV)=\bigl\{G \circ f \circ F \mid f \in \calR(\calW)\bigr\}.$$
In particular, if $n \geq 4$, then $\calT_A=\calR(\calH')$ by Corollary \ref{reflexiveclosureofahyperplane},
and hence $\calS=\calR(\calH)$; in that case, we see that the equivalence class of $\calS$ depends solely
on that of $\calH$, and point (b) yields that the weak equivalence class of $A$ depends solely on the equivalence class of $\calH$.
Thus, point (d) is proven.

It remains to prove point (a).
The basic strategy consists in pushing the arguments of our proof of Theorem \ref{minranktheo1} further.
If $\calH$ is LLD itself, then we know from Theorem \ref{basictheo} that $\mrk \calH<n$, contradicting $\mrk \calH=2n-2$.
Thus, $\calH$ is not LLD. Next, we see that $\calS$ is a minimal LLD space:
indeed, if it were not, then there would be an LLD hyperplane $\calH'$ of $\calS$; obviously we should have $\calH \neq \calH'$,
and hence $\calH \cap \calH'$ would be a hyperplane of $\calH'$; then
Theorem \ref{minranktheo1} would yield $\mrk(\calH) \leq \mrk (\calH \cap \calH') \leq 2n-4$.

Now, let us choose a rank-optimal operator $\varphi \in \widehat{\calS}$.
We see that $\Ker \varphi \cap \calH = \{0\}$ as the opposite would ensure that $\mrk \calH \leq \rk \varphi \leq n<2n-2$ by Lemma \ref{basiclemma}.
This shows that $\rk \varphi=n$ and that we may find a non-zero operator $g \in \calS$ such that
$\calS=\calH \oplus \Ker \varphi$ and $\Ker \varphi=\K g$.
As in the proof of Theorem \ref{minranktheo1}, the assumption $\rk g=n$ would lead to $\mrk \calH \leq n<2n-2$.
Therefore, $\rk g<n$. The line of reasoning from the proof of Theorem \ref{minranktheo1} yields $\mrk \calH \leq \rk g+(n-1)$,
whence $\rk g=n-1$.

Now, using Lemma \ref{basiclemma}, we can find a basis $\calC$ of $V$ in which the first $(n-1)$ vectors span $\im g$, and
we can choose a basis $\calB$ of $\calS$ such that $\varphi$ is represented in $\calB$ and $\calC$ by
$\begin{bmatrix}
I_n & [0]_{n \times 1} \\
[0]_{(m-n) \times n} & [0]_{(m-n) \times 1}
\end{bmatrix}$, so that the last vector of $\calB$ is $g$ and the first $n$ vectors span $\calH$.
We denote by $\calM$ the vector space of matrices representing the operators of $\widehat{\calS}$ in the bases $\calB$ and $\calC$.
Then:
\begin{enumerate}[(a)]
\item $\calM$ is reduced.
\item Every $M \in \calM$ splits up as
$$M=\begin{bmatrix}
[?]_{(n-1) \times n} & [?]_{(n-1) \times 1} \\
[?]_{(m-n+1) \times n} & [0]_{(m-n+1) \times 1}
\end{bmatrix}.$$
\item $\calM$ contains
$\begin{bmatrix}
I_n & [0]_{n \times 1} \\
[0]_{(m-n) \times n} & [0]_{(m-n) \times 1}
\end{bmatrix}$.
\item For every $X \in \K^{n-1}$, there exists a matrix in $\calM$ with
last column $\begin{bmatrix}
X \\
[0]_{(m-n+1) \times 1}
\end{bmatrix}$.
\end{enumerate}
Modifying the last $m-n$ vectors of $\calC$ further, we may assume that we have found an integer $r \in \lcro 0,m-n\rcro$
such that every matrix $M$ in $\calM$
splits up as
$$M=\begin{bmatrix}
[?]_{n \times (n-1)} & [?]_{n \times 1} & [?]_{n \times 1} \\
J(M) & [?]_{r \times 1} & [0]_{r \times 1} \\
[0]_{(m-n-r) \times 1} & H(M) & [0]_{(m-n-r) \times 1}
\end{bmatrix}$$
with $J(M) \in \Mat_{r,n-1}(\K)$ and $H(M) \in \Mat_{m-n-r,1}(\K)$,
and so that $\K^r$ is the sum of all column spaces of the matrices $J(M)$, for $M \in \calM$.

\begin{claim}\label{claimHiszero}
One has $H(M)=0$ for all $M \in \calM$, and $r=m-n$.
\end{claim}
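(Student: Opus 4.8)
The plan is to argue by contradiction, so suppose the bottom strip of rows in the decomposition above is non-empty, i.e.\ $m-n-r\geq 1$ (equivalently, since $\calM$ is reduced, $H\not\equiv 0$). I would start by collecting the facts already at hand. From $\mrk\calH=2n-2$, chaining the inequalities $\dim B(\calM)y\leq\rk f_y\leq (n-1)+\dim B(\calM)y$ — where $f_y\in\calH$ has coordinate vector $y$, and $B(\calM)$ is, as in the proof of Theorem \ref{minranktheo1}, the block of rows $n,\dots,m$ in columns $1,\dots,n$, so that $\urk B(\calM)\leq n-1$ by Lemma \ref{decompositionlemma} — forces $\dim B(\calM)y=n-1$ and $\rk f_y=2n-2$ for \emph{every} non-zero $y\in\K^{n}$; in particular $r\geq n-2$ and $m\geq 2n-2$. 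Next, applying Lemma \ref{Flanders} to $\Vect(\Phi,M)$, where $\Phi$ denotes the standard form of $\varphi$ (with $I_n$ in rows and columns $1,\dots,n$), gives $J(M)\,b'(M)=0$ for all $M\in\calM$, where $b'(M)\in\K^{n-1}$ is the portion of the last column of $M$ in rows $1,\dots,n-1$; since $\rk g=n-1$, the map $b'$ is onto, so a generic matrix $\mathbf{M}$ of $\calM$ satisfies $J(\mathbf{M})\,b'(\mathbf{M})=0$ with $b'(\mathbf{M})\neq0$, whence $\rk J(\mathbf{M})\leq n-2$ and $\urk J(\calM)\leq n-2$ by Lemma \ref{genericrank}.

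Because $r\leq m-n$, the entire Claim reduces to the single inequality $m\leq 2n-2$: combined with $m\geq 2n-2$ it yields $m=2n-2$, then $n-2\leq r\leq m-n=n-2$ yields $r=m-n$, and the bottom strip being empty makes $H$ vacuously zero. So the plan is to prove $m=\dim V\leq 2n-2$ by showing that $V$ equals the common range of the non-zero operators of $\calH$. The key input is a rank identity: whenever $f\in\calH$ is non-zero and $g+f$ lies in the kernel of some rank-optimal $\psi\in\widehat{\calS}$, Lemma \ref{basiclemma} gives $\im(g+f)\subseteq\im\psi$, which is $n$-dimensional, while $\rk(g+f)\geq\rk f-\rk g=(2n-2)-(n-1)=n-1$; since $\calS$ is LLD, not $2$-LLD, and contains an operator of rank $>n$, case (i) of Theorem \ref{refinedMStheogeneralized} fails, so by case (ii) every element of the kernel of a rank-optimal operator has rank $<n$, forcing $\rk(g+f)=n-1$; then the equality $\rk f=\rk(g+f)+\rk g$ forces $\im f=\im(g+f)\oplus\im g$, so $\im g\subseteq\im f$ and $\dim\im f=2n-2$. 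If one can upgrade this to "$\im f$ is the same $(2n-2)$-dimensional space $W_0$ for every non-zero $f\in\calH$" — which then contains $\im g$ — then $V=\bigl(\sum_{f\in\calH}\im f\bigr)+\im g=W_0$ and $m=2n-2$, contradicting $m-n-r\geq 1$.

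The main obstacle is exactly this upgrade. There are two points to handle. First, one must show that enough operators of $\calH$ arise in the form $g_\psi-g$ with $\psi$ rank-optimal (here $\K g_\psi:=\Ker\psi$, so $g_\psi\notin\calH$ and $g_\psi=g+(g_\psi-g)$): this should follow from the density of the rank-optimal locus in $\widehat{\calS}$ together with an analysis of which lines of $\calS$ occur as kernels of rank-optimal operators, the point being that as $\psi$ varies $g_\psi$ is generically not proportional to $g$ (otherwise all rank-optimal $\psi$ would annihilate $g$, forcing $g=0$). Second — and this is where I expect the real work to be — one must exclude a "twisted" configuration in which the ranges $\im f$ all contain $\im g$ but vary; here the finer block structure of $\calM$ must be used, in particular the surjectivity of $H$ forced by reducedness of $\calM$ and the relation $J(M)b'(M)=0$, to produce in the contrary case a matrix of $\calM$ of rank $\geq n+1$, contradicting $\urk\calM=n$. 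I expect this explicit rank computation on a cleverly chosen member of $\calM$ to be the crux of the proof.
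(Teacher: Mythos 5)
The approach here diverges considerably from the paper's, and it has a genuine gap that you yourself acknowledge. The key tool the paper uses, which you do not invoke, is the \emph{minimality of $\calS$ as an LLD space}: this is established at the start of the proof of Theorem \ref{theocaslimite} (from $\mrk\calH=2n-2$ via Theorems \ref{basictheo} and \ref{minranktheo1}), and it makes the claim almost immediate. Concretely, suppose $H(\calM)\neq\{0\}$. After moving the $n$-th column of each $M\in\calM$ to the last position, the rows $n+r+1,\dots,m$ are entirely concentrated in that last column, so $M$ becomes block-triangular; Lemma \ref{decompositionlemma} then gives $\urk S(\calM)+\urk H(\calM)\leq\urk\calM=n$, where $S(M)$ is obtained from $M$ by deleting the $n$-th column and the last $m-n-r$ rows, hence $\urk S(\calM)\leq n-1$. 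But once column $n$ is removed, those last $m-n-r$ rows of $M$ are identically zero, so $\urk S(\calM)$ is the upper-rank of the matrix space representing the duals of the $n$-dimensional subspace $\Vect(f_1,\dots,f_{n-1},g)\subset\calS$; the bound $\leq n-1$ means that subspace is LLD, contradicting minimality of $\calS$. Reducedness of $\calM$ then forces $r=m-n$.

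Your proposal instead aims directly at $m\leq 2n-2$, which is a strictly stronger conclusion: it is in fact Claim \ref{claimm=2n-2} in the paper, proved only \emph{after} Claim \ref{claimHiszero} and two further claims have supplied the needed structure (the vanishing of $H$, the surjectivity of $C_2$, and the family $N_Y$). The preliminary facts you derive en route — that $\rk f=2n-2$ for every non-zero $f\in\calH$, that $\urk J(\calM)\leq n-2$, that $r\geq n-2$ and $m\geq 2n-2$ — are correct, but the bridge to $m\leq 2n-2$ is left open (your "upgrade to a common range" and "exclude the twisted configuration"), and the plan you sketch for it (density of rank-optimal kernels plus an explicit rank bound on a cleverly chosen member of $\calM$) does not correspond to anything the paper does at this stage, nor is it clear it can be carried out without the information that $H=0$. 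Once minimality is brought in, the present claim falls out in a few lines and none of the heavier machinery is needed here.
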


\begin{proof}
Assume that $H(\calM) \neq \{0\}$.
For $M \in \calM$, denote by $S(M)$ the matrix obtained by deleting the last $m-n-r$ rows and the $n$-th column.
Using Lemma \ref{decompositionlemma}, we see that $\urk S(\calM)+\urk H(\calM) \leq \urk \calM=n$.
As $H(\calM) \neq 0$, we deduce that $\urk S(\calM) \leq n-1$. If we write $\calB=(f_1,\dots,f_n,g)$, it follows that
$\Vect(f_1,\dots,f_{n-1},g)$ is LLD, contradicting the fact that $\calS$ is a minimal LLD space.
Thus, $H(\calM)=\{0\}$, which leads to $m-n-r=0$ since $\calM$ is reduced.
\end{proof}

Next, we split every $M$ of $\calM$ up as
$$M=\begin{bmatrix}
[?]_{(n-1) \times (n-1)} & [?]_{(n-1) \times 1} & C(M) \\
R(M) & ? & 0 \\
J(M) & [?]_{(m-n)\times 1} & [0]_{(m-n) \times 1}
\end{bmatrix},$$
with $R(M) \in \Mat_{1,n-1}(\K)$ and $C(M) \in \K^{n-1}$.

\begin{claim}
For every $M \in \calM$, equality $C(M)=0$ implies $J(M)=0$.
\end{claim}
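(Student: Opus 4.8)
The plan is to convert, for each $M\in\calM$, the Flanders--Atkinson lemma into a quadratic identity binding the bottom-left block $J(M)$ to the last-column block $C(M)$, and then to polarize it using the linearity of $\calM$. Set $J_0:=\begin{bmatrix} I_n & [0]_{n\times 1}\\ [0]_{(m-n)\times n} & [0]_{(m-n)\times 1}\end{bmatrix}$, which belongs to $\calM$ by point~(c) and has rank~$n$. For an arbitrary $M\in\calM$, the pencil $\Vect(J_0,M)$ is contained in $\calM$, so $\urk\bigl(\Vect(J_0,M)\bigr)\le\urk\calM=n$; since $\#\K>n$, Lemma~\ref{Flanders} applies. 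Decomposing $M$ along the column partition $(n,1)$ and the row partition $(n,m-n)$ as $M=\begin{bmatrix} P(M) & Q(M)\\ S(M) & [?]_{(m-n)\times 1}\end{bmatrix}$ with $P(M)\in\Mat_n(\K)$, $Q(M)\in\K^n$ and $S(M)\in\Mat_{m-n,n}(\K)$, the lemma gives $S(M)\,P(M)^k\,Q(M)=0$ for all $k\in\N$; in particular, taking $k=0$, one gets $S(M)\,Q(M)=0$ for every $M\in\calM$.

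Next I would polarize this identity. As $\calM$ is a linear space, $M+M'\in\calM$, so applying the previous identity to $M+M'$ and subtracting the ones for $M$ and $M'$ yields $S(M)Q(M')+S(M')Q(M)=0$ for all $M,M'\in\calM$. Now the block decomposition of $M$ displayed just before the claim shows that $Q(M)=\begin{bmatrix} C(M)\\ 0\end{bmatrix}$ and that $S(M)=\begin{bmatrix} J(M) & [?]_{(m-n)\times 1}\end{bmatrix}$, whence $S(M)Q(M')=J(M)\,C(M')$. Therefore
\[
J(M)\,C(M')+J(M')\,C(M)=0\qquad\text{for all }M,M'\in\calM .
\]
Assume $C(M)=0$; then $J(M)\,C(M')=0$ for every $M'\in\calM$. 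By point~(d) the linear map $M'\mapsto C(M')$ is onto $\K^{n-1}$, so $J(M)\,X=0$ for all $X\in\K^{n-1}$, i.e.\ $J(M)=0$.

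The only step that requires attention is the block bookkeeping — making sure that the "$C$"-block of the Flanders--Atkinson decomposition of $M$ really is $Q(M)=\begin{bmatrix}C(M)\\0\end{bmatrix}$ and that the first $n-1$ columns of its "$B$"-block are $J(M)$ — but this is immediate from points~(b)--(d), and I do not expect any genuine obstacle: the whole proof relies only on the $k=0$ instance of the Flanders--Atkinson conclusion, polarization, and the surjectivity of $M\mapsto C(M)$.
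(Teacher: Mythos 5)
Your proof is correct and matches the paper's argument: both extract the identity $J(M)\,C(M)=0$ for every $M\in\calM$ from the $k=0$ case of the Flanders--Atkinson lemma and then exploit linearity (your polarization is exactly the paper's trick of applying the identity to $N+\lambda M$ and varying $\lambda$) together with the surjectivity of $M\mapsto C(M)$. The block bookkeeping you flag as the delicate point is indeed fine, so there is nothing to add.
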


\begin{proof}
Let $M \in \calM$ be such that $C(M)=0$.
Let $X \in \K^{n-1}$. Then, some $N \in \calM$ has last column $\begin{bmatrix}
X \\
[0]_{(m-n+1) \times 1}
\end{bmatrix}$. Thus, for all $\lambda \in \K$, the matrix
$N+\lambda M$ has last column $\begin{bmatrix}
X \\
[0]_{(m-n+1) \times 1}
\end{bmatrix}$, and hence the Flanders-Atkinson lemma yields $(J(N)+\lambda J(M))X=0$.
Varying $\lambda$ yields $J(M)X=0$. As this holds for all $X \in \K^{n-1}$, the claimed statement ensues.
\end{proof}

Thus, one obtains a linear map $K : \K^{n-1} \rightarrow \Mat_{m-n,n-1}(\K)$ such that
every $M \in \calM$ splits up as
$$M=\begin{bmatrix}
[?]_{(n-1) \times (n-1)} & [?]_{(n-1) \times 1} & C(M) \\
R(M) & ? & 0 \\
K(C(M)) & [?]_{(m-n)\times 1} & [0]_{(m-n) \times 1}
\end{bmatrix}.$$

Using the Flanders-Atkinson lemma, we note further that
$$\forall X \in \K^{n-1}, \; K(X)X=0.$$

\vskip 3mm
Next, we show that $R(M)$ is also solely dependent on $C(M)$.

\begin{claim}\label{claimRdependsonC}
For every $M \in \calM$, equality $C(M)=0$ implies $R(M)=0$.
\end{claim}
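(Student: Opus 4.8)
The plan is to imitate the proof of the preceding claim (that $C(M)=0$ forces $J(M)=0$), but to feed the Flanders--Atkinson lemma at order $k=1$ rather than at order $k=0$. Fix $M\in\calM$ with $C(M)=0$; by the preceding claim $J(M)=K(C(M))=0$, so $M$ has its whole bottom-left block and its whole last column equal to zero. Given $X\in\K^{n-1}$, use property (d) of $\calM$ to pick $N\in\calM$ whose last column equals $X$ in its first $n-1$ entries and vanishes below; then $N+\lambda M$ has that same last column for every $\lambda\in\K$, so $\urk\bigl(\Vect(J_n,N+\lambda M)\bigr)\le\urk\calM=n$, and since $\#\K>n$ the Flanders--Atkinson lemma applies to the pair $(J_n,N+\lambda M)$. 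Writing $N+\lambda M$ in the block pattern of $J_n$ (with an $n\times n$ upper-left block) and expanding the $k=1$ instance $BAC=0$ of the Flanders--Atkinson conclusion as a polynomial in $\lambda$, I expect the coefficient of $\lambda^2$ to read $(R(M)X)\,c(M)=0$ and the coefficient of $\lambda$ to read
$$K(X)\,A(M)\,X+(R(M)X)\,c(N)+(R(N)X)\,c(M)=0,$$
where $c(M)$ denotes the block of $M$ lying below the $n$-th row and in the $n$-th column.

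If $c(M)\neq 0$, the first relation gives $R(M)X=0$ for all $X$, hence $R(M)=0$ and we are done. Assume now $c(M)=0$; then the order-one relation collapses to $K(X)A(M)X+(R(M)X)\,c(N)=0$, and comparing it for two matrices $N,N'$ with $C(N)=C(N')=X$ yields $(R(M)X)\,c(N-N')=0$. Letting $N-N'$ run over $\{M'\in\calM:C(M')=0\}$ (the linear map $M'\mapsto C(M')$ being onto $\K^{n-1}$ by property (d)) and letting $X$ vary, we obtain $(R(M)X)\,c(M')=0$ for every such $M'$ and every $X$. Hence either $R(M)=0$, as wanted, or else $c$ vanishes identically on $\{M':C(M')=0\}$; in the latter case $c$ factors through $C$ as a linear map, every matrix with $C=0$ is concentrated in its top-left $n\times n$ corner, and the surviving relation reads $(R(M)X)\,\widetilde c(X)=-K(X)A(M)X$ for all $X$.

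The main obstacle is disposing of this last, degenerate configuration: $\{M':C(M')=0\}$ entirely contained in the top-left $n\times n$ corner while some $M_0$ with $C(M_0)=0$ has $R(M_0)\neq 0$. I plan to rule it out exactly as Claim \ref{claimHiszero} was proved, namely by producing a proper $n$-dimensional LLD subspace of $\calS$, which contradicts the minimality of $\calS$ among LLD spaces established at the start of the proof of Theorem \ref{theocaslimite}. The tools at hand are: the equality $\dim B(\calM)y=n-1$ for \emph{every} nonzero $y\in\K^n$ (which comes from $\mrk\calH=2n-2$ together with Lemma \ref{decompositionlemma} and Proposition \ref{rangminmatriciel}, just as in the proof of Theorem \ref{minranktheo1}, $B(\calM)$ being the bottom block in the notation used there); the fact that, in the degenerate case, the part of $B(\calM)$ lying below its first row depends only on $C(M)$; and the reducedness of $\calM$. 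Combining these should pin down the possible shapes of $B(\calM)$ tightly enough to exhibit a nonzero $y\in\K^n$ for which a subspace of $\calS$ obtained by deleting one of the vectors $f_1,\dots,f_n$ (or a suitable combination of them) has upper-rank at most $n-1$ once one coordinate of $V$ is projected away. I expect this final bookkeeping — choosing the right vector to delete and the right coordinate to discard, and then verifying the upper-rank bound — to be the delicate part of the argument.
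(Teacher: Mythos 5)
Your approach is genuinely different from the paper's. The paper proves this claim with a determinantal-polynomial argument: it introduces $\calM$-good vectors, extends scalars to $\L=\K((s,t))$, produces a two-parameter family of good vectors of the form $t\bigl((I_n-sT)-tA\bigr)^{-1}\widetilde X$, and extracts the conclusion by expanding a subdeterminant polynomial $G$ modulo $t^{d+1}$ and then modulo $s^{c+1}$. You instead feed Flanders--Atkinson at order $k=1$ into $N+\lambda M$ and read off the $\lambda$-coefficients. Your computation of those coefficients is correct, and the cases you dispose of cleanly (namely $c(M)\neq 0$, and, in the other branch, $c$ not vanishing identically on $\Ker C$) are handled properly. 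Your observation that the remaining configuration forces $c$ to factor through $C$, so that $\Ker C$ sits entirely in the top-left $n\times n$ corner, is also right.

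However, the degenerate configuration is not actually ruled out: your final paragraph is a plan rather than a proof, and you say so yourself. The step you propose — pinning down the shape of the bottom block $B(\calM)$ using $\dim B(\calM)y=n-1$, reducedness, and the fact that everything below the first row of $B(M)$ now factors through $C(M)$, so as to exhibit a proper $n$-dimensional LLD subspace of $\calS$ — is not carried out, and it is far from obvious that it can be. The constraints you list are consistent with quite a lot of configurations, and the paper's later use (identities \eqref{bigidentity1}--\eqref{bigidentity2}) of exactly this order-$k=1$ Flanders--Atkinson relation is made only \emph{after} $R$ is known to factor through $C$; the relation alone seems too weak to force $R=0$ on $\Ker C$. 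This is precisely where the paper resorts to the heavier two-variable power series machinery, which handles all cases uniformly without the split that stalls your argument. So there is a genuine gap: the degenerate case must be eliminated, and the sketch offered does not do so.
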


\begin{proof}
Let $N_0 \in \calM$ be such that $C(N_0)=0$. Set $L_0:=R(N_0)$.
We shall use a line of reasoning that is largely similar to that of our proof of Theorem \ref{refinedMStheogeneralized}.
A non-zero vector $X \in \K^{n+1}$ is called $\calM$-good when $X$ belongs to the kernel of some
rank $n$ matrix of $\calM$. For such a vector, we know from Lemma \ref{basiclemma} that $\dim \calM X \leq n$.

In a basis $\bfD$ of $\calM$ and the canonical basis $\bfB_c$ of $\K^m$, we consider the
space of all matrices representing the operators $\check{X} : M \in \calM \mapsto MX \in \K^m$, for $X \in \K^{n+1}$.
As obviously $\urk \calS \geq \mrk \calH>n$, we may find respective subsets
$I$ and $J$ of $\lcro 1,m\rcro$ and $\lcro 1,\dim \calM\rcro$, both with cardinality $n+1$,
such that the function $X \in \K^{n+1} \mapsto \det \Mat_{\bfD,\bfB_c}(\check{X})(I \mid J)$ does not vanish everywhere
on $\K^{n+1}$ (with the notation from the proof of Theorem \ref{refinedMStheogeneralized}).
 We denote by $G \in \K[\mathbf{x_1},\dots,\mathbf{x_{n+1}}]$ the associated $(n+1)$-homogeneous polynomial, and note
that $G$ is non-zero but vanishes at every $\calM$-good vector.

As in the proof of Theorem \ref{refinedMStheogeneralized}, we see that $G$ is unmodified by extending the ground field since $\# \K\geq n+1$.
Taking the quotient field $\L:=\K((s,t))$ of the ring of formal power series in two independent
variables $s$ and $t$, we deduce that $G$ vanishes at every $\calM_\L$-good
vector of $\L^{n+1}$.

Writing $N_0=\begin{bmatrix}
T & [0]_{n \times 1} \\
[?]_{(m-n) \times n} & [0]_{(m-n) \times 1}
\end{bmatrix}$, with $T \in \Mat_n(\K)$, we know that $\calM_\L$ contains a matrix of the form
$$J=\begin{bmatrix}
I_n-s T & [0]_{n \times 1} \\
[?]_{(m-n) \times n} & [0]_{(m-n) \times 1}
\end{bmatrix},$$
and $I_n-sT$ belongs to $\GL_n(\L)$.

Let $X \in \K^{n-1}$, and set $\widetilde{X}:=\begin{bmatrix}
X \\
0
\end{bmatrix} \in \K^n$.
We know that $\calM$ contains a matrix of the form
$$M=\begin{bmatrix}
A & \widetilde{X} \\
[?] & [?]
\end{bmatrix}, \quad \text{where $A \in \Mat_n(\K)$.}$$
Thus, $\calM_\L$ contains a matrix of the form
$$\begin{bmatrix}
(I_n-sT)-tA & -t\widetilde{X} \\
[?] & [?]
\end{bmatrix}$$
and, as $(I_n-sT)-tA$ is obviously invertible, one deduces that the vector
$$\begin{bmatrix}
t\bigl((I_n-sT)-tA\bigr)^{-1}\widetilde{X} \\
1
\end{bmatrix}$$
is $\calM_\L$-good.
Thus,
\begin{equation}\label{powerseries2}
G\Bigl(t\bigl((I_n-sT)-tA\bigr)^{-1}\widetilde{X}\, ,\,1\Bigr)=0.
\end{equation}
As we have assumed that $G \neq 0$, and on the other hand $G(0,\dots,0,1)=0$ since the last vector of the canonical basis of $\K^{n+1}$
is $\calM$-good, we may find some $d \in \lcro 1,n+1\rcro$ such that
$$G=\underset{k=d}{\overset{n+1}{\sum}}G_k(\mathbf{x_1},\dots,\mathbf{x_n})\,(\mathbf{x_{n+1}})^{n+1-k},$$
where each $G_k$ is a $k$-homogeneous polynomial in $n$ variables, and
$G_d \neq 0$. Then, by expanding modulo $t^{d+1}$ as in the proof of Theorem \ref{refinedMStheogeneralized},
one finds
$$G_d((I_n-sT)^{-1}\widetilde{X})=0.$$
Again, we may expand
$$G_d=\sum_{k=c}^d H_{d-k}(\mathbf{x_1},\dots,\mathbf{x_{n-1}}) (\mathbf{x_n})^k,$$
where each $H_i$ is an $i$-homogeneous polynomial, $c \in \lcro 0,d\rcro$ and $H_{d-c} \neq 0$.
Remembering that $T=\begin{bmatrix}
[?]_{(n-1) \times (n-1)} & [?]_{(n-1) \times 1} \\
L_0 & ?
\end{bmatrix}$, we see that $(I_n-s T)^{-1} \widetilde{X}=\begin{bmatrix}
Y \\
p
\end{bmatrix}$, where $Y=X$ mod.\ $s$ and $p=s\,L_0X$ mod.\ $s^2$.
It follows that
$$G_d\bigl((I_n-sT)^{-1}\widetilde{X}\bigr)= H_{d-c}(X)(L_0X)^c s^c\quad \text{mod. $s^{c+1}$.}$$
We conclude that
$$\forall X \in \K^{n-1}, \; H_{d-c}(X)(L_0X)^c=0.$$
As $X \mapsto H_{d-c}(X)(L_0X)^c$ is a $d$-homogeneous polynomial function (and $\# \K \geq n+1 \geq d$)
whereas $H_{d-c}\neq 0$, we find that $(L_0X)^c=0$ for all $X \in \K^{n-1}$. This yields $L_0=0$, as claimed.
\end{proof}

From now on, it will be convenient to use a parametrization of $\calM$.
First of all, we may find linear maps $A : \K^{n-1} \rightarrow \Mat_{n-1}(\K)$,
$L : \K^{n-1} \rightarrow \Mat_{1,n-1}(\K)$, $C_1 : \K^{n-1} \rightarrow \K^{m-n}$ such that, for every
$X \in \K^{n-1}$, the space $\calM$ contains a matrix of the form
$$M_X=\begin{bmatrix}
A(X) & [?]_{(n-1) \times 1} & X \\
L(X) & ? & 0 \\
K(X) & C_1(X) & [0]_{(m-n) \times 1}
\end{bmatrix}.$$
Setting $\calN:=\Ker C$ and using Claim \ref{claimRdependsonC}, we may also find linear maps $B : \calN \rightarrow \Mat_{n-1}(\K)$,
and $C_2 : \calN \rightarrow \K^{m-n}$ such that every $N \in \calN$ splits up as
$$N=\begin{bmatrix}
B(N) & [?]_{(n-1) \times 1} & [0]_{(n-1) \times 1} \\
[0]_{1 \times (n-1)} & ? & 0 \\
[0]_{(m-n) \times (n-1)} & C_2(N) & [0]_{(m-n) \times 1}
\end{bmatrix}.$$
Note that
$$\calM=\bigl\{M_X \mid X \in \K^{n-1}\bigr\} \oplus \calN.$$

\begin{claim}
One has $B(\calN)X =\K^{n-1}$
for every non-zero vector $X \in \K^{n-1}$.
\end{claim}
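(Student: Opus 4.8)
The plan is to transport the statement — which concerns the matrix space $\calM$ — back to a statement about the operators of $\calH$, where the hypothesis $\mrk\calH=2n-2$ together with the already-established equality $\rk g=n-1$ can be brought to bear. Write $\calB=(f_1,\dots,f_n,g)$, so that $(f_1,\dots,f_n)$ is a basis of $\calH$; fix a non-zero vector $X=(x_1,\dots,x_{n-1})\in\K^{n-1}$ and set $h_X:=x_1f_1+\cdots+x_{n-1}f_{n-1}$. Then $h_X$ is a non-zero element of $\calH$, hence $\rk h_X\geq\mrk\calH=2n-2$. I would also identify $\calN$ with $\Ker g$: every matrix of $\calM$ equals $\Mat_{\calB,\calC}(\widehat x)$ for a unique $x\in U$ (the adjoint $U\to\widehat\calS$ is injective since $\calS$ is reduced), its $(n+1)$-st column is the coordinate vector of $g(x)$ in $\calC$, and by the block form of $\calM$ the only possibly non-zero entries of that column are its first $n-1$, which constitute $C(\widehat x)$; hence $C(\widehat x)=0$ exactly when $x\in\Ker g$, so $x\mapsto\widehat x$ restricts to an isomorphism $\Ker g\overset{\simeq}{\rightarrow}\calN$.

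Next comes a short coordinate computation. Let $x\in\Ker g$. Multiplying $\Mat_{\calB,\calC}(\widehat x)$ by the column $(x_1,\dots,x_{n-1},0,0)^T\in\K^{n+1}$ yields the coordinate vector of $h_X(x)$ in $\calC$; but $\widehat x\in\calN$, so by the displayed form of the matrices of $\calN$ the first $n-1$ columns of $\widehat x$ have $B(\widehat x)$ as their top $(n-1)\times(n-1)$ block and only zeros below it, whence that product has $B(\widehat x)X$ as its first $n-1$ coordinates and zeros elsewhere. Therefore $h_X(x)$ lies in the span of the first $n-1$ vectors of $\calC$ and, read in that subspace, equals $B(\widehat x)X$. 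Letting $x$ range over $\Ker g$ and using the isomorphism $\Ker g\cong\calN$, one gets that $h_X(\Ker g)$ is, under the obvious identification, the subspace $B(\calN)X$ of $\K^{n-1}$; in particular $\dim B(\calN)X=\rk\bigl(h_X|_{\Ker g}\bigr)$.

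Finally I would conclude by a dimension count. For any linear map $h$ and any subspace $W$ of its source one has $\rk(h|_W)\geq\rk h-\codim W$, since $h$ induces a surjection from the source modulo $W$ onto $\im h$ modulo $h(W)$. Applying this with $h=h_X$ and $W=\Ker g$, whose codimension in $U$ is $\rk g=n-1$, gives
$$\dim B(\calN)X=\rk\bigl(h_X|_{\Ker g}\bigr)\geq(2n-2)-(n-1)=n-1 .$$
Since $B(\calN)X$ is a subspace of $\K^{n-1}$, it equals $\K^{n-1}$, as claimed. There is no serious obstacle: once the dictionary between $\calM$ and the operators of $\calS$ is in place, the whole content of the claim is that restricting a rank-$\geq(2n-2)$ operator to a subspace of codimension $n-1$ still leaves rank $\geq n-1$, and this is forced to be an equality because the restricted range sits inside an $(n-1)$-dimensional coordinate subspace; the only point demanding care is the bookkeeping in the block decompositions of $\calM$ and $\calN$.
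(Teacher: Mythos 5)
Your proof is correct and rests on the same rank-theorem dimension count as the paper's: the paper bounds $\dim\calN X \geq \dim\calM X - \dim C(\calM) = (2n-2)-(n-1)$ directly at the matrix level, while you transport this back to the operator level by observing that $\calN$ corresponds to $\Ker g$ and $B(\calN)X$ records the rank of $h_X$ restricted to $\Ker g$. Your translation through $\Ker g$ adds some bookkeeping but is the same argument in substance.
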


\begin{proof}
Let $X \in \K^{n-1} \setminus \{0\}$ seen as a vector of $\K^{n+1}$ through the canonical identification map
$\K^{n-1} \overset{\simeq}{\rightarrow} \K^{n-1} \times \{0\}$. By the rank theorem, one finds
$$2n-2 \leq \dim \calM X \leq \dim \calN X+\dim C(\calM)=\dim \calN X+(n-1).$$
Therefore, $\dim \calN X \geq n-1$. This proves our claim, judging from the form of the matrices in $\calN$.
\end{proof}

Now, fix $X \in \K^{n-1}$ and $N \in \calN$. Let $\lambda \in \K$. Applying the Flanders-Atkinson lemma to
$M_X+\lambda N$ for $k=1$, one finds:
$$K(X)\bigl(A(X)+\lambda B(N)\bigr)X+L(X)X\,\bigl(C_1(X)+\lambda C_2(N)\bigr)=0.$$
As this holds for all $\lambda \in \K$, one finds
\begin{equation}\label{bigidentity1}
\forall (X,N) \in \K^{n-1} \times \calN, \; K(X)\,B(N)\,X+L(X)X\,C_2(N)=0.
\end{equation}
and
\begin{equation}\label{bigidentity2}
\forall X \in \K^{n-1}, \; K(X)\,A(X)\,X+L(X)X\,C_1(X)=0.
\end{equation}

\begin{claim}\label{ontoclaim}
The map $C_2 : \calN \rightarrow \K^{m-n}$ is onto.
\end{claim}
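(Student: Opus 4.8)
The plan is to prove that $C_2$ is onto by showing that the orthogonal complement of its image in $\K^{m-n}$ is trivial, feeding off the reducedness of $\calM$ together with the identities \eqref{bigidentity1} and \eqref{bigidentity2}. The case $m=n$ is vacuous, so I would assume $m>n$ throughout.

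First I would note that $X \mapsto L(X)X$ cannot be the zero function on $\K^{n-1}$: if it were, \eqref{bigidentity1} would give $K(X)B(N)X=0$ for all $(X,N)\in \K^{n-1}\times \calN$, and since for each nonzero $X$ the set $\{B(N)X : N\in \calN\}$ is all of $\K^{n-1}$ (the claim immediately preceding the present one), this would force $K(X)=0$ for every $X$, contradicting the fact that the column spaces of the matrices $K(X)$ were normalized so as to span $\K^{m-n}\neq\{0\}$.

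Next, fix $y\in \K^{m-n}$ with $y^T C_2(N)=0$ for all $N\in\calN$. Left-multiplying \eqref{bigidentity1} by $y^T$ kills the $C_2$-term and leaves $y^T K(X)\,B(N)\,X=0$; fixing a nonzero $X$ and letting $B(N)X$ sweep out $\K^{n-1}$ yields $y^T K(X)=0$, hence $y^T K(X)=0$ for all $X$. Left-multiplying \eqref{bigidentity2} by $y^T$ then collapses to $(L(X)X)\,(y^T C_1(X))=0$ for all $X\in\K^{n-1}$. This is a polynomial identity of degree at most $3$ in each of the $n-1$ variables, so since $\# \K\geq 4$ (because $\# \K>n\geq 3$) it is in fact an identity of polynomials; the polynomial ring being a domain and $X\mapsto L(X)X$ being a nonzero polynomial, I conclude $y^T C_1(X)=0$ for all $X$. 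Finally I would take $\widetilde y\in\K^m$ with first $n$ coordinates $0$ and last $m-n$ coordinates those of $y$: from the block form of the matrices of $\calM$ their last $m-n$ rows are $\begin{bmatrix} K(X) & C_1(X)+C_2(N) & 0\end{bmatrix}$, whence $\widetilde y^T M=0$ for all $M\in\calM$, and reducedness gives $\widetilde y=0$, i.e.\ $y=0$. Thus $C_2$ is onto.

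The main obstacle is the middle step — upgrading $y^T K\equiv 0$ and $y^T C_2\equiv 0$ to $y^T C_1\equiv 0$, which is precisely the piece of information needed to invoke reducedness. Identity \eqref{bigidentity2} only supplies the \emph{product} $(L(X)X)(y^T C_1(X))$ vanishing pointwise, so the two delicate points are checking that the cardinality hypothesis $\# \K>n$ suffices to turn this into a genuine polynomial identity, and confirming that $X\mapsto L(X)X$ is not the zero polynomial — the latter being what forces one to come back to the column-span normalization of $\calM$ established earlier.
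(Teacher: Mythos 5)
Your proof is correct, and its core step is the same as the paper's: apply $y^T$ on the left of identity \eqref{bigidentity1} (the paper phrases this via a WLOG left-multiplication $I_n \oplus P$ that sends $\im C_2$ into $\K^{m-n-1}\times\{0\}$ and then extracts the last row, which is equivalent to picking $y$ in the orthogonal complement of $\im C_2$), and then use $B(\calN)X=\K^{n-1}$ to deduce $y^T K(X)=0$ for every $X$. Where you diverge from the paper is in what you do with that conclusion, and there you take an unnecessary detour. Once you have $y^T K(X)=0$ for all $X$, you are already done: if $y\neq 0$, the column space of every $K(X)$ would lie in the hyperplane $\{y\}^\bot$, so the sum of those column spaces could not equal $\K^{m-n}$ -- precisely the normalization you already invoke in your preliminary step, and precisely the contradiction the paper reaches. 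Thus both the preliminary step (showing $L(X)X\not\equiv 0$) and the subsequent work (left-multiplying \eqref{bigidentity2} by $y^T$, the cubic polynomial-identity argument to get $y^T C_1\equiv 0$, and the appeal to reducedness of $\calM$) are correct but superfluous. You end up re-deriving via reducedness what the column-span normalization of $K$ already gives you directly.
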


\begin{proof}
Assume on the contrary that $C_2$ is not onto.
Note that none of the previous assumptions is changed by left-multiplying $\calM$
with a matrix of the form $I_n \oplus P$ for some $P \in \GL_{m-n}(\K)$.
Therefore, we lose no generality in assuming that $C_2$ maps $\calN$ into $\K^{m-n-1} \times \{0\}$.
Extracting the last row in identity \eqref{bigidentity1} yields
$$\forall (X,N) \in \K^{n-1} \times \calN, \quad K_{m-n}(X)B(N)X=0,$$
where $K_{m-n}(X)$ denotes the last row of $K(X)$.
However, fixing a non-zero vector $X \in \K^{n-1}$, one deduces from $B(\calN)X=\K^{n-1}$ that
$K_{m-n}(X)=0$. Thus, the last row of $K(X)$ is zero for all $X \in \K^{n-1}$, contradicting the
assumption that $\K^{m-n}$ be the sum of all the column spaces of the matrices in $J(\calM)$.
\end{proof}

Using the fact that $\calM$ contains $\begin{bmatrix}
I_n & [0]_{n \times 1} \\
[0]_{(m-n) \times n} & [0]_{(m-n) \times 1}
\end{bmatrix}$, it ensues that, for all $Y \in \K^{m-n}$, the space
$\calM$ contains a matrix of the form
$$N_Y=\begin{bmatrix}
[?]_{(n-1) \times (n-1)} & [?]_{(n-1) \times 1} & [0]_{(n-1) \times 1} \\
[0]_{(m-n+1) \times (n-1)} & Y & [0]_{(m-n+1) \times 1}
\end{bmatrix}.$$

\begin{claim}\label{claimm=2n-2}
One has $m=2n-2$.
\end{claim}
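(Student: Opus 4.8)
The plan is to extract the equality $m=2n-2$ directly from the structural identity \eqref{bigidentity1}, using the two surjectivity facts just proved: that $C_2:\calN\to\K^{m-n}$ is onto and that $B(\calN)X=\K^{n-1}$ for every nonzero $X\in\K^{n-1}$. First I would record the free lower bound $m\geq\mrk\calH=2n-2$ (any operator has rank at most $\dim V=m$), so that the whole problem reduces to proving $m\leq 2n-2$.

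The crucial step is a case split according to whether the bilinear form $(X,Y)\mapsto L(X)Y$ is alternating, i.e.\ whether $L(X)X=0$ for every $X\in\K^{n-1}$. If it is, then \eqref{bigidentity1} degenerates to $K(X)B(N)X=0$ for all $X$ and all $N\in\calN$; fixing a nonzero $X$ and using $B(\calN)X=\K^{n-1}$ forces $K(X)=0$, hence every $K(X)$ vanishes, and since $\K^{m-n}$ is by construction the sum of the column spaces of the matrices $K(X)$ we would obtain $m=n$, which is impossible because $m\geq 2n-2>n$ for $n\geq 3$. So there is some $X_0$ with $L(X_0)X_0\neq 0$ (in particular $X_0\neq 0$), and for that $X_0$ identity \eqref{bigidentity1} can be rewritten as $C_2(N)=-(L(X_0)X_0)^{-1}K(X_0)B(N)X_0$. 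Letting $N$ run over $\calN$, the left-hand side exhausts $\K^{m-n}$ (surjectivity of $C_2$) while $B(N)X_0$ exhausts $\K^{n-1}$ (the claim $B(\calN)X_0=\K^{n-1}$); consequently the column space of $K(X_0)$ is all of $\K^{m-n}$, i.e.\ $\rk K(X_0)=m-n$. On the other hand $K(X_0)X_0=0$ with $X_0\neq 0$ gives $\rk K(X_0)\leq (n-1)-1=n-2$. Hence $m-n\leq n-2$, that is $m\leq 2n-2$, and combined with the lower bound we conclude $m=2n-2$.

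I do not anticipate a genuine obstacle: relation \eqref{bigidentity1} is perfectly suited to this computation and the two surjectivity statements are already available. The only point deserving care is the elimination of the degenerate case in which $L(X)X$ is identically zero; this is handled cleanly by the reducedness of $\calM$ — encoded in the fact that $\K^{m-n}$ is spanned by the columns of the matrices $K(X)$ — together with the standing hypothesis $n\geq 3$, which makes $2n-2$ strictly larger than $n$.
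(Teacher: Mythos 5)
Your proposal is correct, and it takes a genuinely different and more elementary route than the paper. The paper introduces the block-extraction map $D(M)$ (the last $m-n+1$ rows of $M$, truncated to the first $n$ columns), derives $\urk D(\calM)\leq n-1$ from Lemma \ref{decompositionlemma}, and then invokes Theorem \ref{refinedMStheogeneralized} applied to the operator space $\{M\mapsto D(M)X\mid X\in\K^n\}$ to force $\dim D(\calM)X=n-1$ for \emph{every} non-zero $X\in\K^n$; comparing this against $\dim D(\calM)X_0=m-n+1$ for the particular vector $X_0=e_n$ (read off from the matrices $N_Y$) gives $m-n+1=n-1$. You instead extract both inequalities directly: the free bound $m\geq\mrk\calH=2n-2$, and the bound $m\leq 2n-2$ from identity \eqref{bigidentity1} together with the two surjectivity facts $C_2(\calN)=\K^{m-n}$ and $B(\calN)X=\K^{n-1}$, via the chain $m-n=\rk K(X_0)\leq n-2$ (using $K(X_0)X_0=0$, $X_0\neq 0$). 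Your handling of the degenerate case $L(X)X\equiv 0$ is also correct: there $K\equiv 0$, forcing $m-n=0$ by the normalization of $r$, which contradicts $m\geq 2n-2>n$ when $n\geq 3$. The benefit of your approach is that it dispenses with Theorem \ref{refinedMStheogeneralized} entirely at this step and leans only on the bilinear identity and elementary rank counting; the paper's approach is heavier here but foregrounds the constancy of $\dim D(\calM)X$, a structural fact that resurfaces later in identifying $\calS$ with a twisted operator space.
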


\begin{proof}
For every $M \in \calM$, let us write
$$M=\begin{bmatrix}
[?]_{(n-1) \times n} & [?]_{(n-1) \times 1} \\
D(M) & [0]_{(m-n+1) \times 1}
\end{bmatrix} \quad \text{with $D(M) \in \Mat_{m-n+1,n}(\K)$.}$$
On the one hand, Lemma \ref{decompositionlemma} yields
$\urk D(\calM) \leq n-1$. On the other hand, $\dim D(\calM)X \geq n-1$ for all non-zero vectors $X \in \K^n$, since
$\rk f \geq 2n-2$ for all non-zero operators $f \in \calH$.
Theorem \ref{refinedMStheogeneralized} applied to the LLD operator space $\bigl\{M \mapsto D(M)X \mid X \in \K^n\}$
shows that $\dim D(\calM)X=n-1$ for all non-zero vectors $X \in \K^n$.

However, the matrices of type $N_Y$ show that the vector $X_0:=\begin{bmatrix}
0 & \cdots & 0 & 1
\end{bmatrix}^T\in \K^n$ satisfies $\dim D(\calM) X_0=m-n+1$. Therefore, $m-n+1=n-1$ as claimed.
\end{proof}

Subtracting matrices of type $N_Y$, we may now assume that, for all $X \in \K^n$,
$$M_X=
\begin{bmatrix}
A(X) & C_3(X) & X \\
L(X) & 0 & 0 \\
K(X) & [0]_{(n-1) \times 1} & [0]_{(n-1) \times 1}
\end{bmatrix} \quad \text{with $C_3(X) \in \K^{n-1}$,}$$
and $C_3$ is an endomorphism of $\K^{n-1}$.

Now, for $X \in \K^{n-1}$, we consider the matrix
$$E(X):=\begin{bmatrix}
L(X) \\
K(X)
\end{bmatrix}\in \Mat_{n-1}(\K).$$

\begin{claim}\label{EXinvertible}
For every non-zero vector $X$ of $\K^{n-1}$, the matrix $E(X)$ is invertible
and the kernel of $K(X)$ is $\K X$.
\end{claim}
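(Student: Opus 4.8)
The plan is to exploit the hypothesis that every nonzero operator of $\calH$ has rank at least $2n-2$, now that Claim \ref{claimm=2n-2} has pinned the target space dimension at $m=2n-2$. Fix a nonzero $X\in\K^{n-1}$ and let $Z\in\K^{n+1}$ be the vector whose first $n-1$ coordinates are those of $X$ and whose remaining two coordinates are $0$; in the basis $\calB=(f_1,\dots,f_n,g)$, this $Z$ is the coordinate vector of a nonzero operator lying in $\Vect(f_1,\dots,f_{n-1})\subset\calH$, so its rank, which equals $\dim\calM Z$, is at least $2n-2=m$. Since $\calM Z\subset\K^m$, this forces $\calM Z=\K^m$.

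Next I would read off $\calM Z$ from the parametrization $\calM=\{M_{X'}:X'\in\K^{n-1}\}\oplus\calN$ together with the block shapes of the $M_{X'}$ and of the elements of $\calN$. One gets $M_{X'}Z=(A(X')X,\,L(X')X,\,K(X')X)$, whose last $n-1$ coordinates form exactly $E(X')X$, while $NZ=(B(N)X,\,0,\,0)$ for $N\in\calN$. Splitting $\K^m=\K^{n-1}\oplus\K^{n-1}$ along the ``$A$''-rows and the ``$(L,K)$''-rows, this gives $\calM Z=\{(A(X')X+B(N)X,\ E(X')X):X'\in\K^{n-1},\ N\in\calN\}$. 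Since we have already shown that $B(\calN)X=\K^{n-1}$, the first block is unconstrained once $X'$ is fixed, so $\calM Z=\K^m$ is equivalent to $\{E(X')X:X'\in\K^{n-1}\}=\K^{n-1}$. Thus the linear endomorphism $X'\mapsto E(X')X$ of $\K^{n-1}$ is surjective, hence bijective, hence injective; so $E(X')X=0$ with $X\neq0$ implies $X'=0$, i.e.\ $E(X')X\neq0$ for every pair of nonzero vectors $X,X'$. In particular, for each nonzero $X$ the square matrix $E(X)$ has trivial kernel, hence is invertible.

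It remains to identify $\Ker K(X)$. The Flanders-Atkinson lemma has already yielded $K(X)X=0$, so $\K X\subset\Ker K(X)$; in particular $K(X)\colon\K^{n-1}\to\K^{n-2}$ has nonzero kernel. Were $L(X)$ zero, we would get $X\in\Ker K(X)=\Ker E(X)$, contradicting the invertibility of $E(X)$; hence $L(X)\neq0$ and $\Ker L(X)$ is a hyperplane of $\K^{n-1}$. Since $\Ker E(X)=\Ker L(X)\cap\Ker K(X)=\{0\}$, comparing dimensions in $\K^{n-1}$ forces $\dim\Ker K(X)\leq(n-1)-\dim\Ker L(X)=1$, and therefore $\Ker K(X)=\K X$, as claimed.

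The matrix bookkeeping is routine; the one substantive step is the opening move, where the rank bound $2n-2$ together with the equality $m=2n-2$ turns a rank \emph{inequality} into the exact surjectivity $\calM Z=\K^m$ --- precisely what is needed to make $X'\mapsto E(X')X$ onto and hence to force $E(X)$ to be invertible. The only point requiring genuine care is keeping the two copies of $\K^{n-1}$ inside $\K^m$ distinct when extracting $E(X')X$ from $M_{X'}Z$.
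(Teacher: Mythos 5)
Your proof is correct and essentially matches the paper's: both rest on the rank bound $\mrk\calH = 2n-2$ applied to nonzero vectors supported in the first $n-1$ coordinates, together with $m=2n-2$ and the Flanders-Atkinson fact $K(X)X=0$. The only cosmetic difference is that the paper argues by contradiction (a hypothetical $Y\in\Ker E(X)$ would give $\dim\calM Z<2n-2$ for the corresponding $Z$) and then uses $\rk K(X)\geq\rk E(X)-1=n-2$, whereas you directly show $X'\mapsto E(X')X$ is bijective and then intersect $\Ker L(X)$ with $\Ker K(X)$.
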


\begin{proof}
Let $X \in \K^{n-1}$ be a non-zero vector. Assume that $E(X)$ is singular, and
choose a non-zero vector $Y \in \K^{n-1}$ such that $E(X)Y=0$.
The rank theorem yields
$$\dim \calM \begin{bmatrix}
Y \\
0 \\
0
\end{bmatrix}
 \leq (n-1)+\dim E(\K^{n-1})Y<(n-1)+(n-1),$$
which contradicts the assumption that $\mrk \calH=2n-2$.
Thus, $E(X)$ is non-singular and it follows that $\rk K(X)=n-2$.
As $K(X)X=0$ (by the Flanders-Atkinson lemma) and $X$ is non-zero, one concludes that $X$ spans the kernel of $K(X)$.
\end{proof}

Now, for $N \in \calN$, we write
$$N=\begin{bmatrix}
B(N) & C_4(N) & [0]_{(n-1) \times 1} \\
[0]_{(n-1) \times (n-1)} & C_5(N) & [0]_{(n-1) \times 1}
\end{bmatrix}
\quad \text{with $(C_4(N),C_5(N)) \in (\K^{n-1})^2$.}$$

Fix $N \in \calN$ together with a non-zero vector $X \in \K^{n-1}$.
Then, we have
$$N+M_X=\begin{bmatrix}
A(X)+B(N) & C_3(X)+C_4(N) & X \\
E(X) & C_5(N) & [0]_{(n-1) \times 1}
\end{bmatrix},$$
and we choose a non-zero vector $\begin{bmatrix}
Z_0 \\
\lambda \\
\mu
\end{bmatrix}$ in its kernel, with $Z_0 \in \K^{n-2}$ and $(\lambda,\mu) \in \K^2$.
This leads to
$$\bigl(A(X)+B(N)\bigr)Z_0+\lambda \,C_3(X)+\lambda \,C_4(N)+\mu \,X=0 \quad \text{and} \quad
E(X)Z_0+\lambda\, C_5(N)=0.$$
The second identity yields $Z_0=-\lambda\,E(X)^{-1}C_5(N)$.
If $\lambda=0$, then we would deduce that $Z_0=0$, and the first identity above would yield $\mu X=0$, and hence $\mu=0$.
Therefore $\lambda \neq 0$, and hence the first identity yields:
\begin{equation}\label{majoridentity}
\forall X \in \K^{n-1} \setminus \{0\}, \; \forall N \in \calN, \; \bigl(A(X)+B(N)\bigr)E(X)^{-1} C_5(N)
-C_3(X)-C_4(N) \in \K X.
\end{equation}

\begin{claim}
There is a scalar $\lambda_n \in \K$ such that $C_3(X)=\lambda_n\,X$ for all $X \in \K^{n-1}$.
\end{claim}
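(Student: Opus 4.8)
The plan is to read the conclusion off almost directly from the identity \eqref{majoridentity}, specialised at $N=0$, and then to invoke the elementary fact that a linear endomorphism of a space of dimension at least $2$ all of whose vectors are eigenvectors must be a scalar multiple of the identity.

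First I would note that $\calN$ is a linear subspace of $\calM$, so $0 \in \calN$, and that $B$, $C_4$ and $C_5$ are linear, hence vanish at $0$. Substituting $N=0$ into \eqref{majoridentity} thus gives, for every non-zero $X \in \K^{n-1}$ (for which $E(X)$ is invertible by Claim \ref{EXinvertible}), the relation $(A(X)+0)\,E(X)^{-1}\cdot 0-C_3(X)-0 \in \K X$, that is $C_3(X) \in \K X$; and this holds trivially for $X=0$ as well. So $C_3$ is a linear endomorphism of $\K^{n-1}$ sending every vector into the line it spans. Since $n \geq 3$, the space $\K^{n-1}$ has dimension at least $2$, and one concludes in the standard way: fixing a non-zero $X_0$ and writing $C_3(X_0)=\lambda_n X_0$, an arbitrary $X$ satisfies $C_3(X)=\lambda_n X$, directly by linearity when $X \in \K X_0$, and, when $X \notin \K X_0$, by expanding $C_3(X_0+X)=\mu(X_0+X)=\lambda_n X_0+C_3(X)$ with $X_0+X$ free from $X_0$, which forces $\mu=\lambda_n$ and hence $C_3(X)=\lambda_n X$. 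This produces the required scalar $\lambda_n$.

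There is essentially no obstacle here; the only points to be careful about are that $0$ genuinely lies in $\calN$, so that the specialisation in \eqref{majoridentity} is legitimate, and that it is precisely the hypothesis $n \geq 3$ that makes the ``every vector is an eigenvector'' step valid — the statement would be false in dimension $1$. If one prefers to avoid \eqref{majoridentity}, the same conclusion follows from the shape of $M_X$ alone: for $X \neq 0$ the matrix $M_X$ has rank at most $n$ since $\urk \calM=n$, while its lower block $E(X)$ is invertible by Claim \ref{EXinvertible}, so any vector of $\Ker M_X$ has the form $(0,\lambda,\mu)$ with $\lambda\,C_3(X)+\mu\,X=0$; as $\dim \Ker M_X \geq (n+1)-n=1$, the vectors $C_3(X)$ and $X$ are linearly dependent, and one finishes as above.
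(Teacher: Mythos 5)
Your proposal is correct and follows the same route as the paper: specialize \eqref{majoridentity} at $N=0$ to obtain $C_3(X)\in\K X$ for all nonzero $X$, then conclude by linearity of $C_3$; your alternative route via $\Ker M_X$ and the invertibility of $E(X)$ is also a valid (and arguably cleaner) way to reach $C_3(X)\in\K X$. One tiny quibble on your closing remark: the conclusion would in fact still hold when $\dim\K^{n-1}=1$, since every endomorphism of a line is trivially scalar multiplication — it is only the specific ``pick $X\notin\K X_0$'' argument, not the statement itself, that requires dimension at least $2$.
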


\begin{proof}
Let $X \in \K^{n-1} \setminus \{0\}$. Applying identity \eqref{majoridentity} to $N=0$ yields
$C_3(X) \in \K X$.  The claimed result ensues since $X \mapsto C_3(X)$ is an endomorphism of $\K^{n-1}$.
\end{proof}

Thus, identity \eqref{majoridentity} yields:
\begin{equation}\label{majoridentity2}
\forall X \in \K^{n-1} \setminus \{0\}, \; \forall N \in \calN, \; \bigl(A(X)+B(N)\bigr)E(X)^{-1} C_5(N)-C_4(N) \in \K X.
\end{equation}

\begin{claim}
The map $C_5 : \calN \rightarrow \K^{n-1}$ is an isomorphism.
\end{claim}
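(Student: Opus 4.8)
The plan is to prove that $C_5$ is both onto and one-to-one. Surjectivity is a byproduct of the earlier steps: the matrices of type $N_Y$ introduced just before Claim \ref{claimm=2n-2} belong to $\calN$ (their last column vanishes), and, once the normalization $m=2n-2$ has been carried out, the restriction of the middle column of $N_Y$ to the rows indexed from $n$ to $2n-2$ is precisely $C_5(N_Y)$; by Claim \ref{ontoclaim} together with the fact that $\calM$ contains $\begin{bmatrix} I_n & [0] \\ [0] & [0] \end{bmatrix}$, that vector can be prescribed arbitrarily in $\K^{n-1}$. Hence $C_5(\calN)=\K^{n-1}$.

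For injectivity the essential device is identity \eqref{majoridentity2}, exploited by differences. Suppose first that $N\in\calN$ satisfies $C_5(N)=0$. Then \eqref{majoridentity2} degenerates to $-C_4(N)\in\K X$ for every nonzero $X\in\K^{n-1}$; since $n-1\geq 2$, the intersection of all the lines $\K X$ is $\{0\}$, so $C_4(N)=0$ and therefore $N=\begin{bmatrix} B(N) & [0] & [0] \\ [0] & [0] & [0]\end{bmatrix}$. It remains to force $B(N)=0$. For this, fix an arbitrary $N_0\in\calN$ and write \eqref{majoridentity2} both for $N_0+N$ and for $N_0$; because $C_5(N_0+N)=C_5(N_0)$, $C_4(N_0+N)=C_4(N_0)$, and $B$ is linear, subtracting the two memberships (valid as each $\K X$ is a linear subspace) yields
\[
B(N)\,E(X)^{-1}C_5(N_0)\in\K X\qquad\text{for all }X\in\K^{n-1}\setminus\{0\}\text{ and all }N_0\in\calN .
\]
Now $E(X)\in\GL_{n-1}(\K)$ by Claim \ref{EXinvertible}, and, by the surjectivity just established, $C_5(N_0)$ ranges over all of $\K^{n-1}$ as $N_0$ varies; hence $B(N)w\in\K X$ for every $w\in\K^{n-1}$, i.e.\ $\im B(N)\subseteq\K X$. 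Letting $X$ run over two linearly independent vectors gives $B(N)=0$, so $N=0$. Combined with surjectivity, $C_5:\calN\to\K^{n-1}$ is a linear bijection, hence an isomorphism.

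The surjectivity statement is routine bookkeeping inherited from the preceding claims; the substantive part is the difference computation. Its one genuinely delicate point is that surjectivity of $C_5$ must be secured before it is used to let $E(X)^{-1}C_5(N_0)$ sweep out $\K^{n-1}$: proving injectivity in isolation would fail, since a priori $C_5(\calN)$ could be a proper subspace and then $B(N)$ would be constrained only on that subspace. I would therefore establish surjectivity first and only afterwards run the above elimination.
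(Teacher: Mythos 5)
Your proof is correct and takes a genuinely different (and shorter) route than the paper's. After deducing $C_4(N)=0$, the paper falls back on identity \eqref{bigidentity1}: from $C_2(N)=0$ it gets $K(X)B(N)X=0$, hence $B(N)X\in\K X$ for all $X$ (by Claim \ref{EXinvertible}), which only forces $B(N)=\alpha I_{n-1}$ for some scalar $\alpha$; ruling out $\alpha\neq 0$ then requires a substantially heavier argument — one shows $\calM$ contains the elementary matrix $E_{n,n}$, forms the deleted-row-and-column space $\calM'$, bounds its upper-rank, and invokes Theorem \ref{refinedMStheogeneralized} again to reach a contradiction with $\mrk\calH=2n-2$. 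You instead exploit \eqref{majoridentity2} by a difference trick: since $\K X$ is a subspace, subtracting the membership at $N_0+N$ from the one at $N_0$ cancels $A(X)$, $B(N_0)$, $C_4(N_0)$, and leaves $B(N)E(X)^{-1}C_5(N_0)\in\K X$; surjectivity of $C_5$ (available beforehand) and invertibility of $E(X)$ let $E(X)^{-1}C_5(N_0)$ sweep $\K^{n-1}$, so $\im B(N)\subseteq\K X$ for every nonzero $X$, forcing $B(N)=0$ outright. This sidesteps the entire $E_{n,n}$/submatrix digression. You are also right to flag the order of quantifiers: surjectivity must be secured before the sweeping argument, and you handle that correctly. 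Your ``$n-1\geq 2$ so the lines $\K X$ intersect in $\{0\}$'' step uses $n\geq 3$, which is indeed a hypothesis of the theorem. One tiny caveat: you state that ``$\im B(N)\subseteq\K X$ \emph{for all} nonzero $X$'' — it would suffice (and is cleaner) to take two linearly independent choices of $X$, which is exactly what you do in the final sentence.
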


\begin{proof}
We know that $C_5$ is linear, and we have seen right above the statement of Claim \ref{claimm=2n-2} that $C_5$ is onto.
Let us prove that $C_5$ is one-to-one.
Assume that some non-zero $N_0 \in \calN$ satisfies $C_5(N_0)=0$.
Then, identity \eqref{majoridentity2} yields $C_4(N_0) \in \K X$ for all non-zero
vectors $X \in \K^{n-1}$. Since $n \geq 3$, this yields $C_4(N_0)=0$.
As $C_2(N_0)=0$, identity \eqref{bigidentity1}
yields $\forall X \in \K^{n-1}, \; K(X)B(N_0)X=0$, and hence $\forall X \in \K^{n-1}, \; B(N_0)X \in \K X$ by Claim \ref{EXinvertible}.
It follows that $B(N_0)=\alpha I_{n-1}$ for some $\alpha \in \K$. Since $N_0 \neq 0$ and $C_4(N_0)=C_5(N_0)=0$, one must
have $\alpha \neq 0$. Combining linearly $N_0$ with $\begin{bmatrix}
I_n & [0]_{n \times 1} \\
[0]_{(n-2) \times n} & [0]_{(n-2) \times 1}
\end{bmatrix}$, we deduce that $\calM$ contains the matrix
$E_{n,n} \in \Mat_{2n-2,n+1}(\K)$ with all entries zero save the one at the $(n,n)$-spot, which equals one.

Given a matrix $M \in \calM$, we denote by $M'$ the matrix of $\Mat_{2n-3,n}(\K)$ obtained by deleting the $n$-th row and $n$-th column of
$M$, thus giving rise to a linear subspace $\calM'$ of $\Mat_{2n-3,n}(\K)$.
We contend that $\urk \calM'\leq n-1$. Assume on the contrary that $\calM$ contains a matrix $M$
such that $\rk M'=n$. As $\rk M \leq n$, one deduces that the $n$-th column of $M$ is a linear combination
of the other columns. Applying the same principle to $M+E_{n,n}$, one deduces that the $n$-th column $C_0$ of
$E_{n,n}$ is a linear combination of the columns of $M$, and hence the matrix $M_1$ obtained from $M$ by
replacing its $n$-th column by $C_0$ has also rank less than $n+1$. However, one sees that
$\rk M_1=1+\rk M'=n+1$, a contradiction.

Now, not only do we have $\urk \calM'\leq n-1$, but the assumptions on $\calM$ also show that
$\calM'$ contains $\begin{bmatrix}
I_{n-1} & [0]_{(n-1) \times 1} \\
[0]_{(n-2) \times (n-1)} & [0]_{(n-2) \times 1}
\end{bmatrix}$ and that, for every $X \in \K^{n-1}$, it contains a matrix of the form
$\begin{bmatrix}
[?]_{(n-1) \times (n-1)} & X \\
[?]_{(n-2) \times (n-1)} & [0]_{(n-2) \times 1}
\end{bmatrix}$. Thus, Theorem \ref{refinedMStheogeneralized} shows that $\dim \calM' Y\leq n-1$ for all $Y \in \K^n$.
In particular, denoting by $e_1$ the first vector of the canonical basis of $\K^{n+1}$, one deduces that
$$\dim \calM e_1 \leq 1+(n-1) = n<2n-2,$$
contradicting our assumptions. Thus, $C_5$ is one-to-one.
\end{proof}

In particular, the rank theorem yields
$$\dim \calM=\dim \calN+(n-1)=2(n-1).$$
Moreover, we find a (unique) matrix $D \in \Mat_{n-1}(\K)$ such that $C_4(N)=D \,C_5(N)$ for all $N \in \calN$.
As $\calN$ contains $\begin{bmatrix}
I_n & [0]_{n \times 1} \\
[0]_{(n-2) \times n} & [0]_{(n-2) \times 1}
\end{bmatrix}$, one deduces that the first column of $D$ is zero.
Setting
$P:=\begin{bmatrix}
I_{n-1} & -D \\
[0]_{(n-1) \times (n-1)} & I_{n-1}
\end{bmatrix}$, we have
$P=\begin{bmatrix}
I_n & [?]_{n \times (n-2)} \\
[0]_{(n-2) \times n} & I_{n-2}
\end{bmatrix}$, and hence none of the previous assumptions is lost in replacing $\calM$ with $P\calM$
(as we have only modified the last $n-2$ vectors of the chosen basis of $V$
by adding to each one of them a well-chosen linear combination of the first $n$ ones).
In the new space $\calM$, we have $C_4(N)=0$ for all $N \in \calN$.

In this reduced situation, we may find a linear map $F : \K^{n-1} \rightarrow \Mat_{n-1}(\K)$ such that
$B(N)=F(C_5(N))$ for all $N \in \calN$. Thus, $\calN$ is the set of all matrices
$$\begin{bmatrix}
F(Y) & [0]_{(n-1) \times 1} & [0]_{(n-1) \times 1} \\
[0]_{(n-1) \times (n-1)} & Y & [0]_{(n-1) \times 1}
\end{bmatrix} \quad \text{with $Y \in \K^{n-1}$.}$$

We have already shown that $\dim F(\K^{n-1})X=\dim B(\calN) X=n-1$ for all non-zero vectors $X \in \K^{n-1}$.
On the other hand, we have proved that $\dim \calN=n-1$.
With the same line of reasoning as in the proof of Claim \ref{EXinvertible}, this yields:

\begin{claim}\label{FYinvertible}
For every non-zero vector $Y \in \K^{n-1}$, the matrix $F(Y)$ is invertible.
\end{claim}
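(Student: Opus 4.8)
The plan is to argue by contradiction, in close analogy with the proof of Claim~\ref{EXinvertible}: from a singular $F(Y)$ I will produce a nonzero operator of $\calH$ whose rank is strictly less than $2n-2$, contradicting $\mrk \calH = 2n-2$.

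Concretely, I would fix a nonzero vector $Y \in \K^{n-1}$, suppose for contradiction that $F(Y)$ is singular, pick $Z \in \K^{n-1}\setminus\{0\}$ with $F(Y)Z = 0$, and set $W := \begin{bmatrix} Z \\ 0 \\ 0 \end{bmatrix} \in \K^{n+1}$, whose last coordinate vanishes, so that $W$ represents a nonzero operator of $\calH$. Using the shapes obtained above for $M_{X'}$ and for the elements of $\calN$, a block multiplication gives, for all $(X',Y') \in (\K^{n-1})^2$,
$$\bigl(M_{X'} + N_{Y'}\bigr)W = \begin{bmatrix} A(X')Z + F(Y')Z \\ E(X')Z \end{bmatrix},$$
where both blocks have $n-1$ rows; in particular $N_Y W = \begin{bmatrix} F(Y)Z \\ 0 \end{bmatrix} = 0$, while $M_0 W = 0$ since $A,C_3,L,K$ are linear. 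Since $(X',Y') \mapsto M_{X'} + N_{Y'}$ is a linear isomorphism from $(\K^{n-1})^2$ onto $\calM$ (injectivity being visible from the corner blocks, and $\dim \calM = 2(n-1)$), the pair $(0,Y)$ is a nonzero element of the kernel of the composite map $(X',Y') \mapsto (M_{X'}+N_{Y'})W$. The rank theorem then yields $\dim \calM W \leq 2(n-1) - 1 < 2n-2$, i.e.\ the nonzero operator of $\calH$ represented by $W$ has rank at most $2n-3$, contradicting $\mrk \calH = 2n-2$. Hence $F(Y)$ is invertible for every nonzero $Y$.

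I do not anticipate any real obstacle here; it is a bare rank count, exactly parallel to Claim~\ref{EXinvertible} (with ``$E(X)$ singular'' replaced by ``$F(Y)$ singular''). The one subtlety worth keeping straight is that, unlike in Claim~\ref{EXinvertible} where $\dim\calM W$ is bounded by the sum of the dimensions of the two row-blocks of $\calM W$, here both row-blocks may well be of full dimension $n-1$, so the deficiency has to be extracted from the kernel of the parametrising map rather than block by block. Alternatively, one can bypass the rank theorem altogether: if $F(Y)Z = 0$ for some nonzero $Z$, then the map $Y' \mapsto F(Y')Z$, which by the already-established fact $\dim F(\K^{n-1})Z = n-1$ is an automorphism of $\K^{n-1}$, would kill the nonzero vector $Y$ — which is absurd.
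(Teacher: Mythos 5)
Your proof is correct, and your \emph{alternative} at the end is exactly the paper's argument: the paper cites the two facts $\dim F(\K^{n-1})Z = n-1$ (for every nonzero $Z$) and $\dim\calN = n-1$ precisely so that the linear map $Y' \mapsto F(Y')Z$ on $\K^{n-1}$ is seen to be an automorphism, which cannot annihilate a nonzero $Y$. Your main argument reaches the same contradiction by a longer route, re-running the $\mrk\calH = 2n-2$ bound via the kernel of the parametrising map $(X',Y') \mapsto (M_{X'}+N_{Y'})W$; it is also sound, and you are right that the block-by-block dimension count from Claim~\ref{EXinvertible} does not transfer here since both row-block projections of $\calM W$ can be full, so the deficiency must be extracted from the parametrisation's kernel rather than from either block.
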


Identity \eqref{majoridentity2} now reads:
$$\forall X \in \K^{n-1} \setminus \{0\}, \; \forall Y \in \K^{n-1}, \;
A(X)E(X)^{-1}Y+F(Y)E(X)^{-1}Y \in \K X.$$
Fix $X \in \K^{n-1} \setminus \{0\}$. As $K(X)X=0$, we deduce that
$$\forall Y \in \K^{n-1}, \; K(X) A(X)E(X)^{-1}Y+K(X)F(Y)E(X)^{-1}Y=0.$$
Noting that $Y \mapsto K(X) A(X)E(X)^{-1}Y$ is linear and $Y \mapsto K(X)F(Y)E(X)^{-1}Y$ is quadratic
(and $\# \K > 2$),  this yields the two identities
\begin{equation}\label{majoridentity3}
\forall Y \in \K^{n-1}, \;  K(X) A(X)E(X)^{-1}Y=0
\end{equation}
and
\begin{equation}\label{majoridentity4}
\forall Y \in \K^{n-1}, \;  K(X)F(Y)E(X)^{-1}Y=0.
\end{equation}
First of all, \eqref{majoridentity4} can be rephrased as
\begin{equation}\label{majoridentity5}
\forall Y \in \K^{n-1}, \;  F(Y)E(X)^{-1}Y \in \K X.
\end{equation}
On the other hand, identity \eqref{majoridentity3} yields $\im(A(X)E(X)^{-1}) \subset \K X$.
Since $E(X)^{-1}$ is invertible, this shows that $\im A(X) \subset \K X$.
As this holds for all non-zero vectors $X \in \K^{n-1}$, and as $X\mapsto A(X)$ is linear, this yields scalars $\lambda_1,\dots,\lambda_{n-1}$
such that
$$\forall X \in \K^{n-1}, \; A(X)=\begin{bmatrix}
\lambda_1\,X & \cdots & \lambda_{n-1}\,X
\end{bmatrix}.$$
We are almost ready to conclude. Now, we can forget the assumption that $\dim \calM Z=2n-2$ for all non-zero vectors $Z \in \K^n \times \{0\}$
and focus on finding a matrix space which is equivalent to $\calM$ and represents the dual operator space of the twisted
operator space associated with an LDB division algebra.
Recall that,
for all $X \in \K^{n-1}$, we have
$$M_X=\begin{bmatrix}
A(X) & \lambda_n X & X \\
K(X) & [0]_{(n-1) \times 1} & [0]_{(n-1) \times 1}
\end{bmatrix}.$$
Remember also that $\calM=\{M_X \mid X \in \K^{n-1}\} \oplus \calN$.

Using the column operations $C_i \leftarrow C_i-\lambda_i C_{n+1}$ for $i$ from $1$ to $n$,
followed by the column operation $C_n \leftrightarrow C_{n+1}$,
one deduces that $\calM$ is equivalent to the space $\calM_1$ of all matrices of the form
$$\begin{bmatrix}
F(Y) & X & [0]_{(n-1) \times 1}  \\
E(X) & [0]_{(n-1) \times 1} & Y
\end{bmatrix} \quad \text{with $(X,Y) \in (\K^{n-1})^2$.}$$

By Claims \ref{EXinvertible} and \ref{FYinvertible},
we see that the rules
$$\forall (X,Y) \in (\K^{n-1})^2, \quad X \star Y:=F(Y)X \quad \text{and} \quad X \bullet Y:=E(Y)X$$
define two regular bilinear pairings $\star$ and $\bullet$ from $\K^{n-1} \times \K^{n-1}$ to $\K^{n-1}$.
We shall conclude by proving that $\calA:=(\K^{n-1},\star,\bullet)$ is an LDB division algebra and that $\calM_1$ represents the \emph{dual} operator
space of $\calT_\calA$ in well-chosen bases.

Given $(X,Y)\in (\K^{n-1})^2$, with $X$ non-zero, identity
\eqref{majoridentity5} applied to the pair $(Y,E(Y)X)$ reads
$$F\bigl(E(Y)X\bigr)\,E(Y)^{-1}E(Y)X \in \K Y,$$
and hence
$$X \star (X \bullet Y) \in \K Y.$$
With $X=0$, the above statement is obvious, and hence $\bullet$ is a quasi-left-inversion of~$\star$.
Thus, $\calA$ is an LDB division algebra and the dual operator space $\widehat{\calT_\calA}$ is equivalent to the space
$\calV$ of all linear maps
$$f_{Y,Z} : (X,\lambda,\mu)\in \K^{n+1} \mapsto (F(Z)X+\lambda Y\,,\, E(Y)X+\mu Z) \quad \text{with $(Y,Z) \in \K^{n-1} \times \K^{n-1}$.}$$
Given $(Y,Z) \in \K^{n-1} \times \K^{n-1}$, one checks that the matrix of $f_{Y,Z}$ in the canonical bases of
$\K^{n+1}$ and $\K^{n-1} \times \K^{n-1}$ is precisely
$$\begin{bmatrix}
F(Z) &  Y & [0]_{(n-1) \times 1}  \\
E(Y) & [0]_{(n-1) \times 1} & Z
\end{bmatrix}.$$
Therefore, $\calM_1$ represents $\calV$, and hence it also represents $\widehat{\calT_\calA}$.
As $\calM_1$ represents $\widehat{\calS}$, one concludes that $\calS$ is equivalent to $\calT_\calA$.
This completes the proof of Theorem \ref{theocaslimite}.


\begin{thebibliography}{1}
\bibitem{Amitsur}
S. A. Amitsur, Generalized polynomial identities and pivotal monomials,
\newblock{\em Trans. Am. Math. Soc.}
\newblock{\textbf{114}}
\newblock{(1965)}
\newblock{210-226.}

\bibitem{AtkinsonPrim}
M. D. Atkinson, Primitive spaces of matrices of bounded rank II,
\newblock{\em J. Austr. Math. Soc. (Ser. A)}
\newblock{\textbf{34}}
\newblock{(1983)}
\newblock{306-315.}

\bibitem{AtkLloydPrim}
M. D. Atkinson, S. Lloyd, Primitive spaces of matrices of bounded rank,
\newblock{\em J. Austr. Math. Soc. (Ser. A)}
\newblock{\textbf{30}}
\newblock{(1980)}
\newblock{473-482.}

\bibitem{BottMilnor}
R. Bott, J. Milnor, On the parallelizability of the spheres,
\newblock{\em Bull. Amer. Math. Soc.}
\newblock{\textbf{64}}
\newblock{(1958)}
\newblock{87-89.}

\bibitem{BresarSemrl}
M. Bre\v sar, P. \v Semrl, On locally linearly dependent operators and derivations,
\newblock{\em Trans. Am. Math. Soc.}
\newblock{\textbf{351}}
\newblock{(1999)}
\newblock{1257-1275.}

\bibitem{ChebotarSemrl}
M. Chebotar, P. \v Semrl, Minimal locally linearly dependent spaces of operators,
\newblock{\em Linear Algebra Appl.}
\newblock{\textbf{429}}
\newblock{(2008)}
\newblock{887-900.}

\bibitem{Dieudonnegroupesclassiques}
J. Dieudonn\'e, Sur les groupes classiques, 3rd edition,
\newblock{Hermann,}
\newblock{Paris,}
\newblock{1967.}

\bibitem{Ding}
L. Ding, On a pattern of reflexivity of operator spaces,
\newblock{\em Proc. Am. Math. Soc.}
\newblock{\textbf{124}}
\newblock{(1996)}
\newblock{3101-3108.}

\bibitem{Fillmoreetal}
P. Fillmore, C. Laurie, H. Radjavi, On matrix spaces with zero determinant,
\newblock{\em Lin. Multilin. Algebra}
\newblock{\textbf{18}}
\newblock{(1985)}
\newblock{255-266.}

\bibitem{Flanders}
H. Flanders, On spaces of linear transformations with bounded rank,
\newblock{\em J. Lond. Math. Soc.}
\newblock{\textbf{37}}
\newblock{(1962)}
\newblock{10-16.}

\bibitem{GowQuinlan}
R. Gow, R. Quinlan, On the vanishing of subspaces of alternating forms,
\newblock{\em Lin. Multilin. Algebra}
\newblock{\textbf{54-6}}
\newblock{(2006)}
\newblock{415-428.}

\bibitem{Heineken}
H. Heineken, Gruppen mit kleinen abelschen Untergruppen,
\newblock{\em Arch. Math.}
\newblock{\textbf{29}}
\newblock{(1977)}
\newblock{20-31.}

\bibitem{Kervaire}
M. Kervaire, Non-parallelizability of the $n$-sphere for $n>7$,
\newblock{\em Proc. Natl. Acad. Sci.}
\newblock{\textbf{44}}
\newblock{(1958)}
\newblock{280-283.}

\bibitem{Larson}
D. A. Larson, Reflexivity, algebraic reflexivity and linear interpolation,
\newblock{\em Am. J. Math.}
\newblock{\textbf{283}}
\newblock{(1988)}
\newblock{283-299.}

\bibitem{MeshulamSemrlPJM}
R. Meshulam, P. \v Semrl, Locally linearly dependent operators,
\newblock{\em Pac. J. Math.}
\newblock{\textbf{203-2}}
\newblock{(2002)}
\newblock{441-459.}

\bibitem{MeshulamSemrlLAA}
R. Meshulam, P. \v Semrl, Locally linearly dependent operators and reflexivity of operator spaces,
\newblock{\em Linear Algebra Appl.}
\newblock{\textbf{383}}
\newblock{(2004)}
\newblock{143-150.}

\bibitem{MeshulamSemrlPAMS}
R. Meshulam, P. \v Semrl, Minimal rank and reflexivity of operator spaces,
\newblock{\em Proc. Am. Math. Soc.}
\newblock{\textbf{135}}
\newblock{(2007)}
\newblock{1839-1842.}

\bibitem{invitquad}
C. de Seguins Pazzis, Invitation aux formes quadratiques,
\newblock{Calvage \& Mounet,}
\newblock{Paris,}
\newblock{2011.}

\bibitem{dSPlinpresGL}
C. de Seguins Pazzis, The singular linear preservers of non-singular matrices,
\newblock{\em  Linear Algebra Appl,}
\newblock{\textbf{433-2}}
\newblock{(2010),}
\newblock{483-490.}


\end{thebibliography}
\end{document}